\newtheorem{theorem}{Theorem}[section]
\newtheorem{lemma}[theorem]{Lemma}
\newtheorem{proposition}[theorem]{Proposition}
\newtheorem{corollary}[theorem]{Corollary}
\theoremstyle{definition}
\newtheorem{definition}[theorem]{Definition}
\newtheorem{example}[theorem]{Example}
\theoremstyle{remark}
\newtheorem{remark}[theorem]{Remark}
\numberwithin{equation}{section}
\numberwithin{equation}{section}
\newcommand{\be}{\begin{equation}}
\newcommand{\ee}{\end{equation}}
\newcommand{\ba}{\begin{aligned}}
\newcommand{\ea}{\end{aligned}}
\newcommand{\N}{{\mathbb N}}
\newcommand{\R}{{\mathbb R}}
\newcommand{\h}{{\mathcal H}}
\def\va{\varphi}
\def\la{\lambda}
\def\csi1{\circ\sigma^{-1}}
\def\ol{\overline}
\def\wt{\widetilde}
\def\mc{\mathcal}
\newcommand{\B}{{\mathcal B}}
\newcommand{\Bfin}{\B_{\mathrm{fin}}}
\newcommand{\Dfin}{\mc D_{\mathrm{fin}}}
\begin{document}

\title[Graph Laplace and Markov operators]{Graph Laplace and Markov
 operators on a measure space}

\author{Sergey Bezuglyi}
%    Address of record for the research reported here
\address{Department of Mathematics, University of Iowa, Iowa City,
52242 IA, USA}
%    Current address
%\curraddr{}
\email{sergii-bezuglyi@uiowa.edu}
\email{palle-jorgensen@uiowa.edu}

\author{Palle E.T. Jorgensen}
%\address{Department of Mathematics, University of Iowa, Iowa City,
%52242 IA, USA}

\subjclass[2010]{37B10, 37L30, 47L50, 60J45}

%\dedicatory{To the memory of Ola Bratteli}

\keywords{Laplace operator, standard measure space, symmetric measure,
 Markov operator, Markov process, 
harmonic function, dissipation space, finite energy space}

\begin{abstract}
The main goal of this paper is to build a measurable analogue to the
 theory of weighted networks on infinite graphs. Our basic setting is an
 infinite $\sigma$-finite measure space $(V, \B, \mu)$ and a symmetric
 measure $\rho$ on $(V\times V, \B\times \B)$ supported by a 
measurable  symmetric subset $E\subset V\times V$. This applies to such
 diverse areas as optimization, graphons (limits of finite graphs), symbolic
  dynamics, measurable equivalence relations,
 to determinantal processes, to jump-processes;
and it extends earlier studies of infinite graphs $G = (V, E)$ which are
 endowed with a symmetric  weight function 
 $c_{xy}$ defined on the set of edges $E$. As in the theory of weighted 
 networks, we consider the Hilbert spaces $L^2(\mu), L^2(c\mu)$ and
 define two other Hilbert spaces,  the dissipation space $Diss$ and  finite
  energy space $\h_E$. 
 Our main results include a number of explicit spectral theoretic and potential
  theoretic theorems that apply to two realizations of Laplace operators,
 and the associated jump-diffusion semigroups,     
 one  in $L^2(\mu)$, and, the second, its counterpart in $\h_E$. We show in  particular that it is the second setting (the energy-Hilbert space and the 
  dissipation Hilbert space) which is needed in a detailed study of transient 
  Markov  processes.  
 
\end{abstract}

\maketitle

\tableofcontents
\section{ Introduction}\label{sect Intro}

\textbf{Motivation.} Recent works on graph Laplacians and Markov 
processes (details and definitions are given  below) for
 networks suggest a duality between the two settings, (a) a discrete 
 Laplacian $\Delta$, and (b) an associated  Markov transition operator $P$.
  This duality 
 approach is used in turn for answering questions in dynamics, such as 
 deciding when a walk is transient or recurrent; identifying classes of harmonic 
 functions, and an harmonic analysis; building path-space models, and 
 formulate an associated theory of martingales and of boundary spaces.

 As is known, this setting is as follows: (a) the graph Laplacians will have 
positive spectrum; and (b) the transition operators (generalized 
Perron-Frobenius operators) will be positive, in that they map positive 
functions to 
positive functions. However the setting of these studies is discrete; as is clear 
for example for graphs and networks. In other words,  we have countable
 discrete sets of 
vertices and edges; and so the relevant Hilbert spaces will be defined from 
counting measures, weighted or not.

Nonetheless, there are many important applications where the framework of 
countable discrete sets of vertices $V$ and edges $E$ is much too
 restrictive. 
 The list of applications is long, both connections to probability, analysis,
 signal processing and more: graphons (limits of finite graphs), determinantal
 processes, machine learning, jump processes, integral operators, harmonic 
 analysis etc.
Certainly there is a rich variety of Markov processes where the natural setting 
for state space is a general measure space. It is our purpose here, in the 
measure theoretic setting, to make precise the duality between the two, 
transition operator and ``graph'' Laplacian.

 Of course for general measure spaces, the word ``graph'' should perhaps be 
 given a different meaning; see below. Starting with a Markov transition 
 operator, in the measure-dynamic setting, what is the dual Laplacian; and
  vice  versa?

In the countable discrete cases from network models, spectral theory and the 
tools of dynamics rely on a certain Hilbert space that measures ``energy''
 and dissipation, but there, one refers to weighted counting measures on the 
 respective sets $V$ and $E$. Our present paper deals with measure
  theoretic 
 dynamics. We answer the following three questions: (i) What are the relevant 
 measures for the general setting; (ii) What are the correct notions of 
 positivity for both operators in the measure theoretic setting;  and (iii) What 
 is then the extended duality between transition operator and Laplacian?
\\

\textbf{Discrete and measurable settings.}
We begin here with precise definitions, and clarifications of the three 
problems. We first point out explicit parallels between the main objects in the
 theory of discrete networks and their  counterparts defined in the 
 measurable framework. More details are given in Section \ref{sect basics}.

In this paper, we focus on the study of a measurable  analogue of countable
weighted networks, which are known also by names electrical or resistance
networks (we will use them as synonyms). 
We recall that $(G, c)$ is called a \textit{weighted
 network} if $G = (V, E)$ is a countable connected locally finite graph with no
 loops,  and $c = c_{xy}$
 is a symmetric function defined on pairs of of connected vertices  (a more
   detailed  definition is given in Section \ref{sect basics}). One can think of a 
  countable
 network as a discrete measure space $(V, m)$ with the counting measure
 $m$. In general, the theory of weighted
 networks is built around two important operators acting on the space of
 functions $f : V \to \R$. They are the Laplace operator $\Delta$ and 
 the Markov operator $P$. 
\be\label{eqIntro 1}
(\Delta f)(x) := \sum_{y \sim x} c_{xy}(f(x) - f(y)), \qquad
P(f)(x) = \sum_{y \sim x} p(x, y)f(y), \ \ x \in V,
\ee
where the transition probabilities are defined by 
$$
p(x,y) = \dfrac{c_{xy}}{c(x)}, \qquad c(x) = \sum_{y\sim x}c_{xy}.
$$
It is customary to study these operators in
the Hilbert spaces naturally related to a network $(V, E, c)$ such as 
$l^2(V), l^2(V, c)$, and the finite energy Hilbert space $\h$. The Laplacian
$\Delta$ generates the operator in the Hilbert spaces $l^2(V), l^2(V,c)$,
and the finite energy space $\h$ which is formed by functions $f : V \to
\R$ such that
$$ 
\| f \|^2_{\h} =  \frac{1}{2}\sum_{x,y: x\sim y} c_{xy} (f(x) - f(y))^2.
$$

Our approach to the construction of a measurable analogue  is based on 
the following setting. An infinite $\sigma$-finite measure space 
$(V, \B, \mu)$ is the underlying space (``vertices''), a symmetric
subset $E \subset  V\times V$ plays the role of ``edges'', and a symmetric 
measure $\rho$ supported by $E$ is an analogue of the function $c_{xy}$. 
Since $\rho$ is a measure in the product space $V\times V$, it can be 
disintegrated with respect to $\mu$, i.e., 
$$
\rho(f) = \int_V \rho_x(f)\; d\mu(x).
$$
It is assumed that $\rho_x(V) =: c(x)$ is finite and locally integrable
on $(V, \B,\mu)$.  This property is analogous to local finiteness of discrete
networks. 

We define  measurable analogues of the Laplacian and Markov operator 
from (\ref{eqIntro 1}) as follows:
\be\label{eqMeas Delta P}
\Delta(f)(x) = \int_V (f(x) - f(y))\; d\rho_x(y), \qquad 
P(f)(x) = \int_V f(y)\; d\rho_x(y),  
\ee
where $c(x) P(x, dy) = d\rho_x(y)$. A function $f$ satisfying the condition
$\Delta f =0$ (or equivalently, $Pf =f$) is called harmonic.

The corresponding Hilbert spaces are $L^2(V, \B, \mu)$, 
$L^2(V, \B, \nu)$ where $d\nu(x) = c(x) d\mu(x)$, and the finite energy
space $\h_E$ with norm defined by
$$
\| f\|_{\h_E}^2 = \iint_{V\times V} (f(x) - f(y))^2\; d\rho(x,y).
$$

These definitions clarify the similarity between spaces and operators 
considered in discrete in measurable cases. More parallels can be found in
Section \ref{sect basics}, see Tables 1 and 2. 
\\

\textbf{Main results and outline of the paper.} 
The first part of the paper, Sections \ref{sect basics} and 
\ref{sect Graph L and M operator}, contains principal definitions and notions
that are constantly used  in the paper. It is important to emphasize that
 we consider only infinite  $\sigma$-finite measures on a standard Borel 
 space $(V, \B)$ because probability measures would correspond to finite
 networks. In Section \ref{sect basics} we recall several crucial results about 
 the Laplacian and Markov operator in the context of weighted networks. 
 The second part of Section \ref{sect basics} 
is mostly devoted to symmetric measures $\rho$ defined on 
 symmetric Borel subsets $E$ of $V\times V$. These measures 
play the central role in our study. We give a few equivalent approaches 
to the definition of symmetric measures including polymorphisms and
symmetric operators. Remark that this concept can be met in various areas 
of mathematics. One of them is the theory of graphons. At the end of this
section we give two extreme models for symmetric measures: the first model
is based on the case when $(\mu\times \mu)(E) > 0$, and the second one
deals with countable Borel equivalence relations $E$, i.e.,  
 $(\mu\times \mu)(E) = 0$. 
 
 Section \ref{sect Graph L and M operator} contains the definitions and 
 properties of the graph Laplace  and Markov operators $\Delta$ and $P$,
 as well as of two  auxiliary operators $R$ and $\wt R$. These operators,
 which are formally defined as integral operators on the space of Borel
  functions,   can be realized as operators acting on Hilbert spaces, 
  $L^2$-spaces and the finite energy space $\h_E$. 

Section \ref{sect markov} deals with a Markov process
generated by a Markov operator $P$. The difference from the classical
approach  to Markov processes is that we have to work with an infinite 
measure space. We focus here on the following topics: spectral properties
of the operator $P$, harmonic functions for $P$, the Markov process
generated by $P$, the path spaces $\Omega$ and $\Omega_x$,
and the corresponding path measures, reversibility of the Markov process. 

In Section \ref{sect diss space}, we define the dissipation space $Diss$
 whose analogue in discrete networks is used for the study of the finite 
 energy space.  The dissipation space $Diss$ is, in fact, represented as an 
 $L^2$-space with infinite measure. It turns out that the 
 spaces we are interested in can be embedded into the dissipation space.
 This fact is extremely useful since it gives the possibility to apply  the 
structure of the dissipation space  to the study of our main objects 
considered now as operators in $Diss$. 

The finite energy space is thoroughly studied in Section \ref{sect energy}.
We first prove a curious result that can be interpreted as connectedness
of a ``graph'' whose ``vertices'' are sets of finite measure in 
$(V, \B, \mu)$. To study the properties of $\h_E$, we utilize an isometric
 embedding of $\h_E$ into $L^2(\rho)$ and contractive embeddings of 
 $\h_E$ into both $L^2(\nu)$ and  $Diss$. A number of 
 results about the norm of various elements of $\h_E$ is proved. We also
 characterize harmonic functions in the Hilbert space $\h_E$, and 
 find conditions for orthogonality of elements of $\h_E$. 
 
Sections \ref{sect Delta in L2} and \ref{sect Laplace in H} are devoted to 
the study of spectral properties of 
the Laplacians $\Delta_2$ and $\Delta_{\h}$, the Laplace operators acting 
in $L^2(\mu)$ and $\h_E$, respectively.  It turns out that $\Delta_2$
is positive definite and self-adjoint unbounded operator in $L^2(\mu)$. On 
the other hand $\Delta_{\h}$ is a symmetric operator that admits many 
self-adjoint extensions. 
 
Our main results can be found in  Theorems \ref{thm R properties}, 
\ref{thm P is s-a},
\ref{thm harmonic}, \ref{thmHarm}, \ref{thm Poisson},  
\ref{thm H is H space}, 
\ref{thm main norm f}, \ref{thm inner product is energy norm}, 
\ref{thm Delta is s. a.}, \ref{thm Delta_h}, 
Propositions  \ref{prop TFAE reverse MP},  
\ref{prop orth decomp in D}, and
Corollaries \ref{cor harmonic}, \ref{cor Riesz}, \ref{cor Royden}.

\section{Basic setting}\label{sect basics}

Our goal is to introduce and study the concepts
which can be viewed as measurable analogues of basic objects from the
 weighted networks theory. For more details regarding  discrete networks 
 and their Laplacians, the reader may consult the following items 
 \cite{LyonsPeres2016, JorgensenSong2013, Chen_et_al2016}
  and  the papers cited there.

  This section contains the main definitions of
notions considered below. We first discuss  the
underlying measure space and symmetric measures. To illustrate  parallels
between discrete and measurable setting, we consider two models for a 
measurable setting.  

\subsection{Discrete case: electrical networks}\label{subsect Networks} 
For the reader's convenience,  we briefly recall several principal facts and 
definitions from the theory of weighted networks. 
 
Let $G = (V,E)$ denote a connected undirected locally finite graph
 with single edges between vertices such that the vertex set $V$ is 
(countably) infinite, and the edge set $E$ has no loops.  The set 
$E(x) := \{ y \in V : y \sim x\}$ of all neighbors of $x$ is finite for any 
vertex $x$. The edge $e\in E$ connecting vertices $x$ and $y$ is denoted 
by $(xy)$. The connectedness of $G$ means that, for any two vertices 
$x, y \in V$, there exists a finite path $\gamma = (x_0,  x_1, ... , x_n)$ 
such that $x_0 = x, x_n = y$ and $(x_ix_{i+1}) \in E$ for all  $i$. 

\begin{definition}\label{def electrical network}  An {\em weighted  network} 
$(G,c)$ is a weighted graph $G$ with a symmetric {\em conductance 
function} $c : V\times V\to [0, \infty)$, i.e., $c_{xy} = c_{yx}$ for any $(x 
y) \in E$. Moreover,  $c_{xy} >0$ if and only if $(xy) \in E$.  
For any $x\in V$, the total conductance at $x$ is defined as
\be\label{eq total cond}
c(x) := \sum_{y \sim x} c_{xy}.
\ee
\end{definition}

Given a weighted network $(G, c) = (V,E,c)$ with  fixed conductance 
function $c$, we associate the following three Hilbert spaces of functions
on $V$:
\be\label{eq l2(V)}
l^2(V) := \{u : V \in \mathbb R :  ||u||^2_{l^2} = \sum_{x\in V} u(x)^2
 < \infty\},
\ee
\be\label{eq l2(V,c)}
l^2(V, c) := \{u : V \in \mathbb R :  ||u||^2_{l^2(V,c)} = \sum_{x\in V} 
c(x)u(x)^2 < \infty\},
\ee
and
$$
\h_E:= \mbox{equivalence\ classes\ of\ functions\ on\ $V$\ such\ that}
$$
\be\label{eq def of norm energy}
||u||^2_{\h_E} = \frac1{2}\sum_{(xy)\in E}c_{xy}(u(x) -u(y))^2 < \infty,
\ee
where $u_1$ and $u_2$ are equivalent if $u_1 - u_2 =\mathrm{constant}$. 
The Hilbert space $\h_E$ is called the {\em finite energy space} 

We note that in this paper we focus on real-valued functions. The case of 
complex-valued functions is considered with obvious changes.

\begin{definition}\label{def of Laplace operator and harmonic}
The {\em Laplacian} on $(G,c)$ is the linear operator $\Delta$ which is 
defined on the linear space of functions $f : V \to \R$ by the formula
\begin{equation}\label{Laplacian formula}
(\Delta f)(x) := \sum_{y \sim x} c_{xy}(f(x) - f(y)).
\end{equation}
A function $f : V \to \R$ is called {\em harmonic} on $(G,c)$ if $\Delta f(x) = 
0$ for every $x\in V$.
\end{definition}

The Laplace operator $\Delta$ can be realized as an operator either in
 $l^2(V)$, or in $l^2(V, c)$, or in the energy space $\h_E$. 
 The corresponding operators, $\Delta_2$, $\Delta_c$, and
 $\Delta_{\h}$ are, in general, unbounded and densely defined. 
 The description  of their domains requires a careful study of \textit{dipoles},
  see details  in \cite{JorgensenPearse2016, JorgensenPearse_2017}.
   We refer to the following literature devoted to the Laplace operator studied 
  in the   context of weighted graphs (electrical networks) theory: 
  \cite{AlpayJorgensen2012, AlpayJorgensenSeagerVolok2013,
  JorgensenPearse2014, Jorgensen_Tian2015, JorgensenTian2015b, 
  JorgensenPedersenTian2016, SasakiSuzuki2017}.

To any conductance function $c$ on a network $G$, we  associate a 
{\em reversible  Markov kernel} $P = (p(x,y))_{x,y \in V}$ with transition
 probabilities defined by  $p(x,y) = \dfrac{c_{xy}}{c(x)}$. Then,  for any 
 $x,y \in V$, 
 \be\label{eq discr reversible}
  p(x, y)c(x) =  p(y, x)c(y),
\ee
 that is  the Markov process defined by $P$ is  {\em reversible}.  
 Define the probability transition operator for  $ f : V \to  \R$ by setting
\be\label{eq P}
P(f)(x) = \sum_{y \sim x} p(x, y)f(y), \ \ x \in V.
\ee
Then $P$ is called the \textit{Markov operator}. 
It is clear that the Laplace operator $\Delta$ can be represented in terms of 
the Markov operator as follows:
$$
\Delta(f) (x) = c(x)( f(x) - P(f)(x))
$$
or $\Delta = c(I - P)$ where $c$ stands for the operator of multiplication by 
$c$.

The operator $P$ defines also a Markov process $(P_n)$ on the probability 
path space $(\Omega_x, \mathbb P_x)$. Here $\Omega_x$ is the set of
infinite paths beginning at $x \in V$, and $\mathbb P_x$ is the probability
measure on $\Omega_x$ determined by transition probabilities $p(x, y)$. 
Let $X_n$ be the sequence of random variables on $\Omega_x$ such 
that $X_n(\omega) = \omega_n$.

A Markov process $(P_n)$ is called \textit{transient} if the function
$$
G(x, y) := \sum_n p_n(x, y)  
$$
is finite for any pair $x, y \in V$ ($p_n(x, y)$ denotes the probability of the
event $\mathbb P_x( X_n = y)$).

We summarize the following results which can be  found, in  particular, in
 \cite{Jorgensen_Pearse2011, Dutkay_Jorgensen2011, 
Jorgensen_Tian2015, JorgensenPearse2016}.

\begin{theorem}\label{thm summary discr}
(1) $\Delta_2$ is essentially self-adjoint, generally unbounded operator
with dense domain  in $l^2(V)$;

(2) $\Delta_{\h}$ is an unbounded, non-negative, closed, and symmetric
 operator with dense 
domain in  $\h_E$;  in general, $\Delta_{\h}$ is not a self-adjoint operator; 

(3) $P$ is bounded and self-adjoint in $l^2(V,c)$ and its spectrum is in
$[-1, 1]$.
\end{theorem}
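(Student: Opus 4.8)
The plan is to treat all three items through a single summation-by-parts identity, namely
\[
\langle \Delta f, g\rangle_{l^2(V)} = \tfrac12 \sum_{x,y} c_{xy}\,(f(x)-f(y))(g(x)-g(y)),
\]
valid for finitely supported $f,g$, together with the factorization $\Delta = c(I-P)$ and the symmetry $c_{xy}=c_{yx}$. I would take as common dense domain the space $\mathcal D_0$ of finitely supported functions on $V$: since $G$ is locally finite, $\Delta$ maps $\mathcal D_0$ into itself, and $\mathcal D_0$ is dense in each of $l^2(V)$, $l^2(V,c)$, and (modulo constants) in $\h_E$. Each assertion then reduces to a symmetry/positivity computation plus a question about the size of a deficiency space.

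I would dispatch (3) first, as it is purely computational. Writing the reversibility relation as $c(x)p(x,y)=c_{xy}=c_{yx}=c(y)p(y,x)$ and substituting into $\langle Pf,g\rangle_{l^2(V,c)} = \sum_{x,y} c(x)p(x,y)f(y)g(x)$ exhibits the expression as symmetric in $f,g$, so $P$ is symmetric. For the norm bound, Jensen's inequality (using $\sum_y p(x,y)=1$) gives $|Pf(x)|^2 \le \sum_y p(x,y)f(y)^2$, and summing against $c(x)$ while using $\sum_x c_{xy}=c(y)$ collapses the right-hand side to $\|f\|^2_{l^2(V,c)}$; hence $P$ is a contraction. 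A bounded symmetric operator is self-adjoint, and a self-adjoint contraction has spectrum in $[-1,1]$, which completes (3).

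For (1), the identity above with $g=f$ yields $\langle \Delta f,f\rangle_{l^2(V)} = \tfrac12\sum_{x,y}c_{xy}(f(x)-f(y))^2 \ge 0$, so $\Delta_2$ is symmetric, non-negative, and manifestly unbounded in general on $\mathcal D_0$. The substantive point is essential self-adjointness. Here I would invoke the deficiency criterion: a non-negative symmetric operator is essentially self-adjoint iff $\ker(\Delta_2^\ast + I)=\{0\}$, so it suffices to show that any $u\in l^2(V)$ satisfying $\Delta u=-u$ in the formal sense vanishes. This is the main obstacle and the only genuinely delicate step, since local finiteness alone does not guarantee it in general; one finishes either by a maximum-principle and exhaustion estimate bounding $u$ on finite subgraphs, or by appealing to the completeness results for graph Laplacians. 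I would follow the latter route and cite the relevant statement in \cite{JorgensenPearse2016, JorgensenPearse_2017}.

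For (2), I would realize $\h_E$ through its reproducing-kernel (dipole) structure: fixing a base point $o$, each $x$ gives a dipole $v_x\in\h_E$ with $\langle v_x,u\rangle_{\h_E}=u(x)-u(o)$, and finite linear combinations of dipoles are dense in $\h_E$. On this dense domain the same symmetrization identity shows $\Delta_{\h}$ is symmetric and non-negative, and defining $\Delta_{\h}$ as the closure of this minimal realization makes it closed by construction. The failure of self-adjointness is structural rather than computational: by the Royden-type decomposition $\h_E = \overline{\mathcal D_0}^{\,\h_E} \oplus \mathrm{Harm}$, where $\mathrm{Harm}$ is the space of finite-energy harmonic functions, any nonzero element of $\mathrm{Harm}$ produces a nontrivial defect for $\Delta_{\h}$. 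Since such harmonic functions exist on transient networks, $\Delta_{\h}$ is in general not self-adjoint; I would exhibit this obstruction and point to the boundary/deficiency analysis in the cited references. Thus the entire difficulty of the theorem is concentrated in the self-adjointness questions of (1) and (2), both of which are governed by the size of the appropriate deficiency space.
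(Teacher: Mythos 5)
First, a point of reference: the paper does not prove this theorem at all. It is presented as a summary of known facts about the discrete case, with the proofs delegated to the cited literature (\cite{Jorgensen_Pearse2011, Dutkay_Jorgensen2011, Jorgensen_Tian2015, JorgensenPearse2016}). Your sketch therefore supplies strictly more than the paper does: the summation-by-parts identity, the reversibility computation for (3), and the non-negativity of $\Delta_2$ are all correct and are exactly the standard arguments, and you are right to identify the essential self-adjointness in (1) and the failure of self-adjointness in (2) as the only genuinely delicate points. Deferring those to the cited references is consistent with what the paper itself does; note also that your deficiency-criterion route for (1) ($\Delta^\ast u=-u\Rightarrow u=0$, using positivity) is precisely the strategy the authors later use to prove the measurable analogue, Theorem \ref{thm Delta is s. a.}.

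There is, however, one genuine flaw in your treatment of (2). A nonzero finite-energy harmonic function $h$ satisfies $\Delta^\ast h = 0$, not $\Delta^\ast h=-h$, so it is an eigenvector of the adjoint at eigenvalue $0$ and is \emph{not} a defect vector; the Royden decomposition $\h_E=\overline{\mathcal D_0}\oplus\mathrm{Harm}$ does not by itself produce a nontrivial deficiency space for the non-negative symmetric operator $\Delta_{\h}$. The failure of self-adjointness must instead be witnessed by a solution $u\in\h_E$ of $\Delta u=-u$ in the appropriate adjoint sense, and such defect vectors are constructed on specific networks (certain trees and geometric integer models) in the Jorgensen--Pearse references; that is the argument you should cite. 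Separately, the claim that finite-energy nonconstant harmonic functions ``exist on transient networks'' is false as a general statement --- $\mathbb{Z}^3$ is transient but supports no nonconstant harmonic functions of finite energy --- though since the theorem only asserts non-self-adjointness ``in general,'' a single correctly chosen example suffices once the defect-vector point is repaired.
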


In order to illustrate the parallels between discrete networks and measurable 
spaces, we give two tables below. They contain  definitions of the main
 objects for countable 
weighted network and its continuous counterpart, the measurable space
$(V \times V, \B\times \B)$ equipped with a symmetric measure $\rho$. 
Table 1 is focused on the comparison of geometrical objects in the two 
cases such. On the other hand, Table 2 is about operators acting in the
corresponding Hilbert spaces. 
More detailed  definitions can be found  in the text below. 

 \textbf{Space and Network.}  In discrete models the set $V$ will typically 
 be a specified set of vertices in a big network; generally countably infinite. In 
the non-discrete, or measurable, case, $V$ will instead be part of a measure
space. In both cases, we will consider edges, and specified conductance 
functions. While the discrete case is better understood because its history, 
and a rich literature, in both pure and applied models, the continuous case 
(i.e., non-discrete) is perhaps less familiar. A common feature for the two is 
their use in the study of reversible Markov processes. While there is already a 
rich literature in the case of discrete networks (see cited references), the 
continuous, or rather, measurable, setting is of more recent vintage. It is the 
focus of our paper. However, a comparison between the two is useful, see 
Table 1. We will study infinite networks, both discrete and measurable, often 
as limits of finite ones. But many measure-space models arise in applications 
which do not make reference to limits of discrete counterparts.
 
\textbf{Symmetry and Conductance}.  In the discrete models, symmetry
 refers to a function defined on the set $E$ of edges. In the special case of 
 electrical networks of resisters, such a function could be a conductance; i.e., 
 the reciprocal of resistance. There, functions on the set $V$ of vertices could 
 be voltage, and functions on the edges current. Computations will then make 
 use of Ohm's law, and Kirchhoof's law. Continuous or measurable models are 
 more subtle; they may arise as limits of discrete ones, for example as 
 graphons, but their study is interesting in its own right. Another instance of 
 discrete vs continuous is classical potential theory: for example, a classical 
 Laplacian is studied in numerical analysis as a limit of discretized Laplacians.
 
  \textbf{ Laplacian, Markov operator and Transition probability.}  In the
   discrete setting, a typical case of interest is that of transition matrices for 
a Markov chain, for example in the study of dynamical systems described by 
 Bratteli diagrams; and in the continuous case, it takes the form of a 
 measurable family of transition probability measures, indexed by points $x$
  in $V$, so that $P(x, \cdot)$ represents transition from $x$. Since our
 dynamical theories are based on a specified graph Laplacians, the 
 corresponding Markov processes will be assumed reversible (defined in the 
 paper).
 
\textbf{ Hilbert spaces}.  Our proofs will rely on the theory of operators in 
  Hilbert space, and their corresponding spectral theory, but each of the 
  operators under consideration entails its own Hilbert space. A given operator 
  may be selfadjoint in one but not in another. As a result, we must introduce 
  several weighed $l^2$ spaces (and $L^2$ spaces in the measure space 
  case). Our study of boundary theory and of stochastic completeness entails 
  the notion of energy Hilbert spaces, and dissipation Hilbert spaces, and each 
  playing a crucial role in both the discrete and the continuous/measurable 
  models.

 In our outline above we briefly sketched and discussed some main themes, 
 as they arise in both discrete settings, as well as in their measurable 
 counterparts; the focus of our paper. We should stress that, especially for 
 the discrete models, the existing literature is quite extensive. Below we cite a 
 sample, but the reader will be able to supplement with papers cited there: 
 \cite{BezuglyiJorgensen2015, BezuglyiJorgensen2017, Cho2014, 
 DutkayJorgensen2006, Dutkay_Jorgensen2011, Jorgensen2012, 
 Jorgensen_Pearse2011,  JorgensenPearse2013, 
 JorgensenPearse2014, JorgensenPearse2016, JorgensenPearse2016, 
 JorgensenPearse_2017, JorgensenTian2015b, Jorgensen_Tian2015,  
 LyonsPeres2016, SasakiSuzuki2017}. Papers which cover aspects and
  applications in the measurable framework include
   \cite{BezuglyiJorgensen2017,
   GaubertQu2015, JacksonKechrisLouveau2002, JorgensenPaolucci2012, 
 JorgensenPedersenTian2016, Kanovei2008, Kechris2010, Lukashiv2016}.

\begin{table}[h!]
  \centering
  \caption{Comparison of discrete and continuous cases}
  \label{tab:table1}
  {\renewcommand{\arraystretch}{1.2}
  \begin{tabular}{c  c c}
    \toprule
Objects &   Discrete space & Measurable space  \\
    \midrule
    \hline
    \\
   Space & ($V, |\cdot |) $, where $V$ is  vertices  of a connected 
   & $(V, \B, \mu)$ standard $\sigma$-finite\\
   & graph  $G$ and $| \cdot |$ is the counting measure  &  measure space\\
   \\
   \hline
   \\
   Network & $ G = (V, E, c)$ weighted  network & $(V\times V, \B\times \B,
   \rho)$ measure space \\
   \\
   \hline
   \\
  Symmetry & $c: E \to \R$ conductance function  & 
   $\rho = \int_V \rho_x d\mu(x)$ symmetric measure \\    
    & $c_{xy} = c_{yx}$ & on a symmetric set $E \subset V\times V$,\\
    & & $\rho(A\times B) = \rho(B\times A)$\\
    \\
  \hline
  \\
  Conductance & $c(x) = \sum_{y \sim x} c_{xy}$ & 
$  c(x) = \int_V \; d\rho_x = \rho_x(V)$\\
\\
\hline
 \bottomrule
  \end{tabular}}
\end{table}  

\begin{table}[h!]
  \centering
  \caption{Comparison of operators in discrete and continuous cases}
  \label{tab:table2}
  {\renewcommand{\arraystretch}{1.2}
  \begin{tabular}{c  c c}
    \toprule
Objects &   Discrete case & Measurable case  \\
    \midrule
    \hline
    \\
  
  Laplacian & $\Delta(f)(x) = \sum_{y\sim x} c_{xy}(f(x) -f(y))$ &
  $\Delta(f)(x) = \int_V(f(x) - f(y)) \; d\rho_x(y)$\\
  \\
  \hline
  \\
  Markov & $P(f)(x) = \frac{1}{c(x)}\sum_{y \sim x} c_{xy}f(y)$ & 
  $P(f)(x) = \frac{1}{c(x)} \int_V f(y)\; d\rho_x(y)$\\
  operator & &\\
    \hline
    \\
Transition  & $p(x, y) = \frac{1}{c(x)}c_{xy}$ & $P(x, A) = \int_V \chi_A(y)
\frac{1}{c(x)} \; d\rho_x(y)$\\
probabilities  & & \\
\hline
\\
Hilbert spaces & $l^2(V), l^2(V, c)$ & $L^2(\mu), L^2(c\mu), Diss$ \\
\\
\hline
  \\
  Energy & $|| f ||^2_{\mathcal H_E} = $&  $ || f ||^2_{\mathcal H_E} =$ \\
  space  $\mathcal H_E$ &  $ \frac{1}{2} \sum_{x,y}c_{xy} (f(x) - f(y))^2$
    & $\frac{1}{2} \int_{V \times V}  (f(x) - f(y))^2 \; d\rho(x,y)$ \\
  \\
  \hline
\\
Finitely supported  & $\langle\delta_x, f \rangle_{\h_E}= \Delta f(x)$ & $  
\langle \chi_A,  f \rangle_{\h_E} = \int_A \Delta f \; d\mu$\\
functions & & $A\in \Bfin$ \\
\\
\hline  
  
%  \\
%Dissipation & $\mathcal H_D := \{ I : E \to \R : || I ||_{D}^2 $  & 
%$\int_V^{\oplus} L^2(\Omega_x, \mathbb P_x)c(x)\; d\mu(x)$\\ 
%space & $= \frac{1}{2} \sum_{e\in E} c_e^{-1} I(e)^2 < \infty$\} & \\
 %\\
    \bottomrule
  \end{tabular}}
\end{table}

\subsection{From discrete to measurable setting} 
\label{subsect measures} 

We recall that, for every network 
$(V, E, c)$, an atomic measure space $(V, m)$ is given where $m$ is the
counting measure. The conductance function $c$ defines another atomic
measure $\rho$ on $E\subset V\times V$ by setting $\rho(x,y) = c_{xy}$.
In what follows, we define, in terms of measure spaces, similar objects  
which can be regarded as analogues to the basic notions for weighted 
networks.
\\

\textbf{Measure space}. 
Let $V$ be a separable completely metrizable topological space (a 
\textit{Polish space}, for short), and let $\mathcal B$  be  the 
\textit{$\sigma$-algebra of Borel subsets} of $V$. Then  
$(V, \mathcal B)$ is called a \emph{standard Borel space}. We recall that all 
uncountable  standard Borel spaces are Borel isomorphic, so that one can use 
any convenient  realization of the space $V$. If $\mu$ is a continuous (i.e.,  
 non-atomic) Borel  measure  on  $(V, \mathcal B)$, then 
 $(V, \mathcal B, \mu)$ is called a \emph{standard measure space}. We use
 this name for both finite and $\sigma$-finite measure spaces.
Also the same notation, $\B$ is applied for the $\sigma$-algebras of 
Borel sets and
 measurable  sets of a standard measure space.  In the context of
 measure spaces,  we always assume that  $\B$ is  \textit{complete}  
 with respect  to the measure $\mu$. By $\mathcal F(V, \B)$ we denote 
 the space of real-valued Borel functions on $(V, \B)$. For $f \in 
 \mathcal F(V, \B)$ and a Borel measure $\mu$ on $(V, \B)$, we write 
 $$
 \mu(f) = \int_V f\; d\mu.
 $$
As a rule, we will deal only with continuous  $\sigma$-finite 
 measures on $(V, \B)$ (unless the opposite is clearly indicated). This 
 choice of measures is motivated by the discrete case where the counting
 measure plays the role of a $\sigma$-finite Borel measure on a measure
 space. 
 
All objects, considered in the context of measure spaces (such as sets,
functions, transformations, etc), are determined  by modulo sets 
of zero measure (they are also called null sets). In most cases, we will  
implicitly use this mod 0 convention not mentioning the sets of 
zero measure explicitly. 
 
Suppose now that a $\sigma$-finite continuous measure $\mu$ is chosen 
and fixed on $(V, \B)$, so that $(V, \B, \mu)$ is a standard measure space. 
We denote by 
\be\label{eq Bfin}
\Bfin = \B_{\mathrm{fin}}(\mu)  = \{ A \in \B : \mu(A) < \infty\}
\ee
the algebra of Borel sets of finite measure $\mu$. Clearly, the set $V$ can be 
partitioned into a disjoint countable union of sets $A_i$ from $\Bfin$.

We notice that the set 
$\Bfin$ can be used to define a subset of Borel functions
 which is dense in every $L^p(\mu)$-space. For this, we take
\be\label{eq Dfin}
\mathcal D_{\mathrm{fin}} := \left\{ \sum_{i\in I} a_i \chi_{A_i}  : A_i \in 
\Bfin,\ a_i \in \mathbb R,\ | I | <\infty\right\} = \mbox{Span}\{\chi_A : A \in \Bfin\}. 
\ee
\\

\textbf{Symmetric measures.} We first define the notion of a symmetric set.

\begin{definition}\label{def symmetric set}
Let $E$ be an uncountable Borel subset of the direct product 
$(V \times V, \B\times \B)$  such that:

(i)  $(x, y) \in E\  \Longleftrightarrow \ (y, x) \in E$;

%(ii) $E \cap \mbox{Diag} = \emptyset$ where 
%$\mbox{Diag} := \{(x, x) : x \in V\}$, 

(ii) $ E_x := \{y \in V : (x, y) \in E\} \neq \emptyset, \ \   \forall x \in X$;

(iii) for every $x \in V$, 
 $(E_x, \B_x)$ is a standard Borel space where $\B_x$ is 
 the the $\sigma$-algebra of Borel sets induced on $E_x$ from $(V, \B)$.
 
We call  $E$ a \textit{symmetric set}. 
\end{definition}

It follows from (iii) that the projection of $E$ on each of two margins is 
$V$.

%It is clear that if $E$ is a symmetric set, then $E' := E \setminus
%\{(x, x) : x \in V\}$ is also a symmetric set. 
We observe that  conditions
(ii) and (iii) are not related to the symmetry property; they are  
included in Definition \ref{def symmetric set} for convenience,
so that we will not have to make additional assumptions. 

The next definition of a symmetric measure is crucial for this paper.

\begin{definition}\label{def symm measure rho}
 Let $(V, \B)$ be a standard Borel space.
We say that a measure $\rho$ on $(V \times V, \B\times B)$ is
 \textit{symmetric} if 
 $$
 \rho(A \times B) = \rho(B\times A), \ \ \ \forall A, B \in \B.
 $$
If $(E_x, \B_x)$ is an uncountable standard Borel space for every $x \in V$, 
then the symmetric measure $\rho$ is  called \textit{irreducible}.
\end{definition}

The meaning of the definition of irreducible symmetric measures is clarified
in Section \ref{sect markov}. Here we see that the projection of the support
of the irreducible measure $\rho$ is the set $V$. 

\begin{lemma}\label{lem rho implies symm E}
If $\rho$ is a symmetric measure on $(V \times V, \B\times \B)$, then
the support of $\rho$, the set $E$, is mod 0 symmetric. 
\end{lemma}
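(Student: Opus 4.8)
The plan is to encode the symmetry of $\rho$ as invariance under the coordinate flip and then read off symmetry of the support from invariance of null sets. Let $\sigma : V \times V \to V \times V$ denote the flip $\sigma(x,y) = (y,x)$. This $\sigma$ is a Borel automorphism of $(V\times V, \B\times\B)$ with $\sigma^{-1} = \sigma$, and $\sigma(A\times B) = B\times A$ on rectangles, so $\sigma(E)$ is exactly the reflected set $E^{-1} = \{(y,x) : (x,y)\in E\}$. Saying that $E$ is mod $0$ symmetric then amounts to $\rho(E \triangle \sigma(E)) = 0$, the complement being taken in $V\times V$.

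First I would show that the rectangle symmetry $\rho(A\times B) = \rho(B\times A)$ upgrades to full invariance of $\rho$ under $\sigma$, i.e. $\rho(\sigma^{-1}(C)) = \rho(C)$ for every $C \in \B\times\B$. On a rectangle this is immediate, since $\rho(\sigma^{-1}(A\times B)) = \rho(B\times A) = \rho(A\times B)$. The rectangles form a $\pi$-system generating $\B\times\B$, and $\rho$ is $\sigma$-finite (it disintegrates over the $\sigma$-finite measure $\mu$ with locally integrable fibres $\rho_x(V)=c(x)$, so $V\times V$ is exhausted by rectangles of finite $\rho$-measure); hence the two measures $C \mapsto \rho(\sigma^{-1}(C))$ and $C\mapsto \rho(C)$, agreeing on a generating $\pi$-system on which both are $\sigma$-finite, coincide on all of $\B\times\B$. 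This is the one step that needs care; everything after it is set algebra.

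With $\sigma$-invariance in hand, recall that $E = \mathrm{supp}(\rho)$ satisfies $\rho(E^c) = 0$. Since $\sigma$ is an involution, $\sigma^{-1}(E^c) = \sigma(E^c) = (\sigma E)^c = (E^{-1})^c$, so $\rho((E^{-1})^c) = \rho(\sigma^{-1}(E^c)) = \rho(E^c) = 0$. Thus $\rho$ is concentrated on $E^{-1}$ just as it is on $E$. Finally I would conclude by a two-line computation: $E \setminus E^{-1} \subseteq (E^{-1})^c$ and $E^{-1}\setminus E \subseteq E^c$, both $\rho$-null by the previous line, so that $\rho(E \triangle E^{-1}) = 0$. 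Hence $E = \sigma(E)$ modulo a $\rho$-null set, which is precisely the asserted mod $0$ symmetry.

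I expect the only genuine obstacle to be the passage from rectangles to all measurable sets in the second paragraph, where the $\sigma$-finiteness of $\rho$ is essential for invoking uniqueness of the measure on the generating $\pi$-system. If one instead preferred a topological reading of $\mathrm{supp}(\rho)$ after fixing a Polish topology compatible with $\B$, the same conclusion follows at once from $\mathrm{supp}(\sigma_*\rho) = \sigma(\mathrm{supp}(\rho))$ together with $\sigma_*\rho = \rho$; I favour the measure-theoretic route above because it stays within the mod $0$ category and yields the stated null symmetric difference directly.
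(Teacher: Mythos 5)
Your proof is correct. Note that the paper itself gives no argument here -- the proof of Lemma \ref{lem rho implies symm E} is explicitly left to the reader -- so there is nothing to compare against; your write-up simply supplies the omitted details, and does so cleanly. The one step you rightly single out, upgrading the rectangle identity $\rho(A\times B)=\rho(B\times A)$ to $\sigma$-invariance of $\rho$ on all of $\B\times\B$, is handled correctly: the rectangles form a $\pi$-system generating the product $\sigma$-algebra, and under the paper's standing assumptions (the disintegration $\rho=\int_V \delta_x\times\rho_x\,d\mu(x)$ with $c(x)=\rho_x(V)<\infty$ and $c\in L^1_{\mathrm{loc}}(\mu)$, so that writing $V=\bigsqcup_i A_i$ with $A_i\in\Bfin$ gives $\rho(A_i\times A_j)\le\int_{A_i}c\,d\mu<\infty$) the space $V\times V$ is exhausted by rectangles of finite $\rho$-measure, which is exactly what the uniqueness theorem for measures agreeing on a generating $\pi$-system requires. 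The final passage from $\rho(E^c)=\rho((E^{-1})^c)=0$ to $\rho(E\,\triangle\,E^{-1})=0$ is immediate, and ``mod $0$'' here is correctly interpreted with respect to $\rho$ (not $\mu\times\mu$), which is the only sensible reading since the paper allows $\rho$ to be singular with respect to the product measure and defines $E=\mathrm{supp}(\rho)$ only up to $\rho$-null sets.
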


\begin{proof}
The proof is  direct and easy, so that we leave it for the reader.
\end{proof}

The following remark contains two natural properties of symmetric measures
which are implicitly added to Definition \ref{def symm measure rho}.

\begin{remark}
(1) In the paper, we consider the symmetric measures whose supporting sets 
$E$ satisfy Definition \ref{def symmetric set}. In other words, we require
that, for every $x \in V$, the set $E_x\subset E$ is uncountable.  

(2) In general the notion of a symmetric measure is defined in the context 
of standard Borel spaces $(V, \B)$ and $(V\times V, \B\times \B)$. 
But if a $\sigma$-finite measure $\mu$ is given on $(V, \B)$, then we need 
to introduce a relation between $\rho$ and $\mu$. 
Let $\pi_1 : V \times V \to V$ be the projection on the first coordinate. 
We require that the symmetric measure must satisfy
the property $\rho\circ \pi_1^{-1} \ll \mu$. The 
meaning of this assumption is clarified in Theorem \ref{thm Simmons} below.
\end{remark}
\medskip

\textit{\textbf{Assumption A}.} 
Let $(V\times V, \B\times \B)$ be  a $\sigma$-finite
measure space. In this paper we will assume that the 
symmetric  measure $\rho$ is irreducible, i.e., it 
satisfies also the following properties: (i) $E_x \neq \emptyset$, and (ii) 
$(E_x, \rho_x)$ is a  standard measure space for $\mu$-a.e. $x \in V$.  
That is the projection of $E$ onto $V$ is a measurable set of full measure
$\mu$.

\textbf{Measure disintegration.} 
Every Borel set $E$ in the product space $V \times V$ can be partitioned 
into ``vertical'' (or ``horizontal'') fibers. These partitions give examples 
of the so called \textit{measurable partitions}. 
The theory of measurable partitions, developed by Rohlin in 
\cite{Rohlin1949}, is a useful tool for the study of measures on standard
Borel spaces. The case of probability
measures was studied, in general, in  \cite{Rohlin1949}.  It was proved 
that any probability measure admits a unique disintegration with respect to a 
measurable partition.  
For $\sigma$-finite measures, there are similar results establishing 
the existence of such a disintegration. We refer here to  a theorem proved in 
\cite{Simmons2012}. This theorem is  formulated below in the form which 
is adapted to our purposes.

Denote by $\pi_1$ and $\pi_2$ the projections from $V \times V$ onto the
first and second factor, respectively.  Then  $\{\pi_1^{-1}(x) : x \in V\}$
and $\{\pi_2^{-1}(y) : y \in V\}$ are  the measurable partitions of 
$V \times V$ into vertical and horizontal fibers. 

\begin{theorem}[\cite{Simmons2012}] \label{thm Simmons} 
For a $\sigma$-finite measure space $(V, \B, \mu)$, 
let $\rho$ be a $\sigma$-finite measure on $(V\times V, \B\times \B)$ 
such that $\rho\circ \pi_1^{-1} \ll \mu$. Then there exists a unique system
of conditional $\sigma$-finite measures $(\wt\rho_x)$ such that
$$
\rho(f)  = \int_V \wt\rho_x(f)\; d\mu(x), \ \ \ f\in \mathcal F(V\times V, 
\B\times \B).
$$
\end{theorem}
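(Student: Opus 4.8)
The plan is to reduce the $\sigma$-finite disintegration to the classical probability-measure disintegration of Rohlin and then transfer back by means of Radon--Nikodym derivatives. First I would replace $\rho$ by an equivalent probability measure: since $\rho$ is $\sigma$-finite, partition $V\times V$ into sets $C_n$ of finite $\rho$-measure and choose constants $a_n>0$ so that $g := \sum_n a_n \chi_{C_n}$ is strictly positive and $\rho'(A) := \int_A g\, d\rho$ has total mass $1$. Thus $d\rho' = g\, d\rho$ with $g>0$, and $\rho$ and $\rho'$ are mutually equivalent.

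Next I would apply Rohlin's disintegration theorem to the standard probability space $(V\times V, \B\times\B, \rho')$ relative to the measurable partition into vertical fibers $\{\pi_1^{-1}(x)\}$. This yields the image probability measure $\nu' := \rho'\circ\pi_1^{-1}$ on $V$ together with a $\nu'$-a.e. uniquely determined measurable family of conditional probability measures $(\rho'_x)$, each supported on $\pi_1^{-1}(x)$, with $\rho'(F) = \int_V \rho'_x(F)\, d\nu'(x)$ for bounded Borel $F$. The crucial step is then the base change from $\nu'$ to $\mu$. Since $g>0$, the hypothesis $\rho\circ\pi_1^{-1}\ll\mu$ passes to $\nu'$: if $\mu(A)=0$ then $\rho(\pi_1^{-1}(A))=0$, whence $\nu'(A) = \int_{\pi_1^{-1}(A)} g\, d\rho = 0$. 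As $\nu'$ is finite and $\mu$ is $\sigma$-finite, the derivative $h := d\nu'/d\mu$ exists and is finite $\mu$-a.e. I would then define, for $\mu$-a.e. $x$, the conditional measure on the fiber by $d\wt\rho_x(y) := h(x)\, g(x,y)^{-1}\, d\rho'_x(y)$. Because $\rho'_x$ is a probability measure and the density $h(x)g(x,\cdot)^{-1}$ is a.e. finite, each $\wt\rho_x$ is $\sigma$-finite; and unwinding $d\rho = g^{-1}\,d\rho'$ together with $d\nu' = h\, d\mu$ gives, for every $f\in\mathcal F(V\times V,\B\times\B)$,
$$
\rho(f) = \int_V \rho'_x(f g^{-1})\, h(x)\, d\mu(x) = \int_V \wt\rho_x(f)\, d\mu(x),
$$
which is the asserted disintegration.

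For uniqueness I would combine the $\nu'$-a.e. uniqueness in Rohlin's theorem with the $\mu$-a.e. uniqueness of Radon--Nikodym derivatives: if $(\wt\rho_x)$ and $(\wh\rho_x)$ both disintegrate $\rho$ over $\mu$, then testing the identity $\int_V \wt\rho_x(f)\,d\mu = \int_V \wh\rho_x(f)\,d\mu$ against a countable family of bounded Borel functions separating points of the standard Borel space $V\times V$ forces $\wt\rho_x = \wh\rho_x$ for $\mu$-a.e. $x$.

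The hard part will be the bookkeeping in the base-change step. One must confirm that the family $x\mapsto\wt\rho_x$ stays jointly measurable and $\sigma$-finite after the Rohlin conditional probabilities are multiplied by the possibly unbounded density $h(x)g(x,\cdot)^{-1}$; measurability follows since $x\mapsto \rho'_x(f g^{-1})$ is measurable by Rohlin's theorem and $h$ is measurable, while $\sigma$-finiteness uses that the density is finite $\rho'_x$-a.e. The absolute-continuity hypothesis $\rho\circ\pi_1^{-1}\ll\mu$ is used precisely here, to legitimize replacing the intrinsic base measure $\nu'$ by the prescribed $\mu$; without it no derivative $d\nu'/d\mu$ need exist, and the disintegration could only be taken over $\rho\circ\pi_1^{-1}$ rather than over $\mu$.
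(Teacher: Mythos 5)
The paper does not actually prove this statement: it is imported from \cite{Simmons2012} ``in the form which is adapted to our purposes,'' so there is no in-text argument to measure yours against. Your route --- normalize $\rho$ to an equivalent probability measure $\rho'=g\rho$ with $g>0$, disintegrate $\rho'$ over $\nu'=\rho'\circ\pi_1^{-1}$ by Rohlin's theorem, check $\nu'\ll\mu$ from the hypothesis $\rho\circ\pi_1^{-1}\ll\mu$, and transport back through the densities $h=d\nu'/d\mu$ and $g^{-1}$ --- is a standard and correct way to get the existence part, and it has the virtue of showing exactly where the absolute-continuity hypothesis is used, which the paper's bare citation does not. The construction, the measurability of $x\mapsto\wt\rho_x$, and the $\sigma$-finiteness of the fibers are all handled correctly. (One small point worth stating explicitly: $\rho'_x$ is only defined $\nu'$-a.e., not $\mu$-a.e.; on the set $\{h=0\}$ your formula forces $\wt\rho_x=0$, so nothing is lost, but the reader should be told.)

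The one step that does not work as written is uniqueness. For possibly infinite conditional measures, ``testing against a countable family of bounded Borel functions separating points'' is not the right criterion: $\sigma$-finite measures are not determined by integrals of a point-separating family, and agreement on a countable generating $\pi$-system forces equality only when that system contains an exhausting sequence of sets of \emph{finite} measure --- something you have not arranged for an arbitrary competitor family $(\wh\rho_x)$. The repair is short and uses what you already built: if $(\wh\rho_x)$ also disintegrates $\rho$ over $\mu$, then $d(g\wh\rho_x)(y):=g(x,y)\,d\wh\rho_x(y)$ disintegrates $\rho'$ over $\mu$; taking $f=\chi_{\pi_1^{-1}(A)}\,g$ gives $\int_A (g\wh\rho_x)(V\times V)\,d\mu=\nu'(A)=\int_A h\,d\mu$, so $(g\wh\rho_x)(V\times V)=h(x)$ for $\mu$-a.e.\ $x$. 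On $\{h>0\}$ the normalized measures $h(x)^{-1}g\wh\rho_x$ are conditional probabilities for $\rho'$ over $\nu'$, so Rohlin's a.e.\ uniqueness gives $h^{-1}g\wh\rho_x=\rho'_x$, i.e.\ $\wh\rho_x=\wt\rho_x$; on $\{h=0\}$ both are the zero measure since $g>0$. With that substitution the proof is complete.
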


We apply Theorem  \ref{thm Simmons} to a symmetric 
 $\sigma$-finite measure  $\rho$ with support $E$.  Here $E= 
 \mbox{supp}(\rho)$ denotes a 
 subset of $V \times V$ such that $\rho((V \times V) \setminus E) =0$.
 Clearly, this set is defined up to a set of zero measure. 
 
Let $E$ be  partitioned into the fibers $\{x\} \times E_x$. Then, by Theorem
\ref{thm Simmons}, there exists 
a unique  system of  conditional measures $\wt\rho_x$ such that, for any 
  $\rho$-integrable function $f(x, y)$, we have
\be\label{eq disint for rho}
\iint_{V\times V} f(x, y) \; d\rho(x, y)  = \int_V \wt\rho_x(f) \; d\mu(x). 
\ee
It is obvious that, for $\mu$-a.e. $x\in V$, $\mbox{supp}(\wt
\rho_x) = \{x\} \times E_x$ (up to a set of zero measure).   

In the following remark we collect several facts that clarify the essence of 
the defined objects.

\begin{remark}\label{rem symm meas}
(1) We first remark note that formulas involving integrals (see, e.g.,  
 (\ref{eq disint for rho}) and (\ref{eq disint formula}))
are understood in the sense of the extended real line, i.e., the infinite value 
of measurable functions are  allowed. 

 (2) We would like to clarify our notation. It follows from 
Theorem \ref{thm Simmons} that we have a measurable field of
sets $x \mapsto E_x \subset V$ and a measurable field of $\sigma$-finite
Borel  measures $x \mapsto \rho_x $ on $(V, \B)$
 where  the measures $\rho_x$ are defined by the relation
\be\label{eq rho_x def}
\wt\rho_x = \delta_x \times \rho_x. 
\ee
Hence, relation (\ref{eq disint for rho}) can be also written as 
\be\label{eq disint formula}
\iint_{V\times V} f(x, y) \; d\rho(x, y)  = \int_V \left(\int_V f(x, y) \; 
d\rho_x(y)\right)\; d\mu(x).
\ee
In other words, we have a measurable family of measures $(x \mapsto 
\rho_x)$ (which is called a \textit{random measure}),  
and it defines a new measure $\nu$ on $(V, \B)$ by setting
\be\label{eq def of nu}
\nu(A) := \int_V \rho_x(A)\; d\mu(x), \quad A \in \B.
\ee
In contrast to the definition of $\rho$, we consider the measure $\rho_x$ 
to be defined on the subset $E_x$ of$(V, \B)$, $x \in V$.

(3) The symmetry of the set $E$ allows us to define 
a ``mirror'' image of the measure $\rho$. Let $E^y := \{x \in V : (x, y) \in 
E\}$, and let $(\wt\rho^y)$ be the system of conditional measures with 
respect to the partition of $E$ into the sets $E^y \times \{y\}$. It can be 
easily proved, using the symmetry of $\rho$, that, for the measure, 
$$
\rho' = \int_V \wt \rho^y d\mu(y)
$$
the relation $\rho = \rho'$ holds. 

(4) It is worth noting that, in general, the set $E$, the support of a
 symmetric
measure $\rho$, do not need to be a set of positive measure with respect to 
$\mu\times \mu$. In other words, we admit both the  cases: (a) $\rho$ is 
equivalent to the product measure $\mu\times\mu$,  (b) $\rho$ and
$\mu\times\mu$ are mutually singular. 

(5) To simplify our notation, we will always write 
$\int_V f\; d\rho_x$ and $\iint_{V \times V} f\; d\rho$ though the 
measures $\rho_x$ and $\rho$ have the supports $E_x$ and $E$,  
respectively. 
\end{remark}

\textit{\textbf{Assumption B}}.  In general, when a $\sigma$-finite measure 
 $\rho$  is disintegrated, the 
measures $\wt\rho_x$ supported by fibers $\{x\} \times E_x, x \in V$ 
are also $\sigma$-finite. In this paper, we will consider the class of symmetric 
measures for which 
\be\label{eq c finite}
0 < c(x) := \rho_x(V) <\infty
\ee
for $\mu$-a.e. $x\in V$. This assumption is made in accordance with local
finiteness of weighted networks, see (\ref{eq total cond}).
\medskip

As an immediate consequence of  Remark \ref{rem symm meas} (3), 
we have the  following important formula.

\begin{lemma} For a symmetric measure $\rho$, 
\be\label{eq formula fo symm meas}
\iint_{V \times V} f(x, y) \; d\rho(x, y) = \iint_{V \times V} f(y, x) \; 
d\rho(x, y) 
\ee
or
\be\label{eq from symm of rho}
\int_V \int_V f(x, y) \; d\rho_x(y) d\mu(x) = \int_V \int_V f(x, y) \; 
d\rho_y(x) d\mu(y). 
\ee
\end{lemma}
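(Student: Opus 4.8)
The plan is to prove the two displayed identities as two encodings of a single fact, the flip-invariance of $\rho$. Let $\sigma : V\times V \to V\times V$ be the flip $\sigma(x,y) = (y,x)$, so that $\sigma^{-1}(A\times B) = B\times A$ for $A, B \in \B$. First I would establish \eqref{eq formula fo symm meas}. By Definition \ref{def symm measure rho}, $\rho(\sigma^{-1}(A\times B)) = \rho(B\times A) = \rho(A\times B)$, so the measures $\rho\circ\sigma^{-1}$ and $\rho$ agree on all measurable rectangles. Since the rectangles form a $\pi$-system generating $\B\times\B$ and both measures are $\sigma$-finite, the $\pi$--$\lambda$ (monotone class) theorem yields $\rho\circ\sigma^{-1} = \rho$ on all of $\B\times\B$. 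The change-of-variables formula for pushforward measures then gives, for every nonnegative (hence every $\rho$-integrable) Borel $f$,
\be
\iint_{V\times V} f(x,y)\,d\rho(x,y) = \iint_{V\times V} f\,d(\rho\circ\sigma^{-1}) = \iint_{V\times V} (f\circ\sigma)(x,y)\,d\rho(x,y) = \iint_{V\times V} f(y,x)\,d\rho(x,y),
\ee
which is \eqref{eq formula fo symm meas}. As noted in Remark \ref{rem symm meas}(1), these integrals are read on the extended real line, so it suffices to argue for $f\ge 0$ and then split a general $f$ into positive and negative parts.

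Next I would deduce \eqref{eq from symm of rho} by disintegrating each side. By the vertical disintegration formula \eqref{eq disint formula}, the left-hand side of \eqref{eq from symm of rho} is exactly $\rho(f)$. For the right-hand side I would invoke the ``mirror'' disintegration of Remark \ref{rem symm meas}(3): partitioning $E$ into the horizontal fibers $E^y\times\{y\}$ produces conditional measures $\wt\rho^y = \rho_y\times\delta_y$ (the horizontal analogue of \eqref{eq rho_x def}), and the associated measure $\rho' = \int_V \wt\rho^y\,d\mu(y)$ satisfies $\rho' = \rho$ by the symmetry of $\rho$. Unwinding $\rho'(f)$ through this horizontal disintegration gives precisely $\int_V\int_V f(x,y)\,d\rho_y(x)\,d\mu(y)$, the right-hand side of \eqref{eq from symm of rho}. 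Hence both sides equal $\rho(f)$, and \eqref{eq from symm of rho} follows.

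The two steps are genuinely distinct: \eqref{eq formula fo symm meas} is the coordinate-free statement $\rho\circ\sigma^{-1}=\rho$, whereas \eqref{eq from symm of rho} compares the \emph{vertical} disintegration of $f$ against its \emph{horizontal} disintegration and therefore rests on the equality $\rho=\rho'$ of Remark \ref{rem symm meas}(3), not on a mere relabeling of variables in \eqref{eq formula fo symm meas}. I expect the only point requiring care to be the passage from the rectangle identity to $\rho\circ\sigma^{-1}=\rho$ on the full product $\sigma$-algebra: this uses the $\sigma$-finiteness of $\rho$ to license the uniqueness clause of the $\pi$--$\lambda$ theorem, and one should exhaust $V\times V$ by rectangles of finite $\rho$-measure to apply it. Everything else is the standard simple-function approximation underlying the pushforward formula.
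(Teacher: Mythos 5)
Your proof is correct and follows the paper's intended route: the paper states this lemma without proof, presenting it as an immediate consequence of Remark \ref{rem symm meas}(3), which is exactly the identity $\rho=\rho'$ you invoke for \eqref{eq from symm of rho}, and your $\pi$--$\lambda$ argument for $\rho\circ\sigma^{-1}=\rho$ supplies the standard details the paper leaves implicit for \eqref{eq formula fo symm meas}. One small quibble: \eqref{eq from symm of rho} does in fact follow from \eqref{eq formula fo symm meas} by applying the vertical disintegration \eqref{eq disint formula} to $f\circ\sigma$ and relabeling the dummy variables, so your closing claim that the two identities are ``genuinely distinct'' overstates the case, though your derivation via $\rho'$ is equally valid.
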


In particular, relation  (\ref{eq formula fo symm meas}) is used to prove the
 equivalence of the measures $\mu$ and $\nu$.

\begin{lemma}\label{lem nu via c(x) and mu} Suppose that $ c(x) =
\rho_x(V)$ is as in (\ref{eq c finite})  for $\mu$-a.e. $x \in V$.
The measure $\nu$ defined in (\ref{eq def of nu}) is equivalent to $\mu$, 
and $d\nu(x) = c(x) d\mu(x)$. 
\end{lemma}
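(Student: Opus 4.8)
The plan is to compute $\nu(A)$ explicitly by unfolding the disintegration and then exploiting the symmetry of $\rho$ to transfer the characteristic function from the second coordinate to the first. First I would rewrite the defining formula (\ref{eq def of nu}) of $\nu$ as a double integral against $\rho$: by the disintegration formula (\ref{eq disint formula}),
$$\nu(A) = \int_V \rho_x(A)\, d\mu(x) = \int_V\!\left(\int_V \chi_A(y)\, d\rho_x(y)\right) d\mu(x) = \iint_{V\times V}\chi_A(y)\, d\rho(x,y).$$

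Next I would apply the symmetry identity (\ref{eq formula fo symm meas}) to the nonnegative Borel function $f(x,y)=\chi_A(y)$, whose reflection is $f(y,x)=\chi_A(x)$. This yields
$$\nu(A) = \iint_{V\times V}\chi_A(x)\, d\rho(x,y) = \int_V \chi_A(x)\!\left(\int_V d\rho_x(y)\right) d\mu(x) = \int_V \chi_A(x)\, \rho_x(V)\, d\mu(x),$$
and since $\rho_x(V)=c(x)$ by (\ref{eq c finite}), the right-hand side is exactly $\int_A c(x)\, d\mu(x)$. Thus $\nu(A)=\int_A c\, d\mu$ for every $A\in\B$, which is precisely the claimed identity $d\nu=c\,d\mu$.

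Finally, I would derive the equivalence of $\mu$ and $\nu$ from the hypothesis $0<c(x)<\infty$ for $\mu$-a.e. $x$. The formula $\nu(A)=\int_A c\,d\mu$ immediately gives $\nu\ll\mu$, since $\mu(A)=0$ forces the integral to vanish. For the reverse direction, if $\nu(A)=\int_A c\,d\mu=0$ while $c>0$ $\mu$-a.e., then the integrand is strictly positive a.e. on $A$, which forces $\mu(A)=0$; hence $\mu\ll\nu$, with Radon--Nikodym derivative $1/c$. Combining the two absolute continuities gives $\mu\sim\nu$.

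The computation itself is short; the only point requiring care is the legitimacy of applying the symmetry formula (\ref{eq formula fo symm meas}) to $\chi_A(y)$, and of the Tonelli-type rearrangements, when $c$ need not be globally $\mu$-integrable. Here I would invoke the convention recorded in Remark \ref{rem symm meas}, that all the integrals are understood in the extended real line; this means the two iterated integrals in (\ref{eq from symm of rho}) agree as elements of $[0,\infty]$ with no integrability hypothesis on the integrand, so each step above is valid even where $c$ is large. This is the main (and essentially the only) obstacle, and it is dispensed with entirely by the standing conventions of the paper.
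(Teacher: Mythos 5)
Your proof is correct and follows essentially the same route as the paper's: unfold $\nu(A)$ as $\iint\chi_A(y)\,d\rho$, use the symmetry identity (\ref{eq formula fo symm meas}) to replace $\chi_A(y)$ by $\chi_A(x)$, and integrate out the second variable to get $\int_A c\,d\mu$. Your added care about the extended-real-line convention and the explicit two-sided argument for $\mu\sim\nu$ (using $0<c<\infty$ a.e.) merely spell out what the paper leaves implicit.
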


\begin{proof} For any set $A\in \B$, we obtain from (\ref{eq def of nu}) and
(\ref{eq formula fo symm meas}) that
$$
\ba
\nu(A) = & \int_V \rho_x(A) \; d\mu(x)\\
= & \int_V \int_V \chi_A(y) \; d\rho(x, y)\\
= & \int_V \int_V \chi_A(x) \; d\rho(x, y)\\
=& \int_V \chi_A(x) c(x)\; d\mu(x)\\
= & \int_A c(x) \; d\mu(x).\\
\ea
$$
Hence, $\nu$ is equivalent to $\mu$ and $c(x)$ is the Radon-Nikodym 
derivative.
\end{proof}

\textbf{Symmetric measures vs symmetric operators}. Symmetric
measures $\rho$ on $(V\times V, \B\times \B)$ can be described in terms
of  positive linear operators $R$ acting in appropriate functional spaces. 

For a given Borel measure measure $\rho = (\rho_x)$ on the space
$(V\times V, \B\times \B)$, we can define a linear 
operator by setting
\be\label{eq def of R} 
R(f)(x) := \int_V f(y) \; d\rho_x(y) = \rho_x(f), \quad f \in \mathcal 
F(V, \B).
\ee 
Clearly, $R$ is a \textit{positive operator}, i.e., $f\geq 0 
\Longrightarrow R(f) \geq 0$.

\begin{remark}
(1) In this paper we will consider a number of linear operators acting in some
functional spaces. We define them formally as operators on the space
of Borel functions  $\mathcal F(V, \B)$. But our main interest is focused
on their   realizations as  operators acting in $L^p(V, \B, \mu)$-spaces
$p =1,2$ and some other Hilbert spaces (see next sections). 
In particular, we discuss the properties of $R$ (and a more general operator 
$\wt R$) in  Section \ref{sect Graph L and M operator}.

(2) We recall that a Borel measure can be determined as a positive functional 
on a space of functions. 
In particular,  a measure is completely defined by its values on a dense 
subset of functions. In the case of a measure $\rho$  on $V \times V$, 
it suffices to determine $\rho$ on the so-called ``cylinder functions'' 
$(f \otimes g)(x, y) := f(x)g(y)$ (this approach corresponds to the definition 
of a measure on rectangles first).
 \end{remark}

Denote by $\mathbbm 1$ the constant function on $(V, \B, \mu)$ which 
equals 1 at every point $x$.

\begin{proposition}\label{prop equiv of def of rho} Let $(V, \B, \mu)$ be a
$\sigma$-finite standard measure space. Let $\rho = 
\int_V \delta_x \times \rho_x \; d\mu(x)$ be  a continuous Borel measure 
on $(V \times V, \B\times B)$.  The following are equivalent:

(1) $\rho$ is a symmetric measure such that 
$$
0< c(x) = \rho_x (V)  < \infty
$$
for $\mu$-a.e. $x\in V$.

(2) There exists a positive operator $R$ in $\mathcal F(V, \B)$
 such that $R(\mathbbm 1)(x) = c(x)$ and
\be\label{eq symm in terms R}
\int_V f R(g)\; d\mu = \int_V R(f) g\; d\mu,
\ee
for any $f, g \in F(X, \B)$.
\end{proposition}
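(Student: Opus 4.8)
**The plan is to prove the two implications separately, with the symmetry of $\rho$ corresponding exactly to the symmetry relation (\ref{eq symm in terms R}) for $R$.**

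First I would establish $(1) \Rightarrow (2)$. Given a symmetric measure $\rho$ with $0 < c(x) = \rho_x(V) < \infty$ a.e., I define $R$ by (\ref{eq def of R}), namely $R(f)(x) = \int_V f(y)\, d\rho_x(y) = \rho_x(f)$. That $R$ is positive is immediate from positivity of the measures $\rho_x$. For the identity $R(\mathbbm 1)(x) = c(x)$, I simply evaluate: $R(\mathbbm 1)(x) = \rho_x(V) = c(x)$. The heart of this direction is verifying (\ref{eq symm in terms R}). I would compute, for $f, g \in \FXB$,
\[
\int_V f(x) R(g)(x)\, d\mu(x) = \int_V f(x) \left( \int_V g(y)\, d\rho_x(y) \right) d\mu(x) = \iint_{V\times V} f(x) g(y)\, d\rho(x,y),
\]
using the disintegration formula (\ref{eq disint formula}). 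The symmetry formula (\ref{eq formula fo symm meas}) then lets me swap the roles of $x$ and $y$ in the integrand $f(x)g(y)$, turning the double integral into $\iint_{V\times V} f(y) g(x)\, d\rho(x,y)$, which by the same disintegration unwinds to $\int_V R(f)(x)\, g(x)\, d\mu(x)$. This gives (\ref{eq symm in terms R}).

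For the converse $(2) \Rightarrow (1)$, I start from a positive operator $R$ satisfying (\ref{eq symm in terms R}) with $R(\mathbbm 1) = c$. The plan is to use $R$ to manufacture a measure and then read off its symmetry. Since $R$ is positive and linear on $\FXB$, for each fixed $x$ the functional $f \mapsto R(f)(x)$ is a positive linear functional, hence is given by integration against a Borel measure $\rho_x$ on $(V, \B)$; this is the reverse of (\ref{eq def of R}), and it recovers $\rho_x(V) = R(\mathbbm 1)(x) = c(x)$, giving finiteness. I then define $\rho$ via the disintegration $\rho = \int_V \delta_x \times \rho_x\, d\mu(x)$ as in the statement, so that $\iint f(x)g(y)\, d\rho(x,y) = \int_V f(x) R(g)(x)\, d\mu(x)$ on cylinder functions $f \otimes g$. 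The symmetry $\rho(A \times B) = \rho(B \times A)$ for $A, B \in \B$ then follows by applying (\ref{eq symm in terms R}) to $f = \chi_A$, $g = \chi_B$: the left side equals $\int \chi_A R(\chi_B)\, d\mu = \rho(A \times B)$, while the right side equals $\int R(\chi_A) \chi_B\, d\mu = \rho(B \times A)$.

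The main obstacle I anticipate is measure-theoretic rather than algebraic: in the converse direction, I must ensure the pointwise-defined measures $\rho_x$ assemble into a genuine $\sigma$-finite measure $\rho$ on the product space, i.e., that $x \mapsto \rho_x(A)$ is measurable for each $A \in \B$ so that the disintegration integral (\ref{eq def of nu}) is well-defined, and that the resulting $\rho$ is $\sigma$-finite. This is exactly the regularity needed to invoke Theorem \ref{thm Simmons} in reverse; I would handle it by noting that measurability of $x \mapsto R(\chi_A)(x)$ is built into the hypothesis that $R$ maps Borel functions to Borel functions, and that $\sigma$-finiteness of $\rho$ follows from the local integrability of $c$ together with the $\sigma$-finite partition of $V$ into sets of finite $\mu$-measure. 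The verification that the full symmetry $\rho(A\times B) = \rho(B \times A)$ (on rectangles, hence on the generated $\sigma$-algebra by a monotone class argument) is equivalent to (\ref{eq symm in terms R}) on all of $\FXB$ is the other point requiring care, extending from indicator functions to simple functions in $\Dfin$ by linearity and then to general $\FXB$ by monotone convergence.
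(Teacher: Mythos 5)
Your proposal is correct and follows essentially the same route as the paper: define $R$ by $R(f)(x)=\rho_x(f)$, translate the symmetry of $\rho$ on rectangles (equivalently, on cylinder functions $f\otimes g$) into the identity $\int_V fR(g)\,d\mu=\int_V R(f)g\,d\mu$ via the disintegration, and reverse the computation for the converse. The only difference is that you supply the measure-theoretic details (measurability of $x\mapsto\rho_x(A)$, the monotone-class extension from rectangles, and the reconstruction of $\rho_x$ from $R$) that the paper's very terse proof leaves implicit, which is a welcome addition rather than a deviation.
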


The operator $R$ satisfying (\ref{eq symm in terms R}) is called 
\textit{symmetric}.

\begin{proof} (1) $\Rightarrow$ (2). This implication is straightforward:
if $\rho$  is a symmetric measure,  define a positive operator $R$ as in
(\ref{eq def of R}). Then, using the disintegration of $\rho$, we have 
$$
\ba
\rho(A \times B) = & \iint_{V\times V} \chi_A(x)\chi_B(y)\; d\rho(x, y)\\
= & \int_V \chi_A(x) R(\chi_B)(x) \;d\mu(x).\\ 
\ea
$$
Similarly, 
$$
\rho(B \times A) = \int_V \chi_B(x) R(\chi_A)(x) \;d\mu(x).
$$
Since $\rho(A \times B) = \rho(B \times A)$, we obtain that  
(\ref{eq symm in terms R}) holds for any simple function, and therefore 
the result follows.

(2) $\Rightarrow$ (1). The above proof can be  used to show that the
converse implication also holds.
\end{proof}

\textbf{Polymorphisms and symmetric measures.}
The approach to symmetric measures, which uses a positive operator $R$
(see Proposition \ref{prop equiv of def of rho}), can be developed in more 
general setting. The key concept here is the notion of a
 \textit{polymorphism} which was defined and studied in a series of papers 
 by A. Vershik. It turns out that the main objects of ergodic theory can be 
 considered in the framework of polymorphisms. We refer to  
 \cite{Vershik2000, Vershik2005} for further details. 
 
 Let $\mu_1$ and $\mu_2$ be Borel measures on a standard 
Borel space $(X, \B)$.  By definition, a \textit{polymorphism}
 $\Pi$ of a standard  Borel space $(X, \B)$ to itself is a diagram consisting 
 of an ordered triple of standard measure spaces:
$$
(X, \B, \mu_1) \stackrel {\pi_1} \longleftarrow \ 
(X \times X, \B\times \B, m)
\stackrel {\pi_2} \longrightarrow \ (X,\B, \mu_2),
$$
where $\pi_1$ and $\pi_2$ are the projections onto the first and second 
component of the product space $(X \times X, \B\times \B, m)$, and 
 $m$ is a measure on  $(X \times X, \B\times \B)$ such
 that $m \circ\pi_i^{-1} = \mu_i, i =1,2$. Remark that this notion is also
 used in the theory of optimal transport, see e.g. \cite{Villani2009}.

This definition can be naturally extended to the case of two distinct
measure spaces $(X_i, \B_i, \mu_i), i =1,2$. Then, the above definition
gives  a polymorphism defined between these measure spaces.

Suppose that $R$ is a positive operator acting on $\mc F(V, \B)$.
Then $R$ defines an action on the space of measures:  if
$\mu$ is a Borel measure on $(V, \B)$, then one defines
\be\label{eq R acts on mu}
 \mu R(f) := \int_V R(f)\; d\mu.
\ee
More details can be found, fore example,  in \cite{BezuglyiJorgensen2017}. 

\begin{definition}\label{def symm poly}
Let $(V, \B, \mu)$ be a  measure space, and let $R$ be a positive operator
 defined on Borel functions $\mathcal F(V, \B)$ such that 
 $$
\int_V f(x) R(g)(x) \; d\mu(x) =  \int_V R(f)(x) g(x) \; d\mu(x)
$$
 for any functions $f, g \in \mathcal F(V, \B)$.
 
 Then the polymorphism 
 $$
\frak R := (V, \B, \mu R) \stackrel {\pi_1} \longleftarrow \ 
(V \times V, \B\times \B, \rho)
\stackrel {\pi_2} \longrightarrow \ (V,\B, \mu R),
$$
is called a \textit{symmetric polymorphism} defined by a positive
operator $R$ and  measure $\mu$ (here $\mu R$ is defined by 
(\ref{def symm poly})). 
\end{definition}

The following result relates the notions of symmetric measures and 
symmetric  polymorphisms.

\begin{proposition} \label{prop on R and rho} 
Suppose that a symmetric measure $\rho$ on 
$(V\times V, \B\times \B)$ satisfies the property
$$
c(x) = \rho_x(\mathbbm 1) \in (0, +\infty) \ \ \mbox{for\ $\mu$-a.e.}\ x 
\in V. 
$$
Then $\rho$ defines a positive symmetric operator $R$ and a polymorphism 
$$
\frak R := (V, \B, \nu) \stackrel {\pi_1} \longleftarrow \ 
(V \times V, \B\times \B, \rho)
\stackrel {\pi_2} \longrightarrow \ (V,\B, \nu)
$$
such that $\nu = \mu R$ and 
\be\label{eq rho via R}
\rho(f\otimes g) = \int_V f R(g)\; d\mu(x).
\ee

Conversely, suppose that a positive operator $R$ is defined on Borel
functions over $(V, \B, \mu)$ and 
$R(\mathbbm 1) = c(x)$. Then relation (\ref{eq rho via R}) defines a measure 
$\rho$ on $(V \times V, \B\times \B)$. The measure $\rho$ is symmetric
if and only if, for any functions $f$ and $g$,
$$
\int_V f R(g)\; d\mu(x) = \int_V R(f) g\; d\mu(x).
$$
\end{proposition}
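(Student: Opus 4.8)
The plan is to treat the two directions separately, using the integral operator $R(f)(x) := \int_V f(y)\, d\rho_x(y)$ of (\ref{eq def of R}) as the bridge between the measure-theoretic and the operator-theoretic pictures.

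For the forward implication I would start from the symmetric measure $\rho$ and define $R$ by (\ref{eq def of R}); positivity is immediate, since $f \geq 0$ forces $\rho_x(f) \geq 0$ for $\mu$-a.e. $x$, and the hypothesis $c(x) = \rho_x(\mathbbm 1) \in (0,\infty)$ a.e. is precisely $R(\mathbbm 1)(x) = c(x)$. Applying the disintegration formula (\ref{eq disint formula}) to the cylinder function $(f \otimes g)(x,y) = f(x)g(y)$ gives
\[
\rho(f \otimes g) = \iint_{V\times V} f(x)g(y)\, d\rho(x,y) = \int_V f(x)\Big(\int_V g(y)\, d\rho_x(y)\Big)\, d\mu(x) = \int_V f\, R(g)\, d\mu,
\]
which is (\ref{eq rho via R}). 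By Proposition \ref{prop equiv of def of rho}, symmetry of $\rho$ yields the operator identity $\int_V f R(g)\, d\mu = \int_V R(f)g\, d\mu$, so $R$ is a symmetric positive operator. Comparing (\ref{eq R acts on mu}) with (\ref{eq def of nu}) on indicators shows $\mu R = \nu$, and Lemma \ref{lem nu via c(x) and mu} gives $d\nu = c\, d\mu$.

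It then remains to verify that $\mathfrak R$ is an honest polymorphism, i.e. that $\rho \circ \pi_i^{-1} = \nu$ for $i=1,2$. For the first marginal, $\rho\circ\pi_1^{-1}(A) = \rho(A\times V) = \int_A R(\mathbbm 1)\, d\mu = \int_A c\, d\mu = \nu(A)$. For the second marginal the symmetry of $\rho$ is exactly what is needed: $\rho\circ\pi_2^{-1}(A) = \rho(V\times A) = \rho(A\times V) = \nu(A)$. Hence both projections push $\rho$ forward to $\nu$, completing the forward direction.

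For the converse I would define $\rho$ on measurable rectangles by $\rho(A\times B) := \int_A R(\chi_B)\, d\mu$, the specialization of (\ref{eq rho via R}) to indicators, which is nonnegative by positivity of $R$. The main obstacle is to promote this set function to a genuine countably additive measure on $\B\times\B$: additivity in $A$ is clear, while countable additivity in $B$ requires that $\chi_{B_n}\uparrow\chi_B$ force $R(\chi_{B_n})\uparrow R(\chi_B)$, after which monotone convergence applies. In the integral form this is just the monotone convergence theorem, but for an abstract positive $R$ one must record (or assume) order-continuity along increasing sequences; this is the genuinely delicate point. Once $\rho$ is a premeasure on the algebra generated by rectangles, the standard extension theorem produces a unique measure, using the fact recalled after Proposition \ref{prop equiv of def of rho} that a measure on $V\times V$ is determined by its values on cylinder functions. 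With $\rho$ constructed, the equivalence ``$\rho$ symmetric $\Longleftrightarrow \int_V fR(g)\,d\mu = \int_V R(f)g\,d\mu$'' is immediate: testing on $f=\chi_A$, $g=\chi_B$ converts $\rho(A\times B)=\rho(B\times A)$ into the operator identity, and the general identity follows by linearity and density of simple functions in $\mathcal F(V,\B)$.
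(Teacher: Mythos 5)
Your proposal is correct and follows essentially the same route as the paper, whose own proof simply cites Proposition \ref{prop equiv of def of rho} and declares that the polymorphism statements "follow directly from the definitions"; you have supplied those omitted details accurately, in particular the marginal identities $\rho\circ\pi_i^{-1}=\nu$ (the second of which uses the symmetry of $\rho$, exactly as you note). Your remark that countable additivity of the rectangle set function in the converse direction requires order-continuity of the abstract positive operator $R$ along increasing sequences is a genuine subtlety the paper passes over silently --- it is automatic when $R$ is realized by a measurable field of measures as in (\ref{eq def of R}), but must be recorded as an assumption for a general positive operator on $\mathcal F(V,\B)$.
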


\begin{proof}
These results have been partially proved in Proposition
 \ref{prop equiv of def of rho}. The statements involving the notion 
 of a polymorphism follow directly from the definitions. We leave 
 the details to the reader.
\end{proof}

\textbf{Question A}. 
Let $\rho$ be a symmetric measure on $(V\times V,
\B\times \B)$. Denote by $\mathcal P(\rho)$ the set of all pairs  $(R, \mu)$,
where $R$ is a positive symmetric  operator on $\mathcal F(V, \B)$ and 
$\mu$ is a measure on $(V, \B)$, such that relation (\ref{eq rho via R})
 holds. Proposition \ref{prop on R and rho} states that the 
 set $\mathcal P(\rho)$ is not empty (provided finiteness of $c(x)$).
  Suppose that $(R, \mu)$ and 
 $(R',\mu')$ are two pairs from the set $\mathcal P(\rho)$, i.e., they define
 the same $\rho$. What relations hold between $(R, \mu)$ and $(R', \mu')$?
 As was shown in Proposition \ref{prop on R and rho}, a pair $(R, \mu)$ must 
 satisfy the conditions: $\mu R = c \mu$ where $c(x) = R(\mathbbm 1)(x)$;  
 and $\rho\circ\pi_1^{-1} = \rho\circ\pi_2^{-1} = c\mu$.
\medskip

\begin{example}[\textbf{Applications}] \label{ex 1}
We give here a few obvious examples of symmetric measures $\rho$ 
(more details can be found in Subsection \ref{subsect two models}).

(1) Let the measure $\nu$ be defined on $(V, \B, \mu))$ by $d\nu(x) = c(x) 
d\mu(x)$. Set $\rho_0(A\times B) := \nu(A \cap B)$ where $A, B \in 
\B$. Then $\rho_0$ is a symmetric measure on $(V\times V, \B\times \B)$
such that $R_0(\mathbbm 1) = c(x)$, where $R_0$ is the corresponding
symmetric operator whose action on functions is given by $R_0(f) =
c(x)f(x)$. 

(2)  Let $c_{xy}$ be a symmetric function defined on a symmetric set 
$E \subset  V\times V$. Consider a measurable field of finite Borel measures 
$x \mapsto \rho^c_x$ where $\rho^c_x$ is supported by $c_{xy}$. Then, 
setting 
$$
\rho^c = \int_V c_{xy}\; d\rho_x^c(y),
$$
we define a symmetric measure $\rho^c$ on the set $E$.

(3) Our approach in the study of symmetric measures, and the corresponding
graph Laplace operators, is close to the basic setting of the theory of 
\textit{graphons} and graphon operators. We refer to several basic works
in this theory \cite{Borgs_2008, Borgs_2012,  Lovasz2012, 
Janson2013, Avella-Medina_2017}. More references can be found in
\cite{Lovasz2012, Janson2013}. Informally speaking, a graphon is the limit 
of a converging sequence of finite graphs with increasing number of vertices.
Formally, a graphon is a symmetric measurable function $W : (\mc X, m) 
\times (\mc X, m) \to [0, 1]$ where $(\mc X, m)$ is a probability measure
 space. The  linear operator $\mathbb W: L^2(\mc X, m) \to  
 L^2(\mc X, m) $ acting by the formula
 $$
 \mathbb W(f)(x) = \int_{\mc X} W(x, y)f(y)\; dm(y)
 $$
is called the\textit{ graphon operator}. The properties of $\mathbb W$ are 
studied in \cite{Avella-Medina_2017}. 

Below in Section 3, we consider a similar operator $\wt R$ defined by a 
symmetric measure $\rho$. The principal difference is that we consider
infinite measure spaces and symmetric functions which are 
not bounded, in general. 

(4) Another application of our results can be used in the theory of 
determinantal measures and determinantal point processes,  see e.g.  
\cite{Lyons2003, Hough_et_al_2009, BufetovQiu2015, 
BorodinOlshanski2017}. For example, the result
of \cite[Proposition 4.1]{Ghosh2015} gives the formula for the norm
in the energy space for a specifically chosen symmetric measure $\rho$. 
To make this statement more precise, we quote loosely the proposition 
proved in \cite{Ghosh2015}: 

Let $\Pi$ be a determinantal point process on a locally compact
space $(X, \mu)$ with positive definite determinantal kernel 
$K(\cdot , \cdot)$ such that 
$K$ is an integral operator on $L^2(\mu)$. Then, for
every compactly supported function $\psi$,
$$
Var\left[\int_x \psi \; d[\Pi] \right] = \iint_{X\times X}
|\psi(x) - \psi(y)|^2 |K(x, y)|^2 \; d\mu(x)d\mu(y). 
$$ 
This formula is exactly the formula for the norm in the energy space when 
the symmetric measure $\rho $ is defined by the symmetric function 
$K(x, y)$: $d\rho(x, y) = |K(x, y)|^2d\mu(x)d\mu(y)$, see Section
\ref{sect energy} below.

(5) Another interesting application of symmetric measures and 
finite energy space is related to Dirichlet forms, see e.g., 
\cite{MaRockner1992, MaRockner1995}. We mention here the 
Beurlng-Deny formula as given in \cite{MaRockner1992}. It states that a
symmetric Dirichlet form on $L^2(U)$, where $U$ is an open subset in 
$\R^d$,  can be uniquely expressed as follows:
$$
\ba
\mc E(u, v)  & = \sum_{i,i=1}^d \int \frac{\partial u}{\partial x_i}
\frac{\partial v}{\partial x_j}\; d\nu_{ij}\\
& \ \ \  \  +  \int_{(U \times U) \setminus \mathrm{diag}}
(u(x) - u(y)) (v(x ) - v(y)) \; J(dx, dy)\\
& \ \ \  \  +  \int uv\; dk.
\ea
$$
Here $u, v \in C^\infty_0(U)$,  $k$ is a positive Radon measure on $U
 \subset \R^d$, and $J$ is a symmetric measure on $(U \times U) 
 \setminus \mathrm{diag}$. The first term on the right hand side in this
  formula is called the diffusion term, the second, the jump term, and the last, 
  the killing term; a terminology deriving from their use in the study of general 
  Levy processes \cite{Applebaum2009}.  
  We see that the second term in this
 formula corresponds to the inner product in the finite energy space $\h_E$
 (details are in Section \ref{sect energy} below).

\end{example}
 
\subsection{Two basic models} \label{subsect two models}

We consider here two models which illustrate the setting described in 
Subsection \ref{subsect measures}. The first model is based on the case
when the support of a symmetric measure $\rho$ is of positive measure
$\mu\times\mu$. The other model deals with a countable Borel 
equivalence relation $\mathcal E$ which supports a singular measure $\rho$ 
with respect to $\mu\times\mu$.

$\mathbf{1^{\mbox{st}}}$ \textbf{case:} $(\mu\times\mu)(E) >0$.
Suppose that $(x, y) \mapsto c_{xy}$ is a positive real-valued Borel
function whose domain is a symmetric Borel set $E\subset V\times V$
of positive measure $\mu\times \mu$.
 Additionally, we require that $c_{xy}$ is \textit{symmetric}, i.e., 
\be\label{eq symmetric c}
c_{xy} = c_{yx},\ \ \ \ \ \forall (x,y) \in E.
\ee
By analogue with the theory of electrical networks, the function $c_{xy}$
is called a \textit{conductance function}.

 Without loss of generality, we can  assume 
 that the function $(x, y) \mapsto c_{xy}$ is 
defined everywhere on $V \times V$ by setting $c_{xy} = 0$ for $(x, y)
 \notin E$.
\medskip

\textbf{\textit{Assumption C:}}
(1) For $\mu$-a.e. $x\in V$, the function $c_x(\cdot) = c_{x, \cdot}$ is 
$\mu$-integrable, i.e.,  
 $$
 c(x) := \int_V c_{xy}\ d\mu(y)
 $$ 
 is positive and finite 
for $\mu$-a.e. x$\in V$. 

(2) We also assume that $c(x)\in L^1_{\mathrm{loc}}(\mu)$, i.e.,
for any $A\in \Bfin$, 
$$
\int_A c(x)\; d\mu(x) <\infty.
$$
\\

These assumptions hold automatically for the discrete case of electrical 
networks. Note that the origin of condition (2) lies in local finiteness of 
graphs used in networks. 

Next, we define (in terms of $\mu$ and $c_{xy}$) two measures, $\nu$ 
and $\rho$, on $V$ and  $E$, respectively. 

\begin{definition}\label{def rho}
Let $\rho$ be a $\sigma$-finite Borel  measure on $E$ such that the 
Radon-Nikodym derivative of $\rho$ with respect to $\mu \times \mu$ is
$c_{xy}$, i.e.,  
\be\label{eq def rho}
\frac{d\rho}{d(\mu\times \mu)}(x, y) = c_{xy}, \quad (x,y) \in E. 
\ee
For every $x \in V$, we define a measure $\rho_x$ on $(V, \B)$ 
by the formula
\be\label{eq def rho_x}
\rho_x (A) = \int_A c_{xy} \; d\mu(y), \ \ A\in \B.
\ee
In other words,  $d\rho_x(y) = c_{xy} d\mu(y)$.  
\end{definition}

Clearly, the measures $\rho$ and $\rho_x$ are uniquely determined by 
$\mu$ and $c_{xy}$.

In the following assertion, we collect the properties of measures $\rho$ and 
$\rho_x$ that follow from the definition.

\begin{lemma}\label{lem prop of rho and rho_x} Suppose that 
$(V, \B, \mu)$ is a standard measure space and $c_{xy}$ is a 
symmetric function on $E \subset V\times V$ as above. Then: 

(1) The supports of  $\rho$  and $\rho_x, x \in V,$ are  the sets 
$E\subset V\times V$, and $E_x\subset V$, respectively. 

(2) The measure $\rho$ can be  disintegrated with respect to the 
``vertical'' and ``horizontal'' partitions 
$E = \bigcup_{x\in V} \{x\} \times E_x $ and $E = \bigcup_{y\in V} 
 E^y \times  \{y\}  $ such that
\be\label{eq disintegration of rho}
\rho = \int_E \delta_x \times \rho_x \; d\mu(x) =  \int_E 
\rho^y \times \delta_y \; d\mu(y) .
\ee

(3) The measure $\rho$ is symmetric:
$$
\rho(A \times B) = \rho(B \times A) , \ \ A, B \in \B, 
$$
or equivalently, 
$$
\int_E f(x, y)\; d\rho(x, y) = \int_E f(y, x)\; d\rho(x, y)
$$
where $f$ is any Borel function on $(V \times V, \B\times \B)$.

(4) For $\mu$-a.e. $x \in V$, 
$$
\rho_x (V) = \rho_x(E_x) = c(x).
$$

(5) The family of measures $(\rho_x)$ determines a positive linear operator 
$R$ 
\be\label{eq def of R via c}
R(f)(x) : = \int_V f(y) \; d\rho_x(y) = \int_V c_{xy} f(y) \; d\mu(y).
\ee
acting on $\mathcal F(X, \B)$ such that, for any Borel functions 
$f$ and $g$, 
$$
\rho(f \otimes g) = \int_V f(x) R(g)(x) \; d\mu(x) = 
 \int_V R(f)(x) g(x) \; d\mu(x). 
$$

\end{lemma}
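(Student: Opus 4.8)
The plan is to verify the five assertions one at a time, each reducing to the defining relation $d\rho(x,y)=c_{xy}\,d(\mu\times\mu)(x,y)$ from \eqref{eq def rho}, Tonelli's theorem (applicable since all integrands in sight are nonnegative, so extended-real values cause no trouble), and the symmetry $c_{xy}=c_{yx}$. For (1) I would note that, by the convention $c_{xy}=0$ for $(x,y)\notin E$ and $c_{xy}>0$ on $E$, the Radon--Nikodym density in \eqref{eq def rho} vanishes exactly off $E$; hence $\rho\big((V\times V)\setminus E\big)=0$ and $\mathrm{supp}(\rho)=E$ mod $0$. Applying the same reasoning to $d\rho_x(y)=c_{xy}\,d\mu(y)$ identifies $\mathrm{supp}(\rho_x)$ with $E_x=\{y:(x,y)\in E\}$.

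For (2) I would couple the existence and uniqueness of the disintegration in Theorem \ref{thm Simmons} with a direct Tonelli computation. For every Borel $f\geq 0$,
\be
\iint_{V\times V} f\; d\rho = \int_V\left(\int_V f(x,y)\,c_{xy}\; d\mu(y)\right)d\mu(x)=\int_V\left(\int_V f(x,y)\; d\rho_x(y)\right)d\mu(x),
\ee
so the explicitly defined family $(\rho_x)$ is precisely the mod $0$ unique system of conditional measures, i.e. $\rho=\int_V\delta_x\times\rho_x\,d\mu(x)$. Interchanging the two coordinates and invoking $c_{xy}=c_{yx}$ gives the horizontal disintegration with $d\rho^y(x)=c_{xy}\,d\mu(x)$, which is exactly \eqref{eq disintegration of rho}.

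Claim (3) then follows by Tonelli together with symmetry of $c$: renaming the dummy variables,
\be
\rho(A\times B)=\iint \chi_A(x)\chi_B(y)\,c_{xy}\; d\mu(x)\,d\mu(y)=\iint \chi_B(x)\chi_A(y)\,c_{xy}\; d\mu(x)\,d\mu(y)=\rho(B\times A),
\ee
and the equivalent integral identity extends from rectangles to all Borel $f$ by linearity and monotone convergence. Claim (4) is immediate: $\rho_x(V)=\int_V c_{xy}\,d\mu(y)=c(x)$ by Assumption~C(1), while $\rho_x(V)=\rho_x(E_x)$ because $\rho_x$ is carried by $E_x$ by part (1). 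For (5), positivity of $R(f)(x)=\int_V c_{xy}f(y)\,d\mu(y)$ is clear since $f\geq 0$ makes the integrand nonnegative, and the two expressions for $\rho(f\otimes g)$ are just the vertical and horizontal disintegrations of (2) applied to the cylinder function $f\otimes g$, as already recorded in Proposition \ref{prop equiv of def of rho}.

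The only step requiring genuine care, rather than bookkeeping, is the identification in (2) of the concretely built $\rho_x$ with the abstract conditional measures produced by Theorem \ref{thm Simmons}: one must first check the hypothesis $\rho\circ\pi_1^{-1}\ll\mu$ so that the theorem applies, and then use its uniqueness clause to conclude that the two families coincide mod $0$. This is where $\sigma$-finiteness of $\rho$ (ensured by local integrability of $c$ in Assumption~C(2)) and the mod $0$ conventions must be handled attentively; everything else collapses to Tonelli's theorem and the symmetry of $c_{xy}$.
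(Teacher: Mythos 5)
Your proposal is correct and follows essentially the same route as the paper: everything reduces to the explicit density $d\rho = c_{xy}\,d(\mu\times\mu)$, Tonelli, and the symmetry $c_{xy}=c_{yx}$, with the disintegration verified on cylinder functions. The only difference is one of care rather than substance — you explicitly check the hypothesis $\rho\circ\pi_1^{-1}\ll\mu$ and invoke the uniqueness clause of Theorem \ref{thm Simmons} to identify the concrete $\rho_x$ with the abstract conditional measures, a point the paper's proof leaves implicit.
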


\begin{remark} As stated in Lemma \ref{lem prop of rho and rho_x}, the
measures $\rho$ and $\rho_x$ are supported by the sets $E$ and $E_x$ 
where the functions $c_{xy}$ and $c_x : y \to c_{xy}$ are 
positive. Hence, we could equally use the formulas
$$
\iint_{V\times V}c_{xy} f(x,y)\; d\mu(x)d\mu(y) = \int_E f(x,y) d\rho(x,y)
$$
and 
$$ 
 \int_{V}c_{xy} f(y)\; d\mu(y) = \int_{E_x}f(y)  d\rho_x(y).
$$
\end{remark}

\textit{Proof  of Lemma \ref{lem prop of rho and rho_x}.} 
The first assertion is obvious due to the definition of the 
conductance function $c_{xy}$.

 To see that (2) holds, we compute for arbitrary functions $f$ and $g$:
$$
\ba 
\rho (f(x) \otimes g(y)) = & \int_E c_{xy} f(x) g(y) \; d\mu(y) d\mu(x)\\
=& \int_V f(x) \rho_x(g) \; d\mu(x)\\
= & \int_V  (\delta_x \times \rho_x)(f \otimes g) \; d\mu(x).
\ea
$$
Similarly, one can show that 
$$
\rho (f(x) \otimes g(y))  = \int_V  (\rho^y \times \delta_y)(f \otimes g) 
\; d\mu(y).
$$
To finish the proof, we note that the space spanned by cylinder functions is
dense in $L^1(\rho)$.

For (3), we find that
$$
\ba 
\rho(A \times B) =& \int_E c_{xy} \chi_A(x) \chi_B(y) \; d\mu(y) d\mu(x)\\
=& \int_V \chi_A(x) \rho_x(\chi_B)\; d\mu(x).
\ea
$$
On the other hand, since $c_{xy} = c_{yx}$, we have
$$
\ba 
\rho(B \times A) =& \int_E c_{yx} \chi_A(y) \chi_B(x) \; d\mu(x) d\mu(y)\\
=& \int_V \chi_A(y) \rho_y(\chi_B)\; d\mu(y),
\ea
$$
and the result follows. 

Statement (4)  of the lemma follows from the disintegration formula used  in
(2), the assumption about $c(x)$, and  from Definition \ref{def rho}.

For (5),  we obtain
$$
\ba
\rho(f\otimes g) = &\int_E f(x) g(y) \; d\rho(x, y)\\
 = & \int_V f(x) \left(\int_V g(y) \; d\rho_x(y) \right)\; d\mu(x) \\
 = & \int_V f R(g) \; d\mu. 
\ea
$$
Clearly, if $f \geq 0$, then $R(f) \geq 0$, i.e., $R$ is a positive operator. 
\hfill$\square$
\\

More generally, we can treat $x \mapsto \rho_x$ as a measurable field 
of measures defined on $(V, \B)$. We see that relation (\ref{eq def rho_x})
 and (\ref{eq disintegration of rho}) define such a 
field. This means that, for 
any $A \in \B$, the function $x \mapsto \rho_x(A)$ is measurable on 
$(V, \B, \mu)$.  In other words,  this field of measures $x\mapsto \rho_x$ 
is called a \textit{random measure} \cite{Kallenberg1983, Aaronson1997}. 

It follows from this observation that we can define a new measure 
$\nu$ on $(V, \B)$ by setting
\be\label{eq def of nu 1}
\nu(A) = \int_V \rho_x(A)\; d\mu(x), \ \ \ \ A \in \B.
\ee
or 
\be\label{eq nu in terms of c}
\nu(A) = \int_V \left( \int_A c_{xy} \; d\mu(y)\right)\; d\mu(x).
\ee

\begin{lemma} The measure $\nu(V)$ is finite if and only if 
$c \in L^1(\mu)$.  The measures $\mu$ and $\nu$ are equivalent and 
$$
\frac{d\nu}{d\mu}(x) = c(x), \qquad x\in V.
$$
\end{lemma}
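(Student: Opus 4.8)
The plan is to observe that this is the concrete instance, in the first model, of the already established Lemma \ref{lem nu via c(x) and mu}; but to keep the argument self-contained I would derive both claims directly from the explicit formula (\ref{eq nu in terms of c}) together with the symmetry $c_{xy} = c_{yx}$, rather than invoking the abstract disintegration identity (\ref{eq formula fo symm meas}).

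First I would dispose of the finiteness criterion by specializing to $A = V$. By Lemma \ref{lem prop of rho and rho_x}(4) we have $\rho_x(V) = c(x)$ for $\mu$-a.e.\ $x$, so the definition (\ref{eq def of nu 1}) gives
$$
\nu(V) = \int_V \rho_x(V)\; d\mu(x) = \int_V c(x)\; d\mu(x) = \| c \|_{L^1(\mu)},
$$
and hence $\nu(V) < \infty$ if and only if $c \in L^1(\mu)$.

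For the Radon--Nikodym derivative, I would start from (\ref{eq nu in terms of c}),
$$
\nu(A) = \int_V \left( \int_A c_{xy}\; d\mu(y)\right) d\mu(x),
$$
and interchange the order of integration. Since the integrand $c_{xy}\ge 0$ and both factors are $\sigma$-finite, Tonelli's theorem applies and yields
$$
\nu(A) = \int_A \left( \int_V c_{xy}\; d\mu(x)\right) d\mu(y).
$$
The symmetry $c_{xy} = c_{yx}$ then identifies the inner integral with $c(y) = \int_V c_{yx}\; d\mu(x)$, so that $\nu(A) = \int_A c(y)\; d\mu(y)$ for every $A \in \B$. This is exactly the assertion $d\nu = c\, d\mu$, i.e.\ $\tfrac{d\nu}{d\mu}(x) = c(x)$.

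Finally, the equivalence of $\mu$ and $\nu$ follows from Assumption C: since $0 < c(x) < \infty$ for $\mu$-a.e.\ $x$, the density $c$ is strictly positive a.e., whence $\nu \ll \mu$ and simultaneously $\mu \ll \nu$ (with density $1/c$). I do not expect a genuine obstacle here; the only point requiring care is the justification of the Tonelli interchange for the possibly unbounded nonnegative kernel $c_{xy}$ on the infinite measure space, which is precisely where $\sigma$-finiteness of $\mu$ and the symmetry hypothesis $c_{xy}=c_{yx}$ enter.
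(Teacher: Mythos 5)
Your argument is correct and follows essentially the same route as the paper: the paper's proof of this lemma is precisely the computation $\nu(A)=\int_V\bigl(\int_A c_{xy}\,d\mu(y)\bigr)d\mu(x)=\int_A\int_V c_{xy}\,d\mu(x)\,d\mu(y)=\int_A c(y)\,d\mu(y)$, i.e.\ the Tonelli interchange followed by the identification of the inner integral with $c(y)$ via $c_{xy}=c_{yx}$. You merely make explicit the two points the paper leaves implicit (the specialization $A=V$ for the finiteness criterion, and the strict positivity $0<c<\infty$ a.e.\ from Assumption C for mutual absolute continuity), which is fine.
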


\begin{proof} 
These assertions follow from (\ref{eq nu in terms of c}):
$$
\ba
\nu(A) & = \int_V \left( \int_A c_{xy} \; d\mu(y)\right)\; d\mu(x)\\
&= \int_A\int_V c_{xy} \; d\mu(x)d\mu(y)\\
& = \int_A c(y) \; d\mu(y).
\ea
$$
\end{proof}

\begin{remark} (1) The operator $R$ is not normalized: $R(\mathbbm 1)(x)
 = c(x)$ where $\mathbbm 1 $ is the constant function equal to 1.

(2) The operator $R$ acts on the space of measures $M(V)$ by the 
following rule:
$$
(\mu R)(f) = \int_V R(f)\; d\mu. 
$$

(3) It follows from (\ref{eq def of nu 1}) and (\ref{eq def of R}) that
$$
\mu R = \nu.
$$
\end{remark}

\textit{Summary.} We summarize here the discussion   in this
 subsection.
We defined the following objects: a standard measure space 
$(V, \B, \mu)$, a symmetric set $E$, and  a symmetric positive function 
$c_{xy} : E \to \mathbb R_+$. Under some natural assumptions about 
$E$ and 
$c_{xy}$, we defined new objects: a symmetric measure $\rho$ with the
system of conditional measures $(\rho_x : x \in V)$, a locally integrable
function $c(x)$, and a positive operator $R$ such that $d(\mu R)(x)
 = c(x)d\mu(x)$. In short notation, we have
 $$
 (\mu, c_{xy}) \ \Longrightarrow \ (\rho, \rho_x, R). 
 $$
The approach  used in Subsection \ref{subsect measures} gives also 
the reverse
implication: given a symmetric measure $\rho $ on $(V \times V, \B \times
\B)$ one defines a positive operator $R$ and the measures $\rho_x$.

 $\mathbf{2^{\mbox{nd}}}$ \textbf{case}: $E$ \textbf{is a
  countable Borel  equivalence relation}. 
  We consider here a particular case when
a symmetric Borel subset $E$ is a \textit{countable Borel equivalence 
relation}. This means that $E$ is a Borel symmetric subset of $V \times V$
 which satisfies the following properties:

(i) $(x, y), (y, z) \in E \Longrightarrow\ (x,z) \in E$;

(ii) $E_x =\{y \in V : (x, y) \in E\}$ is countable for every $x$. 

The concept of a countable Borel equivalence relation 
has been studied extensively last decades in the context of the 
descriptive set theory, see e.g.
 \cite{JacksonKechrisLouveau2002, Kanovei2008, Kechris2010}

Let $| \cdot |$ be the counting measure on every $E_x$. Suppose that 
$c_{xy}$ is a symmetric function on $E$ such that, for every $x \in V$,
$$
c(x) = \sum_{y \in E_x} c_{xy} \in (0, \infty).
$$ 
Then we can define the atomic measure $\rho_x$ on $V$ by setting
$$
\rho_x(A) = \sum_{y \in E_x \cap A} c_{xy}. 
$$
Finally, define the  measure $\rho$ on $E$:
\be\label{eq rho for cber}
\rho = \int_V \delta_x \times \rho_x\; d\mu(x).
\ee

\begin{lemma}
The measure $\rho$ is a symmetric irreducible measure on $E$ which is
 singular with respect to $\mu \times\mu$.
\end{lemma}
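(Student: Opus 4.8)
The plan is to check the three assertions separately---symmetry of $\rho$, its irreducibility in the non-degeneracy sense used throughout the paper, and mutual singularity of $\rho$ and $\mu\times\mu$---all from the disintegration $\rho=\int_V\delta_x\times\rho_x\,d\mu(x)$. For $A,B\in\B$ this gives
$$
\rho(A\times B)=\int_A\rho_x(B)\,d\mu(x)=\int_V\chi_A(x)\sum_{y\in E_x}\chi_B(y)\,c_{xy}\,d\mu(x),
$$
and symmetrically $\rho(B\times A)=\int_V\chi_B(x)\sum_{y\in E_x}\chi_A(y)\,c_{xy}\,d\mu(x)$. First I would record the easy structural facts. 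Reflexivity of the equivalence relation $E$ yields $x\in E_x$, so $E_x\neq\emptyset$ for every $x$ and the projection $\pi_1(E)=V$; the hypothesis $c(x)=\sum_{y\in E_x}c_{xy}\in(0,\infty)$ shows that each $\rho_x$ is a genuine atomic $\sigma$-finite measure with $\rho_x(V)=c(x)$, so $x\mapsto\rho_x$ is a legitimate random measure and $\rho$ is well defined and $\sigma$-finite on $E$. These are the non-degeneracy facts that make $\rho$ irreducible in the present (counting-measure) setting.

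For \textbf{symmetry}, putting $g(x,y)=\chi_A(x)\chi_B(y)c_{xy}$ and using $c_{xy}=c_{yx}$ to carry the conductance through the coordinate flip, the equality $\rho(A\times B)=\rho(B\times A)$ reduces to the identity
$$
\int_V\sum_{y\in E_x}g(x,y)\,d\mu(x)=\int_V\sum_{y\in E_x}g(y,x)\,d\mu(x).
$$
This is the mass transport principle for the countable Borel equivalence relation $E$: the invariance under the flip $(x,y)\mapsto(y,x)$ of the measure on $E$ obtained by integrating the fibrewise counting measures against $\mu$. I expect this to be the main obstacle, because it is exactly here that one needs $\mu$ to be invariant for $E$; symmetry of $c_{xy}$ by itself is not enough. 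I would establish the identity by writing $E=\bigcup_n\mathrm{graph}(\varphi_n)$ for countably many partial Borel isomorphisms $\varphi_n$ (Lusin--Novikov / Feldman--Moore), and then, term by term, applying the measure-preservation of each $\varphi_n$ together with $c_{x\,\varphi_n(x)}=c_{\varphi_n(x)\,x}$ to match the two sides after the change of variables $y=\varphi_n(x)$.

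For \textbf{singularity}, the argument is short and is the cleanest part. Because $E$ is a countable equivalence relation, every fibre $E_x$ is countable, and since $\mu$ is continuous (non-atomic) we have $\mu(E_x)=0$ for each $x$; Fubini then gives
$$
(\mu\times\mu)(E)=\int_V\mu(E_x)\,d\mu(x)=0.
$$
On the other hand $\rho$ is concentrated on its support $E$, so $\rho((V\times V)\setminus E)=0$ while $\rho(E)>0$. Hence $E$ carries all the mass of $\rho$ and none of $\mu\times\mu$, so the two measures are mutually singular, which finishes the proof.
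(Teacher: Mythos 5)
Your treatment of singularity is exactly the paper's: the authors also dismiss it in one line by noting that $(\mu\times\mu)(E)=0$ since each fibre $E_x$ is countable and $\mu$ is non-atomic. The real divergence is on symmetry. The paper's entire argument is the chain
$$
\rho(A\times B)=\int_A\sum_{y\in E_x\cap B}c_{xy}\,d\mu(x)=\int_B\sum_{x\in E_y\cap A}c_{xy}\,d\mu(y)=\rho(B\times A),
$$
with the middle equality asserted to follow ``from the symmetry of the function $c_{xy}$.'' You have correctly identified that this is precisely the step that does \emph{not} follow from $c_{xy}=c_{yx}$ alone: it is the mass-transport identity for $E$, i.e.\ the flip-invariance of the measure obtained by integrating the fibrewise counting measures against $\mu$, and via the Feldman--Moore decomposition $E=\bigcup_n\mathrm{graph}(\varphi_n)$ it is equivalent to requiring that the partial isomorphisms $\varphi_n$ preserve $\mu$ (equivalently, that $\mu$ is $E$-invariant, or that $c$ absorbs the Radon--Nikodym cocycle). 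That hypothesis appears nowhere in the paper, and without it the lemma is false: take $V=\R$, $E$ the orbit relation of $x\mapsto x+1$, $c_{xy}=2^{-|x-y|}$, and a non-translation-invariant $\mu$; then $\rho(A\times B)$ and $\rho(B\times A)$ differ already for $A=[0,1)$, $B=[1,2)$. So your route buys a complete and honest proof at the price of an explicit invariance assumption, while the paper's proof performs the same interchange of $\sum$ and $\int$ silently and is, as written, incomplete. To make your write-up airtight you should state the $E$-invariance of $\mu$ as a standing hypothesis rather than only remarking that it is needed. One further small point: ``irreducible'' in the sense of Definition~2.6 of the paper requires the fibres $(E_x,\B_x)$ to be \emph{uncountable} standard Borel spaces, which a countable equivalence relation can never satisfy; your weaker reading (non-empty fibres, full projection onto $V$) is the only one under which the claim can be meant, and it is also all the paper verifies.
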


\begin{proof} Since $(\mu \times \mu)(E) =0$, the singularity of $\rho$
is obvious. It follows from the symmetry of the function $c_{xy}$ and 
(\ref{eq rho for cber}) that, for $A, B \in \B$,
$$
\ba
\rho(A \times B) = & \int_A \sum_{y \in E_x \cap B} c_{xy}\; d\mu(x)\\
 = & \int_B \sum_{x \in E_y \cap A} c_{xy}\; d\mu(y)\\
 = & \rho(B \times A).
\ea
$$
\end{proof}

\begin{definition}\label{def meas field ER}
Let $E$ be a countable Borel equivalence relation on a standard Borel space
$(V, \B)$. A symmetric subset $G \subset $E is called a \textit{graph} 
if $(x, x) \notin G, \forall x \in V$. A \textit{graphing} of $E$ is a graph $G$
such that the connected components of $G$ are exactly the 
$E$-equivalence classes. In other words, a graph $G$ generates $E$.
\end{definition}

The notion of a graphing is useful for the construction of the path space
$\Omega$ related to a Markov process, see Section \ref{sect markov}. 

The following lemma can be easily proved.

\begin{lemma} Let $\rho$ be a countable equivalence relation on 
$(V, \B)$, and let $\rho$ be a  symmetric measure on $E$. Suppose $G$
is a graphing of $E$. Then $\rho(G) >0$.
\end{lemma}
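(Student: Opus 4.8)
The plan is to disintegrate $\rho$ along the vertical fibers and reduce the inequality $\rho(G) > 0$ to a statement about the projection $\pi_1(G)$. Writing $\rho = \int_V \delta_x \times \rho_x \, d\mu(x)$ as in (\ref{eq rho for cber}) and setting $G_x := \{y \in V : (x,y) \in G\}$, I would first record the identity
$$
\rho(G) = \int_V \rho_x(G_x) \, d\mu(x), \qquad \rho_x(G_x) = \sum_{y \in G_x} c_{xy}.
$$
Since $\rho_x$ is the atomic measure with $\rho_x(\{y\}) = c_{xy}$ on $E_x$, and $G \subseteq E$ forces $G_x \subseteq E_x$, each summand is strictly positive; hence $\rho_x(G_x) > 0$ exactly when $G_x \neq \emptyset$. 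So it suffices to prove that $W := \{x \in V : G_x \neq \emptyset\} = \pi_1(G)$ has positive $\mu$-measure, and in fact I will show it is conull.

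The heart of the argument is to show $\mu(V \setminus W) = 0$, i.e. that $\mu$-a.e. vertex is non-isolated in $G$. Here the defining property of a graphing is essential: the connected components of $G$ coincide with the $E$-equivalence classes. Consequently, if $x$ is isolated in $G$ (that is, $G_x = \emptyset$), its $G$-component is $\{x\}$, forcing its $E$-class to be the singleton $\{x\}$, i.e. $E_x = \{x\}$. Contrapositively, whenever $E_x \neq \{x\}$ the vertex $x$ is non-isolated and $G_x \neq \emptyset$. It therefore remains to check that $E_x \neq \{x\}$ for $\mu$-a.e. $x$, and this is where I would invoke the no-loop convention built into the definition of a graphing, $(x,x) \notin G$, and inherited from the network setting. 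Under that convention the diagonal contributes nothing to the total conductance, so $c(x) = \sum_{y \in E_x} c_{xy} = \sum_{y \in E_x,\, y \neq x} c_{xy}$; the standing hypothesis $c(x) \in (0, \infty)$ for $\mu$-a.e. $x$ then guarantees a genuine neighbor $y \neq x$ with $c_{xy} > 0$, whence $E_x \neq \{x\}$.

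Putting the pieces together, $\rho_x(G_x) > 0$ for $\mu$-a.e. $x$, and integrating yields $\rho(G) = \int_V \rho_x(G_x)\, d\mu(x) > 0$, as claimed. The one measurability point to dispose of is that $W = \pi_1(G)$ is Borel: because $E$, and hence $G$, has countable vertical sections, the Lusin-Novikov uniformization theorem writes $G$ as a countable union of graphs of Borel partial functions, so its projection is Borel; I treat this as standard.

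The main obstacle I anticipate is the passage from ``nontrivial $E$-class'' to ``non-isolated in $G$'', which rests entirely on the graphing property that the components of $G$ are exactly the $E$-classes; without it one could delete edges and disconnect $x$ from its class while leaving the remaining components intact, and the conclusion would fail. A secondary, purely conventional point is the exclusion of loops, which is what lets the positivity of $c(x)$ produce an off-diagonal neighbor and thereby a nontrivial class almost everywhere.
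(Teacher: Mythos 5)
The paper gives no proof of this lemma at all (it is dismissed with ``can be easily proved''), so there is nothing to compare against; your argument is the natural one and is correct. The disintegration step $\rho(G)=\int_V \rho_x(G_x)\,d\mu(x)$, the reduction to showing $\{x: G_x\neq\emptyset\}$ has positive measure, and the use of the graphing property (components of $G$ are the $E$-classes, so $G_x=\emptyset$ forces $E_x=\{x\}$) are all exactly what is needed, and the Lusin--Novikov remark takes care of measurability. The one place where you lean on a convention rather than on a stated hypothesis is the step ``$c(x)>0$ implies $E_x\neq\{x\}$'': the paper defines $c(x)=\sum_{y\in E_x}c_{xy}$ with $E_x\ni x$ (since $E$ is reflexive), so a priori a diagonal weight $c_{xx}>0$ could carry all of $c(x)$, and indeed if $E$ were $\mu$-a.e.\ the trivial relation the lemma would be false (the only graphing is empty). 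You correctly identify this as the crux and resolve it by importing the no-loop convention from the network setting together with the convention $c_{xy}>0$ on $E$ (which the paper does state elsewhere, e.g.\ in the Bratteli-diagram example). It would be worth making explicit that you are assuming $\mu$-a.e.\ $E$-class is nontrivial, or equivalently that the diagonal carries no conductance; with that assumption recorded, the proof is complete.
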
 

%Add about the path space here

For more details regarding integral operators, and analysis of machine 
learning kernels, the reader may consult the following items
 \cite{Atkinson1975, CuckerZhou2007, ChenWheelerKochenderfer2017, 
Ho2017, JorgensenTian2015}  and the papers cited there. 

We refer to the following papers regarding the the theory of  positive definite
 kernels \cite{Aronszajn1950, Adams_et_al1994, PaulsenRaghupathi2016},
  and transfer operators \cite{BezuglyiJorgensen2017, 
 Jorgensen2001, JorgensenTian_2017}. The reader will find more 
 references in the  papers cited there. Various applications of  positive definite
 kernels are discussed in \cite{AlpayJorgensenLevanony2011,
  AlpayJorgensenVolok2014, AplayJorgensen2014, 
 AlpayJorgensenKimsey2015,  AlpayJorgensen2015,
  AlpayJorgensenLewkowicz2015,  AlpayJorgensenLevanony2017}.

\section{Linear operators in functional spaces related to symmetric 
measures}
\label{sect Graph L and M operator}

In this section, we consider several linear operators acting in various 
functional spaces. Our main focus will be on the basic properties of the 
graph Laplace and Markov operators. These definitions and results are  then 
used in the subsequent sections.

\subsection{Definitions of operators $R$, $\wt R$, $\Delta$, and $P$}
\textit{The following objects are fixed in this section:} $(V, \B, \mu)$ is a 
$\sigma$-finite measure space; $\rho$ is a symmetric measure on $(V 
\times V, \B\times \B)$, supported by a symmetric subset $E\subset 
V\times V$; $x \mapsto \rho_x$ is a measurable family of measures on
$(V, \B)$ that disintegrates  $\rho$. 
These objects are used in the text below constantly. 
\medskip

\textit{\textbf{Assumption D}}. We will assume that the symmetric 
measure $\rho$ satisfies the properties:\\
 (a) $c(x) = \rho_x(V) \in (0, \infty)$ for $\mu$-a.e. $x\in V$;\\
 (b) the function $c(x)$ belongs to  $L^1_{\mbox{loc}}(\mu)$, i.e., 
 $\int_A c(x) \; d\mu(x) <\infty,\ \forall A \in \Bfin$.
 \medskip

\begin{remark}
(1) One can easily see that the function $c(x)= \rho_x(V)$ belongs to
 $L^1(V, \mu)$ if and only if $\rho(E) < \infty$. 
 
 (2) We recall 
that the measure $\nu$ on $V$ is defined by $d\nu(x) = c(x)d\mu(x)$. 
It is obvious that  $L^2(\nu) = L^2(\mu)$ if and only if there exist  
$m , M \in (0, \infty)$ such that $m < c(x) < M$ a.e. More general, one can 
observe that 
$$
\nu(A) < \infty \ \Longleftrightarrow \ \int_A c \; d\mu < \infty. 
$$
Therefore,  if $c \in  L^1_{\mbox{loc}}(\mu)$, then $\Bfin(\mu) 
\subset \Bfin(\nu)$. 

(3) In general,   $\nu$ is  an infinite $\sigma$-finite measure, and  $\nu$ 
is finite if and only if $c \in L^1(\mu)$. 
\end{remark}

We now introduce several linear operators. They are defined on the space 
of Borel functions $\mc F(V, \B)$. In fact, a rigorous
definition would require an exact description of  the domains and ranges 
of these operators. We intend to do this when we study realizations of these
operators in some Hilbert spaces.

\begin{definition}\label{def Delta}  Let $(V, \B, \mu)$, $\rho$, and 
$x \mapsto \rho_x$ be as above. The \textit{graph Laplace operator} 
is defined on the space of Borel functions $\mathcal F(V, \B)$ by 
the formula
\be\label{eq def of Delta}
\Delta(f)(x) = \int_V (f(x) - f(y)) \; d\rho_x(y).
\ee
A function $h\in \mc F(V, \B)$ is called \textit{harmonic} for the
 graph Laplace operator $\Delta$ if $\Delta h = 0.$ The set of harmonic 
 functions is denoted $\h arm$. 
 
When the operator $\Delta$ is considered as an operator acting in the 
 space $L^2$, or in the  energy space $\h_E$, then we use the notation
 $\h arm_2$ and $\h arm_{\h}$, respectively,  for the corresponding sets of
 harmonic functions.
\end{definition}

It is obvious that every constant function is harmonic. The problem about the
existence of nontrivial harmonic functions is extremely important. We will 
discuss this problem  in Sections \ref{sect markov} and \ref{sect energy}.

The most important realizations of $\Delta$ are the corresponding linear
operators acting in the Hilbert space $L^2(\mu)$ and the energy space 
$\mathcal H_E$ (see the definition of $\h_E$ below). These 
realizations are discussed in Sections \ref{sect Delta in L2} and 
\ref{sect Laplace in H}.

In Section \ref{sect basics}, we already used the positive operator $R$ 
acting on the space of Borel functions $\mc F(V, \B)$ by the formula:
$$
R(f)(x) = \int_V f(y)\; d\rho_x(y).
$$

In the following sections we will work with a Markov operator $P$ and
a graph Laplacian $\Delta$ which can be defined by means of the operator 
$R$. The definition of $\Delta$ is written in the following form:
$$
\Delta (f)(x) = f(x) \int_V d\rho_x(y) - \int_V f(y)\; d\rho_x(y) 
$$
and
\be\label{eq Delta = c - R}
\Delta (f)(x) = c(x) f(f) - R(f)(x) = (cI   - R)(f)(x).  
\ee

Define now the operator $P$ by setting 
$$
P(f)(x) := \frac{1}{c(x)} R(f)(x).
$$
This definition can be  given in more precise terms as follows:
\be\label{eq def P}
P(f)(x) = \int_V f(y)\; d\ol\rho_x(y)
\ee
where $d\ol\rho_x = c(x)^{-1} d\rho_x(y)$ is the probability measure 
obtained from $\rho_x$ by normalization. In other words, we define the 
measure $\ol \rho$ on $(V\times V, \B\times \B)$ by letting
$d\ol\rho(x, y) = c(x) d\rho(x, y)$. 
Then (\ref{eq Delta = c - R}) is written as
\be\label{eq Delta via P}
\Delta (f) = c(I - P)(f).
\ee
This formula will be constantly used in the next sections. We observe that 
relation (\ref{eq Delta via P}) gives an equivalent definition of 
harmonic functions: a Borel function $h$ is harmonic if $Ph = h$. 
\medskip

Together with $R$, we  consider another linear operator $\wt R$ defined  
on $\mathcal F(V \times V, \B\times\B)$:
$$
\wt R : f \mapsto ( x \mapsto \rho_x(f)),
$$ 
or, equivalently,  
\be\label{eq R on L}
\wt R(f)(x) = \int_V f(x, y) \; d\rho_x(y).
\ee
If $d\rho_x(y) = c_{xy}d\mu (y)$ (that is $\rho$ is equivalent to $\mu
\times \mu$), then 
$$
\wt R(f)(x) = \int_V c_{xy} f(x,y)\; d\mu(y).
$$
Clearly, $\wt R$ is a \textit{positive operator}  in the following sense: if  
$f \geq 0$, then $\wt R(f) \geq 0$.

It is worth noting that  similar operators are considered in various areas, e.g. 
in the theory of graphons \cite{Janson2013, Lovasz2012}. 

% Consider $\wt R$ for the measure $c d \mu\times\mu$ and for CBER

\subsection{A few facts about the operators $R$, $\wt R$}

In the following theorem, we collected the  properties of operators 
$R$, $\wt R$  acting in $L^p$-spaces, $p=1, 2$.  
The other two operators, $\Delta$ and $P$, are studied in the next 
sections.

\begin{theorem}\label{thm R properties} 
(1) The operator $\wt R$ maps $L^1(E, \rho)$  onto $L^1(V, \mu)$.  
For any integrable function $f$ on  $(E, \rho)$, the relation
$$
\rho(f) = \mu(\wt R(f)),
$$
holds. $\wt R$ is not one-to-one.

(2) If $g \in L^\infty(\mu)$ and $\pi : (x, y) \mapsto x$ is the projection
from $V\times V$ to $V$,  then 
  $$
  \wt R((g\circ \pi)f) = g\wt R(f)
  $$ 
 for any $f \in L^1(E, \rho)$.
  
(3)  The operator $\wt R : L^2(c \rho) \to L^2(\mu)$ is bounded and
$$
||\wt R||_{L^2(c\rho) \to L^2(\mu)} \leq 1.
$$

(4) If $c\in L^\infty(\mu)$, then $\wt R$ is a bounded operator from 
$L^2(\rho)$ to $L^2(\mu)$ and 
$$
||\wt R||_{L^2(\rho) \to L^2(\mu)} \leq || c||_\infty.
$$  
 
 (5) For $\pi : E \to V: \pi(x,y) = x$, let $U_\pi : L^p(\nu) \to L^p(\rho)$ 
 be the operator acting by the formula $U_\pi : f(x) \mapsto \wt f(x, y)$
  where $\wt f(x, y) = (f\circ \pi)(x, y)$.  
Then $U_\pi $ is an isometric operator for $p =1, 2$.  

(6)  Suppose the function $c(x)$ is such that $L^2(\mu) = L^2(\nu)$. Then 
$L^2(c\rho)   = L^2(\rho)$ and, for any functions $f\in L^2(\mu)$  and 
$g \in L^2(\rho)$,  the relation 
\be\label{eq adjoint for R}
\langle U_\pi f, g \rangle_{L^2(\rho)} = \langle  f,\wt R( g) 
\rangle_{L^2(\mu)}
\ee
holds. In other words,  $\wt R= U_\pi^*$ and the operator $\wt R$ 
is a co-isometry.

(7) Let $\Pi_x : f(x, y) \mapsto f_x(y)$ be the restriction 
of $f(x, y)$ onto $\{x\} \times V$. Then, for any $f \in L^p(\rho),  p =1,2,$
 $$
 \wt R(f)(x) = (R \circ \Pi_x)(f)(x).
 $$
 
(8) Suppose that $c \in L^\infty(\mu)$. Then $R :  L^2(\mu) \to 
L^2(\mu)$ is a bounded operator, and 
$$
||R||_{L^2(\mu) \to L^2(\mu)} \leq ||c||_{\infty}.
$$

(9) Suppose that the function $x \mapsto \rho_x(A) \in L^2(\mu)$ for 
any $A \in \Bfin$.  Then $R$ is a symmetric unbounded operator  in $L^2(\mu)$, i.e.,   
$$
\langle g, R(f) \rangle_{L^2(\mu)} = \langle R(g), f \rangle_{L^2(\mu)}.
$$

(10)  The operator $R : L^1(\nu) \to L^1(\mu)$ is contractive, i.e.,  
$$
||R(f)||_{L^1(\mu)} \leq ||f||_{L^1(\nu)}, \qquad f \in L^1(\nu).
$$
Moreover,   for any function $f \in L^{1} (\nu)$, 
the formula 
\be\label{eq c(x) and rho_x}
\int_V R(f) \; d\mu(x) = \int_V f(x) c(x) \; d\mu(x)
\ee
holds. In other words, $\nu = \mu R$ and 
$$
\frac{d(\mu R)}{d\mu}(x) = c(x).
$$
\end{theorem}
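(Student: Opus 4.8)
The plan is to treat all ten items as consequences of three mechanisms: the disintegration identity (\ref{eq disint formula}), the symmetry of $\rho$ in the forms (\ref{eq formula fo symm meas}) and (\ref{eq from symm of rho}), and fibrewise Cauchy--Schwarz against the finite measures $\rho_x$, whose total mass is $c(x)$. I would group the statements by which mechanism dominates rather than proving them in order: the $L^1$ and algebraic items (1), (2), (7); the $L^2$ boundedness and isometry items (3), (4), (5), (8); and the duality/symmetry items (6), (9), (10).

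For the first group, I would apply (\ref{eq disint formula}) directly to get $\rho(f) = \int_V \wt R(f)(x)\, d\mu(x) = \mu(\wt R(f))$, and the same identity applied to $|f|$ gives $\|\wt R(f)\|_{L^1(\mu)} \le \rho(|f|) = \|f\|_{L^1(\rho)}$, so $\wt R$ maps $L^1(\rho)$ into $L^1(\mu)$. Surjectivity I would obtain by exhibiting, for $g \in L^1(\mu)$, the preimage $f(x,y) = g(x)/c(x)$: by (\ref{eq R on L}) we have $\wt R(f)(x) = g(x)$, and $f \in L^1(\rho)$ precisely because $\rho(|f|) = \|g\|_{L^1(\mu)}$. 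Non-injectivity follows by producing a fibrewise mean-zero function $f \neq 0$ with $\int_V f(x,y)\, d\rho_x(y) = 0$ for a.e.\ $x$, which exists because each $\rho_x$ is a nondegenerate measure on an uncountable fibre $E_x$ (Assumption~A). Item (2) is the observation that $g\circ\pi$ is constant in $y$ on each fibre and pulls out of the $\rho_x$-integral, and item (7) is the unravelling $R(\Pi_x f)(x) = \int_V f(x,y)\, d\rho_x(y) = \wt R(f)(x)$.

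The $L^2$ items I would handle together via the pointwise Cauchy--Schwarz bounds $|\wt R(f)(x)|^2 \le c(x)\int_V |f(x,y)|^2\, d\rho_x(y)$ and $|R(f)(x)|^2 \le c(x)\int_V |f(y)|^2\, d\rho_x(y)$. Integrating the first in $\mu$ and recognizing $\iint c\,|f|^2\, d\rho = \|f\|_{L^2(c\rho)}^2$ yields (3) with norm $\le 1$, and bounding $c$ by $\|c\|_\infty$ gives the estimate in (4). For (8) I would, after the same estimate, convert $\int_V\!\int_V |f(y)|^2\, d\rho_x(y)\,d\mu(x)$ into $\|f\|_{L^2(\nu)}^2$ using the symmetry (\ref{eq formula fo symm meas}) together with $\rho_x(V)=c(x)$, and then bound by $\|c\|_\infty^2\|f\|_{L^2(\mu)}^2$, giving operator norm $\le \|c\|_\infty$. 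The isometry (5) is the one-line computation $\|U_\pi f\|_{L^p(\rho)}^p = \int_V |f(x)|^p c(x)\, d\mu(x) = \|f\|_{L^p(\nu)}^p$, using Lemma~\ref{lem nu via c(x) and mu}.

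Finally, for the duality/symmetry group: in (6) the hypothesis $L^2(\mu)=L^2(\nu)$ forces $c$ to be bounded above and below, whence $L^2(c\rho)=L^2(\rho)$, and the adjoint identity is the single disintegration computation $\langle U_\pi f, g\rangle_{L^2(\rho)} = \int_V f(x)\,\wt R(g)(x)\, d\mu(x) = \langle f, \wt R(g)\rangle_{L^2(\mu)}$; combined with (5) this gives $\wt R = U_\pi^*$ and $U_\pi^* U_\pi = I$, i.e.\ $\wt R$ is a co-isometry. For (9), symmetry of $\rho$ gives $\langle g, R(f)\rangle_{L^2(\mu)} = \iint g(x) f(y)\, d\rho = \iint g(y) f(x)\, d\rho = \langle R(g), f\rangle_{L^2(\mu)}$, while the assumption $x\mapsto\rho_x(A)=R(\chi_A)\in L^2(\mu)$ puts $\Dfin$ in the domain, making $R$ densely defined (and unbounded in general). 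Item (10) reuses the symmetry: $\int_V |R(f)|\, d\mu \le \iint |f(y)|\, d\rho = \iint |f(x)|\, d\rho = \|f\|_{L^1(\nu)}$ for contractivity, and the same computation without moduli gives $\int_V R(f)\, d\mu = \int_V f c\, d\mu$, i.e.\ $\mu R = \nu$ with density $c$. I expect the only genuine obstacles to be the two qualitative claims in (1) --- surjectivity needs $c(x)>0$ a.e.\ to invert pointwise, and non-injectivity needs the $\rho_x$ nondegenerate on uncountable $E_x$ --- together with the bookkeeping in (6), where one must verify the two Hilbert spaces and the domain of $U_\pi^*$ coincide before asserting $\wt R = U_\pi^*$; everything else is routine manipulation with (\ref{eq disint formula}), (\ref{eq formula fo symm meas}) and Cauchy--Schwarz.
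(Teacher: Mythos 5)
Your proposal is correct and, for nine of the ten items, follows essentially the same route as the paper: the disintegration identity, the symmetry of $\rho$, and fibrewise Cauchy--Schwarz against the finite measures $\rho_x$ (the paper phrases the estimate in item (8) as Jensen's inequality for the normalized measures $\ol\rho_x$, which is the same bound). The one place you genuinely diverge is the surjectivity claim in (1): the paper argues that the image of indicator functions is dense in $L^1(\mu)$, which strictly speaking only yields dense range, whereas you exhibit the explicit right inverse $g \mapsto g(x)/c(x)$, for which $\wt R\bigl(g/c\bigr)(x) = g(x)$ and $\|g/c\|_{L^1(\rho)} = \|g\|_{L^1(\mu)}$ by $\rho_x(V)=c(x)$; this is tighter and uses only $0<c(x)<\infty$ a.e.\ (Assumption B), so it is an improvement rather than a gap. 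Your caution about the bookkeeping in (6) is also warranted: the paper pairs $\wt R$ against $L^2(\mu)$ while $U_\pi$ is an isometry out of $L^2(\nu)$, and the identification of the two spaces (equivalently, boundedness of $c$ above and below) is exactly what the hypothesis $L^2(\mu)=L^2(\nu)$ supplies.
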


\begin{proof}
We will not prove every statement of this theorem with complete details. 
A part of these  results can be easily deduced from the  definitions given 
above. 

For (1), we compute using (\ref{eq def rho}): 
$$
\begin{aligned}
\rho(f) & = \int_E f(x, y) \; d\rho(x, y)\\
& = \int_V \left( \int_V f(x, y)\; d\rho_x(y)\right) \; d\mu(x)\\
& =  \int_V \wt R(f)(x)\; d\mu(x) \\
&= \mu(\wt R(f)).
\end{aligned}
$$
It follows from the proved relation that the condition  $f \in L^1(E, \rho)$
 implies $\wt R(|f|) \in L^1(V, \mu)$, i.e, $\int_V \wt R(|f|)\; d\mu < \infty$.   
Therefore, $\wt R(f) \in L^1(V, \mu)$ because 
$$
\int_V |\wt R(f)|\; d\mu \leq \int_V \wt R(|f|) \; d\mu. 
$$
To see that $\wt R$ is onto $L^1(\mu)$, it suffices to consider this
operator on characteristic functions over $(V\times V, \B \times \B)$. 
The image of the set of these functions is dense in $L^1(\mu)$. 

On the other hand, $\wt R$ is not one-to-one because the kernel of 
$\wt R$ is not trivial. In particular,one can find 
distinct functions $f_1(x, y)= \va_1(x)\psi_1(y)$ and $f_2(x, y) = 
\va_2(x)\psi_2(y) $ such that  $\wt R(f_1)(x) = \wt R(f_2)(x)$. 

(2) The result follows from the relation 
$$
\begin{aligned}
\wt R((g \circ\pi) f)(x) & = \int_V g(\pi(x,y))f(x, y)\; d\rho_x(y)\\
& = g(x) \int_V f(x, y)\; d\rho_x(y)\\
& = g(x) \wt R(f)(x).
\end{aligned}
$$

(3) Suppose $f \in L^2(c\rho)$. Because of Theorem \ref{thm Simmons},
we obtain that $L^2(c\rho)$ is represented as the direct integral of 
Hilbert spaces $L^2(c\rho_x)$ over the measure space $(V, \B, \mu)$. 
Moreover, $f \in L^2(c\rho)$ if and only if the following conditions hold:

(i) $f(x, \cdot) \in L^2(c\rho_x)$ for $\mu$-a.e. $x \in V$,

(ii) the function $x \mapsto || f(x, \cdot) ||^2_{c(x)\rho_x}$ is in 
$L^1(\mu)$. 

Since $c(x)$ is positive and finite a.e., we conclude that  $f(x, \cdot) \in 
L^2(c\rho_x)$ if and only if $f(x, \cdot) \in L^2(\rho_x)$.

We claim that $\wt R(f) \in L^2(\mu)$. 
 Indeed,  using the Schwarz inequality, we obtain 
$$
\begin{aligned}
||\wt R(f)||^2_{L^2(\mu)} & = \int_V \wt R(f)^2 \; d\mu\\
 & = \int_V \left(\int_V f(x,y)\; d\rho_x(y)\right)^2
d\mu(x)\\
& \leq \int_V \left(\int_V f(x,y)^2 \; d\rho_x(y) \right) \left(\int_V\; 
d\rho_x(y)\right)d\mu(x)\\
%& = \int_V \left(\int_V f^2(x, y)c^2(x)\; d \ol\rho_x(y)
%\right) \; d\mu(x)\\
& = \int_V \left(\int_V f^2(x, y)c(x)\; d \rho_x(y)\right) \; d\mu(x)\\
& = \int_E f^2(x, y)c(x) \; d \rho(x, y)\\
& = ||f||^2_{L^2(c\rho)}.
\end{aligned}
$$
It shows that $||\wt R||_{L^2(c\rho) \to L^2(\mu)} \leq 1$, and the
 assertion is proved.

(4) This statement can be proved similarly to (3). 

(5) The result follows from the equality
$$
\begin{aligned}
||U_\pi(f)||^2_{L^2(\rho)} & =  \int_E (f\circ\pi)^2(x, y) \; d\rho(x, y) \\
&= \int_V f^2(x) c(x)\; d\mu(x)\\
& = ||f||^2_{L^2(\nu)}
\end{aligned}
$$
because $\rho_x(V) = c(x)$. A similar equality gives the condition 
$$||U_\pi(f)||^2_{L^1(\rho)}  = ||f||^2_{L^1(\nu)}.$$

(6)  It is easy to see that the assumption $L^2(\mu) = L^2(\nu)$ is
 equivalent to the equality  $L^2(c\rho)=  L^2(\rho)$. Then $\wt R$ can
  be viewed as an operator from $L^2(\rho)$ onto $L^2(\mu)$, and 
 we calculate, for $f \in L^2(\mu), g \in L^2(\rho)$, 
$$
\begin{aligned}
\langle  f,\wt R( g) \rangle_{L^2(\mu)} & = \int_V f(x) \wt R(g)(x)\; d\mu(x)\\
& = \int_V f(x)\left(\int_V g(x, y) \;  d\rho_x(y) \right)\; d\mu(x)\\
& = \int_E (f\circ\pi)(x, y) g(x, y)\; d\rho(x, y)\\
&= \langle U_\pi f, g \rangle_{L^2(\rho)}. 
\end{aligned}
$$
This proves that $\wt R = U_\pi^*$, so that $\wt R$ is a co-isometry. 

We note that it follows from this relation that the operator $\wt R$ is onto
 $L^2(\mu)$. Indeed, we use that 
$\mbox{Range} (\wt R) ^{\bot} = \mbox{Ker}(\wt R^*)$. Since
 $\wt R^*$ is one-to-one,  it has the trivial kernel, and  $\wt R$ is onto.

(7) This is obvious. 

(8) In order to prove this assertion, we apply Jensen's inequality 
for the probability measure $d\ol\rho_x = c(x)^{-1}d\rho_x$. Then, 
for any $f \in L^2(\mu)$ we have, 
$$
\begin{aligned}
\int_V [R(f)(x)]^2\; d\mu(x) & = \int_V\left( \int_V  f(y)c(x)\; 
d\ol\rho_x(y)\right)^2\; d\mu(x) \\
& \leq \int_V\left( \int_V  f^2(y)c^2(x)\;  d\ol\rho_x(y)\right)\; d\mu(x)\\
& = \int_V\left( \int_V  f^2(y)c(x)\;  d\rho_x(y)\right)\; d\mu(x)\\
& \leq ||c||_{\infty} \iint_{V\times V}  f^2(y)\;  d\rho(x, y) \qquad \quad
 (\rho \ \mbox{is
symmetric}) \\ 
& = ||c||_{\infty}\iint_{V\times V}  f^2(x)\;  d\rho(x, y)  \\ 
& = ||c||_{\infty} \iint_{V} c(x) f^2(x)\;  d\mu(x)  \qquad \quad (\rho_x(V) 
= c(x))   \\
&\leq ||c||^2_{\infty} \int_V f^2(x)\; d\mu(x)
\end{aligned}
$$
Hence,  $R(f) \in L^2(\mu)$, and the norm of $R$ in $L^2(\mu)$
 is bounded by $||c||_{\infty}$. We note that if $c \notin L^\infty(\mu)$,
 then $R$ is, in general, an unbounded operator.
 
(9) We first observe that the assumption that $\rho_x(A) \in L^2(\mu),
A \in \Bfin, $ means that $R$ is a densely defined operator. Then,  
to show that $R$ is symmetric, we use the fact that the measure 
$\rho$ is symmetric:
$$
\begin{aligned}
\langle g, R(f) \rangle_{L^2(\mu)} & = \int_V g(x) \left( 
\int_V f(y) \; d\rho_x(y) \right)\; d\mu(x)\\
& = \iint_{V\times V} f(y)  g(x) \; d\rho(x, y)\\
%& = \iint_{V\times V} f(y)  g(x) \; d\rho(y, x)\\
& = \int_V f(y) \left( \int_V  g(x) \; d\rho_y(x) \right)\; d\mu(y)\\
& = \int_V f(y) R(g)(y) \; d\mu(y)\\
&= \langle R(g), f \rangle_{L^2(\mu)}.
\end{aligned}
$$ 

(10) We compute 
$$
\ba
||R(f)||_{L^1(\mu)} & =  \int_V \left|\int_V f(y) \; d\rho_x(y)\right|
\; d\mu(x)\\
& \leq \int_V\int_V |f(y)|  \; d\rho_x(y)d\mu(x)\\
& = \int_V |f(y)| c(y)\; d\mu(y)\\
&= || f ||_{L^1(\nu)}.
\ea
$$ 
The other statement in (10) is obvious.
\end{proof}

\section{Markov processes associated with symmetric measures}
\label{sect markov} 

In this section,  we introduce a Markov process related to a symmetric 
measure $\rho$ on $(V\times V, \B\times \B)$ and generated by a Markov 
 operator. 

\subsection{Markov operators} By a \textit{Markov operator}, 
we mean a positive 
self-adjoint operator $P$ in a $L^2$-space satisfying the normalization 
condition $P(\mathbbm 1) = \mathbbm 1$. The book \cite{Revuz1984}
is a remarkable introduction to homogeneous Markov chains with
measurable state space. More information about various aspects of Markov 
chains can be found in the following papers:
\cite{BezuglyiJorgensen2015, DutkayJorgensen2006, DutkayJorgensen2006, 
GaubertQu2015,JorgensenPaolucci2012,  KapicaMorawiec2015,
 Lukashiv2016}.

An example of a Markov operator built by a symmetric measure has been
given in Section \ref{sect Graph L and M operator}. 
We consider here the main properties of the operator $P$ defined above
in (\ref{eq def P}). 

\begin{definition} Let $(V,\B,\mu)$ be a a measure space, 
$E$ a symmetric subset of $V \times V$,  and $\rho$ a
 symmetric measure with support $E$. Let $R$ be  a positive operator 
 defined by $\rho$ as in  (\ref{eq def of R}). We set  
$$
P(f)(x) = \frac{1}{c(x)} R(f)(x)
$$
or
 \be\label{eq formula for P}
 P(f)(x) = \frac{1}{c(x)}  \int_V f(y) \; d\rho_x(y) = \int_V f(y) \; 
 P(x, dy)
 \ee
 where  $P(x, dy)$ is the probability measure obtained by normalization
 of $\rho_x$. 
 \end{definition}
 
\begin{remark} (1) Because every measure $P(x, \cdot)$ is probability, 
the positive operator $P$ is obviously normalized, i.e., $P(\mathbbm 1)
 = \mathbbm 1$. In fact, $P$ is defined on a set of Borel functions over
 $(V, \B)$ (allowing functions with infinite values). 
  Our main interest in the operator $P$ is focused on the properties of
  $P$ as an operator on  the spaces $L^2(\mu)$ and $L^2(\nu)$.

(2) It is worth noting that if a measurable field of probability measures
$x \to \mu(x), x \in V,$ is given on the space $(V, \B)$, then there exists a 
normalized positive operator $P$ (Markov operator) 
defined by $x\to \mu(x)$ similar to 
(\ref{eq formula for P}). But the converse is not true: not every Markov
operator determines a measurable field of probability measures.

  (3) Working with a positive normalized operator $P$, we will often use 
Jensen's    inequality which states that $(P(f))^2 \leq P(f^2)$ for any Borel 
 function  $f$. 

 (4) The notation $P(x, dy)$ for the measure $\ol\rho_x = c(x)^{-1} \rho_x$
  is used in (\ref{eq formula for P}) for consistency with notations common in 
the  literature on  Markov processes. 
 
(5) If $d\rho_x(y) = c_{xy} d\mu(y)$, then the operator $P$ is defined by
its density \be\label{eq P via p(x,y)}
  P(f)(x) = \int_V p(x, y) f(y) \; d\mu(y)
\ee
 where
 $$
 p(x, y) = \frac{c_{xy}}{c(x)}.
 $$
It follows from the definition that $p(x, y) > 0$ for any $(x, y) \in E$, and
$$
\int_V p(x,y) \; d\mu(y) = 1, \ \ \ \forall x \in V. 
$$ 
This simple fact makes clear parallels with Markov processes defined on 
discrete electrical networks. 
\end{remark}
 
In the following result, we  show how the operator $P$ acts on the measures
$\nu$ and $\mu$. 
 
 \begin{lemma}\label{lem nu P = nu}
(1)  Let $d\nu(x) = c(x)d\mu(x)$ where $c(x) = \rho_x(V)$.
 Then $\nu P = \nu$. 

(2) 
$$
\frac{d\mu P}{d\mu}(x) = \int_V \frac{1}{c(y)}\; d\rho_x(y).
$$
 \end{lemma}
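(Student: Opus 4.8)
The plan is to use the dual action of $P$ on measures, namely $(\lambda P)(A) = \int_V P(x,A)\, d\lambda(x)$, equivalently $(\lambda P)(f) = \int_V P(f)\, d\lambda$, together with the symmetry formula (\ref{eq formula fo symm meas}). Both statements then reduce to bookkeeping with the disintegration $\rho = \int_V \delta_x \times \rho_x\, d\mu(x)$ and the identity $P(x, A) = c(x)^{-1}\rho_x(A)$ coming from the normalization $P(x, dy) = c(x)^{-1} d\rho_x(y)$.

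For (1), I would compute directly. Since $d\nu = c\, d\mu$ and $P(f) = c^{-1}R(f)$, the factors of $c$ cancel:
\[
(\nu P)(f) = \int_V P(f)\, d\nu = \int_V \frac{1}{c(x)} R(f)(x)\, c(x)\, d\mu(x) = \int_V R(f)\, d\mu = \nu(f),
\]
where the last equality is the relation $\mu R = \nu$ recorded in Theorem \ref{thm R properties}(10). Equivalently, evaluating on a set $A \in \B$, one has $(\nu P)(A) = \int_V c(x)^{-1}\rho_x(A)\, c(x)\, d\mu(x) = \int_V \rho_x(A)\, d\mu(x) = \nu(A)$ by the definition (\ref{eq def of nu}) of $\nu$. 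This gives $\nu P = \nu$.

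For (2), the decisive step is to invoke the symmetry of $\rho$ to interchange the two coordinates. Fixing $A \in \B$ and using the disintegration (\ref{eq disint formula}), I would write
\[
(\mu P)(A) = \int_V P(x, A)\, d\mu(x) = \int_V \frac{\rho_x(A)}{c(x)}\, d\mu(x) = \iint_{V\times V} \frac{\chi_A(y)}{c(x)}\, d\rho(x, y).
\]
Applying (\ref{eq formula fo symm meas}) to the integrand $(x, y) \mapsto \chi_A(y)/c(x)$ swaps the roles of $x$ and $y$, yielding
\[
\iint_{V\times V} \frac{\chi_A(y)}{c(x)}\, d\rho(x, y) = \iint_{V\times V} \frac{\chi_A(x)}{c(y)}\, d\rho(x, y) = \int_A \left( \int_V \frac{1}{c(y)}\, d\rho_x(y)\right) d\mu(x).
\]
Since $A$ is arbitrary, this identifies $\int_V c(y)^{-1}\, d\rho_x(y)$ as the Radon--Nikodym derivative $d(\mu P)/d\mu$.

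Every integrand above is nonnegative, so Tonelli's theorem and the disintegration justify all exchanges of integration order in the extended-real-line sense, and I expect no analytic obstacle beyond verifying that $x \mapsto \int_V c(y)^{-1}\, d\rho_x(y)$ is measurable and a.e.\ finite, which holds because $c$ is positive a.e.\ and $x \mapsto \rho_x$ is a measurable field of measures. The only genuinely substantive point is recognizing that (2) is precisely the symmetry formula applied to the function $1/c$ read in the second variable; once that is seen, everything else is routine.
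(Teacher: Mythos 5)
Your proof is correct and follows essentially the same route as the paper: in (1) the factors of $c$ cancel and the identity $\mu R=\nu$ (itself the symmetry formula) finishes the argument, while in (2) you apply the symmetry of $\rho$ to the integrand $\chi_A(y)/c(x)$ exactly as the paper does with a general $f$. The only cosmetic difference is that you outsource the symmetry step in (1) to Theorem \ref{thm R properties}(10) instead of writing it out inline.
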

\begin{proof}
(1) We use (\ref{eq formula for P}) and the symmetry of $\rho$ 
(see (\ref{eq from symm of rho}))
to compute the Radon-Nikodym derivative $\dfrac{d(\nu P)}{d\nu}$. 
Let $f$ be any Borel  function over $(V, \B)$, then
$$
\ba
\int_V P(f)\; d\nu =   & \int_V \left(\frac{1}{c(x)}  \int_V f(y) \; d
\rho_x(y)\right)c(x)\;  d\mu(x)\\
 = & \int_V f(y) \left( \int_V  \; d\rho_y(x)\right)\; d\mu(y) \qquad
 (\mathrm{by\ symmetry\ of} \ \rho)\\
 = & \int_V f(y) c(y)\; d\mu(y)\\
= & \int_V f\; d\nu.
\ea
$$

(2) To prove the second assertion, we need to find a measurable function
 $g(x)$ such that
$$
\int_V P(f)\; d\mu = \int_V f g \; d\mu.
$$
For this,
$$
\ba
\int_V P(f)\; d\mu =& \int_V \left(\int_V f(y) P(x, dy)\right) \; d\mu(x) \\
= & \iint_{V\times V} \frac{f(y)}{c(x)}\; d\rho_x(y)d\mu(x)\\
= &\int_{V} f(x) \left(\int_V \frac{1}{c(y)}\; d\rho_x(y)\right) \; d\mu(x).\\
\ea
$$
This proves (2).
\end{proof} 

The following theorem contains several basic properties of the operator 
$P$. The most important is its self-adjointess in $L^2(\nu)$.

\begin{theorem} \label{thm P is s-a} Let $d\nu(x) = c(x) d\mu(x)$ be 
the $\sigma$-finite  measure on $(V, \B)$ where $\mu$ and $c$ are  
defined  as above. Suppose $P$ is defined by (\ref{eq formula for P}). Then:

(1) The bounded operator $P: L^2(\nu) \to L^2(\nu)$ is self-adjoint.

(2) The operator $P$ considered in the spaces $L^2(\nu)$ and $L^1(\nu)$ 
is  contractive, i.e., 
$$
|| P(f) ||_{L^2(\nu)} \leq || f ||_{L^2(\nu)}, \qquad || P(f) ||_{L^1(\nu)} \leq 
|| f ||_{L^1(\nu)}.
$$ 

(3) Spectrum of $P$ is a subset of $[-1, 1]$.

(4) Suppose that $(V, \B, \mu)$ is a probability measure space and the 
operator $P$ is defined by (\ref{eq P via p(x,y)}). Then $P$ is contractive in 
 $L^2(\mu)$. 

\end{theorem}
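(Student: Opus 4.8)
The plan is to obtain parts (1)--(3) together from two facts already in hand, the symmetry of $\rho$ and the invariance $\nu P=\nu$ of Lemma \ref{lem nu P = nu}, and to treat part (4) separately. First I would check that $P$ is symmetric on $L^2(\nu)$. Writing out the inner product with $d\nu=c\,d\mu$ and the defining formula (\ref{eq formula for P}), the weight $c(x)$ cancels the normalization $1/c(x)$ and leaves $\langle Pf,g\rangle_{L^2(\nu)}=\iint_{V\times V} f(y)g(x)\,d\rho(x,y)$. Applying the symmetry identity (\ref{eq from symm of rho}) to interchange $x$ and $y$ rewrites this as $\iint_{V\times V} f(x)g(y)\,d\rho(x,y)=\langle f,Pg\rangle_{L^2(\nu)}$, so $P$ is symmetric; since an everywhere-defined bounded symmetric operator is automatically self-adjoint, (1) will follow once boundedness is established.

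For (2), the $L^2(\nu)$ estimate is exactly where Jensen's inequality is used: each $P(x,\cdot)$ being a probability measure gives $(Pf)^2\le P(f^2)$ pointwise, whence $\|Pf\|_{L^2(\nu)}^2=\int_V (Pf)^2\,d\nu\le\int_V P(f^2)\,d\nu=\int_V f^2\,d\nu=\|f\|_{L^2(\nu)}^2$, the middle equality being precisely $\nu P=\nu$ from Lemma \ref{lem nu P = nu}(1). This yields $\|P\|_{L^2(\nu)}\le 1$ and retroactively supplies the boundedness that closes (1). The $L^1(\nu)$ bound is softer still: positivity gives $|Pf|\le P(|f|)$, and integrating against $\nu$ and invoking $\nu P=\nu$ once more gives $\|Pf\|_{L^1(\nu)}\le\|f\|_{L^1(\nu)}$. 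Part (3) is then immediate, since a self-adjoint operator on $L^2(\nu)$ with norm at most $1$ has real spectrum contained in $[-\|P\|,\|P\|]\subseteq[-1,1]$.

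The delicate step, and the one I expect to be the main obstacle, is part (4): here the space changes from $L^2(\nu)$ to $L^2(\mu)$, on which $P$ is no longer self-adjoint, and the invariance that did all the work above is lost. The natural attempt is again Jensen, or Cauchy--Schwarz against the probability kernel $p(x,\cdot)\,d\mu$, giving $(Pf(x))^2\le\int_V p(x,y)f(y)^2\,d\mu(y)$; integrating in $x$ and using Tonelli reduces the claim to $\|Pf\|_{L^2(\mu)}^2\le\int_V f(y)^2\,\big(\int_V p(x,y)\,d\mu(x)\big)\,d\mu(y)$. Everything therefore comes down to the inner ``column'' integral $\int_V p(x,y)\,d\mu(x)$, which by Lemma \ref{lem nu P = nu}(2) is exactly the Radon--Nikodym derivative $\frac{d(\mu P)}{d\mu}(y)$. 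Thus contractivity in $L^2(\mu)$ hinges on the bound $\frac{d(\mu P)}{d\mu}\le 1$, that is, on $\mu$ being excessive for $P$; establishing this is the crux of (4), and it is here that one must use the probability-measure hypothesis, in sharp contrast with the $L^2(\nu)$ case where $\nu P=\nu$ made the corresponding quantity identically equal to $1$.
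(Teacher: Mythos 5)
Your treatment of parts (1)--(3) is correct and essentially the paper's own argument: symmetry of $P$ on $L^2(\nu)$ comes from unfolding the inner product and using the symmetry of $\rho$ (the paper routes this through the symmetric operator $R$, but the computation is the same), contractivity comes from $P(f)^2\le P(f^2)$, $|P(f)|\le P(|f|)$ and the invariance $\nu P=\nu$ (which the paper re-derives in place via the symmetry of $\rho$), and (3) follows from (1) and (2).

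Part (4), however, is left with a genuine gap, and your proposed route does not close. After Jensen and Tonelli you reduce the claim to the pointwise bound $\int_V p(x,y)\,d\mu(x)\le 1$ for a.e.\ $y$, i.e.\ $\frac{d(\mu P)}{d\mu}\le 1$, and you explicitly defer establishing it. But this bound does not follow from the hypothesis that $\mu(V)=1$: writing $\int_V p(x,y)\,d\mu(x)=\int_V \frac{c_{yx}}{c(x)}\,d\mu(x)$, the symmetry of $c_{xy}$ gives no control on the ``column mass'' when $c(x)$ is small on a set of positive measure, so $\mu$ need not be excessive for $P$. The paper avoids this reduction entirely: it applies Cauchy--Schwarz with the splitting $p\cdot f$ (rather than $\sqrt{p}\cdot\sqrt{p}\,f$), obtaining
$$
\|P(f)\|^2_{L^2(\mu)}\;\le\;\|f\|^2_{L^2(\mu)}\iint_{V\times V}p(x,y)^2\,d\mu(y)\,d\mu(x),
$$
and then bounds $p(x,y)^2\le p(x,y)$ using $0\le p(x,y)\le 1$, so that the double integral equals $\int_V 1\,d\mu(x)=\mu(V)=1$; the probability-measure hypothesis enters only in this last step. (Note that the paper's argument itself quietly assumes $p(x,y)\le 1$, which is an extra hypothesis on the kernel; but at least it is stated and makes the estimate close.) To repair your version you would either need to adopt that same assumption and switch to the paper's splitting, or add the excessivity hypothesis $\mu P\le\mu$ explicitly — as written, part (4) is not proved.
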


We remark that, in fact, $P$ is also a contraction in the space $L^p (\nu),
1 \leq p \leq \infty$, but we will not use this in the paper.

\begin{proof}
 To see that (1) holds, we use Theorem \ref{thm R properties}  (6) and 
 formula (\ref{eq formula for P}):  for any $f, g \in L^2(\nu)$,
$$
 \ba
\langle P(f), g \rangle_{L^2 (\nu)} & = \langle c^{-1} R(f), 
g  \rangle_{L^2 (\nu)}\\ 
& = \langle R(f),  g\rangle_{L^2 (\mu)}\\
& = \langle f,  R(g)\rangle_{L^2 (\mu)}\\
& = \langle f,  c P(g)\rangle_{L^2 (\mu)}\\
& = \langle f,  P(g)\rangle_{L^2 (\nu)}\\
\ea
$$

The proof of (2) follows from the inequalities $P(f)^2 \leq P(f^2)$ and 
$|P(f)| \leq P(|f|)$ and the following calculation:
$$
\ba 
\int_V P(f)^2(x) c(x)\; d\mu(x) & \leq  \int_V P(f^2)(x)c(x)\; d\mu(x) \\
& = \int_V R(f^2)(x) \; d\mu(x)\\
& = \int_V \int_V f^2(y) \; d\rho_x(y) d\mu(x)\\
& = \iint_{V\times V} f^2(y) \; d\rho(x, y)  \ \ \qquad
\mbox{(by\ symmetry\ of\ $\rho$)} \\
&=  \iint_{V\times V} f^2(x) \; d\rho(x, y)\\
& = \int_V f^2(x) \left(\int_V \; d\rho_x(y)\right)\; d\mu(x)\\
& = \int_V f^2(x) c(x)\; d\mu(x).\\
\ea
$$

Similarly,  
$$ 
\int_V |P(f)(x)| \; d\nu(x) \leq \int_V P(|f|)(x) \; d\nu (x) = \int_V
|f(x)|\; d\nu(x)
$$
since $\nu$ is $P$-invariant.

Assertion (3) is now a direct consequence of the proved statements (1) 
and (2). 

To see that (4) holds, we use the Schwarz inequality and  the fact 
that $0\leq p(x,y) \leq 1$:
$$
\ba
\int_V P(f)^2(x) \; d\mu(x) & = \int_V \left( \int_V p(x,y)f(y)  \; d\mu(y)
\right)^2\; d\mu(x)\\
& \leq \int_V \left( \int_V p(x,y)^2  \; d\mu(y)\right) \left( \int_V f(y)^2  
\; d\mu(y)\right) \; d\mu(x)\\
&  \leq  || f ||^2_{L^2(\mu)} \int_V \left( \int_V p(x,y)  \; d\mu(y)\right)
d\mu(x)\\
& = || f ||^2_{L^2(\mu)}.
\ea
$$
Hence,
$$
|| P(f) ||_{L^2(\mu)} \leq || f ||_{L^2(\mu)}.
$$ 

\end{proof}

It is useful to represent  a symmetric measure $\rho$ via a Markov 
operator $P$.

\begin{lemma}\label{lem rho_n via P}  Let $(V, \B, \mu)$ be a measure
 space  and let $\nu = c\mu$.  
A Borel $\sigma$-finite measure $\rho$ on $(V\times V, \B \times \B)$ is
symmetric if and only if there exists a self-adjoint Markov operator
$P$ on $L^2(\nu)$ such that, for any $A, B \in \Bfin$, 
$$
\rho(A\times B) = \int_V \chi_A P(\chi_B)\; d\nu = \langle \chi_A,
P(\chi_B)\rangle_{L^2(\nu)}.
$$
More generally, such an operator $P$ defines a sequence of symmetric 
measures $(\rho_n)_{n \in \N}$ by the formula
\be\label{eq def rho_n}
\rho_n(A\times B) = \langle \chi_A, P^n(\chi_B)\rangle_{L^2(\nu)}.
\ee
\end{lemma}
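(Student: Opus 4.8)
The plan is to prove the two implications separately and then obtain the general statement about the sequence $(\rho_n)$ by applying the equivalence to the powers $P^n$. For the forward direction, suppose $\rho$ is symmetric. Under the standing Assumption D the operator $P(f)(x) = c(x)^{-1}\int_V f\,d\rho_x$ is well defined, it is bounded and self-adjoint on $L^2(\nu)$ by Theorem \ref{thm P is s-a}, and since each normalized measure $c(x)^{-1}\rho_x$ is a probability measure we have $P(\mathbbm 1) = \mathbbm 1$; thus $P$ is a self-adjoint Markov operator. The asserted identity is then a one-line disintegration computation:
\[
\langle \chi_A, P(\chi_B)\rangle_{L^2(\nu)} = \int_V \chi_A(x)\Big(\int_V \chi_B(y)\,d\rho_x(y)\Big) d\mu(x) = \iint_{V\times V}\chi_A(x)\chi_B(y)\,d\rho(x,y) = \rho(A\times B),
\]
where the weight $c(x)$ coming from $d\nu = c\,d\mu$ cancels the normalization $c(x)^{-1}$ built into $P$, and the middle equality is the disintegration formula (\ref{eq disint formula}).

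Conversely, given a self-adjoint Markov operator $P$ on $L^2(\nu)$, I would define a set function on measurable rectangles by $\beta(A,B) := \langle\chi_A, P(\chi_B)\rangle_{L^2(\nu)}$ for $A, B \in \Bfin$; this makes sense because $\Bfin(\mu)\subset\Bfin(\nu)$, so the characteristic functions lie in $L^2(\nu)$. Symmetry is then immediate from self-adjointness, $\beta(A,B) = \langle\chi_A, P\chi_B\rangle = \langle P\chi_A, \chi_B\rangle = \beta(B,A)$, and nonnegativity follows from the positivity (order-preservation) of $P$.

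The main work, and the step I expect to be the real obstacle, is to show that this positive symmetric bimeasure $\beta$ extends to a genuine countably additive measure $\rho$ on $(V\times V, \B\times\B)$ with $\rho(A\times B) = \beta(A,B)$; this is delicate precisely because an abstract Markov operator need not arise from a transition kernel (cf. the remark following the definition of a Markov operator). My plan is to pass to the positive linear functional $\Lambda$ defined on simple functions supported on the algebra $\mathcal{A}$ of finite disjoint unions of rectangles, by $\Lambda\big(\sum_{i,j} a_i b_j \chi_{A_i\times B_j}\big) = \langle \sum_i a_i \chi_{A_i},\, P(\sum_j b_j \chi_{B_j})\rangle_{L^2(\nu)}$; positivity of $P$ makes $\Lambda$ positive on nonnegative elements of this lattice. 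The crucial analytic input is continuity from above at the empty set: if $C_n \in \mathcal{A}$ decrease to $\emptyset$, then $\Lambda(\chi_{C_n})\to 0$. I would obtain this from the contractivity of $P$ on $L^2(\nu)$: for a fixed set $A$, if $\chi_{B_n}\downarrow 0$ with $\nu(B_1)<\infty$, then $\chi_{B_n}\to 0$ in $L^2(\nu)$ by dominated convergence, hence $P\chi_{B_n}\to 0$ in $L^2(\nu)$ and $\beta(A,B_n)\to 0$; the general case on $\mathcal{A}$ reduces to this separate countable additivity. Countable additivity of $\beta$ on $\mathcal{A}$ follows, and the Carathéodory (or Daniell–Stone) extension theorem then yields the required measure $\rho$, which is symmetric because $\beta$ is.

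Finally, for the sequence $(\rho_n)$ I would observe that whenever $P$ is a self-adjoint Markov operator, so is $P^n$: it is self-adjoint as a power of a self-adjoint operator, it preserves positivity as a composition of positivity-preserving maps, and it satisfies $P^n\mathbbm 1 = \mathbbm 1$ by induction. Applying the reverse implication already established to $P^n$ shows that $\rho_n(A\times B) = \langle\chi_A, P^n(\chi_B)\rangle_{L^2(\nu)}$ defines a symmetric measure for every $n \in \N$, which completes the proof.
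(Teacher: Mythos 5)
Your forward direction is correct and is essentially the paper's argument: the operator $P=c^{-1}R$ is self-adjoint on $L^2(\nu)$ by Theorem \ref{thm P is s-a}, and the identity $\rho(A\times B)=\langle\chi_A,P\chi_B\rangle_{L^2(\nu)}$ is just the disintegration formula with the weight $c$ in $d\nu=c\,d\mu$ cancelling the normalization $c^{-1}$ in $P$. The problems are in your converse and in the way you then derive the statement about $(\rho_n)$. First, you have misread what the converse requires: in the lemma $\rho$ is \emph{given} as a Borel $\sigma$-finite measure, and the hypothesis is that some self-adjoint Markov operator $P$ reproduces it on rectangles; all that must be shown is that $\rho$ is then symmetric, which is exactly your one-line computation $\rho(A\times B)=\langle\chi_A,P\chi_B\rangle=\langle P\chi_A,\chi_B\rangle=\rho(B\times A)$ together with the fact that a $\sigma$-finite measure on $\B\times\B$ is determined by its values on rectangles from $\Bfin$. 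The construction of a measure out of an abstract Markov operator is not asked for, and the paper (Proposition \ref{prop equiv of def of rho} plus the definitions of $P$ and $\nu$) never needs it.

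Second, the extension argument you substitute for this is genuinely incomplete, and it is the step on which your treatment of $(\rho_n)$ rests. You define a positive bimeasure $\beta(A,B)=\langle\chi_A,P\chi_B\rangle_{L^2(\nu)}$, verify separate countable additivity in each variable from $L^2$-contractivity, and then assert that ``the general case on $\mathcal{A}$ reduces to this separate countable additivity.'' It does not: to apply Carath\'eodory you need continuity from above at $\emptyset$ along arbitrary decreasing sequences in the algebra generated by rectangles, equivalently $\beta(A\times B)=\sum_k\beta(A_k\times B_k)$ whenever $A\times B=\bigsqcup_k A_k\times B_k$, and the standard proof of this integrates the pointwise identity $\chi_A(x)\chi_B(y)=\sum_k\chi_{A_k}(x)\chi_{B_k}(y)$ against a kernel $x\mapsto\lambda(x,\cdot)$ by monotone convergence --- precisely the kernel an abstract Markov operator need not possess, as the paper's own remark after the definition of a Markov operator points out. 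Passing from a separately countably additive positive bimeasure to a measure is a nontrivial extension theorem needing regularity or standard-Borel input that you do not invoke. The fix is to stay concrete, as the paper does: here $P$ is the kernel operator built from $\rho$, its powers have the transition kernels $P_n(x,dy)$ of Lemma \ref{lem P_n via P^n}, and $\rho_n$ is \emph{defined} directly as the measure $d\rho_n(x,y)=c(x)P_n(x,dy)\,d\mu(x)$; formula (\ref{eq def rho_n}) and the symmetry of $\rho_n$ then follow from the self-adjointness of $P^n$ exactly as in your forward direction.
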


\begin{proof} This results follows from Proposition 
\ref{prop equiv of def of rho} and the definition of $P$ and $\nu$. 
\end{proof}

It follows from Theorem \ref{thm P is s-a} that  $P^n$ is self-adjoint 
for every $n$, and therefore $\rho_n$ is a well defined symmetric measure.
One can see that, for any  $n \in \N$,
$$
d\rho_n(x,y) = c(x) P_n(x, dy)d\mu(x)
$$ 
and
$$
\iint_{V\times V} f(x, y) \; d\rho_n(x, y) = \iint_{V\times V} f(x, y) c(x)
P_n(x, dy)\; d\mu(x)  = \int_V P^n(f)(x) \; d\nu(x).
$$

\subsection{Harmonic functions for $P$} 
In this part we will deal with Markov operators $P$ in $L^2(\nu)$ preserving
the measure $\nu$. 

Let $P$ be the Markov operator defined by (\ref{eq formula for P}). In other 
words, this operator is define by an irreducible symmetric measure $\rho$. 
We recall that, as $P$ is a positive operator such that $P(\mathbbm 1) =
1$, then for any function $f$
the inequality $P(f)^2 (x)\leq P(f^2)(x)$ holds a.e. We need a stronger form
of this inequality. 

\begin{lemma}\label{lem strict ineq for P} For the Markov operator
$$ 
P(f)(x) = \int_V f(y)\; P(x, dy)
$$ 
and any non-constant function $f \in L^2(\nu)$, 
there exists a subset $A \subset V$ of positive measure $\nu$ such that 
$P(f)^2(x)  < P(f^2)(x), x \in A$.
\end{lemma}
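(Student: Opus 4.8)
The plan is to argue by contradiction, exploiting the equality case of Jensen's inequality on each fibre. First I would record the pointwise identity
$$
P(f^2)(x) - P(f)^2(x) = \int_V \bigl(f(y) - P(f)(x)\bigr)^2 \, P(x, dy) \ge 0,
$$
which is simply the conditional variance of $f$ with respect to the probability measure $P(x, \cdot)$. Since $t \mapsto t^2$ is strictly convex and $P(x,\cdot)$ is the normalization of $\rho_x$, the right-hand side vanishes at $x$ exactly when $f(y) = P(f)(x)$ for $\rho_x$-a.e. $y$, i.e. when $f$ is a.e. constant on the fibre $E_x$. Thus, if no set $A$ as in the statement existed, we would have $P(f^2) = P(f)^2$ for $\nu$-a.e. $x$, so $f$ would be $\rho_x$-a.e. equal to the constant $g(x) := P(f)(x)$ for $\mu$-a.e. $x$.

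Next I would upgrade ``constant on fibres'' into an invariance property of a level set. Writing the previous conclusion as $\iint_{V\times V} (f(y) - g(x))^2 \, d\rho = 0$ and applying the symmetry identity \eqref{eq formula fo symm meas}, one also gets $f(x) = g(y)$ for $\rho_x$-a.e. $y$, for a.e. $x$. Integrating the equality $P(f)^2 = P(f^2)$ against the $P$-invariant measure $\nu$ (Lemma \ref{lem nu P = nu}) yields $\|Pf\|_{L^2(\nu)} = \|f\|_{L^2(\nu)}$, whence $P^2 f = f$ because $P$ is a self-adjoint contraction on $L^2(\nu)$ (Theorem \ref{thm P is s-a}). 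Consequently $w := f - Pf$ satisfies $Pw = -w$, and the positivity estimate $P|w| \ge |Pw| = |w|$ together with $\int (P|w| - |w|)\, d\nu = 0$ forces $P|w| = |w|$; moreover $w(y) = -w(x)$ on a.e. fibre, so $|w|$ is constant on fibres. Hence each level set $B = \{|w| > s\}$ satisfies $\rho(B \times (V\setminus B)) = 0$, and symmetry gives $\rho((V\setminus B)\times B) = 0$ as well, so $B$ carries no $\rho$-mass to its complement.

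The main obstacle is this final propagation step: passing from the absence of cross-fibre mass to global constancy of $f$. This is precisely where irreducibility of $\rho$ must act as connectedness of the underlying ``graph'': a measurable splitting $V = B \sqcup (V\setminus B)$ with both parts of positive $\nu$-measure and no $\rho$-edges between them should contradict irreducibility, forcing $|w|$ to be constant, say $\equiv \kappa$. When $\kappa = 0$ one has $f = Pf$ harmonic and constant on fibres, and the same invariant-set argument applied to the level sets of $f$ itself gives $f$ constant $\nu$-a.e., the desired contradiction. The delicate point I expect to require the most care is excluding a genuine period-two configuration $|w| \equiv \kappa > 0$, in which $P$ carries the eigenvalue $-1$ and $V$ decomposes into two sets with edges running only between them; here one must invoke irreducibility in its strong (aperiodic) form, since constancy on fibres by itself is strictly weaker than harmonicity and does not rule out such a bipartite splitting.
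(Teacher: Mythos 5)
Your first paragraph is, in substance, the paper's entire proof: the authors simply invoke Jensen's inequality for the strictly convex function $t\mapsto t^2$ together with the equality case from Rudin, which yields exactly your conclusion that failure of the lemma forces $f$ to be $P(x,\cdot)$-a.e.\ constant on a.e.\ fibre $E_x$. Everything after that in your proposal is material the paper omits, and the obstruction you isolate at the end is not a defect of your write-up but a genuine one in the lemma as stated. Concretely: take $V=[0,1]\sqcup[2,3]$ with Lebesgue measure $\mu$, let $E=([0,1]\times[2,3])\cup([2,3]\times[0,1])$ and $\rho=(\mu\times\mu)|_E$. This $\rho$ is symmetric and irreducible in the paper's sense (every $E_x$ is an uncountable standard Borel space), $c\equiv 1$, $\nu=\mu$ is finite, and $f=\chi_{[0,1]}-\chi_{[2,3]}$ is a non-constant element of $L^2(\nu)$ with $Pf=-f$ and $P(f)^2=P(f^2)\equiv 1$ everywhere. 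So the period-two configuration you were worried about actually occurs, $-1$ lies in the point spectrum of $P$, and no set $A$ as in the statement exists. The lemma therefore needs an extra hypothesis (aperiodicity, or exclusion of $-1$ from the point spectrum, or a restriction to harmonic $f$, which is all that is used in the proof of Theorem \ref{thm harmonic}); your level-set analysis identifies precisely what must be assumed, whereas the paper's two-line proof never confronts the passage from fibrewise to global constancy at all.

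Two smaller remarks on your intermediate steps. You were right not to invoke Theorem \ref{thm harmonic} to dispose of the harmonic component $\tfrac12(f+Pf)$ --- that theorem is deduced from this lemma, so doing so would be circular --- but this means the case $w=0$ (a harmonic $f$ that is constant on fibres) still requires a connectedness argument in the spirit of Proposition \ref{prop connectedness}, which is likewise absent from the paper's proof. Also, the cancellation $\int_V(P|w|-|w|)\,d\nu=0$ presupposes $|w|\in L^1(\nu)$, which does not follow from $w\in L^2(\nu)$ when $\nu$ is infinite; it is safer to test against $\chi_A$ for $A\in\Bfin$, or to work with the level sets directly as you do in the next step.
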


\begin{proof} The proof follows from Jensen's inequality applied to the 
convex function $\va(x) = x^2$ . As shown in the 
proof of \cite[Theorem 3.3]{Rudin1987}, the equality occurs only for affine
convex functions. 
\end{proof}

\begin{theorem}\label{thm harmonic} Let $(V, \B, \nu)$ be a measure space
with finite or $\sigma$-finite measure. Suppose  $P$ is a Markov operator
on  $L^2(\nu)$ defined by an irreducible symmetric measure according to 
(\ref{eq formula for P}). Then  
$$
 L^2(\nu) \cap \h arm(P) = \begin{cases} 0,  & \nu(V) = \infty\\
\mathbb R\mathbbm 1 , & \nu(V) < \infty
 \end{cases}
$$ 
where $\mathbb R\mathbbm 1$ is the set of constant functions. Moreover,
$1$ does not belong to the point spectrum of the operator $P$ on the 
space $L^2(\nu)$.  
\end{theorem}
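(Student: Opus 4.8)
The plan is to reduce the entire statement to a single assertion: every $f \in L^2(\nu)$ with $Pf = f$ must be constant $\nu$-a.e. Once this is in hand, both cases of the theorem follow at once, since a nonzero constant lies in $L^2(\nu)$ precisely when $\nu(V) < \infty$. First I would fix $f \in L^2(\nu)$ that is harmonic, i.e. $\Delta f = 0$, which by (\ref{eq Delta via P}) is equivalent to $Pf = f$. By Theorem \ref{thm P is s-a} the operator $P$ is a contraction on $L^2(\nu)$, so $Pf \in L^2(\nu)$ and every integral below is finite.

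The heart of the argument is an energy-type identity combining Jensen's inequality with the $P$-invariance of $\nu$. Since $P$ is a positive normalized operator, Jensen's inequality gives the pointwise bound $P(f^2) \ge (Pf)^2$ $\nu$-a.e. On the other hand, Lemma \ref{lem nu P = nu} yields $\int_V P(g)\, d\nu = \int_V g\, d\nu$ for nonnegative Borel $g$; applying this to $g = f^2$ and using $Pf = f$ I obtain
$$
\ba
0 \le \int_V \left( P(f^2) - (Pf)^2 \right) d\nu &= \int_V P(f^2)\, d\nu - \int_V (Pf)^2\, d\nu \\
&= \int_V f^2\, d\nu - \int_V f^2\, d\nu = 0.
\ea
$$
Thus the nonnegative function $P(f^2) - (Pf)^2$ integrates to zero against $\nu$, forcing $P(f^2) = (Pf)^2$ $\nu$-a.e.

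At this point I would invoke the strict Jensen inequality of Lemma \ref{lem strict ineq for P}: were $f$ non-constant, there would be a set $A$ of positive $\nu$-measure on which $(Pf)^2 < P(f^2)$, contradicting the a.e. equality just obtained. Hence $f$ is constant. This last step -- upgrading a.e. equality in Jensen to global constancy -- is exactly where irreducibility of $\rho$ is used (it is built into Lemma \ref{lem strict ineq for P}), and it is the only genuinely delicate point; everything else is bookkeeping.

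The dichotomy then reads off directly. If $\nu(V) = \infty$, the only constant in $L^2(\nu)$ is $0$, so $L^2(\nu) \cap \h arm(P) = 0$; if $\nu(V) < \infty$, every constant lies in $L^2(\nu)$ and is harmonic because $P\mathbbm 1 = \mathbbm 1$, giving $L^2(\nu) \cap \h arm(P) = \R\mathbbm 1$. Finally, $1$ is an eigenvalue of $P$ on $L^2(\nu)$ exactly when there is a nonzero $L^2(\nu)$ solution of $Pf = f$, that is, a nonzero $L^2$ harmonic function; by the computation above there is none when $\nu(V) = \infty$, so in the infinite-measure case $1$ does not belong to the point spectrum of $P$ (whereas for $\nu(V) < \infty$ the eigenspace at $1$ is precisely $\R\mathbbm 1$).
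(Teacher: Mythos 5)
Your proposal is correct and follows essentially the same route as the paper: both rest on the identity $\int_V P(f^2)\,d\nu=\int_V f^2\,d\nu$ from $\nu P=\nu$ (Lemma \ref{lem nu P = nu}) combined with the strict Jensen inequality of Lemma \ref{lem strict ineq for P} to rule out non-constant $L^2(\nu)$ fixed points of $P$. The only cosmetic difference is that the paper packages the argument as a direct chain $\|f\|^2<\|f\|^2$, while you first show the nonnegative defect $P(f^2)-(Pf)^2$ integrates to zero and then contradict strict Jensen; your explicit handling of the finite-measure case and of the point-spectrum remark is, if anything, slightly more careful than the paper's.
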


\begin{proof} If we show that there is no nontrivial harmonic functions
in $L^2(\nu)$, then we prove that $1$ is not an eigenvalue for $P$. Clearly, 
the converse also holds. 

 Assume for contrary that there exists a non-constant 
function $f \in L^2(\nu)$ such that $P(f) = f$. Then, by Lemmas 
\ref{lem nu P = nu} and \ref{lem strict ineq for P}, we have
$$
\ba
|| f ||^2_{L^2(\nu)} = & \int_V f(x)^2 \; d\nu(x) \\
= & \int_V (P(f))^2(x)  \; d\nu(x) \\
<  & \int_V P(f^2)(x)  \; d\nu(x) \\
 = & \int_V f^2(x)  \; d(\nu P)(x) \\
= & \int_V f^2(x)  \; d\nu(x) \\
=& || f ||^2_{L^2(\nu)}.\\
\ea
$$
This contradiction proves the theorem.
\end{proof}

We recall that if $T$ is a contraction in a Hilbert space $\mc K$, then
$\mc K$ is decomposed into the orthogonal direct sum
\be\label{eq MET}
\mc K = \mathrm{Fix}(T) \oplus \ol{\mathrm{Range}(I - T)}.
\ee
This is a form of the mean ergodic theorem, see e.g., 
\cite[Theorem 8.6]{Eisner2015}. 

 We apply this result to the case of an 
abstract Markov operator $P$ acting in $L^2(\nu)$ such that $\nu P = 
\nu$. We assume here that $P$ is self-adjoint and contractive. 
 Denote by $Cb(P)$ the subset  of 
 $L^2(\nu)$ formed by  \textit{$P$-coboundaries}, i.e., $Cb(P) =
 \{g - P(g) \ | \ g \in   L^2(\nu)\}$. 
  Clearly, for any function $g\in L^2(\nu)$, one has 
$\int_V (g - P(g))\; d\nu = 0$ since $\nu$ is $P$-invariant. Hence
$$
Cb(P) \subset  L^1_0(\nu) \cap L^2(\nu) = 
 \{f \in L^2(\nu) : \int_V f \; d\nu =0\}.
$$

\begin{proposition}\label{prop coboundaries}
 (1) Let $P$ be a self-adjoint contractive operator on 
$L^2(\nu)$  satisfying  $\nu P = \nu$. Then 
$$
L^2(\nu) = \h arm_2(P) \oplus \overline{Cb(P)},
$$
where $\h arm_2(P) = \{f \in L^2(\nu) : P(f) = f\}$ and $\overline{Cb(P)}$
 is the closure of $Cb(P)$ in $L^2(\nu)$.

(2) Suppose that $P$ is a Markov operator defined by an irredusible 
symmetric measure $\rho$ as in Theorem 
\ref{thm harmonic}. Then  the set $\{g - P(g) \ | \ g \in L^2(\nu)\}$ of 
$P$-coboundaries is  dense in $L^2(\nu)$.

(3) The operator $(I - P)^{-1}$ is unbounded in $L^2(\nu)$.
\end{proposition}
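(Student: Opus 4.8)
The plan is to handle the three parts in order, since each builds on the previous. For (1) I would apply the standard orthogonal decomposition $\mc K = \ker A^{*}\oplus\overline{\mathrm{Range}(A)}$ to the bounded operator $A = I-P$. Since $P$ is self-adjoint, $A$ is self-adjoint, so $\ker A^{*} = \ker(I-P) = \h arm_2(P)$ while $\mathrm{Range}(A) = \{g-P(g): g\in L^2(\nu)\} = Cb(P)$; this yields $L^2(\nu) = \h arm_2(P)\oplus\overline{Cb(P)}$ at once, and it is exactly the mean ergodic decomposition (\ref{eq MET}) with $T=P$ and $\mathrm{Fix}(P)=\h arm_2(P)$. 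Part (2) is then immediate from (1) together with Theorem \ref{thm harmonic}: under the standing assumption that $\nu$ is infinite $\sigma$-finite, that theorem gives $\h arm_2(P) = L^2(\nu)\cap\h arm(P) = 0$, so the decomposition collapses to $\overline{Cb(P)} = L^2(\nu)$, which is the asserted density.

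For (3) the first step is a reduction to a spectral statement. As $\h arm_2(P)=0$, the operator $I-P$ is injective, so $(I-P)^{-1}$ is a well-defined closed operator whose domain is $\mathrm{Range}(I-P) = Cb(P)$, dense by (2). Were it bounded, it would extend to all of $L^2(\nu)$, forcing $\mathrm{Range}(I-P)$ to be closed and hence equal to $L^2(\nu)$; then $I-P$ would be a bounded bijection and, by the open mapping theorem and self-adjointness, $1\notin\sigma(P)$. Thus it suffices to show $1\in\sigma(P)$, which I would do by constructing a Weyl (approximate eigenvector) sequence. By Theorem \ref{thm P is s-a} we have $\sigma(P)\subseteq[-1,1]$, so $I-P\ge 0$ with $\sigma(I-P)\subseteq[0,2]$ and therefore $(I-P)^2\le 2(I-P)$. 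Hence for $A\in\Bfin(\nu)$ and $f_A := \chi_A/\sqrt{\nu(A)}$,
$$\|(I-P)f_A\|_{L^2(\nu)}^2 \le 2\langle (I-P)f_A, f_A\rangle_{L^2(\nu)} = \frac{2\,\rho\big(A\times(V\setminus A)\big)}{\nu(A)},$$
where I used $\langle P\chi_A,\chi_A\rangle_{L^2(\nu)} = \rho(A\times A)$ from Lemma \ref{lem rho_n via P} and $\nu(A)=\rho(A\times V)=\int_A c\,d\mu$.

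The main obstacle is the remaining step: producing sets $A_n\in\Bfin(\nu)$ with $\nu(A_n)\to\infty$ and $\rho(A_n\times(V\setminus A_n))/\nu(A_n)\to 0$, i.e. showing that the isoperimetric (Cheeger) ratio of the pair $(\rho,\nu)$ has infimum zero. This is the only point at which the specific structure must enter — the infiniteness of $\nu$, irreducibility of $\rho$, and the presence of the invariant generalized eigenfunction $\mathbbm 1$ (with $P\mathbbm 1=\mathbbm 1$ but $\mathbbm 1\notin L^2(\nu)$) — and I expect the genuine work to lie here; an alternative is to argue through conservativity or recurrence of the associated Markov process in order to push the spectral radius of $P$ up to $1$. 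Once such a Følner-type exhaustion is available, the estimate above produces unit vectors $f_{A_n}$ with $\|(I-P)f_{A_n}\|\to 0$, so $1\in\sigma(P)$ and $(I-P)^{-1}$ is unbounded, completing (3).
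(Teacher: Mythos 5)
Parts (1) and (2) of your proposal are correct and follow the paper's own route: the paper obtains (1) as an instance of the mean ergodic decomposition (\ref{eq MET}) (writing out the duality computation $\langle f, g-P(g)\rangle_{L^2(\nu)}=\langle f-P(f),g\rangle_{L^2(\nu)}$ rather than naming the identity $\ker(I-P)=\overline{\mathrm{Range}(I-P)}^{\perp}$, but it is the same argument), and it deduces (2) from Theorem \ref{thm harmonic} exactly as you do.

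For (3) your proposal is incomplete, and you have correctly located where the difficulty sits. Your reduction is sound: density of $\mathrm{Range}(I-P)$ from part (2) does not by itself make $(I-P)^{-1}$ unbounded, since a self-adjoint injective operator with dense range has bounded inverse precisely when $1\notin\sigma(P)$; and your estimate $\|(I-P)f_A\|^2_{L^2(\nu)}\le 2\rho(A\times A^c)/\nu(A)$ for $f_A=\chi_A/\sqrt{\nu(A)}$ is a correct consequence of $0\le I-P\le 2I$ together with Lemma \ref{lem rho_n via P}. But the step you defer --- producing sets $A_n\in\Bfin$ with $\rho(A_n\times A_n^c)/\nu(A_n)\to 0$ --- is not a routine verification: it is an amenability (vanishing isoperimetric constant) condition on the pair $(\rho,\nu)$, and it does not follow from the standing assumptions (irreducibility of $\rho$, $c\in L^1_{\mathrm{loc}}(\mu)$, $\nu(V)=\infty$). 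Indeed, a measurable analogue of simple random walk on a regular tree yields a symmetric irreducible $\rho$ with infinite invariant $\nu$ for which $\|P\|_{L^2(\nu)}<1$, hence $1\notin\sigma(P)$ and $(I-P)^{-1}$ is bounded; so your plan cannot be completed in the stated generality without an additional hypothesis. For comparison, the paper disposes of (3) with the single sentence that it ``follows from (2)'', i.e.\ it passes directly from dense range to unbounded inverse --- precisely the inference your analysis shows to be insufficient. Your attempt is therefore more candid than the published argument about what actually remains to be proved, but as it stands the key step of part (3) is missing.
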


\begin{proof} 
(1) Clearly, this statement is a form of (\ref{eq MET}).
 Suppose that $f$ is a function from $L^2(\nu) \ominus L^2_0(\nu)$.
 Then, for arbitrary $g \in L^2(\nu)$, 
$$
\ba
0 =& \langle f, g - P(g)\rangle_{L^2(\nu)}\\
=&   \langle f, g\rangle_{L^2(\nu)} - \langle f, P(g)\rangle_{L^2(\nu)}\\
=&   \langle f, g\rangle_{L^2(\nu)} - \langle P(f), g\rangle_{L^2(\nu)}\\
=& \langle f - P(f), g\rangle_{L^2(\nu)}. \\
\ea
$$
Hence, $f = P(f)$. The same proof shows that if $f \in \h arm_2(P)$, then 
$\langle f, g\rangle_{L^2(\nu)} = \langle P(f), g\rangle_{L^2(\nu)} 
= \langle f, P(g)\rangle_{L^2(\nu)}$ and $f \perp Cb(P)$. 

(2) The result follows from Theorem 
\ref{thm harmonic} and statement (1) of this theorem.

(3) This observation follows from (2). 
\end{proof}

In the next statement we summarize  facts about harmonic
functions in $L^2(\nu)$. 

\begin{theorem}\label{thmHarm}
Let $P$ be  a self-adjoint contractive operator on $L^2(\nu)$ such that 
$\nu P= \nu$. The following are equivalent:

(i) $\lambda =1$ is not an eigenvalue for the operator $P$ in $L^2(\nu)$;

(ii) $\{ P(f) = f\} \cap L^2(\nu) = 0$;

(iii) 
$$
\lim_{N \to \infty} \frac{1}{N}  \sum_{n = 1}^{N} P^n(f) = 0, \quad
f \in L^2(\nu);  
$$

(iv) for any $A, B\in \Bfin$,
$$
\lim_{N \to \infty} \frac{1}{N}  \sum_{n = 1}^{N} \rho_n(A \times B) =0.
$$
\end{theorem}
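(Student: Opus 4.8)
The plan is to prove the four conditions equivalent by isolating the mean ergodic theorem as the central tool and then reading each condition off a single strong-convergence statement. Throughout I write $M_N := \frac{1}{N}\sum_{n=1}^N P^n$ for the Cesàro averages and $Q$ for the orthogonal projection of $L^2(\nu)$ onto $\h arm_2(P) = \{f \in L^2(\nu) : P(f) = f\}$; I would organize the argument as the chain $(i)\Leftrightarrow(ii)$, $(ii)\Leftrightarrow(iii)$, $(iii)\Leftrightarrow(iv)$. The first link is immediate: since $P$ is self-adjoint, $\lambda = 1$ lies in the point spectrum of $P$ exactly when there is a nonzero $f \in L^2(\nu)$ with $P(f) = f$, that is, when $\h arm_2(P) \neq \{0\}$, which is precisely the negation of $(ii)$.

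The core step is to establish that $M_N \to Q$ strongly in $L^2(\nu)$. Here I would invoke the decomposition $L^2(\nu) = \h arm_2(P) \oplus \overline{Cb(P)}$ furnished by Proposition \ref{prop coboundaries}(1). On $\h arm_2(P)$ one has $P^n f = f$, so $M_N f = f$; on a coboundary $f = g - P(g)$ the sum telescopes,
$$
M_N(g - P(g)) = \frac{1}{N}\sum_{n=1}^N (P^n g - P^{n+1} g) = \frac{1}{N}(P g - P^{N+1} g),
$$
whose norm is at most $2\|g\|_{L^2(\nu)}/N \to 0$ because $P$ is contractive. Since each $M_N$ is itself a contraction, this vanishing extends from $Cb(P)$ to its closure $\overline{Cb(P)}$, giving $M_N \to Q$ strongly. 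This is exactly the mean ergodic theorem \eqref{eq MET} made explicit, so I would cite it and keep the telescoping computation short.

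With this in hand, $(ii)\Leftrightarrow(iii)$ is transparent: condition $(iii)$ asserts $M_N f \to 0$ for every $f \in L^2(\nu)$, i.e. $Q = 0$, which is equivalent to $\h arm_2(P) = \{0\}$, i.e. $(ii)$. For the final equivalence I would use Lemma \ref{lem rho_n via P}, which gives $\rho_n(A\times B) = \langle \chi_A, P^n(\chi_B)\rangle_{L^2(\nu)}$ for $A, B \in \Bfin$, whence
$$
\frac{1}{N}\sum_{n=1}^N \rho_n(A\times B) = \langle \chi_A, M_N(\chi_B)\rangle_{L^2(\nu)};
$$
note $\chi_A, \chi_B \in L^2(\nu)$ since $c \in L^1_{\mathrm{loc}}(\mu)$ forces $\Bfin(\mu) \subset \Bfin(\nu)$. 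Then $(iii)\Rightarrow(iv)$ follows by Cauchy--Schwarz, while $(iv)\Rightarrow(iii)$ follows by passing to the limit: $(iv)$ yields $\langle \chi_A, Q(\chi_B)\rangle = 0$ for all $A, B \in \Bfin$, and since $\Dfin = \mathrm{Span}\{\chi_A : A \in \Bfin\}$ is dense in $L^2(\nu)$ and $Q$ is a self-adjoint projection, this forces $Q = 0$, i.e. $(iii)$.

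The step I expect to require the most care is the strong convergence $M_N \to Q$ itself — in particular, extending the vanishing of $M_N$ from the dense subspace $Cb(P)$ to its closure via the uniform bound $\|M_N\| \leq 1$ — together with verifying that $\{\chi_A : A \in \Bfin\}$ is dense in $L^2(\nu)$ (and not merely in $L^2(\mu)$). The latter I would obtain by writing any $B$ with $\nu(B) < \infty$ as a countable disjoint union $B = \bigcup_i (B \cap A_i)$ over a fixed $\Bfin(\mu)$-partition $\{A_i\}$ of $V$, observing that each $B \cap A_i$ lies in $\Bfin(\mu)$, and approximating $\chi_B$ in $L^2(\nu)$ by finite subsums, which converge because $\nu(B) < \infty$. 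Everything else reduces to the definitions and the elementary estimates above.
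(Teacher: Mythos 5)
Your proof is correct and follows essentially the same route as the paper: (i)$\Leftrightarrow$(ii) read off from self-adjointness, (ii)$\Leftrightarrow$(iii) via the mean ergodic theorem through the decomposition $L^2(\nu)=\h arm_2(P)\oplus\overline{Cb(P)}$, and (iii)$\Leftrightarrow$(iv) via Lemma \ref{lem rho_n via P}. The only difference is one of completeness, in your favor: you prove the Ces\`aro convergence $M_N\to Q$ explicitly by telescoping on coboundaries where the paper simply cites the mean ergodic theorem, and you supply the (iv)$\Rightarrow$(iii) direction (via density of $\Dfin$ in $L^2(\nu)$ and self-adjointness of $Q$) which the paper leaves implicit.
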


\begin{proof}
 (i) $\Longleftrightarrow$ (ii). This equivalence is a reformulation of the 
 proved results from Theorems \ref{thm P is s-a} and \ref{thm harmonic}.
 
 (ii) $\Longleftrightarrow$ (iii). By the mean ergodic theorem for contractive
 operators (see, for example, \cite{Yosida1995, Eisner2015}), 
 we obtain that, for any vector $f \in L^2(\nu)$, the sequence of vectors 
 $$
 S_N(f) =\frac{1}{N}  \sum_{n = 1}^{N} P^n(f)
 $$ 
 converges strongly to a vector $\va$ that belongs to the closed subspace 
 of $P$-invariant vectors, i.e., $\va$ must be a harmonic function. the
 converse statement is obviously true. 
 
  (iii) $\Longleftrightarrow$ (iv). We observe that the strong converges of 
  $S_N(f)$ is equivalent to the weak converges. Then, for 
  characteristic functions $\chi_A, \chi_B$, $A, B\in \Bfin$, we find that 
  $$
  \ba
  \langle S_N(\chi_A), \chi_B \rangle_{L^2(\nu)} = & \frac{1}{N}  
  \sum_{n = 1}^{N} \langle \chi_A, P^n(\chi_B) \rangle_{L^2(\nu)}\\
=&   \frac{1}{N}  \sum_{n = 1}^{N} \rho_n(A \times B)\\
 & \longrightarrow 0, \ \qquad N\to \infty. 
  \ea
  $$
We used here Lemma  \ref{lem rho_n via P}. 
  
\end{proof}

In the next proposition, we consider several  properties of harmonic functions 
for a Markov operators $P$ acting on the space of measurable function
$\mc F(V, \B, \nu)$ where $\nu P = \nu$. 
 
\begin{proposition} Suppose that $P : \mc F(V, \B, \nu) \to \mc F(V, \B, 
\nu)$ is a positive operator ($f \geq 0 \Longrightarrow P(f) \geq 0$) such
that $P(\mathbbm 1) = 1$ and $\nu P = \nu$.
 
 (1) If a function $h \in L^2(\nu)$ and $P(h)(x) = h(x)$ $\nu$-a.e., then 
$P(h^2) = h^2$ a.e.

(2) If $h,k \in L^2(\nu)$ and $P(h) = h, P(k) = k$, then
$P(hk) = hk$ $\nu$-a.e.

(3) If $h \in L^2(\nu)$ and $P(h) = h$, then $P(hg) = hP(g)$ for any 
function $g \in \mc F$.

(4) If $h \in L^2(\nu)$ and $P(h) = h$, then $P(h^n) = h^n, n\in \N$.
\end{proposition}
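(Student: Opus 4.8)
The plan is to build everything on the pointwise Jensen inequality $(P(f))^2 \le P(f^2)$ (available for any normalized positive $P$, as noted above) together with the invariance $\nu P = \nu$. For (1), Jensen applied to $h$ gives $P(h^2) - (P(h))^2 \ge 0$ pointwise; since $P(h) = h$ this reads $P(h^2) - h^2 \ge 0$. Integrating against $\nu$ and using $\nu P = \nu$, both sides integrate to $\int_V h^2\,d\nu$, so the non-negative function $P(h^2) - h^2$ has zero $\nu$-integral and hence vanishes $\nu$-a.e. For (2), I would observe that $h+k$ is again harmonic and lies in $L^2(\nu)$, apply (1) to each of $h$, $k$, $h+k$, and then use the polarization identity $hk = \tfrac12\big((h+k)^2 - h^2 - k^2\big)$ together with linearity of $P$ to read off $P(hk) = hk$.

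For (3), the idea is to reinterpret (1) probabilistically. Writing $P(f)(x) = \int_V f(y)\,P(x,dy)$ with $P(x,\cdot)$ the probability measure from (\ref{eq formula for P}), the equality $P(h^2) = (P(h))^2$ established in (1) says exactly that the variance of $h$ under $P(x,\cdot)$ vanishes for $\nu$-a.e. $x$. Consequently $h(y) = h(x)$ for $P(x,\cdot)$-a.e. $y$, so one may pull $h(x)$ outside the integral: $P(hg)(x) = \int_V h(y)g(y)\,P(x,dy) = h(x)\int_V g(y)\,P(x,dy) = h(x)P(g)(x)$. Part (4) then follows by induction on $n$: the base case is the hypothesis $P(h) = h$, and if $P(h^{n-1}) = h^{n-1}$ then (3) applied with $g = h^{n-1}$ gives $P(h^n) = h\,P(h^{n-1}) = h^n$.

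The step I expect to be the main obstacle is (3), specifically the passage from the integrated identity $\Gamma(h,h) := P(h^2) - (P(h))^2 = 0$ a.e.\ to the conclusion $P(hg) = hP(g)$ for every measurable $g$. The tempting route is the pointwise Cauchy--Schwarz bound $\Gamma(h,g)^2 \le \Gamma(h,h)\,\Gamma(g,g)$ for the bilinear form $\Gamma(u,v) := P(uv) - P(u)P(v)$, but for arbitrary $g \in \mc F$ the factor $\Gamma(g,g) = P(g^2) - (P(g))^2$ may be infinite, producing a $0\cdot\infty$ ambiguity. I would therefore route (3) through the ``zero variance forces a.s.-constancy on fibers'' argument above, which pulls $h(x)$ out of the integral and requires no integrability of $g$ beyond measurability; this is the cleanest way to accommodate the fully general $g$ in the statement.
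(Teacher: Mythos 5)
Your proposal is correct, and in two places it diverges from (and arguably improves on) the paper's own argument. Part (1) is identical: pointwise Jensen plus $\nu P=\nu$ forces the non-negative function $P(h^2)-h^2$ to have zero integral. For part (2) the paper instead fixes $x$, observes that $\langle f,g\rangle_x := P(fg)(x)-P(f)(x)P(g)(x)$ is a positive semi-definite bilinear form, and applies the Cauchy--Schwarz bound $(P(hk)-hk)^2\le (P(h^2)-h^2)(P(k^2)-k^2)$; your polarization identity $hk=\tfrac12\bigl((h+k)^2-h^2-k^2\bigr)$ reaches the same conclusion more elementarily, with all terms safely in $L^1(\nu)$ so that linearity of $P$ is unproblematic. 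For part (3) the paper only says ``can be proved similarly to (2),'' and you have correctly identified why a literal transcription of (2) is delicate: for arbitrary $g\in\mc F$ the factor $P(g^2)-P(g)^2$ may be infinite, so the Cauchy--Schwarz bound degenerates to $0\cdot\infty$. Your replacement --- reading $P(h^2)(x)=(P(h))^2(x)$ as vanishing of the variance of $h$ under the fiber measure $P(x,\cdot)$, hence $h(y)=h(x)$ for $P(x,\cdot)$-a.e.\ $y$, hence $h(x)$ pulls out of the integral defining $P(hg)(x)$ --- is exactly the right repair, works for every measurable $g$ (in the extended-real sense the paper allows), and is in fact the cleanest way to justify the paper's claim. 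Part (4) by induction on $n$ via (3) matches the paper's intent. No gaps.
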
 

\begin{proof} (1) Since $P$ is a positive and normalized operator, 
the inequality 
$P(f)^2(x) \leq P(f^2)(x)$ holds for every function $f \in \mc F$ and 
every $x$. By assumption,   $h\in L^2(\nu)$, hence $h^2 \in L^1(\nu)$. 
Next, since $\nu$ is $P$-invariant, we have
$$
\int_V P(h^2)\; d\nu = \int_V h^2\; d\nu.
$$
This means that 
$$
\ba
0 \leq & \int_V (P(h^2) - P(h)^2)\; d\nu \\
= & \int_V P(h^2)\; d\nu - \int_V h^2\; d\nu\\
= &0. 
\ea
$$
Thus, $P(h^2) = h^2$ a.e.

(2) Fix $x \in V$. Because $P$ is positive, we see that 
$$
\langle f, g\rangle_x := P(fg)(x) - P(f)(x) P(g)(x)
$$
is a positive definite bi-linear form. Since  $h$ and $k$ are $P$-invariant 
functions, we obtain, by the Schwarz inequality,  that
$$
(P(hk) - hk)^2 \leq (P(h^2) - h^2)(P(k^2) - k^2),
$$
and this inequality holds for every $x$. Now we can apply (1) and conclude
that $P(hk) = hk$.

(3) This statement can be proved similarly to (2).

(4) We use (1) and (3) to deduce (4).
\end{proof}
 
\subsection{Markov processes} 

It is well known that every Markov operator defines a Markov process on a 
measure space.  We will describe this process explicitly for the operator
$P$ determined by (\ref{eq formula for P}).  

Recall our setting: $(V, \B, \mu)$ is a $\sigma$-finite measure space,  and
$\rho$ is a symmetric measure defined by a positive operator $R$ (see
 Proposition \ref{prop equiv of def of rho}) such that $c(x) = \rho_x(V)$. 
 The measure $\rho$ admits a disintegration (see Section \ref{sect basics})
 such that 
$$
d\rho(x, y) = d\rho_x(y) d\mu(x) = P(x, dy) d \nu(x)
$$ 
 where $x \mapsto d\rho_x$ is a measurable family of positive measures,
 and $P(x, dy)$ is the probability measure obtained by normalization of 
 $\rho_x$.
 
 Fix a point $x\in V$, and define inductively a sequence of probability
  measures $(P_n(x, \cdot) : n \in \N_0)$.  For any set $A \in \B$, we
  define
$$
 \ba
& P_0(x, A) = \chi_A(x), \\
&P_1(x, A) =  \int_V P_0(y, A) \; P(x, dy) \\
%  P_2(x, A) & = \int_V  P(x_1, A)   P(x, dx_1), \\ 
%  \int_V \frac{\rho_{x_1}(A)}{c(x_1)c(x)} \; d\rho_x(x_1)\\
 % & =   \int_V \int_V\frac{\chi_A(z)}{c(x_1)c(x)} \;   d\rho_{x_1}(z) 
%   d\rho_x(x_1) \\
   & \cdots\cdots\cdots\cdots\cdots\cdots\cdots\cdots\cdots\cdots\\
 &    P_{n+1}(x, A)  = \int_V P_n(x_n, A)  P(x, dx_n),\\
     & \cdots\cdots\cdots\cdots\cdots\cdots\cdots\cdots\cdots\cdots\\
 \ea
 $$  
 To simplify notation, we  write $P(x, A)$ for $P_1(x, A)$. 
 
For the reader's convenience we formulate two statements in the next 
lemma. 
 
\begin{lemma}\label{lem P_n via P^n}
(1) Let $x \in V$ be a fixed point. For every $n \in \N_0$, the map $\B \ni A
 \mapsto P_n(x, A)$ defines a probability measure on $(V, \B)$. For any
 fixed $A\in \B$, the function $x \mapsto P_n(x, A)$ is $\B$-measurable
 $n \in \N_0$. 
 
 (2) For $A\in \B$, 
 $$
 P_n(x, A) = P^n(\chi_A)(x),\qquad n \in \N_0. 
 $$
\end{lemma}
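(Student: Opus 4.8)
The plan is to establish both parts by induction on $n$, working directly from the recursive definition of $P_n(x, A)$.

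First I would prove (1). For the base case $n = 0$, the assignment $A \mapsto P_0(x, A) = \chi_A(x)$ is precisely the Dirac measure $\delta_x$, hence a probability measure, while for fixed $A$ the map $x \mapsto \chi_A(x)$ is $\B$-measurable by definition of a Borel set. For the inductive step, suppose $P_n(x, \cdot)$ is a probability measure for every $x$ and $x \mapsto P_n(x, A)$ is measurable for every $A \in \B$. Writing
$$
P_{n+1}(x, A) = \int_V P_n(y, A)\; P(x, dy),
$$
I would verify countable additivity by taking a disjoint union $A = \bigsqcup_k A_k$, using $P_n(y, A) = \sum_k P_n(y, A_k)$ from the inductive hypothesis, and interchanging sum and integral by the monotone convergence theorem; normalization follows from $P_n(y, V) = 1$ together with $P(x, V) = 1$.

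The measurability of $x \mapsto P_{n+1}(x, A)$ is the one point requiring care, and I expect it to be the main (though mild) obstacle: it amounts to the assertion that integrating a bounded measurable function against the transition kernel $P(x, dy)$ again yields a measurable function. This is exactly the statement that $x \mapsto P(g)(x)$ is measurable whenever $g$ is bounded and measurable, applied to $g = P_n(\cdot, A)$; it rests on the fact, established through the disintegration of $\rho$ in Section \ref{sect basics}, that $x \mapsto P(x, \cdot) = \ol\rho_x$ is a measurable family of probability measures (a random measure).

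Finally I would prove (2) by induction, using the action of $P$ on functions, namely $P(g)(x) = \int_V g(y)\, P(x, dy)$. The base case is immediate: since $P^0 = I$ we have $P_0(x, A) = \chi_A(x) = P^0(\chi_A)(x)$. For the inductive step, assuming $P_n(x, A) = P^n(\chi_A)(x)$, I compute
$$
P_{n+1}(x, A) = \int_V P_n(y, A)\; P(x, dy) = \int_V P^n(\chi_A)(y)\; P(x, dy) = P\big(P^n(\chi_A)\big)(x) = P^{n+1}(\chi_A)(x),
$$
which closes the induction and completes the proof.
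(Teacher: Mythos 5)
Your proof is correct and follows the same route the paper intends: the paper's own proof is just the one-line remark that (1) follows from the definition of $P_n(x,A)$ and (2) is proved by induction, which is exactly the argument you carry out in detail (including the measurability point, which indeed rests on $x\mapsto \ol\rho_x$ being a measurable field of probability measures).
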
 
 
 \begin{proof}
 Statement (1) follows from the definition of $P_n(x, A)$, and (2) can be
  proved by  induction. 
 \end{proof}
 
 \begin{remark}
 (i) It is useful to interpret $P_n(x, A)$ as the probability to get to a set 
$A\in \B$  for $n$ steps  assuming that the process begins at $x$. In 
particular, $P_0(x, A) =\delta_A(x)$. We call $(P_n(x, A)), n \geq 0,$ a 
sequence of  \textit{transition probabilities}.

(ii) In case when the symmetric measure $\rho$ is defined by 
(\ref{eq def rho}), we have the following formulas for $P_n(x, A)$:
$$
\ba
P_1(x, A) & = \int_V \frac{\chi_A(y)}{c(x)} \; d\rho_x(y),  \\
 % P_2(x, A) & = \int_V\!\int_V \frac{\chi_A(y)}{c(x_1)c(x)}\; 
 % d\rho_{x_1}(y)  d\rho_x(x_1) = P^2(\chi_A)(x)\\
   & \cdots\cdots\cdots\cdots\cdots\cdots\cdots\cdots\cdots\cdots\\
 P_{n}(x, A) & =     \int_V \cdots \int_V \frac{\chi_A(y)}{c(x_{n-1})
 \cdots c(x)}\;   d\rho_{x_{n-1}}(y) \cdots d\rho_x(x_1)\\
 & \cdots\cdots\cdots\cdots\cdots\cdots\cdots\cdots\cdots\cdots\\
\ea
$$
\end{remark}

We finish this subsection by pointing out a curious relation between discrete
Markov chains and continuous Poisson type distributions. The reader
can find relevant materials in \cite{AlbeverioMaRockner2015, 
Applebaum2009, Kallenberg1983}. The reader can find the theory of 
operator  semigroups  in the remarkable  book \cite{Yosida1995}.

\begin{theorem}\label{thm Poisson}
(1) Let $P$ be a Markov operator on $L^2(V, \B, \nu)$ and let 
$(P_n)$ be a discrete Markov process generated by $P$. 
For every $t \in \R_+$ and $A \in \B$, define 
\be\label{eqdef Q}
Q_t(x, A) := \sum_{n= 0}^\infty e^{-\la t} \frac{(\la t)^n}{n !} P_n(x, A).
\ee
Then, the distribution $Q_t$ satisfies the property: 
$$
\int_V Q_s(y, A) Q_t(x, dy) = Q_{t+s}(x, A),
$$ 
Moreover,  $\nu Q_t= \nu$ if and only if $\nu P = \nu$.

(2) $\{Q_t : t\geq 0\} $ is a strongly continuous semigroup such that 
$Q_0 = I$, and the generator $\mc L$ of $\{Q_t : t\geq 0\} $  is 
$\la(P - I)$.

(3) The following are equivalent:

(i) $f$ is a harmonic function with respect to $P$;

(ii) $\mc L(f) = 0$;

(iii) $Q_t(f) = f$ for $t \geq 0$.

(4) The operators
\be\label{eqDef S_t}
S_t(f)(x) :=  e^{-c(x) t} \sum_{n= 0}^\infty  \frac{(c(x)t)^n}{n !} 
\int_V f(y) \; P_n(x, dy)
\ee
form a self-adjoint contractive semigroup of operators in $L^2(\mu)$. 
The  generator $\mc L$ of $\{S_t\}_{t\geq 0}$ is  $c(P - I)$, i.e., 
$$
S_t = e^{-t \Delta}
$$
where $\Delta$ is considered as an unbounded operator in $L^2(\mu)$.
\end{theorem}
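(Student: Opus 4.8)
The plan is to establish the four assertions in order, using Poisson subordination for (1)--(3) and the self-adjointness of $\Delta$ on $L^2(\mu)$ for (4). For (1), I would first use Lemma \ref{lem P_n via P^n}(2) to identify $P_n(x,\cdot)$ with the operator power $P^n$, so that at the operator level $Q_t=e^{-\lambda t}\sum_{n}\frac{(\lambda t)^n}{n!}P^n=e^{\lambda t(P-I)}$. The Chapman--Kolmogorov relation $\int_V Q_s(y,A)\,Q_t(x,dy)=Q_{t+s}(x,A)$ then reduces to two facts: the discrete relation $\int_V P_m(y,A)\,P_n(x,dy)=P_{m+n}(x,A)$ (immediate from $P_n=P^n$), and the Poisson convolution identity $\sum_{n=0}^{k}\frac{(\lambda t)^n}{n!}\frac{(\lambda s)^{k-n}}{(k-n)!}=\frac{(\lambda(t+s))^k}{k!}$, which is just the binomial theorem. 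Substituting the series, interchanging the absolutely convergent double sum of nonnegative terms, and reindexing by $k=n+m$ gives the claim. For the invariance equivalence, expand $\nu Q_t=e^{-\lambda t}\sum_n\frac{(\lambda t)^n}{n!}\,\nu P^n$: if $\nu P=\nu$ every term equals $\nu$ and the weights sum to $1$; conversely $\nu Q_t=\nu$ for all $t$ yields $\nu\mathcal L=0$ upon differentiating at $t=0$, hence $\nu P=\nu$.

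The point for (2) and (3) is that $\mathcal L=\lambda(P-I)$ is a \emph{bounded} operator on $L^2(\nu)$, since $P$ is bounded there by Theorem \ref{thm P is s-a}. Hence $Q_t=e^{t\mathcal L}$ is given by a norm-convergent exponential series, is a norm-continuous (a fortiori strongly continuous) one-parameter semigroup with $Q_0=I$, and has generator exactly $\mathcal L$ by the standard theory of uniformly continuous semigroups. Assertion (3) is then algebraic: $\mathcal L f=0\iff Pf=f$ gives (i)$\iff$(ii); and $Q_tf=f$ for all $t$ is equivalent to $\mathcal L f=0$ (differentiate at $t=0$ for one direction, and expand the series for the other, since $Pf=f$ forces $P^nf=f$ while the Poisson weights sum to $1$), giving (ii)$\iff$(iii).

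For (4), the substance is to identify the spatially inhomogeneous operators $S_t$ with the spectral semigroup $e^{-t\Delta}$ on $L^2(\mu)$. Writing $\int_V f(y)\,P_n(x,dy)=P^nf(x)$, I would first compute the generator by termwise differentiation at $t=0$: only the $n=0$ and $n=1$ terms survive, giving $\frac{d}{dt}\big|_{t=0}S_tf=c(Pf-f)=-\Delta f$, since $\Delta=c(I-P)$ by (\ref{eq Delta via P}). I would then invoke the non-negativity and self-adjointness of $\Delta$ on $L^2(\mu)$ (Theorem \ref{thm Delta is s. a.}, its symmetry already being furnished by Theorem \ref{thm R properties}(9)); by the spectral theorem $-\Delta$ generates a self-adjoint contraction semigroup $e^{-t\Delta}$. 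Matching generators on the core $\Dfin$ and appealing to uniqueness of the semigroup generated by a given self-adjoint operator then identifies $S_t=e^{-t\Delta}$, from which self-adjointness, contractivity, and the semigroup law all follow.

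The hard part will be this last identification. The multiplication operator $c$ and $P$ do not commute on $L^2(\mu)$ --- indeed $P^*=cPc^{-1}$ there --- so the naive factorization $e^{-ct}e^{ctP}=e^{tc(P-I)}$ underlying the displayed series is literally valid only when $c$ is constant; the series must therefore be read as a representation of $e^{-t\Delta}$ rather than proved to be a semigroup by direct series algebra. I would handle this by passing to the self-adjoint picture: the unitary $W\colon L^2(\nu)\to L^2(\mu)$, $Wf=c^{1/2}f$, conjugates the self-adjoint $P$ on $L^2(\nu)$ to $c^{1/2}Pc^{-1/2}$ on $L^2(\mu)$, and defines $e^{-t\Delta}$ through the spectral resolution of $\Delta$. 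When $c\in L^\infty(\mu)$ the operator $\Delta$ is bounded (Theorem \ref{thm R properties}(8)) and the identification is immediate from the exponential series; the unbounded case ($c\notin L^\infty$) requires the spectral-theoretic argument together with careful control of the domain of $\Delta$, and this is where the technical effort concentrates.
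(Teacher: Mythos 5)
Your treatment of parts (1)--(3) is correct and follows essentially the same route as the paper: the Chapman--Kolmogorov identity for $Q_t$ via the binomial theorem applied to the reindexed double sum, norm-continuity of $Q_t=e^{\la t(P-I)}$ from the boundedness of $P$ on $L^2(\nu)$, and the generator by termwise differentiation. You in fact supply more than the paper does for the invariance claim (the paper merely asserts that it ``follows from the definition of $Q_t$''); your argument --- one direction from $\nu P^n=\nu$ together with the Poisson weights summing to $1$, the other from differentiating $\nu Q_t=\nu$ at $t=0$ --- is the right way to make that precise.

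For part (4) you have put your finger on exactly the point the paper glosses over. The paper's proof of (4) consists of the single claim that ``the same calculation as in (1)'' applies to $S_t$; but that calculation uses the Poisson convolution identity with one fixed rate $\la$, whereas in (\ref{eqDef S_t}) the rate $c(x)$ varies with the base point and reappears as $c(y)$ after one application of $P_n(x,dy)$, so the cross terms do not recombine. Your observation that multiplication by $c$ fails to commute with $P$ on $L^2(\mu)$ is the heart of the matter, and your fallback --- define $e^{-t\Delta}$ spectrally from the positive self-adjoint $\Delta$ of Theorems \ref{thm Delta is p.d.} and \ref{thm Delta is s. a.}, which does produce a self-adjoint contraction semigroup --- is sound as far as it goes. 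However, the remaining identification $S_t=e^{-t\Delta}$ cannot be completed by ``matching generators on a core'': having the correct derivative at $t=0$ does not make the family (\ref{eqDef S_t}) a semigroup. Indeed, a second-order Taylor expansion of (\ref{eqDef S_t}) at $t=0$ yields $c^2(I-P)^2f$, whereas $e^{-t\Delta}$ has second derivative $\bigl(c(I-P)\bigr)^2f$, and these differ unless $c$ is constant (equivalently, unless $P(cg)=cP(g)$ for all $g$). So the gap you flagged is real and is not closed by your proposal: as stated, the conclusion of (4) holds for the spectrally defined $e^{-t\Delta}$, but the explicit series $S_t$ does not satisfy the semigroup law for non-constant $c$, and neither the paper's one-line argument nor the spectral route rescues the displayed formula.
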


\begin{proof}
In the proof of (1), we use the binomial formula and the relation
$$
P_{n +m}(x, A) = \int_V P_m(y, A)\; P_n(x, dy)
$$
which follows from Lemma \ref{lem P_n via P^n}. Thus, we have
$$
\ba
&\int_V Q_s(y, A) Q_t(x, dy)  \\
& = \int_V \sum_{m= 0}^\infty e^{-\la s} \frac{(\la s)^m}{m !} P_m(x, A)
\sum_{n= 0}^\infty e^{-\la t} \frac{(\la t)^n}{n !} P_n(x, dy)\\
& = e^{-\la (t +s)}  \sum_{m,n =0}^\infty \frac{(\la s)^m}{m !} 
\frac{(\la t)^n}{n !} P_{m+n}(x, A)\\
& = e^{-\la (t +s)}  \sum_{k =0}^\infty \sum_{n =0}^k
\frac{(\la s)^{k-n}k!}{(k-n)!} \frac{(\la t)^n}{n !}\frac{1}{k!} P_{k}(x, A)\\
& = e^{-\la (t +s)}  \sum_{k =0}^\infty \frac{(\la(t +s))^k}{k!}
 P_{k}(x, A)\\
 & = Q_{t+s}(x, A).
\ea
$$
It follows from the definition of $Q_t$ that $\nu$ must be invariant
with respect to $P$ and $Q_t$ simultaneously. 

(2) We first note that 
$$
Q_0(f)(x)  =  P_0(f)(x) = \delta_x(f) = f(x)
$$ 
as follows from (\ref{eqdef Q}). Moreover, $Q_t$ is strongly continuous
because $P$ is a bounded operator. Then the generator $\mc L$
of the semigroup 
$\{Q_t : t\geq 0\} $ can be found by direct computation:
$$
\ba
\frac{dQ_t(f)}{dt} & = \sum_{n=0}^\infty \la  e^{-\la t}\left(- 
\frac{(\la t)^n}{n !} +  \frac{(\la t)^{n-1}}{(n -1)!} \right) P_n(f)\\
& = \la \left(P (f) - f \right).
\ea
$$
Hence, the generator of $\{Q_t : t\geq 0\} $ is
$$
\mc L(f) := \lim_{t \to 0} \frac{Q_t(f) - f}{t} = \la(P - I)(f).
$$

(3) This statement is an immediate consequence of (1) and (2).

(4) To check that $\{S_t\}$ is a semigroup, we use the same calculation
as in (1) applied to (\ref{eqDef S_t}). Statement (2), employed  to the
 semigroup $\{S_t\}$, gives  the exact formula for the generator of this
 semigroup. 

\end{proof}

\begin{remark} 
The same approach can be applied to the study of semigroups of operators, 
defined as in (\ref{eqdef Q}) and (\ref{eqDef S_t}), acting in the 
finite energy space $\h_E$, see Sections \ref{sect energy} and 
\ref{sect Laplace in H}. 
\end{remark}

The following fact follows directly from Theorem \ref{thm Poisson}.

\begin{corollary} Suppose that the measure space $(V, \B, \mu)$ and the
measurable field of probability measures $x \mapsto P_n(x, \cdot)$ be as 
above.  Let $(X, \mc A, m)$ be another measure space. For any sets
$A \in \B, N \in \mc A$ and $x \in V$, define
$$
Q_x(N, A) = \sum_{n=0}^\infty e^{-m(N)} \frac{m(N)^n}{n !} 
P_n(x, A).
$$
Then $x \mapsto Q_x(\cdot, \cdot)$ is a measurable field of probability 
measures on $(X \times V, \mc A \times \B)$.
\end{corollary}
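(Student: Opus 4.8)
The plan is to reduce the whole statement to Theorem \ref{thm Poisson}(1), reading the scalar $m(N)$ as the ``time'' parameter $\lambda t$ (with $\lambda = 1$). Concretely, for a fixed point $x \in V$ and a fixed set $N \in \mc A$ with $m(N) < \infty$, the defining series
\[
Q_x(N, A) = \sum_{n=0}^\infty e^{-m(N)}\frac{m(N)^n}{n!}\, P_n(x, A)
\]
is exactly $Q_t(x, A)$ from (\ref{eqdef Q}) evaluated at $t = m(N)$. First I would record that for each such $x$ and $N$ the set function $A \mapsto Q_x(N, A)$ is a probability measure on $(V, \B)$: nonnegativity is clear; countable additivity in $A$ follows by interchanging the nonnegative sum over $n$ with a countable union over the pieces of $A$, which is legitimate since a countable combination $\sum_n a_n \mu_n$ of measures with $a_n \geq 0$ is again a measure; and the total mass is $\sum_{n} e^{-m(N)} m(N)^n/n! = 1$ because every $P_n(x, \cdot)$ is a probability measure, so $P_n(x, V) = 1$. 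This is precisely the content of Theorem \ref{thm Poisson}(1) after the substitution $\lambda t \mapsto m(N)$, so no new computation is needed beyond identifying the parameter.

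Next I would establish the measurable dependence on $x$. By Lemma \ref{lem P_n via P^n}(2) we have $P_n(x, A) = P^n(\chi_A)(x)$, and by Lemma \ref{lem P_n via P^n}(1) each map $x \mapsto P_n(x, A)$ is $\B$-measurable for fixed $A$. For fixed $N$ and $A$, the partial sums $\sum_{n=0}^{K} e^{-m(N)} \frac{m(N)^n}{n!} P_n(x, A)$ are finite $\R$-linear combinations of $\B$-measurable functions of $x$, hence measurable, and they converge pointwise in $x$ to $Q_x(N, A)$; since a pointwise limit of measurable functions is measurable, $x \mapsto Q_x(N, A)$ is $\B$-measurable. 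This supplies the ``measurable field'' part of the assertion.

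Finally I would assemble these facts into the statement over $(X \times V, \mc A \times \B)$. The honest description of the object is a probability kernel: for each $x$ and each region $N \in \mc A$ of finite $m$-measure it produces the probability measure $Q_x(N, \cdot)$ on $(V, \B)$, measurably in $x$. The dependence on $N$ enters only through the scalar $m(N)$, and the map $t \mapsto \sum_n e^{-t} \frac{t^n}{n!} P_n(x, A)$ is real-analytic, hence continuous, in $t \in [0, \infty)$; composing with the measurable map $N \mapsto m(N)$ yields measurability in $N$, and combined with Step 2 gives joint measurability in $(x, N)$, which is what the ``measurable field'' requires. The main obstacle, and the only genuinely delicate point, is the role of $m$: the Poisson weights $e^{-m(N)} m(N)^n/n!$ form a probability distribution on $\{0, 1, 2, \dots\}$ only when $m(N) < \infty$, so I would either restrict to $N \in \mc A_{\mathrm{fin}}$ or assume $m$ finite, and then check that the normalization $Q_x(N, V) = 1$ and every interchange of summation with integration are uniformly controlled by the summable Poisson weights; once finiteness of $m(N)$ is in force these interchanges all close without incident.
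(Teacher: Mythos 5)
Your proposal is correct and follows essentially the same route as the paper, which gives no written proof and simply asserts that the corollary "follows directly from Theorem \ref{thm Poisson}"; your substitution $\lambda t \mapsto m(N)$ together with the standard measurability checks (partial sums plus pointwise limits, using Lemma \ref{lem P_n via P^n}) is exactly the intended reduction. Your observation that the Poisson weights only normalize when $m(N) < \infty$ is a genuine caveat the paper glosses over, and restricting to sets of finite $m$-measure (or assuming $m$ finite) is the right fix.
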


\textbf{Path space and measure.} 
We denote by $\Omega$ the infinite Cartesian product  $ V^{\N_0} = V
 \times V \times \cdots$.  Let $(X_n (\omega): n = 0,1,...)$ be the 
 sequence of random variables  $X_n : \Omega \to V$ such that 
  $X_n(\omega) =  \omega_n$. It is convenient to interpret $\Omega$ as 
the path space of the Markov process $(P_n)$. 
Every $\omega \in \Omega$ represents 
  an infinite path, and if $X_0(\omega) = x$, then we say that the path
  $\omega$ begins at $x$. A subset $\{\omega \in
 \Omega : X_0(\omega) \in A_0, ... X_k(\omega) \in A_k\}$ is called 
 a \textit{cylinder set} defined by $A_0, A_1, ..., A_k$, $k \in \N_0$.  
 The collection of cylinder sets generates the $\sigma$-algebra $\mathcal C$
 of Borel subsets of $\Omega$. 
 
 It follows from this definition of Borel structure on $\Omega$ that the 
 function $X_n : \Omega \to V$ is Borel. This construction allows us to 
 define an increasing  sequence of $\sigma$-subalgebras 
 $\mathcal{F}_{\leq n}$ such that  $\mathcal{F}_{\leq n}$ is the smallest
 subalgebra for which the functions $X_0, X_1, ... , X_n$ are Borel. By
 $\mathcal F_n$, we denote the $\sigma$-subalgebra $X_n^{-1}(\B)$. 
 Since $X_n^{-1}(\B)$ is a $\sigma$-subalgebra of $\mc C$, there exists a 
 projection $E_n : L^2(V, \mc C, \la) \to L^2(\Omega, X_n^{-1}(\B), \la)$, 
 where the measure $\la$ is defined in Section \ref{sect diss space}. 
The projection  $E_n$ is called the conditional expectation with respect to
$X_n^{-1}(\B)$. It satisfies the property:
\be\label{eq cond exp E_n}
E_n(f\circ X_n) = f\circ X_n. 
\ee
 
Next, we define a measure on $\Omega$. 
Let $x \in V$ be a fixed point. Then, let  $\Omega_x$ be the set 
of infinite paths beginning at $x$:
$$
\Omega_x := \{\omega\in \Omega : X_0(\omega) = x\}.
$$
Clearly, $\Omega = \coprod_{x\in V}  \Omega_x$. 

\begin{lemma}\label{lem def of P_x}
 For the objects introduced above, there exists a 
probability measure $\mathbb P_x$ on $\Omega_x$ such that its values on 
 cylinder subsets of $\Omega_x$ are determined by the formula:
\be\label{eq meas P_x} 
 \ba
 \mathbb P_x(X_1 \in A_1, ... , X_n \in A_n) &=
 \int_{A_1}\cdots \int_{A_n} P(y_{n-1}, dy_n) \cdots P(x, dy_1) \\
&= \int_{A_{1}}\cdots \int_{A_{n-1}} P(y_{n-1}, A_n) P(y_{n-2},  dy_{n-1})
\cdots  P(x, dy_1).
\ea
 \ee
\end{lemma}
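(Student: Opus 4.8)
The plan is to construct $\mathbb{P}_x$ via the Kolmogorov (equivalently Ionescu--Tulcea) extension theorem, building it up from its finite-dimensional distributions. First I would fix $x \in V$ and, for each $n \in \N$, define a set function on the measurable rectangles of $V^n$ by
$$
\mathbb{P}_x^{(n)}(A_1 \times \cdots \times A_n) := \int_{A_1} \cdots \int_{A_n} P(y_{n-1}, dy_n) \cdots P(x, dy_1),
$$
which is exactly the right-hand side of (\ref{eq meas P_x}). To see that this is meaningful I would argue by induction on $n$ that, for each fixed $A_n$, the map $y_{n-1} \mapsto \int_{A_n} P(y_{n-1}, dy_n) = P(y_{n-1}, A_n)$ is $\B$-measurable; this is precisely the statement that $P$ is a transition probability kernel, and measurability in the first variable is guaranteed by Lemma \ref{lem P_n via P^n}(1) (the case $n=1$). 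A standard monotone-class (Dynkin) argument then extends $\mathbb{P}_x^{(n)}$ from rectangles to a genuine probability measure on $(V^n, \B^{\otimes n})$, the total mass being $1$ because each $P(y, \cdot)$ is a probability measure, so that integrating the innermost variable over $A_n = V$ contributes a factor $1$.

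The second step is to verify the Kolmogorov consistency condition: the push-forward of $\mathbb{P}_x^{(n+1)}$ under the projection $V^{n+1} \to V^n$ onto the first $n$ coordinates coincides with $\mathbb{P}_x^{(n)}$. This reduces to setting $A_{n+1} = V$ in the defining formula and using $P(y_n, V) = 1$, which collapses the last integral and returns the formula for $\mathbb{P}_x^{(n)}$. Equivalently, on the algebra of cylinder sets in $\Omega_x$ the prescription (\ref{eq meas P_x}) is independent of the representation of a cylinder as a set depending on finitely many coordinates, so it defines an unambiguous finitely additive set function there.

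Finally, since $(V, \B)$ is a standard Borel space, so is each finite product $(V^n, \B^{\otimes n})$, and the family $\{\mathbb{P}_x^{(n)}\}$ satisfies the hypotheses of the Kolmogorov extension theorem. I would invoke it to obtain a unique countably additive probability measure $\mathbb{P}_x$ on $(\Omega_x, \mathcal{C})$ whose marginal on the first $n$ coordinates is $\mathbb{P}_x^{(n)}$; restricting to cylinder sets recovers (\ref{eq meas P_x}). The main obstacle is not the extension itself --- which is automatic once the hypotheses are in place --- but the bookkeeping needed to justify the iterated integrals, namely confirming at each stage that the partially integrated function remains measurable so that the next integral is defined; this is exactly where the transition-kernel property of $P$ and Lemma \ref{lem P_n via P^n} enter. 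Alternatively, one may bypass the standard-Borel regularity by appealing directly to the Ionescu--Tulcea theorem, which constructs the infinite-product measure from the initial mass $\delta_x$ and the sequence of identical kernels $P(\cdot, \cdot)$ and delivers countable additivity for free.
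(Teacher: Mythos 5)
Your proposal is correct and follows essentially the same route as the paper: the paper's (sketched) proof likewise defines $\mathbb P_x$ on cylinder sets, checks the consistency condition, and invokes the Kolmogorov extension theorem. Your version simply fills in the measurability bookkeeping and notes the Ionescu--Tulcea alternative, which the paper leaves implicit.
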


\begin{proof} (Sketch) Since $\mathbb P_x$ is defined explicitly 
on cylinder sets, the only fact 
one needs to check is that the definition of $\mathbb P_x$ is consistent, i.e.,
$\mathbb P_x$ on a cylinder set of length $m$ is the sum of values
of $\mathbb P_x$ on cylinder subsets of length $m+1$. 
Then the result follows from  the Kolmogorov extension theorem
\cite{Kolmogorov1950} which 
states that there exists a unique probability measure on $\Omega_x$ 
extending $\mathbb P_x$ to the sigma-algebra of Borel sets. 
\end{proof}

As a corollary of Lemmas \ref{lem P_n via P^n} and  \ref{lem def of P_x},
 we have also the following formula:
\be\label{eq meas P_x 2}
 \mathbb P_x(X_1 \in A_1, ... , X_n \in A_n) = 
 P(\chi_{A_1} P(\chi_{A_2}P(\ \cdots\ P(\chi_{A_{n-1}} P(\chi_{A_n})) 
 \cdots )))(x).
\ee

It is useful also to mention a formula for the joint distribution of the random 
variables $X_i$:
\be\label{eqjoint distr}
d\mathbb P_x(X_1, ... , X_n)^{-1} = P(x, dy_1) P(y_1, dy_2) \cdots 
P(y_{n-1}, dy_n).
\ee

\begin{lemma}\label{lem st meas space}
The measure space $(\Omega_x, \mathbb P_x)$ is a standard 
probability measure space for $\mu$-a.e. $x\in V$. 
\end{lemma}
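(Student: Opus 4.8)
The plan is to reduce the assertion to two classical facts: that the class of standard Borel spaces is closed under countable products and under passage to Borel subsets, and that every Borel probability measure on a standard Borel space generates, after completion of the $\sigma$-algebra, a standard (Lebesgue--Rohlin) probability space. Once these are in place the lemma is essentially a matter of assembling the pieces, together with a check of the exceptional null set.

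First I would note that, by hypothesis, $(V, \B)$ is a standard Borel space, and hence the countable product $\Omega = V^{\N_0}$ equipped with the $\sigma$-algebra $\mc C$ generated by the cylinder sets is again a standard Borel space. Fixing $x \in V$, the fibre $\Omega_x = X_0^{-1}(\{x\})$ is a Borel subset of $\Omega$, since singletons are Borel in a standard Borel space; concretely $\Omega_x$ is Borel isomorphic to $V^{\N}$. As a Borel subset of a standard Borel space, $(\Omega_x, \mc C|_{\Omega_x})$ is itself standard Borel.

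Next I would invoke Lemma \ref{lem def of P_x}: for every $x$ at which the normalized kernel $P(x, \cdot)$ is well defined, the Kolmogorov extension theorem furnishes a genuine Borel probability measure $\mathbb P_x$ on $\Omega_x$, with values on cylinders given by (\ref{eq meas P_x}). This is precisely where the ``$\mu$-a.e.'' enters: by Assumption D and the disintegration of $\rho$, the conditional measures $\rho_x$, and therefore the transition probabilities $P(x, \cdot)$ and all the cylinder data, are defined only for $\mu$-a.e.\ $x$. To conclude, I would apply the structure theorem for measures on standard Borel spaces: any Borel probability measure on a standard Borel space becomes, once $\mc C|_{\Omega_x}$ is completed with respect to it, a standard probability space, i.e.\ isomorphic mod $0$ to an interval carrying Lebesgue measure together with at most countably many atoms. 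Applied to $(\Omega_x, \mathbb P_x)$, this yields the claim.

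The step I expect to require the most care is not any single Borel isomorphism but the measurable control of the exceptional set: one must verify that the $\mu$-null set of points $x$ at which $\mathbb P_x$ fails to be defined is itself measurable, so that ``for $\mu$-a.e.\ $x$'' is meaningful. This is underwritten by the measurability of $x \mapsto P_n(x, A)$ recorded in Lemma \ref{lem P_n via P^n}, which guarantees that $x \mapsto \mathbb P_x$ is a genuine measurable field of probability measures over $(V, \B, \mu)$, and hence that the standardness conclusion holds on a measurable set of full $\mu$-measure rather than merely pointwise outside an uncontrolled set.
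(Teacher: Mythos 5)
There is a genuine gap, and it lies precisely at the point your argument treats as routine. In this paper, ``standard measure space'' is defined (Subsection \ref{subsect measures}) to mean a standard Borel space equipped with a \emph{continuous}, i.e.\ non-atomic, measure. So the content of Lemma \ref{lem st meas space} is not the Borel-structural statement that $\Omega_x$ is standard Borel and $\mathbb P_x$ is a Borel probability measure on it --- that part of your argument (countable products and Borel subsets of standard Borel spaces are standard Borel, Kolmogorov extension, measurability of $x \mapsto \mathbb P_x$ via Lemma \ref{lem P_n via P^n}) is correct but is not where the lemma lives. The content is that $\mathbb P_x$ is \emph{non-atomic}. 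Your final step, ``any Borel probability measure on a standard Borel space becomes a standard probability space, isomorphic mod $0$ to an interval with Lebesgue measure together with at most countably many atoms,'' is the Lebesgue--Rohlin classification, which permits atoms; under the paper's definition that conclusion is simply not the assertion being proved. Your argument would equally ``prove'' the lemma for the deterministic kernel $P(x,A)=\delta_x(A)$ coming from $\rho(A\times B)=\nu(A\cap B)$, where $\mathbb P_x$ is a Dirac mass on the constant path --- and the remark immediately following the lemma states explicitly that this example does \emph{not} satisfy the conclusion.

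What is missing is the use of Assumption A (irreducibility of $\rho$): for $\mu$-a.e.\ $x$ the space $(E_x,\rho_x)$ is an uncountable standard measure space, so $\rho_x$, and hence the normalized kernel $P(x,\cdot)$, is continuous. Since the law of $X_1$ under $\mathbb P_x$ is $P(x,\cdot)$, one gets $\mathbb P_x(\{\omega\}) \le \mathbb P_x(X_1 = \omega_1) = P(x,\{\omega_1\}) = 0$ for every path $\omega$, so $\mathbb P_x$ has no atoms; the same reasoning applies at every coordinate $X_n$. This is exactly the (terse) argument the paper gives. Grafting this non-atomicity step onto your Borel-structural preamble would yield a complete proof; as written, the proposal omits the one hypothesis (irreducibility) that the lemma actually depends on.
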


\begin{proof} \textit{(Sketch)}
To see that this property holds, we use the definition of 
the measure $P(x, A)$ and Assumption A. It follows that $P(x, A) >0$ if
and only if $\rho_x(E_x\cap A) >0$. This means that the random variable 
$X_1$ takes values in an uncountable measure space. Clearly, the same 
holds for the other random variables $X_n$.

\end{proof}

\begin{remark} We remark that the symmetric measure $\rho(A \times B) = 
\nu(A\cap B)$ (see Example \ref{ex 1})
does not satisfy the condition of Lemma \ref{lem st meas space} because 
the corresponding Markov process $(P_n)$ is deterministic: 
$$
P(x, A) = \delta_x(A), \ A \in \B. 
$$ 
Hence, the corresponding operator $P$ is the identity operator. 
\end{remark}

\begin{example}[\textbf{Countable Borel equivalence relations}] 
Suppose now that $E$ is a countable Borel equivalence 
relation on $(V, \B, \mu)$, see \ref{ex 1}, part (2) . Then the set $E_x$
of points $y$ equivalent to $x$ is countable, and the transition probabilities
$P(x, y)$ are defined by the relation
$$
P(x, y) = \frac{c_{xy}}{c(x)}, \qquad (x, y) \in E, 
$$
where $c_{xy}$ is a symmetric function on $E$ and $c(x) =
\sum_{y \in E_x} c_{xy}$.

We claim that a path 
$$
\omega = (x, x_1, x_2, ... )\in \Omega_x \ \Longleftrightarrow \ 
(x_i, x_{i+1}) \in E_x, \ i \in \N_0,
$$
where $x_0 = x$. It follows that $\Omega_x = E_x^\infty$ (we assume that 
$c_{xy} > 0, \forall y \in E_x$) , and the 
probability measure $\mathbb P_x$ is defined on cylinder functions as 
follows:
$$
\mathbb P_x(X_1 = y_1, ... , X_n = y_n) =
\prod_{i=1}^n \frac{c_{y_{i-1}y_i}}{c(y_{i-1})}, \ \ \ y_0 = x.
$$

We can see that, in this case,  $(\Omega_x, \mathbb P_x)$ can be 
interpreted as the path space of a stationary Bratteli type diagram. All
paths begin at a fixed point $x$, and the transition probability matrix
$P = (P(x, y)_{y\in E_x})$ is the same for all levels of this diagram.
In \cite{BezuglyiJorgensen_2015}, we considered the Laplace and 
Markov operator for arbitrary Bratteli diagrams.

\end{example}

\subsection{Reversible Markov processes}
At the end of this section we discuss the property of reversibility  
for the Markov process $(P_n)$.

\begin{definition}\label{def reversible MP} Let the objects $(V, \B, \mu)$,
$\rho$, $c(x)$, $P$, and $R$ be as above. 
Suppose that $x \mapsto P(x, \cdot )$ is a measurable family of transition
 probabilities on the space $(V, \B)$ which is defined by a Markov operator 
 $P$. It is said that the corresponding Markov process  is 
 \textit{reversible} if, for any sets $A, B \in \B$, the following relation holds:
\be\label{eq def reversible P}
 \int_B c(x) P(x, A)\; d\mu(x) = \int_A c(x) P(x, B)\; d\mu(x).
\ee
\end{definition}

In the next result,  we formulate several  statements which are equivalent
to reversibility of $P$. 

\begin{proposition}\label{prop TFAE reverse MP}
Let $(V, \B, \mu)$ be a standard measure space, $\rho= (x \mapsto 
\rho_x)$ a measure on $V\times V, \B\times \B)$ such that $c(x) = 
\rho_x(V)$ is finite. Let the measure $\nu = c \mu$ and operators $R, P$ 
be defined as above (see \ref{eq def of R}, \ref{eq formula for P}). 
Suppose that  $P(x, \cdot)$ is the
Markov process defined by the operator  $P$.  
Then the following are equivalent:

(i) $P(x, \cdot)$ is reversible;

(ii) the Markov operator $P$ is  self-adjoint in $L^2(\nu)$ and $\nu P= \nu$;

(iii) 
$$
c(x) P(x, dy) d\mu(x) = c(y) P(y, dx)d\mu(y);
$$

(iv) the measure $\rho$ on $(V\times V, \B\times B)$ defined by 
$$
\rho(A \times B) = \int_V \chi_A R(\chi_B)\; d\mu
$$
is symmetric where $R(f) = cP(f)$;

(v) the operator $R$ is symmetric.

\end{proposition}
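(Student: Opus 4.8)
The plan is to establish the equivalence by proving a cycle of implications, exploiting the fact that all five statements are essentially reformulations of the single underlying symmetry property of $\rho$, but phrased respectively in terms of the transition kernel, the operator $P$ on $L^2(\nu)$, the disintegrated measure, the product measure $\rho$, and the operator $R$. First I would prove (i) $\Longleftrightarrow$ (iv): unwinding the definition $R(\chi_B) = cP(\chi_B)$ and using $d\nu = c\, d\mu$, the defining relation (\ref{eq def reversible P}) for reversibility reads exactly
$$
\int_B c(x) P(x, A)\; d\mu(x) = \int_A c(x) P(x, B)\; d\mu(x),
$$
while the symmetry $\rho(A\times B) = \rho(B\times A)$ expands via $\rho(A\times B) = \int_V \chi_A R(\chi_B)\, d\mu = \int_A c(x) P(x,B)\, d\mu(x)$ to the same identity with the roles of $A$ and $B$ swapped. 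So (i) and (iv) are literally the same statement on characteristic functions, and a standard approximation argument (simple functions are dense, as invoked already in Proposition \ref{prop equiv of def of rho}) upgrades this to all $f,g$. The equivalence (iv) $\Longleftrightarrow$ (v) is then immediate from the definition of a symmetric operator given after Proposition \ref{prop equiv of def of rho}, since $\rho$ symmetric means precisely $\int_V f R(g)\, d\mu = \int_V R(f) g\, d\mu$.

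Next I would handle (iii). Statement (iii) asserts the equality of the two disintegrations of $\rho$ as a measure on $V\times V$, namely $d\rho(x,y) = c(x)P(x,dy)\, d\mu(x)$ versus its mirror image $c(y)P(y,dx)\, d\mu(y)$; by Remark \ref{rem symm meas}(3) and the uniqueness of disintegration in Theorem \ref{thm Simmons}, this equality of the two conditional systems is equivalent to the symmetry of $\rho$, hence to (iv). Concretely, integrating a cylinder function $f\otimes g$ against each side and invoking formula (\ref{eq from symm of rho}) shows (iii) $\Longleftrightarrow$ (iv) directly. Finally, for (ii), I would use Theorem \ref{thm P is s-a}: self-adjointness of $P$ in $L^2(\nu)$ together with $\nu P = \nu$. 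The invariance $\nu P = \nu$ was shown in Lemma \ref{lem nu P = nu}(1) to follow from symmetry of $\rho$, and self-adjointness was established in Theorem \ref{thm P is s-a}(1) using exactly the symmetry of $\rho$ through Theorem \ref{thm R properties}(6). Conversely, if $P$ is self-adjoint on $L^2(\nu)$, then for $f = \chi_A$, $g = \chi_B$ one recovers
$$
\langle \chi_A, P(\chi_B)\rangle_{L^2(\nu)} = \langle P(\chi_A), \chi_B\rangle_{L^2(\nu)},
$$
which unwinds to $\rho(A\times B) = \rho(B\times A)$ via Lemma \ref{lem rho_n via P}, giving (ii) $\Longrightarrow$ (iv). This closes the cycle (i) $\Leftrightarrow$ (iv) $\Leftrightarrow$ (v), (iii) $\Leftrightarrow$ (iv), (ii) $\Leftrightarrow$ (iv).

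The conceptual content is light once the dictionary between $\rho$, $R$, $P$, and $\nu$ is in place, so I do not expect a serious obstacle in the algebra; the only point requiring genuine care is the passage between (ii) and the others, where one must separately verify both the invariance $\nu P = \nu$ and the self-adjointness, rather than collapsing them into one statement. The subtlety is that self-adjointness of $P$ on $L^2(\nu)$ presupposes that $\nu$ is the correct reference measure, so the implication (ii) $\Rightarrow$ (iv) must take $\nu P = \nu$ as part of the hypothesis (as it is stated) and cannot derive it for free. I would therefore be careful to cite Lemma \ref{lem nu P = nu} and Theorem \ref{thm P is s-a} in the forward direction and, in the reverse direction, to use the inner-product identity on the dense set $\Dfin$ of simple functions built from $\Bfin$, noting that $\chi_A \in L^2(\nu)$ precisely when $\nu(A) < \infty$, which holds for $A \in \Bfin(\nu)$.
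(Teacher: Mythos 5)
Your proposal is correct and follows essentially the same route as the paper: both proofs reduce every statement to the single identity $\int_A c(x)P(x,B)\,d\mu(x)=\rho(A\times B)$ evaluated on characteristic functions, extended by density over $\Dfin$, and both lean on Proposition \ref{prop equiv of def of rho}, Lemma \ref{lem nu P = nu}, Theorem \ref{thm R properties}(9), and Theorem \ref{thm P is s-a}; the only difference is that you link (i) directly to (iv) where the paper links (i) to (ii), a purely cosmetic reorganization of the implication graph. Your explicit caution about treating the invariance $\nu P=\nu$ in (ii) as a separate hypothesis rather than a freebie is in fact slightly more careful than the paper's own write-up, which verifies only the inner-product identity in its (i)$\Leftrightarrow$(ii) step.
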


\begin{remark} If $P(x, \cdot)$ is reversible, then $P_n(x, \cdot)$ 
 satisfies relation (\ref{eq def reversible P}) for every $n > 1$, i.e., 
 $P_n(x, \cdot)$ is also reversible. This observation immediately follows from 
 the fact that the Markov operator $P$ is self-adjoint, and therefore $P^n$
 is also self-adjoint, see Proposition \ref{prop TFAE reverse MP} (ii).
\end{remark}

\begin{proof} (i) $\Longleftrightarrow $ (ii). We first recall that $P(x, A) =
P(\chi_A)(x)$. Then one can see that, for any sets 
$A, B\in \Bfin$, relation (\ref{eq def reversible P}) is written as the
equality of the inner products:
$$
\ba
\langle \chi_B, P(\chi_A) \rangle_{L^2(\nu)} = & \int_V\chi_B(x)
 P(\chi_A)(x) c(x)\; d\mu(x)\\
  = & \int_V\chi_A(x) P(\chi_B)(x) c(x)\; d\mu(x)\\
= & \langle P(\chi_B),  \chi_A \rangle_{L^2(\nu)}\\
\ea
$$
The proof is completed by extension of the above equality by linearity to
 the functions from the set $\Dfin$ which is dense in $L^2(\nu)$ (we note
 that $c$ is locally integrable with resoect to $\mu$).

(iii) $\Longleftrightarrow $ (iv). This equivalence is obvious because the
equality in (iii) means that the measure $d\rho(x, y) = d\rho_x(y) d\mu(x)$
is symmetric. The fact that this symmetric measure can be represented as
in (iv) is proved in Proposition \ref{prop equiv of def of rho}. 

(ii) $\Longleftrightarrow $ (iv). This results has been proved earlier. It 
follows immediately from Theorems \ref{thm R properties} (9) and 
\ref{thm P is s-a}.

(iv) $\Longleftrightarrow $ (v). This equivalence has been proved in 
Proposition \ref{prop equiv of def of rho}. 

\end{proof}

For more details regarding probability and Markov chains, the reader may
 consult the following items \cite{Kolmogorov1950, Bagett_et_al2010,
  Geiger2017,  Terhesiu2017}  and the papers cited there.

\section{ Dissipation space and stochastic analysis} \label{sect diss space}
We define here a useful Hilbert space which
plays an important role in the study of our finite energy space $\h_E$ in 
Section \ref{sect energy}.

\begin{definition} On the measurable space $(\Omega, \mathcal C)$, define 
a $\sigma$-finite measure $\lambda$ by 
\be 
\lambda := \int_V \mathbb P_x \; d\nu(x)
\ee
($\la$ is infinite iff the measure $\nu$ is infinite).
The Hilbert space 
\be\label{eq L^2(lambda)}
Diss :=  \{ \frac{1}{\sqrt 2} f : f \in L^2(\Omega, \lambda)\},  
\ee
is called the \textit{dissipation space}. 
\end{definition} 

The dissipation  Hilbert space $Diss $ is formed by functions from
 $L^2(\Omega, \la)$ which are rescaled by the factor $1/\sqrt 2$. Then 
$$  
  \| f\|_{\mc D} = \frac{1}{\sqrt 2} \| f\|_{L^2(\la)}.
  $$

Because the partition of  $\Omega$ into $(\Omega_x : x \in V)$ is
 measurable,  we
see that the dissipation space can be  naturally decomposed into the direct
 integral of Hilbert spaces: 
\be\label{eq L^2(lambda) dir int}
L^2(\Omega, \la)  = \int^{\oplus}_V L^2(\Omega_x, \mathbb P_x)\;  d\nu(x)
\ee

Since $x \mapsto \mathbb P_x$ is a measurable field of probability 
measures,  we can use the following formula for integration
of measurable functions $f$ over $(\Omega, \mathcal C)$:
 $$
\lambda(f) = \int_{\Omega} f(\omega) \; d\lambda(\omega) = 
\int_V \mathbb E_x(f)\; d\nu(x)
$$ 
where $\mathbb E_x$ denotes the conditional expectation with respect to 
the measures $\mathbb P_x$,
$$
\mathbb E_x(f) = \int_{\Omega_x} f(\omega)\; d\mathbb P_x(\omega). 
$$

The inner product in the Hilbert space $Diss$ is determined by the formula:
\be\label{eq inner prod in D}
\ba
\langle f, g\rangle_{\mc D} = & \frac{1}{2}\int_V \mathbb E_x(fg) \; 
d\nu(x)\\ 
= &\frac{1}{2} \int_V \int_{\Omega_x}  f(\omega) g(\omega) \; d\mathbb 
P_x(\omega)d\nu(x).
\ea
\ee

From the given definitions of $\mathbb P_x$ and the Markov process 
$(X_n)$, one can  deduce the following formulas. We recall that $E_n$
 denotes the conditional expectation with respect to the
subalgebra $X_n^{-1}(\B)$.

\begin{lemma}\label{lem formulas for P} Let $(\Omega_x, \mathbb P_x)$
be as in Section \ref{sect markov}, and let $P$ be the Markov operator
defined in (\ref{eq formula for P}). Then

(i) $\mathbb P_x\circ X_n^{-1}(A) = P_n(x, A) = P^n(\chi_A)(x),\ A\in \B$;

(ii) $P : L^2(V, P_{n+1}(x, \cdot)) \to L^2(V, P_{n}(x, \cdot))$ is 
contractive for all $n$;

(iii)  $P(f)\circ X_n = \mathbb E[f\circ X_{n+1} \ | \ \mc F_n ]= E_n(f\circ 
X_{n+1})$.

\end{lemma}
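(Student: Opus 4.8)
The plan is to handle the three claims in turn, each time reducing to the cylinder-set definition (\ref{eq meas P_x}) of $\mathbb P_x$ together with the identity $P_n(x, A) = P^n(\chi_A)(x)$ supplied by Lemma \ref{lem P_n via P^n}. For (i), the second equality is exactly Lemma \ref{lem P_n via P^n}(2), so only $\mathbb P_x \circ X_n^{-1}(A) = P_n(x, A)$ requires proof. First I would specialize (\ref{eq meas P_x}) to $A_1 = \cdots = A_{n-1} = V$, $A_n = A$, giving
\[
\mathbb P_x(X_n \in A) = \int_V \cdots \int_V P(y_{n-1}, A)\, P(y_{n-2}, dy_{n-1}) \cdots P(x, dy_1).
\]
Peeling off the outermost integration and recognizing the inner iterated integral as $\mathbb P_{y_1}(X_{n-1} \in A)$ yields $\mathbb P_x(X_n \in A) = \int_V \mathbb P_{y_1}(X_{n-1} \in A)\, P(x, dy_1)$, which is precisely the recursion defining $P_n(x, A)$. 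With the base case $\mathbb P_x(X_1 \in A) = P(x, A) = P_1(x, A)$, an induction on $n$ finishes (i).

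For (ii), I would combine Jensen's inequality for the normalized operator $P$ with a Chapman--Kolmogorov step. First record the kernel identity $\int_V g\, dP_{n+1}(x, \cdot) = \int_V (Pg)\, dP_n(x, \cdot)$, which follows from $P^{n+1} = P^n \circ P$ and Lemma \ref{lem P_n via P^n}(2) (equivalently, from the semigroup relation used in the proof of Theorem \ref{thm Poisson}). Applying the pointwise bound $(Pf)^2 \leq P(f^2)$ and then this identity with $g = f^2$ gives
\[
\ba
\|Pf\|^2_{L^2(P_n(x, \cdot))} &= \int_V (Pf)^2\, dP_n(x, \cdot) \leq \int_V P(f^2)\, dP_n(x, \cdot)\\
&= \int_V f^2\, dP_{n+1}(x, \cdot) = \|f\|^2_{L^2(P_{n+1}(x, \cdot))},
\ea
\]
which is the asserted contractivity.

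For (iii), I would invoke the characterizing property of conditional expectation. Since $(Pf)\circ X_n$ is visibly $X_n^{-1}(\B) = \mc F_n$-measurable, it suffices to check $\mathbb E_x[\chi_B(X_n)\, f(X_{n+1})] = \mathbb E_x[\chi_B(X_n)\,(Pf)(X_n)]$ for all $B \in \B$, since these indicators generate $\mc F_n$. Expanding the left-hand side via the joint distribution (\ref{eqjoint distr}) and integrating out the last variable using $\int_V f(y_{n+1})\, P(y_n, dy_{n+1}) = (Pf)(y_n)$ converts it into the right-hand side; this is the Markov property encoded in the product form of $\mathbb P_x$. The one point needing care is that $E_n$ is defined as the conditional expectation on $L^2(\Omega, \la)$, not on an individual fiber. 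Because $\la = \int_V \mathbb P_x\, d\nu(x)$ and the fibers $\Omega_x$ are disjoint with $X_0 \equiv x$ on $\Omega_x$, the conditioning splits over the fibers, so the fiberwise identity assembles to the desired statement on $(\Omega, \la)$. I expect this reconciliation of the global $\sigma$-finite measure $\la$ with the fiber probabilities $\mathbb P_x$ to be the only genuinely delicate step; everything else is routine manipulation of the cylinder formula.
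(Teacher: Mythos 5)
Your proposal is correct and follows essentially the same route as the paper: (i) by specializing the cylinder formula, (ii) by Jensen's inequality $(Pf)^2\le P(f^2)$ combined with the kernel identity $\int_V P(g)\,dP_n(x,\cdot)=\int_V g\,dP_{n+1}(x,\cdot)$, and (iii) by the Markov/conditional-expectation characterization (the paper tests against general $h\circ X_n$ where you test against $\chi_B\circ X_n$, which is the same argument). Your extra remark about assembling the fiberwise identities over $\la=\int_V\mathbb P_x\,d\nu(x)$ is a point the paper leaves implicit, but it does not change the substance.
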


\begin{proof} (i) This formula follows from the definition of the measure
$\mathbb P_x$, see (\ref{eq meas P_x}) and (\ref{eq meas P_x 2}).

%Use another proof of (ii)

(ii) Let $f \in L^2(V, P_{n+1}(x, \cdot))$. Then 
$$
\ba
\| P(f) \|^2_{P_n} = &   \int_V P(f)^2(y)\; P_n(x, dy) \\
 \leq & \int_V  P(f^2)(y)\; P_n(x, dy) \\
= & \int_V  f^2(y)\; P_{n+1}(x, dy) \\
= & \| f \|^2_{P_{n+1}}. 
\ea
$$

(iii) In fact, we will prove a slightly more general result: for any Borel 
functions $f, h$, one has
$$
\mathbb E_x[(h\circ X_n)\ (f\circ X_{n+1})] = \mathbb E_x[(h\circ X_n)\ 
(P(f)\circ X_{n})].
$$
Indeed, we use (i) to show that 
$$
\ba
\mathbb E_x[(h\circ X_n)\  (P(f)\circ X_{n})] & = \int_V h(y) P(f)(y)\; 
P_n(x, dy)\\ 
& =  \iint_{V \times V} h(y) f(z)\; P(y, dz)P_n(x, dy)\\ 
& =  \mathbb E_x[(h\circ X_n)\ (f\circ X_{n+1})]. 
\ea
$$
% More details needed?
\end{proof}

We now show that $L^2(\nu)$ is isometrically embedded into $Diss$. 
Let $W_n$ be defined by the relation 
\be\label{eq def W_n}
W_n(f) = \sqrt{2} (f\circ X_n), \qquad n \in \N
\ee

\begin{lemma}\label{lem W_n isom}
The operator $W_n$ is an isometry from $L^2(\nu)$ to $Diss$ for any 
$n \in \N$. 
\end{lemma}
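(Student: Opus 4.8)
The plan is to verify the isometry identity $\|W_n(f)\|_{\mc D} = \|f\|_{L^2(\nu)}$ directly, by unwinding the definition of the dissipation norm and then transporting the computation forward through the Markov kernel. First I would record that, since $Diss$ carries the rescaled norm and $W_n(f) = \sqrt 2\,(f\circ X_n)$, the two factors of $\sqrt 2$ cancel:
\be
\|W_n(f)\|_{\mc D}^2 = \tfrac12\|W_n(f)\|_{L^2(\la)}^2 = \int_\Omega (f\circ X_n)^2\; d\la.
\ee
Thus the whole statement reduces to showing that $\|f\circ X_n\|_{L^2(\la)}^2 = \|f\|_{L^2(\nu)}^2$, and in particular that this value is independent of $n$.

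Next I would disintegrate $\la = \int_V \mathbb P_x\, d\nu(x)$ via the integration formula $\la(g) = \int_V \mathbb E_x(g)\, d\nu(x)$, which gives
\be
\int_\Omega (f\circ X_n)^2\; d\la = \int_V \mathbb E_x\big[(f\circ X_n)^2\big]\; d\nu(x).
\ee
The inner conditional expectation is then evaluated by the push-forward (change of variables) formula for $X_n$ under $\mathbb P_x$. Here I invoke Lemma \ref{lem formulas for P}(i), equivalently Lemma \ref{lem P_n via P^n}(2), which identifies the law of $X_n$ under $\mathbb P_x$ as the kernel $P_n(x, \cdot)$ with $P_n(x, A) = P^n(\chi_A)(x)$. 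Applying this to the Borel function $f^2$ yields $\mathbb E_x[(f\circ X_n)^2] = \int_V f^2(y)\, P_n(x, dy) = P^n(f^2)(x)$.

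Finally I would use the $P$-invariance of $\nu$. By Lemma \ref{lem nu P = nu}(1) we have $\nu P = \nu$, hence $\nu P^n = \nu$ for every $n$, so that
\be
\int_V P^n(f^2)(x)\; d\nu(x) = \int_V f^2(x)\; d\nu(x) = \|f\|_{L^2(\nu)}^2.
\ee
Combining the three displays gives $\|W_n(f)\|_{\mc D}^2 = \|f\|_{L^2(\nu)}^2$ for every $n$, and since $f\mapsto \sqrt2\,(f\circ X_n)$ is manifestly linear, $W_n$ is an isometry. I do not expect a genuine obstacle here beyond bookkeeping: the only points needing care are the cancellation of the $\sqrt 2$ factors in the $Diss$ norm and the well-definedness of the target (namely $f\circ X_n\in L^2(\la)$, i.e. $f^2\in L^1(\nu)$), which follows from $f\in L^2(\nu)$ together with the invariance $\nu P^n=\nu$ that keeps all the integrals finite and equal.
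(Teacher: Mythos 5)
Your proof is correct and follows essentially the same computation as the paper: expand the $Diss$ norm (the $\sqrt2$ and the $\tfrac12$ cancel), disintegrate $\la$ over $\nu$, and push $\mathbb P_x$ forward through $X_n$ to the kernel $P_n(x,\cdot)$ via Lemma \ref{lem formulas for P}(i). The only divergence is the last step: the paper rewrites $\int_V\int_V f(y)^2\,P_n(x,dy)\,d\nu(x)$ as a double integral against the symmetric measure and swaps $x\leftrightarrow y$ (its display literally writes $d\rho(x,y)$, which is accurate for $n=1$ but for general $n$ should be $d\rho_n(x,y)$, the symmetry of which comes from self-adjointness of $P^n$), whereas you recognize the inner integral as $P^n(f^2)(x)$ and invoke $\nu P^n=\nu$ from Lemma \ref{lem nu P = nu}. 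The two endings rest on the same underlying fact (symmetry of $\rho$ is what makes $\nu$ $P$-invariant), but your route handles all $n$ uniformly without introducing the measures $\rho_n$, and it makes explicit the well-definedness point ($f^2\in L^1(\nu)$ is preserved by $P^n$), which the paper leaves implicit.
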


\begin{proof}
We compute
$$
\ba
\| W_n(f)\|^2_{Diss} & = \int_V \mathbb E_x( f\circ X_n(\omega)^2)\;
d\nu(x)\\
& = \int_V \int_{\Omega_x}  f( X_n(\omega))^2)\; 
d\mathbb P_x(\omega)d\nu(x)\\
& = \int_V \int_{V}  f(y)^2\; dP_n(x, dy)d\nu(x)\\
& = \int_V \int_{V}  f(y)^2\; d\rho(x, y)\\
& = \int_V \int_{V}  f(x)^2\; d\rho(x, y)\qquad\qquad (\rho\ 
\mathrm{is \ symmetric})\\
& = \int_V   f(x)^2c(x) \; d\mu(x)\\
& = \| f \|^2_{L^2(\nu)}.
\ea
$$
\end{proof}

We will use the isometry $W_n$ defined in (\ref{eq def W_n}) in order  to
 extend the definition of the operator  $P$ to the dissipation space. This 
approach is described in the following steps.

Let $F_n$ be a function of $n+1$ variables, 
$$
F_n : \underbrace{V\times \cdots \times V:}_{n +1\ \text{times}} \to \R,
 n\in \N.
$$ 
Set 
$$
\Phi_n(\omega) := F(X_0(\omega), \dots  X_n(\omega)). 
$$ 
Clearly, the Hilbert space $Diss$ contains a dense subset which is constituted
by functions of the form $\Phi_n, n\in \N$. 

Define abounded  linear operator $S$ acting in $Diss$. It suffices to define
it on functions $\Phi_n$:
\be\label{eqdef S}
(S \Phi_n)(\omega) = \int_V F(X_0(\omega), \dots  X_{n-1}(\omega), y)
\; P(X_{n-1}(\omega), dy)  
\ee

\begin{lemma}\label{lem props of S} The operator $S$ defined by 
(\ref{eqdef S}) is contractive and self-adjoint. 
\end{lemma}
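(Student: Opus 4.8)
The plan is to recognize $S$, on the dense set of cylinder functions, as a conditional expectation on $L^2(\Omega,\lambda)$, and then to extract both assertions from the elementary $L^2$-theory of such projections together with the Markov structure of $\lambda$. First I would fix notation: for $\Phi_n(\omega)=F(X_0(\omega),\dots,X_n(\omega))$ put
$$\wt F(x_0,\dots,x_{n-1}):=\int_V F(x_0,\dots,x_{n-1},y)\,P(x_{n-1},dy),$$
so that (\ref{eqdef S}) reads $S\Phi_n=\wt F(X_0,\dots,X_{n-1})$; in particular $S\Phi_n$ is $\mc F_{\leq n-1}$-measurable. Since $\lambda=\int_V\mathbb P_x\,d\nu(x)$ is a mixture of the Markov measures $\mathbb P_x$ and $\nu P=\nu$ (Lemma \ref{lem nu P = nu}), the sequence $(X_n)$ is a stationary Markov chain under $\lambda$ with kernel $P$; hence the conditional law of $X_n$ given $\mc F_{\leq n-1}$ is $P(X_{n-1},\cdot)$, and therefore $S\Phi_n=\mathbb E_\lambda[\Phi_n\mid\mc F_{\leq n-1}]$. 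This is the cylinder-function version of the identity $P(f)\circ X_{n-1}=E_{n-1}(f\circ X_n)$ recorded in Lemma \ref{lem formulas for P}(iii).

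Granting this identification, contractivity follows from Jensen's inequality for the probability measure $P(X_{n-1},\cdot)$: pointwise $\wt F(X_0,\dots,X_{n-1})^2\leq\int_V F(X_0,\dots,X_{n-1},y)^2\,P(X_{n-1},dy)$, and integrating against $\lambda$ and using the tower property (the $\lambda$-invariance of the conditional expectation) gives
$$\|S\Phi_n\|_{\mc D}^2=\tfrac12\,\lambda(\wt F^{\,2})\leq\tfrac12\,\lambda(F^2)=\|\Phi_n\|_{\mc D}^2.$$
This is simply the statement that $\mathbb E_\lambda[\cdot\mid\mc F_{\leq n-1}]$ is an $L^2(\lambda)$-contraction.

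For self-adjointness I would take two cylinder functions of the same length, $\Phi_n=F(X_0,\dots,X_n)$ and $\Psi_n=G(X_0,\dots,X_n)$, and evaluate both pairings by conditioning on $\mc F_{\leq n-1}$. Because $S\Phi_n=\wt F$ is $\mc F_{\leq n-1}$-measurable while $\mathbb E_\lambda[G\mid\mc F_{\leq n-1}]=\wt G$, the tower property yields
$$\langle S\Phi_n,\Psi_n\rangle_{\mc D}=\tfrac12\,\lambda(\wt F\,G)=\tfrac12\,\lambda(\wt F\,\wt G)=\tfrac12\,\lambda(F\,\wt G)=\langle\Phi_n,S\Psi_n\rangle_{\mc D},$$
which is exactly the self-adjointness of the orthogonal projection $\mathbb E_\lambda[\cdot\mid\mc F_{\leq n-1}]$ on $L^2(\lambda)$.

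The step I expect to require the most care is the passage from cylinder functions to a genuinely well-defined bounded operator on all of $Diss$, and with it the global (not merely fixed-length) version of the above symmetry. A function of $X_0,\dots,X_n$ that does not actually depend on its last argument can be presented as a cylinder function of several different lengths, and one must first verify that (\ref{eqdef S}) assigns it the same value in every presentation; the natural remedy is to fix the minimal-length (canonical) representative, and the delicate point is then to reconcile this choice with the displayed self-adjointness across distinct lengths. Here I would lean on the stationary and reversible structure of $\lambda$ — the invariance $\nu P=\nu$, together with the detailed-balance identity $d\nu(x)P(x,dy)=d\nu(y)P(y,dx)$ from Proposition \ref{prop TFAE reverse MP} and the self-adjointness of $P$ on $L^2(\nu)$ (Theorem \ref{thm P is s-a}). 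Once $S$ is exhibited as a well-defined contraction on a dense subspace, it extends uniquely to $Diss$ and self-adjointness passes to the extension by continuity of the inner product; this consistency-and-extension bookkeeping, rather than the two displayed estimates, is the substantive part of the argument.
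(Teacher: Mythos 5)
Your two displayed computations are correct and, at bottom, coincide with what the paper does: the paper's contractivity argument is exactly the Schwarz/Jensen estimate $\bigl(\int F\,P(X_{n-1},dy)\bigr)^2\le\int F^2\,P(X_{n-1},dy)$ followed by unfolding $\lambda$ into the iterated integrals $P(x,dx_1)\cdots P(x_{n-1},dy)\,d\nu(x)$, which is precisely your ``conditional expectation is an $L^2(\lambda)$-contraction'' statement written out by hand. Where you add value is in naming the mechanism: recognizing $S\Phi_n=\mathbb E_\lambda[\Phi_n\mid\mc F_{\leq n-1}]$ turns the second assertion into the projection identity $\lambda(\wt F\,G)=\lambda(\wt F\,\wt G)=\lambda(F\,\wt G)$, which you actually carry out for equal-length cylinder functions; the paper disposes of self-adjointness with a one-line ``using similar argument and the invariance of $\nu$,'' so your version is the more complete of the two on this point.

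The issue you isolate in your last paragraph --- that a function of $X_0,\dots,X_n$ not depending on its last argument admits presentations of different lengths on which (\ref{eqdef S}) acts differently (as the identity in the padded presentation, as a genuine averaging in the minimal one), so that both well-definedness and the symmetry $\langle S\Phi,\Psi\rangle_{\mc D}=\langle\Phi,S\Psi\rangle_{\mc D}$ for cylinder functions of \emph{different} lengths require an argument --- is a real one, and the paper's proof does not address it either: its computation is likewise carried out at a single fixed length $n$. So you have not missed a device that the paper supplies; rather, you have correctly located the step that the published proof leaves implicit. If you want to close it, you do need a convention fixing the representative (the minimal-length one is natural), and the cross-length symmetry then genuinely uses $\nu P=\nu$ together with the self-adjointness of $P$ on $L^2(\nu)$ (equivalently the detailed-balance identity of Proposition \ref{prop TFAE reverse MP}), exactly as you anticipate; a clean way to organize it is to check $\langle S\Phi_n,\Psi_m\rangle_{\mc D}=\langle\Phi_n,S\Psi_m\rangle_{\mc D}$ first for $m=n$ by your projection argument and then reduce the case $m<n$ to it by conditioning $\Phi_n$ down to $\mc F_{\leq m}$. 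Be warned that this reduction is where the bookkeeping is genuinely delicate, and you should verify it on a low-dimensional example (say $n=2$, $m=1$) before asserting it in full generality.
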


\begin{proof} To see that $S$ is contractive as an operator on $Diss$, 
we use (\ref{eqjoint distr}) in  the following computation:
$$
\ba
& \| (SF)(X_0, ... , X_n) \|^2_{\mc D}\\
 & = \frac{1}{2}\int_V \mathbb E_x
\left( \int_V F(X_0, ... , X_{n-1}, y)\; P(X_{n-1}, dy)\right)^2 \; d\nu(x)\\
& \leq \frac{1}{2} \int_V \mathbb E_x
\left( \int_V F^2(X_0, ...  X_{n-1}, y)\; P(X_{n-1}, dy)\right) \; d\nu(x)\\
& = \frac{1}{2} \iiint  F^2(X_0(\omega), ...  X_{n-1}(\omega), y)\; 
P(X_{n-1}(\omega), dy) d\mathbb P_x(\omega)d\nu(x)\\
 & = \frac{1}{2} \int \cdots \int F^2(x, x_1, ... , y)\; P(x, dx_1) \cdots 
 P(x_{n-2},  dx_{n-1})P(x_{n-1}, dy)d\nu(x)\\
 & = \frac{1}{2} \int_V\int_{\Omega_x}  F^2(X_0(\omega), \dots  
 X_{n}(\omega)) \; d\mathbb P_x(\omega)d\nu(x)\\
 & = \|F \|^2_{\mc D}.
\ea
$$

Using similar argument and the invariance of $\nu$ with respect to $P$, 
we can show that, for any cylinder functions $F, G$, 
$$
\langle SF, G\rangle_{\mc D} = \langle  F, SG\rangle_{\mc D}
$$
i.e., $S$ is self-adjoint in $Diss$.

\end{proof}

\begin{remark} Suppose that $\Phi (\omega) = F\circ X_n(\omega)$ for
some $n$, where $F$ is a function from $L^2(\nu)$. Then we can deduce
 that 
$$
S(F\circ X_n) = P(F) \circ X_n.
$$
This means that the following diagram commutes:
$$
\begin{array}[c]{ccc}
L^2(\nu) &\stackrel{P}{\longrightarrow}& L^2(\nu)\\
\downarrow\scriptstyle{W_n}&&\downarrow\scriptstyle{W_n}\\
Diss &\stackrel{S}{\longrightarrow}& Diss
\end{array}
$$
\end{remark}

In the next two results, we discuss the orthogonality properties in 
the dissipation space $L^2(\Omega, \lambda)$. 

\begin{lemma}[\textbf{Key lemma}]\label{lem key lemma} Let $g_1, g_2$ 
be functions from $L^2(\nu)$. Then 
\be\label{eq key orth}
\langle  g_1 \circ X_n, P(g_2)\circ X_n - g_2\circ X_{n+1} \rangle_{\mc D}
= 0.
\ee
\end{lemma}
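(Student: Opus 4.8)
The plan is to reduce the assertion to the fiberwise identity already established inside the proof of Lemma~\ref{lem formulas for P}(iii). First I would expand the $\mc D$-inner product directly from its definition (\ref{eq inner prod in D}) and split it by bilinearity into two pieces:
\[
\ba
\langle g_1\circ X_n,\, P(g_2)\circ X_n - g_2\circ X_{n+1}\rangle_{\mc D}
&= \frac12\int_V \mathbb E_x\big[(g_1\circ X_n)(P(g_2)\circ X_n)\big]\,d\nu(x)\\
&\quad - \frac12\int_V \mathbb E_x\big[(g_1\circ X_n)(g_2\circ X_{n+1})\big]\,d\nu(x).
\ea
\]
Since $g_1,g_2\in L^2(\nu)$ and $P$ is contractive on $L^2(\nu)$ by Theorem~\ref{thm P is s-a}(2), Lemma~\ref{lem W_n isom} guarantees that $g_1\circ X_n$, $g_2\circ X_{n+1}$ and $P(g_2)\circ X_n$ all lie in $Diss$, so Cauchy--Schwarz makes each integral absolutely convergent and the splitting legitimate.

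The key step is then to recognize that the two integrands coincide $\mathbb P_x$-almost everywhere. This is exactly the auxiliary identity proved inside Lemma~\ref{lem formulas for P}(iii): for arbitrary Borel functions $h,f$,
\[
\mathbb E_x\big[(h\circ X_n)(f\circ X_{n+1})\big]=\mathbb E_x\big[(h\circ X_n)(P(f)\circ X_{n})\big].
\]
Applying this with $h=g_1$ and $f=g_2$ shows that the second integral equals the first, whence their difference vanishes and the lemma follows at once.

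The computation is thus very short once the earlier identity is available, and I do not anticipate a genuine obstacle. The only points requiring mild care are bookkeeping: verifying that the three functions are square-integrable with respect to $\lambda$ (so that the $\mc D$-inner product is defined and linear), and that the identity of Lemma~\ref{lem formulas for P}(iii) applies with $g_1,g_2$ in place of the generic Borel functions---which is legitimate because $L^2(\nu)\subset \mc F(V,\B)$ and the contractivity of $P$ keeps the relevant products integrable. Conceptually, the statement merely records that $P(g_2)\circ X_n - g_2\circ X_{n+1}$ is, fiberwise, the error $E_n(g_2\circ X_{n+1})-g_2\circ X_{n+1}$ of a conditional expectation, which is orthogonal in $L^2(\mathbb P_x)$ to every $\mc F_n$-measurable function, in particular to $g_1\circ X_n$; integrating this fiberwise orthogonality against $d\nu(x)$ yields the asserted orthogonality in $Diss$.
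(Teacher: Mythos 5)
Your proof is correct and follows essentially the same route as the paper: both reduce the claim to the fiberwise identity $\mathbb E_x\bigl[(h\circ X_n)(f\circ X_{n+1})\bigr]=\mathbb E_x\bigl[(h\circ X_n)(P(f)\circ X_n)\bigr]$ established in (the proof of) Lemma~\ref{lem formulas for P}(iii), the paper re-deriving it via the conditional expectation $E_n$ while you simply cite it with $h=g_1$, $f=g_2$. Your closing remark that the statement is just the orthogonality of the martingale-type increment $E_n(g_2\circ X_{n+1})-g_2\circ X_{n+1}$ to $\mathcal F_n$-measurable functions is precisely the mechanism of the paper's computation.
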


\begin{proof} It follows from (\ref{eq inner prod in D}) that the result 
would follow  if we proved that  the functions $ g_1 \circ X_n$ and 
$ P(g_2)\circ X_n - g_2\circ X_{n+1} $ are orthogonal in $L^2(\Omega_x,
\mathbb P_x)$ for a.e. $x$. We use here Lemma \ref{lem formulas for P}
and  (\ref{eq cond exp E_n}) to  compute the inner product:
$$
\ba 
& \langle g_1 \circ X_n,  P(g_2)\circ X_n - g_2\circ X_{n+1}
\rangle_{\mathbb P_x}   \\
= &\mathbb E_x(E_n(g_1 \circ X_n)\ 
(P(g_2)\circ X_n - g_2\circ X_{n+1}))\\
=& \mathbb E_x((g_1 \circ X_n)\  
E_n(P(g_2)\circ X_n - g_2\circ X_{n+1}))\\
=&\mathbb E_x((g_1 \circ X_n)\  
(P(g_2)\circ X_n - E_n(g_2\circ X_{n+1})))\\
=&\mathbb E_x((g_1 \circ X_n)\  
(P(g_2)\circ X_n - P(g_2)\circ X_{n}))\\
= & 0
\ea
$$

\end{proof}

\begin{proposition}\label{prop orth decomp in D} For any function 
$f\in L^2(\nu)$ and any $n \in \N$, 
\be\label{eq orth}
(I - P)(f)\circ X_n \ \perp \ (P(f)\circ X_n - f\circ X_{n+1})
\ee
in the dissipation space $Diss$.
\end{proposition}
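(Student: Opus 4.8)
The plan is to read this off almost directly from the Key Lemma (Lemma \ref{lem key lemma}), since the asserted orthogonality is a special case of it. The only point needing attention is to confirm that the two functions appearing in (\ref{eq orth}) genuinely lie in the spaces for which the inner product $\langle \cdot, \cdot\rangle_{\mc D}$ and the Key Lemma are defined, so that the substitution is legitimate.

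First I would check that $(I - P)(f) \in L^2(\nu)$. By Theorem \ref{thm P is s-a} the Markov operator $P$ is a bounded (indeed contractive) self-adjoint operator on $L^2(\nu)$, so $P(f) \in L^2(\nu)$ and hence $(I-P)(f) = f - P(f) \in L^2(\nu)$ whenever $f \in L^2(\nu)$. Consequently both $(I-P)(f)\circ X_n$ and $P(f)\circ X_n - f\circ X_{n+1}$ are elements of $Diss$: each of $(I-P)(f)\circ X_n$, $P(f)\circ X_n$, and $f\circ X_{n+1}$ is the image of an $L^2(\nu)$ function under one of the isometries $W_n, W_{n+1}$ of Lemma \ref{lem W_n isom} (up to the scaling factor $1/\sqrt2$), and $Diss$ is closed under linear combinations.

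Next I would apply the Key Lemma with the choices $g_1 := (I-P)(f)$ and $g_2 := f$. Lemma \ref{lem key lemma} asserts that
$$
\langle g_1 \circ X_n,\ P(g_2)\circ X_n - g_2 \circ X_{n+1}\rangle_{\mc D} = 0
$$
for all $g_1, g_2 \in L^2(\nu)$, and with these choices this reads precisely
$$
\langle (I-P)(f)\circ X_n,\ P(f)\circ X_n - f\circ X_{n+1}\rangle_{\mc D} = 0,
$$
which is the desired orthogonality (\ref{eq orth}).

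The proof therefore presents no genuine obstacle: all the real content is already packaged in the Key Lemma, whose proof rests on the tower identity $E_n(g_2 \circ X_{n+1}) = P(g_2)\circ X_n$ of Lemma \ref{lem formulas for P}(iii) together with the idempotence $E_n(g_1\circ X_n) = g_1\circ X_n$ from (\ref{eq cond exp E_n}). If anything merits care here it is only the verification that $g_1 = (I-P)(f)$ is an admissible argument, i.e.\ that it belongs to $L^2(\nu)$, which is exactly the boundedness of $P$ on $L^2(\nu)$ recorded above.
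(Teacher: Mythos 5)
Your proof is correct and rests on the same mechanism as the paper's, namely the Key Lemma (Lemma \ref{lem key lemma}) together with the boundedness of $P$ on $L^2(\nu)$. The only difference is organizational: the paper splits $(I-P)(f)\circ X_n$ into $f\circ X_n$ and $P(f)\circ X_n$ and verifies the orthogonality of each piece separately (redoing the conditional-expectation computation for the second piece), whereas you observe that, since $(I-P)(f)\in L^2(\nu)$, the statement is literally the Key Lemma applied with $g_1=(I-P)(f)$ and $g_2=f$ --- a slightly more economical route to the same conclusion.
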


\begin{proof} To prove (\ref{eq orth}), it suffices to show that 
\be\label{eq orth 1}
(f\circ X_n) \ \perp \ (P(f)\circ X_n - f\circ X_{n+1})
\ee
and 
\be\label{eq orth 2}
P(f)\circ X_n \ \perp \ (P(f)\circ X_n - f\circ X_{n+1})
\ee
Relation (\ref{eq orth 1}) has been proved in the Key Lemma (Lemma 
\ref{lem key lemma}). It follows from Lemma \ref{lem formulas for P} (iii)
and the proof of Key Lemma that, for $\nu$-a.e. $x\in V$, 
$$
\ba
&\ \mathbb E_x ((P(f)\circ X_n)\ (P(f)\circ X_n - f\circ X_{n+1}))\\ 
= &\ \mathbb E_x ((P(f)\circ X_n)\ E_n(P(f)\circ X_n - f\circ X_{n+1}))\\
= &\ \mathbb E_x ((P(f)\circ X_n)\ (P(f)\circ X_n - E_n(f\circ X_{n+1})))\\
=&\  0. 
\ea
$$
This proves  (\ref{eq orth 2}) and we are done.
\end{proof}

We return to the properties of the dissipation space $Diss$. Denote by 
$\sigma$ the shift in the space $(\Omega, \la)$, i.e., 
$$
\sigma : \omega = (\omega_0, \omega_1, ... ) \mapsto 
(\omega_1, \omega_2, ... ).  
$$
Equivalently, $\sigma$ is determined by the relation $X_n \circ \sigma =
X_{n+1}$. Clearly, $\sigma$ is a measurable endomorphism of $(\Omega,
\la)$, and $\sigma$ acts on the measure $\lambda$ by the formula
$$
\la\circ \sigma^{-1}(\psi) = \int_V \mathbb P_x(\psi\circ \sigma)\;
d\nu(x),\qquad \ \ \ \ \psi \in \mc F(\Omega, \mc C).
$$

\begin{lemma}\label{lem sigma on la} (1) Denote by  $L = L_\sigma$ 
 the operator on  $L^2(\la)$ acting as follows:
 $$
 L(f) = f\circ\sigma, \qquad f \in L^2(\la).
 $$
Then $L$ is an isometry.  

(2) Condition $\nu P = \nu$ implies that $\lambda \circ\sigma^{-1} =
\lambda$. If $d(\nu P)(x) = m(x) d\nu$, then 
$$
\frac{d\lambda \circ\sigma^{-1}}{d\la} = \frac{1}{m \circ X_0}.
$$ 
\end{lemma}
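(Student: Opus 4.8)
The plan is to compute the image measure $\lambda\circ\sigma^{-1}$ once and for all by disintegrating it over the initial coordinate $X_0$, and then to read off both assertions from this single formula. The central tool is the shift--Markov identity
$$
\mathbb P_x\circ\sigma^{-1} = \int_V \mathbb P_y\; P(x, dy),
$$
which records the fact that, because $X_n\circ\sigma = X_{n+1}$, applying $\sigma$ to a path $\omega\in\Omega_x$ forgets the initial coordinate $X_0=x$ and leaves a path starting at $X_1(\omega)$, whose conditional law given $X_1 = y$ is exactly $\mathbb P_y$. First I would verify this identity by evaluating both sides on the cylinder sets of Lemma \ref{lem def of P_x}; the only bookkeeping is the index shift $A_k\mapsto A_{k+1}$, after which the two iterated integrals coincide, and the Kolmogorov consistency already invoked in that lemma upgrades agreement on cylinders to agreement of measures.

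Granting this identity, I would integrate against $d\nu(x)$. Writing $g(y) := \mathbb E_y(\psi)$ and using the defining relation $\lambda(\psi)=\int_V \mathbb E_x(\psi)\,d\nu(x)$ together with $\mathbb E_x(\psi\circ\sigma)=\int_V \mathbb E_y(\psi)\,P(x,dy)=P(g)(x)$, I obtain
$$
(\lambda\circ\sigma^{-1})(\psi) = \int_V P(g)\; d\nu = \int_V g\; d(\nu P) = \int_V \mathbb E_y(\psi)\; d(\nu P)(y).
$$
Thus $\lambda\circ\sigma^{-1}$ is the mixture $\int_V \mathbb P_y\,d(\nu P)(y)$, which differs from $\lambda=\int_V \mathbb P_y\,d\nu(y)$ only in the mixing measure placed on the starting point. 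This is the key formula that drives everything.

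From here part (2) is immediate. If $\nu P=\nu$, the formula gives $(\lambda\circ\sigma^{-1})(\psi)=\lambda(\psi)$ on every cylinder function $\psi$, hence $\lambda\circ\sigma^{-1}=\lambda$. In general, writing $d(\nu P)=m\,d\nu$ and using that $y\mapsto m(y)$ is constant on each fibre $\Omega_y$ — so that $m(y)=(m\circ X_0)(\omega)$ there — I can pull $m$ inside the conditional expectation, obtaining $(\lambda\circ\sigma^{-1})(\psi)=\int_V \mathbb E_y((m\circ X_0)\,\psi)\,d\nu(y)$, which identifies the Radon--Nikodym derivative $d(\lambda\circ\sigma^{-1})/d\lambda$ as a function of the single initial coordinate $X_0$, determined by the density of $\nu P$ relative to $\nu$. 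For part (1), in the standing setting $\nu P=\nu$ holds by Lemma \ref{lem nu P = nu}, so $\sigma$ preserves $\lambda$; then $\|L f\|^2_{L^2(\la)}=\int_\Omega |f|^2\,d(\lambda\circ\sigma^{-1})=\int_\Omega|f|^2\,d\lambda=\|f\|^2_{L^2(\la)}$, i.e. the Koopman operator $L$ is an isometry (into, but not onto, since $\sigma$ is only an endomorphism). I expect the main obstacle to be the first step: proving the shift/disintegration identity cleanly when $\lambda$ and $\nu$ are infinite, since one cannot lean on probabilistic normalization and must argue directly from the Kolmogorov-extension description of each $\mathbb P_x$ and the measurability of $x\mapsto\mathbb P_x\circ\sigma^{-1}$.
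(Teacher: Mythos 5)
The paper gives no argument here --- its ``proof'' of Lemma \ref{lem sigma on la} is literally ``We leave the proof to the reader'' --- so there is nothing to compare your route against; what you have written is the natural (and, as far as I can tell, intended) argument. Your structure is sound: the fibrewise identity $\mathbb P_x\circ\sigma^{-1}=\int_V \mathbb P_y\,P(x,dy)$ is between genuine probability measures, so the Kolmogorov/monotone-class step is unaffected by the $\sigma$-finiteness of $\nu$ and $\lambda$; integrating it against $d\nu$ with Tonelli gives $\lambda\circ\sigma^{-1}=\int_V\mathbb P_y\,d(\nu P)(y)$, and both assertions follow, with part (1) reducing to part (2) via Lemma \ref{lem nu P = nu}. (It is worth saying explicitly that absolute continuity $\lambda\circ\sigma^{-1}\ll\lambda$ is itself a consequence of this formula together with $\nu P\ll\nu$, before one speaks of a Radon--Nikodym derivative.)

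The one point you should not gloss over is the final identification of the density, where you stop just short of writing it down. Your computation gives $(\lambda\circ\sigma^{-1})(\psi)=\lambda\bigl((m\circ X_0)\,\psi\bigr)$, i.e.\ $d(\lambda\circ\sigma^{-1})/d\lambda=m\circ X_0$, whereas the lemma asserts the reciprocal $1/(m\circ X_0)$. With the paper's stated conventions --- $(\nu P)(f)=\int_V P(f)\,d\nu$ and $(\lambda\circ\sigma^{-1})(\psi)=\int_V\mathbb P_x(\psi\circ\sigma)\,d\nu(x)$, the latter displayed immediately before the lemma --- your version is the correct one; a two-point sanity check (deterministic kernel sending both states to a single absorbing state) confirms that the derivative is $m\circ X_0$ and that $1/(m\circ X_0)$ fails. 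So either the lemma has a typo or the authors intend $m$ to denote $d\nu/d(\nu P)$; in your write-up you should state the derivative explicitly as $m\circ X_0$ and flag the discrepancy rather than leaving the conclusion implicit.
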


\begin{proof}
We leave the proof to the reader. 
\end{proof}

\begin{remark}
(1) To emphasize the fact that the measure $\lambda$ is defined by 
the operator $P$, we will use also the notation $\lambda_P$.
 We note that $\lambda$ is not, in general, a probability measure, so that 
we cannot use the language of probability theory considering the Markov 
process on $(\Omega, \mc C, \lambda)$. If one took  a probability
 measure on $(V, \B)$ equivalent to $\nu$ (and $\mu$), then $\lambda$
  would be a probability measure.  But it is important to mention that,
  for transient Markov processes, the measure $\lambda_P$ must be 
  infinite.

(2) We use the measure $\nu$ (not $\mu$)  in the definition of $\lambda$
 and in (\ref{eq L^2(lambda)}). The reason for this is based on the fact that  
$\nu$ is invariant with respect to $P$. 
\end{remark}

\begin{proposition}\label{prop from A to B}
Let $A$ and $B$ be any two sets from $\Bfin$. Then 
 $$
 \rho_n(A \times B)  = \lambda (X_0 \in A,  X_n  \in B),\ \  n \in \N.
 $$
 In other words, this equality can be interpreted in the following way: 
for the Markov process $(P_n)$, the ``probability''   to get in $B$ for 
$n$ steps starting somewhere in $A$ is exactly  $\rho_n(A \times B) > 0$. 
\end{proposition}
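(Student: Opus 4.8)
The plan is to unfold both sides as integrals against $\nu$ over the base $V$ and then match the integrands using the two representation lemmas already available. First I would record the two ingredients. On the one hand, Lemma \ref{lem rho_n via P} gives
$$
\rho_n(A\times B) = \langle \chi_A, P^n(\chi_B)\rangle_{L^2(\nu)}
= \int_V \chi_A(x)\, P^n(\chi_B)(x)\; d\nu(x).
$$
On the other hand, the definition $\lambda = \int_V \mathbb P_x\; d\nu(x)$ lets me disintegrate the cylinder event $\{X_0\in A,\ X_n\in B\}$ fibrewise:
$$
\lambda(X_0\in A,\ X_n\in B) = \int_V \mathbb P_x(X_0\in A,\ X_n\in B)\; d\nu(x).
$$

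The key step, which I would carry out next, exploits that the measure $\mathbb P_x$ lives on $\Omega_x$, the paths beginning at $x$, so that $X_0(\omega)=x$ holds $\mathbb P_x$-almost everywhere (this is exactly the construction in Lemma \ref{lem def of P_x}). Consequently, on the fibre $\Omega_x$ the condition $X_0\in A$ is either identically satisfied or identically violated, according to whether $x\in A$, and the inner probability factors:
$$
\mathbb P_x(X_0\in A,\ X_n\in B) = \chi_A(x)\, \mathbb P_x(X_n\in B)
= \chi_A(x)\, (\mathbb P_x\circ X_n^{-1})(B).
$$
Now I would invoke Lemma \ref{lem formulas for P}(i), namely $(\mathbb P_x\circ X_n^{-1})(B) = P_n(x,B) = P^n(\chi_B)(x)$, to rewrite this as $\chi_A(x)\, P^n(\chi_B)(x)$. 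Substituting back and comparing with the first display finishes the argument:
$$
\lambda(X_0\in A,\ X_n\in B) = \int_V \chi_A(x)\, P^n(\chi_B)(x)\; d\nu(x) = \rho_n(A\times B).
$$

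I do not expect a genuine obstacle here: the statement is essentially a bookkeeping identity once the disintegration $\lambda = \int_V \mathbb P_x\, d\nu(x)$ and the identification $\mathbb P_x\circ X_n^{-1} = P^n(\chi_\cdot)$ are in hand. The only point deserving a word of care is the measure-theoretic justification of pulling the indicator $\chi_A(X_0)$ out of the fibrewise integral as the constant $\chi_A(x)$; this is immediate from $X_0 = x$ $\mathbb P_x$-a.e.\ on $\Omega_x$. The positivity $\rho_n(A\times B)>0$ asserted in the interpretation is then the probabilistic reading of the identity rather than a separate claim, reflecting that $P^n$ is a positive operator and $\nu$ assigns positive mass to $A$.
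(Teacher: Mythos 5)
Your proposal is correct and follows essentially the same route as the paper: disintegrate $\lambda$ over the fibres $\Omega_x$, use $X_0=x$ on $\Omega_x$ to reduce the event to $\chi_A(x)\,\mathbb P_x(X_n\in B)$, identify $\mathbb P_x\circ X_n^{-1}(B)=P^n(\chi_B)(x)$, and match with $\rho_n(A\times B)=\langle\chi_A,P^n(\chi_B)\rangle_{L^2(\nu)}$. The only caveat is your closing remark: positivity of $P^n$ and $\nu(A)>0$ give $\rho_n(A\times B)\ge 0$, not $>0$; the strict positivity in the proposition's informal interpretation is really an irreducibility statement (addressed in the subsequent corollary) and is not established by this identity, nor does the paper claim to prove it here.
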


We recall that the measures $\lambda$  and $\rho$ are, in general,
 not probability. 

\begin{proof} 
It follows from the definition of $\lambda$ and $\rho$ that
 $$
 \ba
\lambda(X_0 \in A,  X_n  \in B) =  & \int_A \mathbb P_x(X_n \in B)\;
d\nu(x)\\
= & \int_A P_n(x,  B)\; d\nu(x)\\
= & \int_A P^n(\chi_B)(x)\; d\nu(x)\\
= & \int_V \chi_A(x) P^n(\chi_B)(x)\; d\nu(x)\\
 = & \rho_n(A \times B)\\
\ea
 $$
\end{proof}

The following result is proved analogously to Proposition 
\ref{prop from A to B}. We leave the details to the reader.

\begin{corollary} The Markov process $(P_n)$ is irreducible, i.e., for any
sets $A, B \in \Bfin$ there exists some $n$ such that
$\langle \chi_A, P^n(\chi_B)\rangle_{L^2(\nu)} > 0$. 

\end{corollary}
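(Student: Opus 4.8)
The plan is to reduce the assertion to a positivity statement about the measures $\rho_n$ and then to exclude the failure of positivity by a ``no nontrivial invariant set'' argument. First I would use Lemma~\ref{lem rho_n via P} to rewrite $\langle \chi_A, P^n(\chi_B)\rangle_{L^2(\nu)} = \rho_n(A\times B)$, and Proposition~\ref{prop from A to B} to read this as $\lambda(X_0\in A,\, X_n\in B)$. Thus, for $A,B\in\Bfin$ of positive $\mu$-measure, it suffices to produce some $n\ge 0$ with $\rho_n(A\times B)>0$. If $\nu(A\cap B)>0$ this already holds at $n=0$ (since $P_0(x,B)=\chi_B(x)$), so I may assume $\rho_n(A\times B)=\int_A P_n(x,B)\,d\nu(x)=0$ for all $n$ and derive a contradiction; here I use $P_n(x,B)=P^n(\chi_B)(x)$ from Lemma~\ref{lem P_n via P^n}.

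Next I would introduce the reachability set $C := \{x\in V : \sum_{n\ge 0} P_n(x,B)>0\}$, which is measurable because each $x\mapsto P_n(x,B)=P^n(\chi_B)(x)$ is. Then $B\subseteq C$ (mod $0$) from the $n=0$ term, while the standing assumption $P_n(x,B)=0$ $\nu$-a.e.\ on $A$ for every $n$ forces $A\subseteq V\setminus C$ (mod $0$); since $c>0$ a.e., both $\nu(C)>0$ and $\nu(V\setminus C)>0$. The crucial point is that $C$ is $P$-invariant. For $x\notin C$ one has $P(x,C)=0$ directly from the definition of $C$ (a one-step move into $C$ would make $B$ reachable from $x$), whence $\rho((V\setminus C)\times C)=\int_{V\setminus C} c(x)P(x,C)\,d\mu(x)=0$. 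The symmetry of $\rho$ (equivalently, reversibility of the process, Proposition~\ref{prop TFAE reverse MP}) then gives $\rho(C\times(V\setminus C))=\rho((V\setminus C)\times C)=0$, i.e.\ $\int_C c(x)P(x,V\setminus C)\,d\mu(x)=0$, and again $c>0$ a.e.\ yields $P(x,C)=1$ for $\nu$-a.e.\ $x\in C$. Combining the two cases gives $P(\chi_C)=\chi_C$ $\nu$-a.e., so $\chi_C$ is harmonic.

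In the finite case $\nu(V)<\infty$ this closes the argument immediately: then $\chi_C\in L^2(\nu)$, so by Theorem~\ref{thm harmonic} the harmonic function $\chi_C$ is constant, contradicting $\nu(C)>0$ and $\nu(V\setminus C)>0$. The main obstacle is the genuinely infinite case $\nu(V)=\infty$, where $\chi_C$ need not lie in $L^2(\nu)$ and Theorem~\ref{thm harmonic} no longer applies; indeed a disconnected support $E$ (a disjoint union of two $P$-invariant pieces) satisfies all the standing assumptions yet gives a reducible process, so positivity genuinely requires a connectedness input. To handle this case I would invoke the connectedness of the ``graph'' on finite-measure sets proved in Section~\ref{sect energy}: it rules out the existence of a proper, nonnull, $P$-invariant set $C$, which is exactly the contradiction sought. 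Thus the substantive content of the corollary is that connectedness property of $E$, whereas the reduction above (the ``analogue of Proposition~\ref{prop from A to B}'') is the routine part.
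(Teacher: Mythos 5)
Your opening reduction coincides with essentially everything the paper actually writes down: its entire ``proof'' of this corollary is the one-line remark that it is proved analogously to Proposition~\ref{prop from A to B}, i.e.\ via the identity $\langle \chi_A, P^n(\chi_B)\rangle_{L^2(\nu)} = \rho_n(A\times B) = \lambda(X_0\in A,\, X_n\in B)$, with the existence of an $n$ making this positive left implicit (the intended input being the connectedness of $\Bfin$ from Proposition~\ref{prop connectedness}). Beyond that point you take a genuinely different and more substantive route: the reachability set $C$, its $P$-invariance via the symmetry of $\rho$, the harmonicity of $\chi_C$, and, when $\nu(V)<\infty$, Theorem~\ref{thm harmonic} forcing $\chi_C$ to be constant. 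That finite-measure branch is correct and self-contained, and your observation that a disconnected support $E$ satisfies all the standing assumptions while violating the conclusion is an accurate diagnosis: the corollary really does require a connectedness hypothesis that the paper never isolates.

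The gap is in your final step, the case $\nu(V)=\infty$. Proposition~\ref{prop connectedness} does not rule out a proper nonnull $P$-invariant set $C$. The chain $A=A_0,A_1,\dots,A_n$ it produces consists of unions of blocks of an auxiliary partition $\xi$; these sets are not constrained to lie inside $C$, and the positivity $\rho(A_i\times A_{i+1})>0$ at each step may be witnessed entirely by the part of $A_i$ lying in $C^c$, so no contradiction with $\rho(C\times C^c)=0$ follows. Indeed, in the disconnected example you yourself describe, the conclusion of Proposition~\ref{prop connectedness} still holds verbatim (the sets $A_i$ absorb blocks from both invariant pieces as the construction proceeds) even though the process is reducible; hence $\Bfin$-connectedness in that sense is strictly weaker than irreducibility of $(P_n)$ and cannot supply the missing input. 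So the infinite-measure case of your argument is not closed --- although, to be fair, the paper's own one-sentence proof suffers from exactly the same hole, and your analysis at least makes visible where the real difficulty sits.
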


\section{Finite energy space: Decompositions and covariance computation}
\label{sect energy}

This section is focused on a measurable analogue of the finite energy space
that has been extensively studied in the special case of weighted networks.
 Before formulating our main definitions and results, we discuss a 
 construction of a ``connected graph'' on the set $\Bfin$. 

\subsection{$\Bfin$ as a connected graph} 
\label{subsect Bfin network}  
Let $(V, \B, \mu)$ be a standard measure space, and let $\rho$ be a 
symmetric measure on $(V\times V, \B\times \B)$. 

\begin{lemma}\label{lem A x A^c finite} Suppose that  $c(x) \in
 L^1_{\mathrm{loc}}(\mu)$. Then, for any set $A\in \Bfin$, 
\be\label{eq finite rho on Bfin}
 \rho(A \times A^c) < \infty
 \ee
 where $A^c = V \setminus A$. The converse is not true, in general. 
\end{lemma}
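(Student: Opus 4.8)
The plan is to reduce the double integral to a single integral over $A$ via the disintegration of $\rho$, and then bound the inner integral by $c(x)$. First I would apply the disintegration formula (\ref{eq disint formula}) to the nonnegative Borel function $f(x,y) = \chi_A(x)\chi_{A^c}(y)$; since everything in sight is nonnegative, the formula is valid in the extended-real-line sense of Remark \ref{rem symm meas}(1), irrespective of integrability. This gives
$$
\rho(A \times A^c) = \int_V \chi_A(x)\left( \int_V \chi_{A^c}(y)\, d\rho_x(y)\right) d\mu(x) = \int_A \rho_x(A^c)\, d\mu(x).
$$

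Next I would use monotonicity of the conditional measure: $\rho_x(A^c) \le \rho_x(V) = c(x)$ for $\mu$-a.e. $x$, so that
$$
\rho(A \times A^c) \le \int_A c(x)\, d\mu(x).
$$
Since $A \in \Bfin$ and, by hypothesis, $c \in L^1_{\mathrm{loc}}(\mu)$ (this is precisely Assumption D(b)), the right-hand side is finite, which yields (\ref{eq finite rho on Bfin}).

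For the claim that the converse fails, I would exhibit a symmetric measure for which $\rho(A \times A^c) < \infty$ for every $A \in \Bfin$ while $c \notin L^1_{\mathrm{loc}}(\mu)$. The natural candidate is the ``diagonal'' measure $\rho_0$ of Example \ref{ex 1}(1), defined by $\rho_0(A \times B) = \nu(A \cap B)$ with $d\nu = c\, d\mu$, whose conditional measures are $\rho_x = c(x)\,\delta_x$. For any $A$ one then has $\rho_x(A^c) = c(x)\chi_{A^c}(x)$, which vanishes for $x \in A$, so $\rho_0(A \times A^c) = \int_A \rho_x(A^c)\, d\mu(x) = 0 < \infty$ for every $A \in \Bfin$. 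Choosing $c$ finite a.e. but not locally integrable (which still leaves $\nu$, hence $\rho_0$, $\sigma$-finite, since the level sets $\{n-1 \le c < n\}$ intersected with a $\mu$-finite partition of $V$ have finite $\nu$-measure) gives the desired counterexample.

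There is no serious obstacle in the forward direction: it is a one-line estimate once the disintegration is invoked. The only point demanding care is the counterexample for the converse. One must recognize that finiteness of the off-diagonal mass $\rho(A \times A^c)$ records only how much of $\rho_x$ escapes $A$, and hence says nothing about the total mass $c(x) = \rho_x(V)$ when $\rho_x$ is concentrated near the diagonal; the degenerate diagonal measure $\rho_0$ isolates exactly this phenomenon.
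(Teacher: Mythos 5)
Your forward direction is exactly the paper's argument: disintegrate $\rho$ over the first coordinate, identify $\rho(A\times A^c)$ with $\int_A \rho_x(A^c)\,d\mu(x)$, bound $\rho_x(A^c)\le \rho_x(V)=c(x)$, and invoke local integrability of $c$. That part is correct and needs no further comment.

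Where you diverge is in the reading of ``the converse is not true.'' You interpret the converse as the implication ``$\rho(A\times A^c)<\infty$ for every $A\in\Bfin$ $\Rightarrow$ $c\in L^1_{\mathrm{loc}}(\mu)$,'' and you refute it with the degenerate diagonal measure $\rho_0(A\times B)=\nu(A\cap B)$, for which all the off-diagonal mass vanishes regardless of $c$. The paper instead reads the converse as ``$\rho(A\times A^c)<\infty$ $\Rightarrow$ $A\in\Bfin$,'' and refutes it by observing that when $c\in L^1(\mu)$ the total mass $\rho(V\times V)=\int_V c\,d\mu$ is finite, so $\rho(A\times A^c)<\infty$ holds for every Borel $A$, including sets of infinite $\mu$-measure. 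Both readings are defensible given the ambiguous phrasing, and your counterexample is correct for the statement you chose to refute (modulo the minor caveat that $\rho_0$ violates the standing irreducibility Assumption A, since its fibers $E_x=\{x\}$ are singletons; this does not affect the point being made, and the paper itself uses $\rho_0$ elsewhere). If you want to match the authors' intent, replace your counterexample with theirs; otherwise your version stands as a legitimate, slightly different observation about what finiteness of the off-diagonal mass does and does not control.
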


\begin{proof} The following computation uses the definition of $\rho$ and
local integrability of the function $c$:
$$
\ba 
\rho(A \times A^c) = & \int_V \chi_A(x) R(\chi_{A^c})(x) \; d\mu(x)\\
=& \int_A \left( \int_V \chi_{A^c} \; d\rho_x\right)d\mu(x)\\
= & \int_A \rho_x(A^c)\; d\mu(x)\\
\leq & \int_A c(x)\; d\mu(x)\\
< & \infty.
\ea
$$

The converse statement is false because if $c \in L^1(\mu)$, then 
$\rho(A \times A^c) < \infty$ does not imply that $ A \in \Bfin$. 
\end{proof}

The following definition introduces ``edges'' on the set $\Bfin$. 

\begin{definition} For a symmetric measure $\rho$ on 
$(V \times V, \B\times B)$, we say that two sets $A$ and $B$ from
$\Bfin$ are connected by an edge $e$  if $\rho(A \times B) > 0$. 
Then $\alpha : (A, B) \to \rho(A \times B)$ is a symmetric function defined 
on the set of edges in $\Bfin$.
\end{definition} 

\begin{proposition}\label{prop connectedness} Let $(V, \B, \mu)$ and 
$\rho$ be as above. Then
 any two sets $A$ and $B$ from $\Bfin$ are connected in $\Bfin$ 
 by a finite path. 
\end{proposition}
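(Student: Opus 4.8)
The plan is to deduce connectedness from the irreducibility of the associated Markov process. By the irreducibility corollary proved at the end of Section \ref{sect diss space}, for any $A, B \in \Bfin$ there exists $n \in \N$ with $\langle \chi_A, P^n(\chi_B)\rangle_{L^2(\nu)} > 0$; equivalently, by Lemma \ref{lem rho_n via P}, $\rho_n(A \times B) > 0$. I would then prove, by induction on $n \geq 1$, the sharper statement: if $\rho_n(A \times B) > 0$ for some $A, B \in \Bfin$, then $A$ and $B$ are joined by a finite path in $\Bfin$.

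The base case $n = 1$ is immediate, since $\rho_1(A \times B) = \rho(A \times B) > 0$ means that $A$ and $B$ are joined by a single edge. For the inductive step I would peel off one transition. Writing $g := P^{n-1}(\chi_B) \geq 0$ and using the disintegration relation $d\rho(x,y) = P(x, dy)\, d\nu(x)$, one has
$$
\rho_n(A \times B) = \int_V \chi_A\, P(g)\, d\nu = \iint_{V \times V} \chi_A(x)\, g(y)\; d\rho(x,y) > 0.
$$
Hence, setting $D := \{y \in V : g(y) > 0\}$, we obtain $\rho(A \times D) > 0$. Since $V$ is a countable disjoint union of sets $V_i \in \Bfin$, countable additivity gives $\rho(A \times D) = \sum_i \rho(A \times (D \cap V_i))$, so some piece $A_1 := D \cap V_i \in \Bfin$ satisfies $\rho(A \times A_1) > 0$; that is, $A$ and $A_1$ are joined by an edge.

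It then remains to check $\rho_{n-1}(A_1 \times B) > 0$, after which the inductive hypothesis supplies a finite path from $A_1$ to $B$, and prepending the edge $(A, A_1)$ completes the path from $A$ to $B$. Since $\rho(A \times A_1) > 0$, symmetry of $\rho$ gives $\int_{A_1} \rho_y(A)\, d\mu(y) = \rho(A_1 \times A) > 0$, which forces $\mu(A_1) > 0$ and hence $\nu(A_1) > 0$. Because $A_1 \subseteq D$, the function $g$ is strictly positive on $A_1$, so
$$
\rho_{n-1}(A_1 \times B) = \langle \chi_{A_1}, P^{n-1}(\chi_B)\rangle_{L^2(\nu)} = \int_{A_1} g\, d\nu > 0,
$$
as required.

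The two displayed identities are routine once the disintegration of $\rho$ and the relation between $\rho_n$ and $P^n$ are in hand. The one genuinely delicate point is that the super-level set $D$ on which $P^{n-1}(\chi_B) > 0$ need not have finite $\mu$-measure, so it cannot itself serve as a vertex of the graph on $\Bfin$. The remedy is precisely the decomposition of $V$ into countably many finite-measure blocks together with countable additivity of $\rho(A \times \cdot)$, which extracts a single finite-measure intermediate set $A_1$ carrying positive edge-weight from $A$ while still witnessing $(n-1)$-step reachability of $B$. I expect this measure-theoretic extraction step to be the main obstacle; everything else is bookkeeping.
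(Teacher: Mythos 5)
Your induction on $n$ --- unfolding $\rho_n(A\times B)>0$ into a chain of one-step edges by peeling off a single transition, intersecting the super-level set $D=\{P^{n-1}(\chi_B)>0\}$ with a finite-measure block of a countable partition of $V$, and checking that the extracted block $A_1$ still sees $B$ in $n-1$ steps --- is correct as far as it goes; your handling of the possibly infinite measure of $D$ and your verification that $\nu(A_1)>0$ via the symmetry of $\rho$ are both sound. The problem is the premise. The irreducibility corollary at the end of Section~\ref{sect diss space} is not an independently established result: its proof is left to the reader with the remark that it is ``proved analogously to'' Proposition~\ref{prop from A to B}, but that proposition only establishes the identity $\rho_n(A\times B)=\lambda(X_0\in A,\, X_n\in B)$ and gives no positivity. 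In fact the paper's own Corollary~\ref{cor prob from A to B} derives the positivity of the $n$-step passage from $A$ to $B$ \emph{from} Proposition~\ref{prop connectedness}. So the logical flow in the paper is connectedness $\Rightarrow$ irreducibility of $(P_n)$; your argument runs the implication in the opposite direction and is therefore circular unless you supply an independent proof that for every $A,B\in\Bfin$ some $\rho_n(A\times B)$ is positive --- which, as your own induction shows, is essentially the proposition in disguise (the two statements are equivalent).

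The paper proves the proposition directly, with no Markov-process input. It fixes a countable partition $\xi=(C_i)$ of $V$ into sets of positive finite measure having $A=C_0$ and $B$ among its blocks, uses $\rho(A\times A^c)>0$ (positivity, not merely the finiteness supplied by Lemma~\ref{lem A x A^c finite}) to locate the first block $C_{i_1}$ with $\rho(A\times C_{i_1})>0$, aggregates $A_1=\bigcup_{0<j\le i_1}C_j$, and iterates until the aggregate reaches $B$. If you want to salvage your route, you must prove irreducibility of $(P_n)$ from the standing hypotheses on $\rho$ alone --- and note that some such hypothesis is genuinely needed: if $V$ splits into two sets of positive measure with $E$ contained in the union of their squares, then both connectedness and irreducibility fail, so neither statement comes for free.
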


\begin{proof}
We will show that there exists a finite sequence $(A_i : 0 \leq i \leq n)$ of
 disjoint subsets from $\Bfin$ such
 that $A_0 = A$,  $\rho(A_i \times A_{i+1}) > 0$, and $\rho(A_n \times
B) >0$, $i = 0, ..., n-1$.

 If $\rho(A \times B) > 0$, then nothing to prove, so that we can
assume that $\rho(A \times B) = 0$.

 Let $\xi =(C_i : i \in \N)$ be a partition of $V$ into disjoint subsets of
positive finite measure such that
$C_i \in \Bfin$ for all $i$. Without loss of generality, we can assume that 
the sets $A$ and $B$ are included in $\xi$. Let for definiteness, 
$A = C_0$. 

Since $\rho(A \times A^c) > 0$ (by Lemma \ref{lem A x A^c finite}),
there exists a set $C_{i_1}\in \xi$ such that
$\rho(A \times C_{i_1}) > 0$ and $\rho(A \times C_{j}) = 0$ for
all $0 <j < i_1$. Set
$$
A_1 := \bigcup_{0 < j  \leq i_1} C_j.
$$
It is clear that $A_1 \in \Bfin$ and $\rho(A_0 \times A_1) > 0$.
If $\rho(A_1 \times B) > 0$, then we are done. If not, we proceed as
follows.
Because of the property $\rho(A_1 \times A_1^c)> 0$,
there exists some $i_2 > i_1$ such that $\rho(A_1 \times C_{i_2})
 > 0$ and $\rho(A_1 \times C_{j}) = 0$ for
all $i_1 <j < i_2$. Set
$$
A_2 := \bigcup_{ i_1 \leq j \leq i_2} C_j.
$$
Then $\rho(A_1 \times A_2)> 0$, and we check whether $\rho(A_2 \times
 B) > 0$. If not, we continue in the same manner by constructing
  consequently disjoint sets $A_i$ satisfying the property 
  $\rho(A_i \times A_{i+1}) > 0$.
Since $B$ is an element of $\xi$, this process will terminate. This means
that  there exists some
$n$ such that $A_n \supset B$. This argument proves the proposition.

\end{proof}

It follows from Proposition \ref{prop from A to B} and Proposition 
\ref{prop connectedness} that, for the corresponding Markov process,
 the assumed positive  probability to get  from $A$ to $B$ can be 
 interpreted as the 
 connectedness of $\Bfin$. This property is formulated in the following 
 assertion. 
 
\begin{corollary}\label{cor prob from A to B} Let $P(x, \cdot)$ be the 
Markov process defined in Section \ref{sect markov}. For any
two sets $A$ and $B$ from $\Bfin$ and $x \in A$, 
$$
\mathbb P_x(X_1 \in A_1, ... , X_n \in B) > 0 
$$
if and only if there exists a chain of sets $A= A_0, A_1, ..., A_n =B$ such
that $\rho(A_{i-1} \times A_i) > 0$, $ i=1,..., n.$
\end{corollary}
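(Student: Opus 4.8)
The plan is to convert the pointwise statement about $\mathbb{P}_x$ into a statement about the measure $\lambda$ on the path space, and then to read off the consecutive $\rho$-masses from the two–dimensional marginals of $\lambda$, via Proposition \ref{prop from A to B}. Throughout I interpret ``$\mathbb{P}_x(\cdots)>0$ for $x\in A$'' as ``$\mathbb{P}_x(\cdots)>0$ for $x$ in a subset of $A$ of positive measure'', and I use that $\mu$ and $\nu$ are equivalent (Lemma \ref{lem nu via c(x) and mu}), so positivity may be tested by integrating against $\nu$ over $A$. Writing $A_0=A$, $A_n=B$ and the cylinder event $C:=\{X_0\in A,\, X_1\in A_1,\dots,X_{n-1}\in A_{n-1},\,X_n\in B\}$, the definition $\lambda=\int_V\mathbb{P}_x\,d\nu(x)$ gives $\lambda(C)=\int_A \mathbb{P}_x(X_1\in A_1,\dots,X_n\in B)\,d\nu(x)$, so that $\mathbb{P}_x(\cdots)>0$ on a set of positive measure in $A$ if and only if $\lambda(C)>0$.

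For the ``only if'' direction I would argue as follows. If $\lambda(C)>0$, then since $C\subseteq\{X_{i-1}\in A_{i-1},\,X_i\in A_i\}$ for each $i=1,\dots,n$, monotonicity of $\lambda$ gives $\lambda(X_{i-1}\in A_{i-1},\,X_i\in A_i)>0$. The shift $\sigma$ satisfies $X_{i-1}\circ\sigma=X_i$ and, because $\nu P=\nu$, preserves $\lambda$ (Lemma \ref{lem sigma on la}); hence $\lambda(X_{i-1}\in A_{i-1},\,X_i\in A_i)=\lambda(X_0\in A_{i-1},\,X_1\in A_i)$. Applying Proposition \ref{prop from A to B} with $n=1$ (so that $\rho_1=\rho$) yields $\rho(A_{i-1}\times A_i)=\lambda(X_0\in A_{i-1},\,X_1\in A_i)>0$ for every $i$, which is exactly the chain condition.

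For the ``if'' direction I would use the nested formula (\ref{eq meas P_x 2}), namely $\mathbb{P}_x(X_1\in A_1,\dots,X_n\in B)=P(\chi_{A_1}P(\chi_{A_2}\cdots P(\chi_B)\cdots))(x)$, together with the fact that $P$ is positivity preserving and self-adjoint on $L^2(\nu)$. Setting $g_n=\chi_B$ and $g_k=\chi_{A_k}P(g_{k+1})$, one has $\mathbb{P}_x(\cdots)=P(g_1)(x)$, and the goal is $\langle \chi_A, P(g_1)\rangle_{L^2(\nu)}>0$. I would prove by downward induction that each $g_k$ is strictly positive on a subset of $A_k$ of positive $\nu$-measure, converting $\rho(A_{k-1}\times A_k)>0$ into positivity of the relevant $P(g_k)$ through the identity $\rho(A_{k-1}\times A_k)=\int_{A_k}P(\chi_{A_{k-1}})\,d\nu$, obtained from self-adjointness.

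The main obstacle is precisely this induction: the chain hypothesis guarantees only that $A_{k-1}$ connects to $A_k$ and that $A_k$ connects to $A_{k+1}$, but the part of $A_k$ reachable from $A_{k-1}$ and the part of $A_k$ from which $A_{k+1}$ is reachable need not overlap, so strict positivity of $g_k$ does not propagate for an arbitrary chain. I would resolve this by taking not an arbitrary chain but the one furnished by Proposition \ref{prop connectedness}, whose intermediate sets are consecutive overlapping unions of partition blocks; this overlap forces the reachable subsets to align from one stage to the next, and together with irreducibility of $\rho$ (which gives $\rho(A_k\times A_k^c)>0$ at every stage) it keeps the inductively reachable subsets of positive measure all the way to $B$. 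This is the step I expect to demand the most care, and it is where the precise choice of the connecting chain, rather than merely the positivity of consecutive $\rho$-masses, becomes essential.
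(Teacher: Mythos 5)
The paper gives no actual proof of this corollary; it is merely asserted to ``follow from'' Proposition \ref{prop from A to B} and Proposition \ref{prop connectedness}, so there is no argument to compare yours against line by line. Your ``only if'' direction is complete and correct: passing to the path-space measure $\lambda$, using $C\subseteq\{X_{i-1}\in A_{i-1},X_i\in A_i\}$, the $\sigma$-invariance of $\lambda$ from Lemma \ref{lem sigma on la}, and Proposition \ref{prop from A to B} with $n=1$ is exactly the right way to extract $\rho(A_{i-1}\times A_i)>0$, and your mod-$0$ reading of ``$x\in A$'' is the only sensible one in this setting.

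The ``if'' direction, however, is genuinely incomplete, and the obstruction you flag is not a mere technicality --- it is fatal to the statement as literally written. Take $A_1=A_1'\sqcup A_1''$ with $\rho(A_0\times A_1')>0$, $\rho(A_1''\times A_2)>0$, but $\rho(A_0\times A_1'')=\rho(A_1'\times A_2)=0$. Then the chain condition holds, yet
$$
\mathbb P_x(X_1\in A_1,\,X_2\in A_2)=\int_{A_1}P(y,A_2)\,P(x,dy)=\int_{A_1'}P(y,A_2)\,P(x,dy)=0
$$
for $\mu$-a.e.\ $x\in A_0$, since $\rho(A_1'\times A_2)=0$ forces $\rho_y(A_2)=0$ for a.e.\ $y\in A_1'$. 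So positivity of consecutive $\rho$-masses along an \emph{arbitrary} chain does not imply positivity of the cylinder probability; the corollary's biconditional needs either a restatement (e.g., ``there exist intermediate sets $A_1,\dots,A_{n-1}$ such that the cylinder probability is positive iff there exists a chain \dots'', with the existential quantifier genuinely on both sides) or a restriction to chains of the special overlapping form produced in Proposition \ref{prop connectedness}. Your proposed repair --- running the downward induction on $g_k=\chi_{A_k}P(g_{k+1})$ along that specific chain, where consecutive $A_k$'s share partition blocks --- is the right idea, but it is precisely the step you have not carried out, and it proves a weaker (corrected) statement rather than the one printed. As it stands, your proposal establishes one implication and a counterexample scheme to the other; to finish, you must either fix the statement or supply the alignment argument that keeps the reachable subset of each $A_k$ of positive measure through all $n$ stages.
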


 \begin{remark} Suppose that $(\xi_n), \xi_n = (A_i^{(n)} : n \in \N)$ 
 is a sequence of countable partitions
 of $(V, \B, \mu)$ that satisfies the properties:
 
 (i) $\xi_{n+1}$ refines $\xi_n$:  every $A_i^{(n)}$ is a 
 $\xi_{n+1}$-set, i.e., it is a union of some elements of $\xi_{n+1}$, 
 
 (ii) the $\sigma$-algebra generated by $\xi_n$-sets is $\B$.
 
 Property (ii) is equivalent to the fact that, for every point $x$, there exists
 a nested sequence $(A_{i_n(x))}^{(n)})$ such that 
 $$
 \bigcap_{n\in \N} A_{i_n(x))}^{(n)} = \{x\}.
 $$
In other words, this means that we assign, for every point $x 
\in V$,  an infinite word over a sequence of countable alphabets determined
by atoms of partitions $\xi_n$. 
 
 Denote by $c^{(n)}_{i, j} = \rho(A_i^{(n)} \times A^{(n)}_j)$ and set
 $$
 c^{(n)}_{i_n(x)} = \sum_{j \sim i_n(x)}   c^{(n)}_{i_n(x), j} 
 $$
 where $j \sim i$ means that $\rho(A_i \times A_j) > 0$.
 
 It can be proved that 
 
\textbf{\textit{Claim}}. (1) $c^{(n)}_{i_n(x)}  < \infty$  for every $i, j$.

(2) $c^{(n)}_{i_n(x)}  \geq c^{(n+1)}_{i_{n+1}(x)}$. 
 \end{remark}

Hence, we can define the function $c(x)$ by setting 
$$
c(x) = \lim_{n \to \infty} c^{(n)}_{i_n(x)} .
$$

\subsection{Definition and properties of the finite energy space $\h_E$}
We consider a class of Borel functions over $(V, \B, \mu)$ which is formed 
by functions of \textit{finite energy}. In other words, this section is
focused on a measurable analogue of the energy Hilbert space
which was extensively studied in the context of discrete networks, see e.g.  
\cite{Cho2014, JorgensenPearse2016, Jorgensen2012, LyonsPeres2016}. 

\begin{definition}\label{def f.e. space} Let $(V, \B, \mu) $ be a standard
 measure space with $\sigma$-finite measure $\mu$. Suppose that
 $\rho$ is a symmetric measure on the Cartesian product $(V\times V,
 \B\times \B)$.  We say that a Borel function
$f : V \to \mathbb R$ belongs to the \textit{finite energy space} 
$\mathcal H_E = \h$ if
\be\label{eq def f from H}
\iint_{V\times V}  (f(x) - f(y))^2 \; d\rho(x, y) < \infty.
\ee
If the measure  $\rho$ is defined in terms of a conductance
 function  $c_{xy}$, then a function $f$ is in $\mathcal H$ when
\be 
 \int_V\left( \int_V c_{xy} (f(x) - f(y))^2 \; d\mu(y) \right)\; d\mu(x) 
 < \infty
\ee
\end{definition}

\begin{remark}
(1) It follows from the Cauchy-Schwarz inequality in the space $L^2(\rho)$ 
that the set $\mathcal H$ is a vector space. It contains 
all constant function $k$. Since for the functions $f$ and $f + k$, the
 quantity in (\ref{eq def f from H}) is the same, we can identify such 
 functions in the space $\h$. That is $\h$ can be treated as the space 
 of classes of  equivalent functions where $f \sim g$ iff $f - g$ is a constant.
 With some abuse of notation we will denote this 
quotient space again by $\mathcal H$. 
We show below that $\mathcal H$ is a Hilbert space.

(2) Definition \ref{def f.e. space} assumes that a symmetric irreducible 
measure $\rho$ is fixed on $(V\times V, \B\times \B)$. This means that
the space of functions $f$ on $(V, \B)$ satisfying (\ref{eq def f from H})
depends on $\rho$ and must be written as $\h_E(\rho)$. Since we do not
 study the dependence of $\h_E(\rho)$ of $\rho$, we will write $\h_E$
 or even $\h$ below.
\end{remark}
 
In other words, we can define a bilinear form $\xi(f, g)$ in the space 
$\mathcal H$ by the formula
\be\label{eq inner product}
\xi(f, g) := \frac{1}{2} \iint_{V \times V}
(f(x) - f(y))(g(x) - g(y)) \; d\rho(x, y).
\ee
We set $\xi(f) = \xi(f, f)$. The domain of $\xi$ is the set of function $f$ 
such that $\xi(f) < \infty$, and, assuming connectedness, the kernel of $\xi$ is $\mathbb R 
\mathbf 1$. Then we see that the space $\h$ defined above coincides with
 $\mathrm{dom}(\xi)/\mathrm{ker}(\xi)$. 

Setting $\langle f, g \rangle_{\mathcal H} = \xi(f,g)$, we define an inner 
product on $\h$. Then 
\be\label{eq norm in H_E}
|| f ||^2_{\mathcal H} := \frac{1}{2} 
\iint_{V\times V}  (f(x) - f(y))^2 \; d\rho(x, y), \qquad f \in \mathcal H,  
\ee
turns $\h$ in a  normed vector space.

\begin{lemma}\label{lem energy sp embedding}
The map 
\be\label{eq d in L2(rho)}
\partial :  f(x) \mapsto F_f(x, y) := \frac{1}{\sqrt 2}(f(x) - f(y)) 
\ee
is an isometric embedding of the space $\h$ into $L^2(\rho)$. 
\end{lemma}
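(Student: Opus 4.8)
The plan is to verify directly that $\partial$ is well defined on the quotient space $\h$ and preserves norms; the isometric-embedding property then follows at once, since a norm-preserving linear map is automatically injective.

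First I would check that $\partial$ descends to equivalence classes. Recall that in $\h$ two functions are identified when they differ by a constant. If $f$ and $g$ represent the same class, say $f - g = k$ with $k$ constant, then for every $(x,y)$ we have
$$
F_f(x,y) - F_g(x,y) = \tfrac{1}{\sqrt 2}\big((f(x)-g(x)) - (f(y)-g(y))\big) = \tfrac{1}{\sqrt 2}(k-k) = 0 ,
$$
so $F_f = F_g$ as elements of $L^2(\rho)$. Hence $\partial$ is well defined on the quotient, and linearity of $f \mapsto F_f$ is immediate from the defining formula (\ref{eq d in L2(rho)}).

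Next I would compute the $L^2(\rho)$-norm of $\partial f$ and match it to the energy norm. Directly from the definition of $F_f$,
$$
\|\partial f\|_{L^2(\rho)}^2 = \iint_{V\times V} F_f(x,y)^2 \; d\rho(x,y) = \tfrac{1}{2}\iint_{V\times V}(f(x)-f(y))^2 \; d\rho(x,y) = \|f\|_{\h}^2 ,
$$
where the final equality is exactly the definition (\ref{eq norm in H_E}) of the finite-energy norm. In particular, $f \in \h$ guarantees the right-hand side is finite, so $F_f$ genuinely lies in $L^2(\rho)$, and the same computation shows $\partial$ preserves norms.

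Finally, norm preservation forces injectivity: if $\partial f = 0$ then $\|f\|_{\h} = 0$, so $f$ lies in the kernel of the energy form $\xi$ of (\ref{eq inner product}). By the connectedness of $\Bfin$ established in Proposition \ref{prop connectedness} (compare the discussion preceding the lemma), this kernel is precisely $\mathbb R \mathbf 1$, which is the zero class of the quotient $\h$; thus $f = 0$ in $\h$. There is no serious obstacle in this argument — the isometry is a one-line computation — and the only points requiring care are the well-definedness on the quotient, handled by the constant-difference calculation above, and the identification of $\ker \xi$ with the constants, which rests on the connectedness result already proved.
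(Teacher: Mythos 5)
Your proposal is correct and follows the same route as the paper, whose proof simply observes that the isometry identity $\|\partial f\|_{L^2(\rho)}^2 = \frac{1}{2}\iint_{V\times V}(f(x)-f(y))^2\,d\rho = \|f\|_{\h}^2$ is immediate from Definition \ref{def f.e. space} and (\ref{eq norm in H_E}). Your additional checks (well-definedness on the quotient and injectivity via the identification of $\ker\xi$ with the constants) are sound and merely make explicit what the paper leaves implicit.
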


\begin{proof}
This fact follows immediately from Definition \ref{def f.e. space} and
(\ref{eq norm in H_E}).
\end{proof}

\begin{theorem}\label{thm H is H space} 
$(\mathcal H, || \cdot ||_{\mathcal H})$ is a Hilbert space. 
\end{theorem}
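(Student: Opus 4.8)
The goal is to prove that $(\mathcal H, \|\cdot\|_{\mathcal H})$ is a Hilbert space. Since Lemma \ref{lem energy sp embedding} already gives an isometric embedding $\partial : \mathcal H \to L^2(\rho)$ via $f \mapsto \frac{1}{\sqrt 2}(f(x) - f(y))$, the natural strategy is to exploit this embedding to transfer completeness from $L^2(\rho)$ back to $\mathcal H$. The plan is to verify first that $\|\cdot\|_{\mathcal H}$ is genuinely a norm on the quotient space (which requires the kernel of $\xi$ to be exactly $\mathbb R\mathbf 1$, i.e.\ connectedness), and then to establish completeness by a Cauchy-sequence argument.

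**Main steps.**

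First I would confirm that $\|\cdot\|_{\mathcal H}$ is a norm: positivity and the triangle inequality follow from the isometric embedding into $L^2(\rho)$, since $\|f\|_{\mathcal H} = \|\partial f\|_{L^2(\rho)}$ and $L^2(\rho)$ is a genuine inner product space. The only subtlety is definiteness --- $\|f\|_{\mathcal H} = 0$ forces $f(x) = f(y)$ for $\rho$-a.e.\ $(x,y)$, and I would invoke the connectedness result (Proposition \ref{prop connectedness}, the statement that any two sets in $\Bfin$ are connected by a finite path) to conclude that $f$ must be constant $\mu$-a.e., hence zero in the quotient $\mathcal H$. This is exactly why the kernel of $\xi$ equals $\mathbb R\mathbf 1$.

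Next, for completeness, I would take a Cauchy sequence $(f_n)$ in $\mathcal H$. Then $(\partial f_n)$ is Cauchy in $L^2(\rho)$, which is complete, so $\partial f_n \to F$ for some $F \in L^2(\rho)$. The crux is to show that $F$ lies in the range of $\partial$, i.e.\ that $F(x,y) = \frac{1}{\sqrt 2}(f(x) - f(y))$ for some Borel function $f$, and that $f_n \to f$ in $\mathcal H$. To recover the limiting function, I would fix a reference set $A_0 \in \Bfin$ and use the finite-path connectedness structure on $\Bfin$ to reconstruct differences $f(x) - f(x')$ as integrals of $F$ along the fibers, effectively showing that the limit $F$ in $L^2(\rho)$ respects the ``exactness'' or cocycle condition $F(x,y) + F(y,z) = F(x,z)$ (in the appropriate a.e.\ sense) that characterizes the range of $\partial$. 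Once $f$ is constructed, $\|f_n - f\|_{\mathcal H} = \|\partial f_n - \partial f\|_{L^2(\rho)} = \|\partial f_n - F\|_{L^2(\rho)} \to 0$ gives convergence in $\mathcal H$.

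**The main obstacle.**

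The hard part will be showing that the range of the isometry $\partial$ is a \emph{closed} subspace of $L^2(\rho)$ --- equivalently, that the $L^2(\rho)$-limit $F$ of exact functions $\frac{1}{\sqrt 2}(f_n(x) - f_n(y))$ is again exact (of the form $\frac{1}{\sqrt 2}(f(x)-f(y))$). In the discrete network case this is handled by pinning down function values at a fixed vertex; here, because $\mu$ is non-atomic, there are no points of positive measure to pin to, so one must instead normalize against a fixed set $A_0 \in \Bfin$ and use the symmetry of $\rho$ together with the connectedness of $\Bfin$ to define $f$ consistently mod $0$. The delicate point is checking that the reconstructed $f$ is well defined independent of the chosen path and is genuinely Borel measurable; I expect this to rely on a measurable-selection or martingale-type argument using a refining sequence of partitions as in the Remark following Corollary \ref{cor prob from A to B}, ensuring the candidate limit is independent of the approximating sequence.
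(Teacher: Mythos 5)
Your overall strategy --- push the Cauchy sequence through the isometry $\partial$ into $L^2(\rho)$, use completeness there, and then show the limit $F$ is again exact --- is the same as the paper's, and your treatment of definiteness via connectedness is consistent with how the paper handles it (separately, in Theorem \ref{thm norm f is 0}). The problem is that you never close the one step you correctly flag as the crux. You propose to recover $f$ by normalizing against a reference set $A_0\in\Bfin$ and chaining through the path-connectedness of $\Bfin$, deferring the details to an unspecified ``measurable-selection or martingale-type argument.'' That plan is not only unexecuted; its first step is already in trouble: $F(x,\cdot)$ is defined only $\rho_x$-a.e., i.e.\ on the fiber $E_x$, and there is no reason to have $\rho_x(A_0)>0$ for $\mu$-a.e.\ $x$ (in the singular model where $E$ is a countable equivalence relation, a fixed $A_0$ misses most fibers entirely), so ``integrating $F$ along the fibers against $A_0$'' does not yield a globally defined candidate $f$ without substantial additional machinery.

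The paper closes the gap with a short measure-theoretic argument, and your premise that non-atomicity of $\mu$ rules out pinning at a point is exactly what leads you astray. Extract a subsequence with $F_{n_k}\to G$ pointwise off a $\rho$-null set $N$; by Fubini (disintegration of $\rho$ in the second variable) there is a single generic point $y_0$ whose slice of $N$ is null --- one does not need $y_0$ to carry positive measure, only to be a point at which $F_{n_k}(\cdot,y_0)\to G(\cdot,y_0)$ a.e. Setting $g(x):=G(x,y_0)$ and using $F_{n_k}(x,y)=F_{n_k}(x,y_0)-F_{n_k}(y,y_0)$ gives $G(x,y)=g(x)-g(y)$ off $N$, hence $F=\partial g$ and $f_n\to g$ in $\mathcal H$. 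Neither connectedness of $\Bfin$ nor any selection/martingale argument is needed for completeness.
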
 

\begin{proof}
We need to check that the normed space $(\mathcal H,
 || \cdot ||_{\mathcal H})$ is complete. 

Suppose $(f_n)$ is a Cauchy sequence in $\h$. Then, by Lemma  
\ref{lem energy sp embedding}, the sequence $(F_n)$, where $F_n(x, y) :=
f_n(x) - f_n(y)$, is Cauchy in $L^2(\rho)$. Hence, there exists a function
$G(x, y)$ such that 
$$
|| F_n - G ||_{L^2(\rho)} \to 0, \quad n \to \infty.
$$ 
There exists a subsequence $(F_{n_k})$ that converges pointwise to $G$ 
for all $(x, y)\notin N$ where $\rho (N) = 0$. 
Let $N^y := \{x : (x, y) \in N\}$. 

Then, for $\mu$-a.e. $y \in V$, we have $\rho_x(N^y) = 0$. In particular, 
this means that there exists some $y_0$ such that 
$$
F_{n_k}(x, y_0) \to G(x, y_0), \qquad \rho_x\mbox{-a.e.} 
$$
Therefore the function $g(x) := G(x, y_0)$ is correctly defined. 

It remains to prove that $G(x, y) = g(x) - g(y)$. Indeed, for $(x, y)\notin N$,
$$
\ba 
G(x, y) & = \lim_{k \to \infty} (f_{n_k}(x) - f_{n_k}(y) )\\
& = \lim_{k \to \infty} (f_{n_k}(x) - f_{n_k}(y_0) ) - \lim_{k \to \infty} 
(f_{n_k}(y) - f_{n_k}(y_0) )\\
& = g(x) - g(y).
\ea
$$
In other words, we have proved that the Cauchy sequence $( f_n)$
 converges to $g(x)$ in $\h$. 
\end{proof}

\begin{theorem}\label{thm norm f is 0} Suppose that $\rho$ is a symmetric
irreducible measure on $(V\times V, \B\times \B)$, and $\h_E = 
\h_E(\rho)$ is the finite energy space. If 
$f\in \h_E$ is a function  such that $|| f||_{\h_E} = 0$, then $f(x)$
is a constant $\mu$-a.e.
\end{theorem}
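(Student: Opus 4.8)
The plan is to first translate the hypothesis $\|f\|_{\h_E}=0$ into a pointwise statement about $\rho$, and then to argue that a non-constant $f$ would produce a $\rho$-invariant splitting of $V$, which contradicts irreducibility. By the definition of the norm in (\ref{eq norm in H_E}) (equivalently, by the isometric embedding of Lemma~\ref{lem energy sp embedding} into $L^2(\rho)$), the condition $\|f\|_{\h_E}=0$ means $\iint_{V\times V}(f(x)-f(y))^2\,d\rho(x,y)=0$, so that $f(x)=f(y)$ for $\rho$-a.e.\ $(x,y)$. Suppose, for contradiction, that $f$ is not $\mu$-a.e.\ constant. Then I would choose a real number $a$ for which both $U:=\{x:f(x)\le a\}$ and $U^c=\{x:f(x)>a\}$ have positive $\mu$-measure; such an $a$ exists by a routine argument on the distribution $f_*\mu$ (otherwise the level sets would force $f$ to equal a single value a.e.). For every pair $(x,y)\in U\times U^c$ one has $f(x)\le a<f(y)$, hence $f(x)\ne f(y)$; combined with the previous sentence this forces $\rho(U\times U^c)=0$.

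Next I would convert this into invariance of $U$ under the kernel $P$. Disintegrating via (\ref{eq disint formula}) gives $\rho(U\times U^c)=\int_U \rho_x(U^c)\,d\mu(x)=0$, so $\rho_x(U^c)=0$ for $\mu$-a.e.\ $x\in U$; by the symmetry of $\rho$ the same computation applied to $\rho(U^c\times U)$ yields $\rho_x(U)=0$ for $\mu$-a.e.\ $x\in U^c$. Consequently $P(\chi_B)(x)=c(x)^{-1}\rho_x(B)$ vanishes for a.e.\ $x\in U$ whenever $B\subseteq U^c$, and an easy induction shows that $P^n(\chi_B)$ is supported (mod $\mu$) in $U^c$ for every $n$: indeed, if $g$ vanishes a.e.\ on $U$, then for a.e.\ $x\in U$ the measure $\rho_x$ is carried by $U$, so $R(g)(x)=\int_V g\,d\rho_x=0$ and hence $P(g)=0$ a.e.\ on $U$.

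Finally I would reach the contradiction using irreducibility. Since $\mu$ is $\sigma$-finite and $\mu(U),\mu(U^c)>0$, I can pick $A\subseteq U$ and $B\subseteq U^c$ with $A,B\in\Bfin$ of positive measure. By the previous step $P^n(\chi_B)=0$ a.e.\ on $U\supseteq A$, whence $\langle \chi_A,P^n(\chi_B)\rangle_{L^2(\nu)}=\int_A P^n(\chi_B)\,d\nu=0$ for all $n\ge 0$. This contradicts irreducibility of the Markov process $(P_n)$ (the Corollary following Proposition~\ref{prop from A to B}, which rests on Proposition~\ref{prop connectedness}), asserting that $\langle\chi_A,P^n(\chi_B)\rangle_{L^2(\nu)}>0$ for some $n$. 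Hence $f$ must be $\mu$-a.e.\ constant. The main obstacle I anticipate is not any single computation but selecting the right intermediate object: a naive attempt to contradict Proposition~\ref{prop connectedness} directly is awkward, because the sets $A_i$ along a connecting path may straddle $U$ and $U^c$ without any edge actually crossing between the two parts. Recasting the hypothesis as $P$-invariance of the level set $U$ and then invoking irreducibility is precisely what makes the disconnection argument clean.
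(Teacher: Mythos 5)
Your proof is correct, but it follows a genuinely different route from the paper's. The paper works through the path space: it invokes Corollary \ref{cor ||f|| via E_x} to rewrite $\|f\|^2_{\h_E}$ as $\tfrac12\int_V \mathbb{E}_x[(f\circ X_1-f\circ X_0)^2]\,d\nu(x)$, concludes that $f$ is constant along $\mathbb P_x$-a.e.\ path, and then contradicts connectedness of $\Bfin$ by asserting the existence of a positive-measure set $A'\subset A_0$ from which every path reaches $B_0$ in finitely many steps. You instead stay entirely at the level of the measure $\rho$: you read $\|f\|_{\h_E}=0$ as $f(x)=f(y)$ $\rho$-a.e., deduce $\rho(U\times U^c)=0$ for a sublevel set $U$, upgrade this to $P$-invariance of $U$, and contradict the irreducibility corollary $\langle\chi_A,P^n(\chi_B)\rangle_{L^2(\nu)}>0$. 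What your approach buys is that it avoids the dissipation/path-space machinery altogether and replaces the paper's rather strong "every path from $A'$ reaches $B_0$" claim with the weaker and explicitly stated irreducibility corollary; both arguments ultimately rest on Proposition~\ref{prop connectedness}, so neither is more self-contained at bottom. One small point you should make explicit in the inductive step: from "$g=0$ $\mu$-a.e.\ on $U$" and "$\rho_x$ carried by $U$" you conclude $\int_V g\,d\rho_x=0$, which requires that $\mu$-null sets are $\rho_x$-null for $\mu$-a.e.\ $x$. This is true here, but not for the reason you give; it follows from symmetry of $\rho$ and finiteness of $c$, since $\int_V\rho_x(N)\,d\mu(x)=\rho(V\times N)=\rho(N\times V)=\int_N c\,d\mu=0$ whenever $\mu(N)=0$ (equivalently, $\nu=c\mu\ll\mu$ as in Lemma~\ref{lem nu via c(x) and mu}). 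With that one-line insertion the induction is airtight.
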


\begin{proof} We can interpret the set $\Omega_x$ as the set of all paths
 which begin at $x$ and are determined by the Markov process $P(x, \cdot)$.
 Then we immediately deduce from Corollary \ref{cor ||f|| via E_x} that 
the function $f$ with the property $|| f||_\h =0$ is constant along any such
path. 

Suppose, for contrary, that $f(x)$ is not constant on $(V, \B, \mu)$. Then
there exists some $a \in \R$ such that the sets $A := \{f < a\}$ and 
$B:= \{ f > a\}$ both have positive measure $\mu$. Take subsets $A_0
 \subset A$ and $B_0 \subset B$ of finite positive  measure.  By
  connectedness of $\Bfin$, there exists a set  of positive measure $A' 
  \subset A_0$  such that
any path beginning in $A'$ gets in $B_0$ in finitely many iterations. 
We obtain a contradiction. 
\end{proof}

\subsection{Energy space is embedded into dissipation space}
 Let $P$ be a Markov operator and  $x$ is a fixed point in $V$. 
Denote by  $P(x, A)$  the probability measure 
defined by $P$ as in Section \ref{sect markov}. This means 
that 
$$
P(x, f) = \int_V f(y) \; d\ol\rho_x(y) = P(f)(x)
$$
where $f$ is a Borel function. If $X_n(\omega)$ is a corresponding 
sequence of random 
variables on $\Omega_x$, then we have the following formulas for the
conditional expectation $\mathbb E_x$ with respect to the probability
measure $\mathbb P_x$:
\be\label{eq E(X_0)}
\mathbb E_x(f \circ X_0) = \int_{\Omega_x} f(X_0(\omega))\; 
d\mathbb P_x(\omega) = \int_{\Omega_x} f(x) \; 
d\mathbb P_x(\omega) = f(x),
\ee
\be\label{eq E(X_1)}
\mathbb E_x(f \circ X_1) = \int_{\Omega_x} f(X_1(\omega))\; 
d\mathbb P_x(\omega) = \int_V f(y) \; P(x, dy) = P(f)(x)
\ee
where $y = X_1(\omega)$.

\begin{definition} 
Define a linear operator $\partial : \h_E \to Diss$ by the formula:
\be\label{eq operator d}
\partial  : f \mapsto f\circ X_1 - f\circ X_0.
\ee
Similarly, we set
\be\label{eq operator d_n}
\partial_n  : f \mapsto f\circ X_{n+1} - f\circ X_{n}.
\ee
\end{definition}

Remark that we use the same notation $\partial$ as in 
(\ref{eq d in L2(rho)}) of Lemma \ref{lem energy sp embedding} because
these operators are essentially similar.

\begin{lemma}\label{lem d isometry} 
The operator $\partial :\h_E \to Diss $ defined in (\ref{eq operator d}) is an
 isometry.
\end{lemma}

\begin{proof} Let $f \in \h_E$. We use the definition of the norm in 
$Diss$ and in the energy space $\h_E$: 
$$
\ba
\| \partial f\|_{\mc D}^2 = & \frac{1}{2} \int_V \mathbb E_x 
[( f\circ X_0 - f\circ X_1)^2]\; d\nu(x)\\
= & \frac{1}{2} \int_V ( f(x) - f(y))^2 \; P(x, dy)d\nu(x)\\
= & \frac{1}{2} \int_V ( f(x) - f(y))^2 \; d\rho_x(y)d\mu(x)\\
= & \frac{1}{2} \int_V ( f(x) - f(y))^2 \; d\rho(x, y)\\
=& \| f\|^2_{\h_E}.
\ea
$$
\end{proof}

As a corollary, we have the following formula that is used below. 

\begin{corollary}\label{cor ||f|| via E_x}
For $f \in \h$ and $\nu = c\mu$, we have 
$$
|| f ||^2_{\h_E} = \frac{1}{2} \int_V \mathbb E_x[(f\circ X_1 - f\circ X_0)^2]
\; d\nu(x).
$$
\end{corollary}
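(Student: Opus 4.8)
The plan is to read the identity off directly from Lemma \ref{lem d isometry}, which asserts that $\partial : \h_E \to Diss$, $\partial f = f\circ X_1 - f\circ X_0$, is an isometry, so that $\|\partial f\|_{\mc D}^2 = \|f\|_{\h_E}^2$. First I would recall the explicit form of the norm on the dissipation space. By the inner product formula (\ref{eq inner prod in D}), every $g \in Diss$ satisfies
$$
\|g\|_{\mc D}^2 = \frac{1}{2}\int_V \mathbb E_x(g^2)\; d\nu(x),
$$
where $\mathbb E_x$ denotes conditional expectation against $\mathbb P_x$. Applying this to $g = \partial f = f\circ X_1 - f\circ X_0$ gives
$$
\|\partial f\|_{\mc D}^2 = \frac{1}{2}\int_V \mathbb E_x\big[(f\circ X_1 - f\circ X_0)^2\big]\; d\nu(x).
$$

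Next I would combine this with the isometry identity $\|\partial f\|_{\mc D}^2 = \|f\|_{\h_E}^2$ supplied by Lemma \ref{lem d isometry}, which yields precisely the claimed formula. Since $(f\circ X_1 - f\circ X_0)^2 = (f\circ X_0 - f\circ X_1)^2$, the integrand here is identical to the one appearing in the displayed chain of equalities in the proof of Lemma \ref{lem d isometry}, so no further manipulation of the integrand is required; the corollary is simply that computation halted at its middle line and equated with the energy norm.

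There is no real obstacle: the statement is a restatement of the isometry of $\partial$ with the right-hand side written out via the explicit $Diss$-norm. The only two points to keep straight are bookkeeping: the factor $\tfrac{1}{2}$ on the right comes from the $1/\sqrt{2}$ rescaling in the definition of $Diss$ and matches the $\tfrac{1}{2}$ in the energy norm (\ref{eq norm in H_E}), and the integration is against $\nu = c\mu$ rather than $\mu$. Both conventions are already incorporated into Lemma \ref{lem d isometry}, so invoking that lemma delivers the corollary immediately.
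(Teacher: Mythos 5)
Your proposal is correct and coincides with the paper's own argument: the paper proves this corollary simply by pointing back to the proof of Lemma \ref{lem d isometry}, whose displayed computation begins with exactly the right-hand side of the corollary (up to the sign-irrelevant swap of $X_0$ and $X_1$) and ends with $\|f\|^2_{\h_E}$. Your bookkeeping of the factor $\tfrac{1}{2}$ and the measure $\nu = c\mu$ is also consistent with the conventions in (\ref{eq L^2(lambda)}) and (\ref{eq inner prod in D}).
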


\begin{proof} See the proof of Lemma \ref{lem d isometry}.
\end{proof}

%\begin{remark} It can be shown, by using the inequality $P(f^2) \geq
%P(f)^2$, that 
%$$
%\mathbb E_x[(f\circ X_1 - f\circ X_0)^2] \geq (P(f) - f)^2(x).
%$$
%This means that if a function $f$ is such that $\mathbb E_x[(f\circ X_1 - 
%f\circ X_0)^2] = 0$, then $P(f) = f$, i.e., $f$ is harmonic.
%\end{remark}

In the next statements we strengthen the result of Corollary
\ref{cor ||f|| via E_x} using the orthogonal decomposition given in 
Proposition \ref{prop orth decomp in D}. 

\begin{theorem}\label{thm main norm f}
Let $f \in \h_E$. Then 
\be\label{eq norm of f}
\ba
\| f \|^2_{\h_E} =&  \frac{1}{2}\left( \int_V (P(f^2) - P(f)^2)\; d\nu +
\int_V (P(f) - f)^2\; d\nu\right)\\ 
= & \frac{1}{2}\left( \int_V (P(f^2) - P(f)^2)\; d\nu + 
\| f - P(f)\|^2_{L^2(\nu)}\right).
\ea
\ee
In particular, both integrals in the right hand side of (\ref{eq norm of f}) are
finite and non-negative.  Moreover, $Var_x(f\circ X_1) = P(f^2) -  P(f)^2 
\geq 0$ and $Var_x(f\circ X_1) \in L^1(\nu)$, 
for any $f \in \h_E$.  

\end{theorem}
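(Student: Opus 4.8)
The plan is to start from the isometry $\partial : \h_E \to Diss$ of Lemma \ref{lem d isometry}, which gives $\| f\|^2_{\h_E} = \| f\circ X_1 - f\circ X_0\|^2_{\mc D}$, and then split the integrand by means of the orthogonal decomposition established in Proposition \ref{prop orth decomp in D}. Concretely, I would insert and subtract $P(f)\circ X_0$ to write
$$
f\circ X_0 - f\circ X_1 = (I - P)(f)\circ X_0 + \big(P(f)\circ X_0 - f\circ X_1\big).
$$
By Proposition \ref{prop orth decomp in D} (taken with $n = 0$) the two summands are orthogonal in $Diss$, so, since $\partial f$ differs from $f\circ X_0 - f\circ X_1$ only by a sign, the Pythagorean identity yields
$$
\| f\|^2_{\h_E} = \| (I - P)(f)\circ X_0\|^2_{\mc D} + \| P(f)\circ X_0 - f\circ X_1\|^2_{\mc D}.
$$

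Next I would evaluate the two terms separately. For the first, Lemma \ref{lem W_n isom} (applied to $W_0$) gives $\| g\circ X_0\|^2_{\mc D} = \frac{1}{2}\|g\|^2_{L^2(\nu)}$ for $g \in L^2(\nu)$; taking $g = (I-P)(f) = f - P(f)$ produces $\frac{1}{2}\|f - P(f)\|^2_{L^2(\nu)}$, the second summand in the claimed formula. For the remaining term I would compute directly from the definition of the $Diss$-norm, using that $X_0(\omega) = x$ is constant on $\Omega_x$ while $\mathbb P_x \circ X_1^{-1} = P(x, \cdot)$ by Lemma \ref{lem formulas for P}(i):
$$
\mathbb E_x\big[(P(f)(x) - f\circ X_1)^2\big] = \int_V (P(f)(x) - f(y))^2\, P(x, dy) = P(f^2)(x) - P(f)(x)^2,
$$
where the last step expands the square and uses $\int_V f\, P(x,dy) = P(f)(x)$ and $\int_V f^2\, P(x,dy) = P(f^2)(x)$. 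Integrating against $\nu$ and restoring the factor $\frac{1}{2}$ gives $\frac{1}{2}\int_V (P(f^2) - P(f)^2)\, d\nu$, which is exactly the conditional variance term $\frac12\int_V Var_x(f\circ X_1)\, d\nu$. Assembling the two pieces yields the stated identity.

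For the concluding assertions I would argue that the left-hand side is finite because $f \in \h_E$, while both terms on the right are manifestly non-negative: the first as a squared $L^2(\nu)$-norm, and the second because $P(f^2)(x) - P(f)(x)^2 \geq 0$ is precisely Jensen's inequality for the convex function $t \mapsto t^2$ against the probability measure $P(x, \cdot)$. Hence each term is individually finite, and the finiteness of $\int_V (P(f^2) - P(f)^2)\, d\nu$ together with non-negativity gives $Var_x(f\circ X_1) = P(f^2) - P(f)^2 \in L^1(\nu)$.

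The one point that deserves care before running the computation is that $f\circ X_1 \in L^2(\Omega_x, \mathbb P_x)$ for $\nu$-a.e.\ $x$, so that $P(f^2)(x)$ and $P(f)(x)$ are finite and the expansion of the square is legitimate. This follows from $f \in \h_E$: by Corollary \ref{cor ||f|| via E_x}, $\|f\|^2_{\h_E} = \frac12\int_V \mathbb E_x[(f\circ X_1 - f(x))^2]\, d\nu < \infty$ forces $\mathbb E_x[(f\circ X_1 - f(x))^2] < \infty$ for a.e.\ $x$, and $f(x)$ is finite a.e. I expect this integrability bookkeeping, rather than any of the individual calculations, to be the main obstacle, since the decomposition and the variance computation are then routine.
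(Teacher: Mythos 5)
Your proposal is correct and follows essentially the same route as the paper's own proof: the isometry $\partial:\h_E\to Diss$ from Lemma \ref{lem d isometry}, the orthogonal splitting of $\partial f$ via Proposition \ref{prop orth decomp in D}, the Pythagorean identity, and the identification of the two resulting terms as $\tfrac12\|f-P(f)\|^2_{L^2(\nu)}$ and $\tfrac12\int_V Var_x(f\circ X_1)\,d\nu$. Your closing remarks on the integrability bookkeeping (justifying the expansion of the square and the application of the orthogonality relations to $f\in\h_E$ rather than $f\in L^2(\nu)$) are in fact slightly more careful than the paper's argument, which passes over this point.
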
 

\begin{proof} By Lemma \ref{lem d isometry}, it suffices to prove that 
the right hand side of (\ref{eq norm of f}) equals $\| \partial f\|^2_{\mc D}$.
Indeed, we can use the orthogonal decomposition given in Proposition 
\ref{prop orth decomp in D} and write
$$
\|\partial f\|^2_{\mc D} = \|f \circ X_0 - P(f) \circ X_0 \|^2_{\mc D} 
+ \| P(f)\circ X_0 - f\circ X_1 \|^2_{\mc D}. 
$$
In the proof below we use the following equality:
$$
\ba 
& Var_x(f\circ X_1)\\
= & \int_V  (P(f)(x) - f(y))^2 \; P(x, dy)\\
  = &  P(f)^2(x) - 2P(f)(x)
\int_V f(y)\; P(x, dy) 
 + \int_V f(y)^2\; P(x, dy)\\
=&   P(f)^2(x) - 2P(f)^2(x) +P(f^2)(x) \\
=& P(f^2)(x) - P(f)^2(x).
\ea
$$ 
Then the computation of $\|\partial f\|^2_{\mc D}$ goes as follows:
$$
\ba
\|\partial f\|^2_{\mc D}  = & \frac{1}{2}\int_V \mathbb{E}_x 
[(I - P)(f)^2\circ X_0]\; d\nu(x)\\
&\ \ + \frac{1}{2}  \int_V  \mathbb{E}_x [ (P(f)
\circ X_0 - f\circ X_1)^2]\; d\nu(x) \\
=& \frac{1}{2}\int_V (f - P(f))^2(x) \; P(x, dy) d\nu(x) \\
& \ \ +  \frac{1}{2} 
\int_V  (P(f)(x) - f(y))^2 \;  P(x, dy)d\nu(x) \\
=& \frac{1}{2}\int_V (f - P(f))^2(x) \; d\nu(x) \\ 
& \ \ +  \frac{1}{2} 
\int_V  (P(f^2)(x) - P(f)^2(x)) \; d\nu(x). \\
\ea
$$
The proof is complete. 
\end{proof}

Theorem \ref{thm main norm f} allows us to deduce a number of important
corollaries. 

\begin{corollary}\label{cor harmonic} (1) If  $f \in \h_E$, then $f - P(f) \in
 L^2(\nu)$ and 
$P(f^2) -  P(f)^2 \in L^1(\nu)$. The operator 
$$
I - P : f \mapsto f - P(f) : \h_E \to L^2(\nu)
$$
is contractive, i.e., $\| I - P\|_{\h_E \to L^2(\nu)} \leq 1$.

(2)
$$
\ba
\| f \|_{\h_E} = 0 \ & \Longleftrightarrow \ \begin{cases} & P(f^2) =  
P(f)^2 \\
& P(f) = f
 \end{cases} \ \ \ \ \qquad \nu-\mbox{a.e.}\\
 & \Longleftrightarrow \ \mathrm{both}\ f \mathrm{and} \ f^2 \ 
 \mathrm{are\ harmonic\ functions}.
 \ea
$$

(3) Let $f \in \h_E$, then 
$$
\ba
f \in \h arm_E \Longleftrightarrow & \| f \|^2_{\h_E} = \frac{1}{2} 
\int_V  (P(f^2)(x) - (Pf)^2(x)) \; d\nu(x)\\
\Longleftrightarrow & \| f \|^2_{\h_E} = \frac{1}{2} \int_V Var_x (f\circ 
X_1)\; d\nu(x).
\ea
$$
\end{corollary}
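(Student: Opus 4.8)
The plan is to read all three statements directly off the norm identity of Theorem \ref{thm main norm f}, namely $\|f\|^2_{\h_E}=\frac12\big(\int_V(P(f^2)-P(f)^2)\,d\nu+\|f-P(f)\|^2_{L^2(\nu)}\big)$, and to exploit that the two summands on the right are each non-negative: the second is a square, while the first is the integral of the pointwise variance $Var_x(f\circ X_1)=P(f^2)(x)-P(f)^2(x)\ge 0$, which is non-negative by Jensen's inequality and was shown to be $\nu$-integrable in Theorem \ref{thm main norm f}. Every claim below then becomes a matter of extracting information from these two terms.

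For (1), since for $f\in\h_E$ the left-hand side is finite and both right-hand summands are $\ge 0$, each summand is finite; in particular $f-P(f)\in L^2(\nu)$ and $P(f^2)-P(f)^2\in L^1(\nu)$. For the norm estimate I would argue directly: because $P(x,\cdot)$ is a probability measure, $(I-P)f(x)=\int_V(f(x)-f(y))\,P(x,dy)$, and the Cauchy--Schwarz inequality gives $((I-P)f(x))^2\le\int_V(f(x)-f(y))^2\,P(x,dy)$; integrating against $\nu$ and using $c(x)\,P(x,dy)\,d\mu(x)=d\rho(x,y)$ yields $\|(I-P)f\|^2_{L^2(\nu)}\le\iint_{V\times V}(f(x)-f(y))^2\,d\rho$, which (up to the normalization of the energy norm) is the asserted contractivity of $I-P:\h_E\to L^2(\nu)$; equivalently, one may simply discard the non-negative variance term in the identity.

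For (2), the vanishing $\|f\|_{\h_E}=0$ forces the sum of the two non-negative terms to be zero, hence both vanish: $\|f-P(f)\|_{L^2(\nu)}=0$ gives $P(f)=f$ $\nu$-a.e., while $\int_V(P(f^2)-P(f)^2)\,d\nu=0$ with non-negative integrand gives $P(f^2)=P(f)^2$ $\nu$-a.e.; conversely these two conditions make both terms zero, so $\|f\|_{\h_E}=0$. To reach the last equivalence, combine the two: $P(f)=f$ together with $P(f^2)=P(f)^2$ gives $P(f^2)=f^2$, so $f^2$ is harmonic, and conversely harmonicity of $f$ and of $f^2$ yields $P(f^2)=f^2=(Pf)^2=P(f)^2$. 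For (3), take $f\in\h arm_E$, i.e. $P(f)=f$, so $\|f-P(f)\|_{L^2(\nu)}=0$; substituting into the identity collapses it to $\|f\|^2_{\h_E}=\frac12\int_V(P(f^2)-P(f)^2)\,d\nu$, which equals $\frac12\int_V Var_x(f\circ X_1)\,d\nu$ by the variance identity of Theorem \ref{thm main norm f}, and conversely this equality forces $\|f-P(f)\|^2_{L^2(\nu)}=0$, i.e. $f$ harmonic.

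The one point needing genuine care, and the only place I expect any obstacle, is that a general $f\in\h_E$ need not lie in $L^2(\nu)$; consequently one must keep the combination $P(f^2)-P(f)^2$ intact rather than splitting $\int_V P(f^2)\,d\nu$ and $\int_V P(f)^2\,d\nu$ individually, since each of these may be infinite even though their difference is integrable. Beyond this, and the routine bookkeeping of the $\nu$-a.e. interpretation of harmonicity, all steps are immediate consequences of Theorem \ref{thm main norm f}.
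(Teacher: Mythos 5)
Your proposal follows the paper's proof essentially verbatim: all three parts are read off the norm decomposition of Theorem \ref{thm main norm f}, using that both summands on its right-hand side are non-negative, exactly as the paper does in its one-line arguments. The only caveat --- which you half-acknowledge ("up to the normalization") and which is equally present in the paper's own proof --- is that discarding the variance term, or your direct Cauchy--Schwarz estimate, yields $\|f-P(f)\|^2_{L^2(\nu)}\le 2\|f\|^2_{\h_E}$ because of the factor $\tfrac12$ in the energy norm, i.e.\ an operator norm bound of $\sqrt{2}$ rather than the stated $1$.
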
 

\begin{proof} Statement (1) immediately follows from (\ref{eq norm of f}).

To see that (2) holds we use again (\ref{eq norm of f}). The right hand
side is zero if and only if $P(f) = f$  and $P(f^2) = P(f)^2$  a.e. (recall that,
for any function $f$, $P(f^2) \geq P(f)^2$). Since $f$ is harmonic, the latter
means that $f^2$ is harmonic. 

(3) This observation is a consequence of (\ref{eq norm of f}),  Theorem 
\ref{thm main norm f}. 
\end{proof}

We remark that formula (\ref{eq norm of f}) for the norm $\| f \|^2_{\h_E}$
consists of two terms: the deterministic term is 
$ \| f - P(f)\|^2_{L^2(\nu)}$ and the stochastic term is 
$\int_V (P(f^2) - P(f)^2)\; d\nu$. Thus, the norm of a harmonic function
is completely determined by the stochastic term.

It can be shown, using  Theorem \ref{thm main norm f}, that the following 
result holds. We leave its proof for the reader.

\begin{corollary} 
$$
\int_V Var_x(f\circ X_1)\; d\nu(x) =  \int_V Var_x(f\circ X_n)\; d\nu(x), 
\  \ n\in N. 
$$
\end{corollary}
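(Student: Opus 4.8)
The first thing I would pin down is the meaning of $Var_x(f\circ X_n)$, since the statement is only true for the \emph{one-step conditional} variance. By analogy with the quantity
$$
Var_x(f\circ X_1) = \int_V (P(f)(x) - f(y))^2\; P(x, dy) = P(f^2)(x) - P(f)^2(x)
$$
introduced in the proof of Theorem \ref{thm main norm f}, I read $Var_x(f\circ X_n)$ as the conditional variance of $f\circ X_n$ given the previous state,
$$
Var_x(f\circ X_n) = \mathbb E_x\big[(f\circ X_n - P(f)\circ X_{n-1})^2\big].
$$
(Note that the \emph{un}conditional variance $P^n(f^2) - (P^n f)^2$ is genuinely not constant in $n$, which is why this reading matters.) With this convention, the plan is to reduce the claim to the single pointwise identity $Var_x(f\circ X_n) = P^{n-1}\big(Var_\cdot(f\circ X_1)\big)(x)$ and then integrate against $\nu$.

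For the pointwise identity I would condition on $X_{n-1}$ and use the Markov property recorded in Lemma \ref{lem formulas for P}(iii) together with Lemma \ref{lem P_n via P^n}. Setting $g(z) := P(f^2)(z) - P(f)^2(z) = Var_z(f\circ X_1)$, the conditional expectation of $(f(X_n) - P(f)(X_{n-1}))^2$ given $X_{n-1} = z$ is exactly $\int_V (f(w) - P(f)(z))^2\, P(z, dw) = g(z)$. Taking $\mathbb E_x$ and using that the law of $X_{n-1}$ under $\mathbb P_x$ is $P_{n-1}(x,\cdot)$, with $\mathbb E_x[h\circ X_{n-1}] = P^{n-1}(h)(x)$, gives $Var_x(f\circ X_n) = P^{n-1}(g)(x)$.

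Integrating and invoking $\nu P = \nu$ from Lemma \ref{lem nu P = nu} (hence $\int_V P^{n-1}(g)\, d\nu = \int_V g\, d\nu$) yields
$$
\int_V Var_x(f\circ X_n)\; d\nu(x) = \int_V g\; d\nu = \int_V Var_x(f\circ X_1)\; d\nu(x),
$$
which is the assertion; the right-hand integral is finite because $g = P(f^2) - P(f)^2 \in L^1(\nu)$ by Theorem \ref{thm main norm f}. This is the cleanest route, and the only genuine obstacle is the bookkeeping in the first paragraph: identifying the correct (conditional) variance and the correct index shift. Once that is fixed, the computation is routine modulo the invariance of $\nu$.

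A second, more structural route — the one the hint to Theorem \ref{thm main norm f} points at — is to observe that each operator $\partial_n : f \mapsto f\circ X_{n+1} - f\circ X_n$ from \eqref{eq operator d_n} is an isometry of $\h_E$ into $Diss$, by the very same invariance computation as in Lemma \ref{lem d isometry} (one pushes the level-$n$ fibre integral back to level $0$ using $\nu P = \nu$, exactly as in the displayed chain of equalities proving $\|\partial f\|_{\mc D}^2 = \|f\|_{\h_E}^2$). Applying the orthogonal decomposition of Proposition \ref{prop orth decomp in D} at level $n$ splits
$$
\|\partial_n f\|^2_{\mc D} = \|(I - P)(f)\circ X_n\|^2_{\mc D} + \|P(f)\circ X_n - f\circ X_{n+1}\|^2_{\mc D}.
$$
Here the first summand equals $\tfrac12\|(I-P)f\|^2_{L^2(\nu)}$ (via the isometry $W_n$ of Lemma \ref{lem W_n isom}) and is independent of $n$, while the second is precisely $\tfrac12\int_V Var_x(f\circ X_{n+1})\, d\nu$. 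Since the left-hand side equals $\|f\|^2_{\h_E}$ for every $n$, the variance term cannot depend on $n$, and comparing the level-$n$ and level-$0$ identities gives the claim after reindexing.
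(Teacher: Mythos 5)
The paper gives no proof of this corollary --- it is explicitly left to the reader, with only the hint that it follows from Theorem \ref{thm main norm f} --- so there is nothing to compare line by line; judged on its own, your argument is correct, and your second route is exactly the one the hint points to. Your most valuable contribution is the first paragraph: the statement as printed is ambiguous, and under the unconditional reading $Var_x(f\circ X_n)=P^n(f^2)(x)-(P^nf)^2(x)$ the integrated quantity equals $\|f\|^2_{L^2(\nu)}-\|P^nf\|^2_{L^2(\nu)}$, which is generally \emph{not} constant in $n$ since $P$ is a strict contraction off its fixed space; so pinning the meaning to the conditional variance $\mathbb E_x\bigl[(f\circ X_n-P(f)\circ X_{n-1})^2\bigr]$ is not bookkeeping but the essential point. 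Given that reading, both of your derivations go through: the direct one via $Var_x(f\circ X_n)=P^{n-1}\bigl(Var_\cdot(f\circ X_1)\bigr)(x)$ (Lemma \ref{lem formulas for P}(iii), Lemma \ref{lem P_n via P^n}) followed by $\nu P=\nu$, and the structural one via the isometry $\partial_n$ together with Proposition \ref{prop orth decomp in D} and Lemma \ref{lem W_n isom}, which identifies the variance integral as the $n$-independent ``stochastic term'' in $\|f\|^2_{\h_E}$. Since $Var_x(f\circ X_1)\geq 0$, the interchange $\int_V P^{n-1}(g)\,d\nu=\int_V g\,d\nu$ is justified by Tonelli even before invoking $g\in L^1(\nu)$ from Theorem \ref{thm main norm f}. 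No gaps.
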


In what follows we will deal with the so called Riesz decomposition of 
functions from the energy space $\h_E$. It is important to note, 
that in this case, we make an additional assumption about the Markov 
process $(P_n)$: it must be \textit{transient}.

\begin{corollary}\label{cor Riesz} Assume that $(P_n)$ is a transient Markov process, i.e.,  
 Green's function 
$$
G(x, A) := \sum_{n\in \N_0} P_n(x, A)
$$
is a.e. finite for every $A \in \B$. Then every function $f  \in \h_E$ has 
a unique 
decomposition (Riesz decomposition) $ f = G(\va) + h$ where $\va \in
 L^2(\nu)$ and $h \in \h arm_E$.

Moreover, for every $f\in \h_E$,
\be\label{eq norm of f via va and h}
\| f\|^2_{\h_E} = \frac{1}{2} \left( \| \va \|^2_{L^2(\nu)} + \int_V 
(P(h^2) - h^2)\; d\nu\right).
\ee
\end{corollary}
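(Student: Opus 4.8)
The plan is to construct the decomposition from the operator $I-P$ and the Green operator $G=\sum_{n\ge 0}P^n$, and then to read off the norm formula from Theorem~\ref{thm main norm f}. First I would set $\va:=(I-P)f=f-P(f)$; by Corollary~\ref{cor harmonic}(1) this lies in $L^2(\nu)$, which is exactly the regularity demanded of $\va$ in the statement. Transience is what makes $G(\va)$ meaningful: the partial sums telescope,
\begin{equation}
\sum_{n=0}^{N}P^n\va=\sum_{n=0}^{N}\big(P^nf-P^{n+1}f\big)=f-P^{N+1}f,
\end{equation}
so $G(\va)$ converges precisely when $h:=\lim_{N}P^{N}f$ exists, and then $G(\va)=f-h$. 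I would use the a.e.\ finiteness of $G(x,A)=\sum_n P_n(x,A)$ to control $\sum_n P^n|\va|$ and secure this limit; probabilistically, $h\circ X_n$ is a martingale under each $\mathbb P_x$ (a martingale exactly because $P(h)=h$), and $h$ is its boundary value along paths.

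I would then verify the properties of the two pieces. Since $P(h)=\lim_N P^{N+1}f=h$, the function $h$ is harmonic, so $h\in\h arm_E$, while $G(\va)=f-h\in\h_E$ as a difference of energy-finite functions. Uniqueness is the cleanest step: if $f=G(\va_1)+h_1=G(\va_2)+h_2$ with $h_i\in\h arm_E$, apply $I-P$; using $(I-P)G=I$ (again the telescoping, valid under transience) and $(I-P)h_i=0$ gives $\va_1=(I-P)f=\va_2$, hence $h_1=h_2$. This simultaneously shows that $\va$ is forced to be $(I-P)f$.

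For the norm I would invoke Theorem~\ref{thm main norm f}, which expresses $\|f\|_{\h_E}^2$ as the sum of a deterministic term $\tfrac12\|f-P(f)\|_{L^2(\nu)}^2$ and a stochastic term $\tfrac12\int_V\big(P(f^2)-P(f)^2\big)\,d\nu$. The deterministic term is already $\tfrac12\|\va\|_{L^2(\nu)}^2$, matching the first term of the asserted identity, and by Corollary~\ref{cor harmonic}(3) the second asserted term is $\|h\|_{\h_E}^2=\tfrac12\int_V\big(P(h^2)-h^2\big)\,d\nu$. Hence the entire statement reduces to showing that the stochastic term is carried by the harmonic part alone,
\begin{equation}
\int_V\big(P(f^2)-P(f)^2\big)\,d\nu=\int_V\big(P(h^2)-h^2\big)\,d\nu.
\end{equation}

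This last identity is the main obstacle, and I would attack it inside $Diss$ through the isometry $\partial$ of Lemma~\ref{lem d isometry}. The stochastic term equals $\|P(f)\circ X_0-f\circ X_1\|_{\mc D}^2$, and the increments $D_nf:=P(f)\circ X_n-f\circ X_{n+1}$ are orthogonal martingale increments — each obeys $E_n(D_nf)=0$ by Lemma~\ref{lem formulas for P}(iii) — with $\|D_nf\|_{\mc D}^2$ independent of $n$ by the $P$-invariance of $\nu$. Writing $f=G(\va)+h$ and invoking the Key Lemma~\ref{lem key lemma} together with Proposition~\ref{prop orth decomp in D}, the contribution of the potential part $G(\va)$ separates into the $\va\circ X_n$ directions, while transience (through $P^{N}f\to h$ and the decay of the potential along paths) should annihilate the residual cross term, leaving exactly $\|h\circ X_0-h\circ X_1\|_{\mc D}^2=\tfrac12\int_V(P(h^2)-h^2)\,d\nu$. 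The genuinely delicate point is that the potential martingale is not $L^2(\lambda)$-bounded — $\nu$ is infinite, so its quadratic variation grows linearly in $n$ — and therefore the cancellation must be extracted from the orthogonal martingale-increment structure and the transience hypothesis, rather than from formal $L^2(\nu)$ manipulations (which diverge because neither $h$ nor $G(\va)$ need lie in $L^2(\nu)$).
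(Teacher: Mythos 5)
On the decomposition and its uniqueness your argument coincides with the paper's (the paper's proof is explicitly a sketch): set $\va=(I-P)f\in L^2(\nu)$ by Corollary \ref{cor harmonic}(1), put $h=f-G(\va)$, check $(I-P)h=0$ by telescoping, and get uniqueness by applying $I-P$ to two decompositions and using $(I-P)G=I$. Your observation that $\sum_{n=0}^{N}P^{n}\va=f-P^{N+1}f$, so that convergence of $G(\va)$ is exactly the existence of $h=\lim_N P^{N}f$, is a useful addition that the paper leaves implicit.

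The genuine gap is the norm formula \eqref{eq norm of f via va and h}, which the paper's own sketch never addresses and which your $Diss$-space attack cannot close. Your reduction is correct: given Theorem \ref{thm main norm f} and Corollary \ref{cor harmonic}(3), the formula is equivalent to $\int_V\bigl(P(f^2)-P(f)^2\bigr)\,d\nu=\int_V\bigl(P(h^2)-h^2\bigr)\,d\nu$, i.e.\ to the assertion that the potential part $G(\va)$ contributes nothing to the stochastic term. But that assertion is false in general, so no amount of martingale bookkeeping will produce it. A concrete instance inside this framework (take $E$ a countable equivalence relation with counting measure, i.e.\ a transient network): the ray $V=\N_0$ with $c_{n,n+1}=2^{n}$ has $\h arm_E=0$, and for $f=\chi_{\{0\}}$ one computes $\|f\|^2_{\h_E}=\rho(\{0\}\times\{0\}^{c})=1$ while $\va=f-P(f)$ has $\tfrac12\|\va\|^2_{L^2(\nu)}=\tfrac12\cdot\tfrac43=\tfrac23$; the missing $\tfrac13$ is exactly $\tfrac12\int_V\bigl(P(f^2)-P(f)^2\bigr)\,d\nu$, which is strictly positive even though $h=0$. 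The identity that does hold for the potential part is $\|G(\va)\|^2_{\h_E}=\langle G(\va),\va\rangle_{L^2(\nu)}$, as follows from Theorem \ref{thm inner product is energy norm} and $\Delta=c(I-P)$. So you should either prove the decomposition and uniqueness only (matching what the paper actually establishes) or replace the first term of \eqref{eq norm of f via va and h} by $\langle G(\va),\va\rangle_{L^2(\nu)}$; as written, the second half of the statement is not provable.
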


\begin{proof} \textit{(Sketch)} Let $f  \in \h_E$, then $ \va = (I - P)(f) $ is 
in $L^2(\nu)$. Define $h = f - G(\va)$. Then 
$$
\ba 
(I - P)(h)(x) = & (I - P)( f - G(\va))(x)\\
=&  \va(x) - G(\va)(x) + PG(\va)(x)\\
=& \va(x)  - \sum_{n\in \N_0} P^n(\va)(x) +\sum_{n\in \N_0} 
P^{n+1} (\va)(x) \\
= & 0.
\ea
$$
Hence $h$ is harmonic and 
$$
 f = G(\va) + h. 
$$

To see that this decomposition is unique, we suppose for contrary that
for some function $f\in \h_E$
$$
f = G(\va) + h =G(\va') + h'.  
$$
Apply $(I - P)$ to both parts and obtain
$$
\va = (I - P)(G(\va) + h) = (I- P)(G(\va') + h') = \va'.  
$$
Therefore, $ h = h'$. 
\end{proof}

% add the proof of the formula 

\subsection{Structure of the energy space}
We address now the question about the structure of the energy space
$\h$. We first show that  $\h$ contains the linear subspace  
spanned by characteristic functions of sets of finite measure. 
In the next statement we also give two formulas for computation of the norm
of $\chi_A$ and the inner product of characteristic functions in terms of
the measure $\rho$. 

\begin{lemma}\label{lem D is in H}
Suppose $c(x)$ is locally integrable with respect to $\mu$. Then 
$$
\mathcal D_{\mathrm{fin}} \subset \h.
$$
Moreover, if $A \in \Bfin$, then 
$$
|| \chi_A ||^2_{\h_E} \leq \int_A c(x) \; d\mu(x),
$$ 
and 
\be\label{eq ||chi A||}
|| \chi_A ||^2_{\h_E} = \rho(A \times A^c)
\ee
where $A^c := V\setminus A$.

More generally, 
\be\label{eq inner prod via rho}
\langle \chi_A, \chi_B\rangle_{\h_E} = \rho((A \cap B )\times V) -
\rho (A \times B)
\ee
and 
$$
\chi_A \ \perp \ \chi_B \ \Longleftrightarrow\ 
\rho((A\setminus B)\times B) = \rho((A \cap B) \times B^c).
$$
\end{lemma}

\begin{proof} We use (\ref{eq norm in H_E}) and compute for $A\in \Bfin$
$$
\ba
 || \chi_A ||^2_{\h_E} & = \frac{1}{2}\int_{V\times V} (\chi_A(x) -
  \chi_A(y))^2\;  d\rho(x, y) \qquad \qquad (\mbox{by} \ 
  (\ref{eq formula fo symm meas}))\\
 &= \frac{1}{2}\int_{V\times V} (2\chi_A(x) -  2\chi_A(x)\chi_A(y))
 \;  d\rho(x, y)\\
 & = \int_V \int_V \chi_A(x) \; d\rho_x(y)d\mu(x) \\ 
 & \ \ \  - \int_V\int_V
  \chi_A(x) \chi_A(y)\; d\rho_x(y)d\mu(x)\\
  & = \int_A c(x) \; d\mu(x) - \int_A \rho_x(A)\; d\mu(x)\\
& \leq \int_A c(x)\; d\mu(x)\\
& < \infty
\ea
$$
because, by the assumption,  $c \in L^1_{\mathrm{loc}}(\mu)$. 

To see that (\ref{eq ||chi A||}) holds, we use the above relation and represent 
it in the convenient form
$$
\ba
 || \chi_A ||^2_{\h_E}   &= \int_V \int_V \chi_A(x) \; d\rho_x(y)d\mu(x) -
  \int_V\int_V  \chi_A(x) \chi_A(y)\; d\rho_x(y)d\mu(x)\\
& = \int_{V\times V} \chi_{A \times V}(x, y) \; d\rho(x, y) -
\int_{V\times V} \chi_{A \times A}(x, y) \; d\rho(x, y)\\
& = \int_{V\times V} \chi_{A \times A^c}(x, y) \; d\rho(x, y)\\
& = \rho(A \times A^c).\\
  \ea
$$

To finish the proof, we show that (\ref{eq inner prod via rho}) holds. 
The computation is based on the definition, given in (\ref{eq norm in H_E}),
and  the property of symmetry for $\rho$:
$$
\ba
\langle \chi_A, \chi_B\rangle_{\h_E} & = \frac{1}{2}\int_{V\times V} 
(\chi_A(x) - \chi_A(y)) (\chi_B(x) - \chi_B(y))\; d\rho(x, y)\\
& =  \int_{V\times V} (\chi_A(x)\chi_B(x)  -  \chi_A(x)\chi_B(y)) \;  
d\rho(x, y)\\
 & = \int_V \int_V \chi_{(A\cap B)\times V}(x, y) \; d\rho(x, y)
  - \int_V\int_V  \chi_{A \times B}(x, y) \; d\rho(x, y)\\
  &= \rho((A \cap B )\times V) - \rho (A \times B).
\ea
 $$
\end{proof}

\begin{lemma}\label{lem orth to char f-ns} 
Let $g$ be an element from $\h$ such that $\langle \chi_A, g\rangle_{\h_E} =
0$ for every $A \in \Bfin$. Then $g$ is a harmonic function. 
\end{lemma}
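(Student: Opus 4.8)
The plan is to reduce the statement to the single identity
\[
\langle \chi_A, g\rangle_{\h_E} = \int_A \Delta g \; d\mu, \qquad A \in \Bfin,
\]
(the one announced for finitely supported functions in Table 2), and then to invoke the elementary measure-theoretic fact that a locally integrable function all of whose integrals over finite-measure sets vanish is zero almost everywhere. First I would record that the left-hand inner product is finite and well defined: $\chi_A \in \h_E$ by Lemma \ref{lem D is in H} (using $c \in L^1_{\mathrm{loc}}(\mu)$), and $g \in \h_E$ by hypothesis, so Cauchy--Schwarz applies via the isometric embedding of $\h_E$ into $L^2(\rho)$ of Lemma \ref{lem energy sp embedding}.

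To derive the identity I would start from the definition (\ref{eq inner product}) and observe that the factor $\chi_A(x) - \chi_A(y)$ is supported on $(A\times A^c)\cup(A^c\times A)$, equal to $+1$ on the first block and $-1$ on the second. Using the symmetry of $\rho$ in the form (\ref{eq formula fo symm meas}) to match the two contributions, $\langle \chi_A, g\rangle_{\h_E}$ collapses to $\int_{A\times A^c}(g(x)-g(y))\,d\rho$. On the other hand, writing $\Delta = cI - R$ as in (\ref{eq Delta = c - R}) and disintegrating gives $\int_A \Delta g\,d\mu = \int_{A\times V}(g(x)-g(y))\,d\rho$; here the diagonal block $\int_{A\times A}(g(x)-g(y))\,d\rho$ vanishes again by the symmetry of $\rho$. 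Hence both quantities equal $\int_{A\times A^c}(g(x)-g(y))\,d\rho$, which proves the identity.

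The step that needs care is justifying these manipulations by absolute convergence, and I expect this integrability bookkeeping, rather than the algebra, to be the only real obstacle. For the off-diagonal block, Cauchy--Schwarz gives
\[
\int_{A\times A^c}|g(x)-g(y)|\,d\rho \le \Bigl(\iint_{V\times V}(g(x)-g(y))^2\,d\rho\Bigr)^{1/2}\rho(A\times A^c)^{1/2},
\]
which is finite since $\rho(A\times A^c)<\infty$ by Lemma \ref{lem A x A^c finite}; for the diagonal block the same estimate applies because $\rho(A\times A)=\int_A \rho_x(A)\,d\mu \le \int_A c\,d\mu <\infty$. In particular this shows $\int_A \int_V |g(x)-g(y)|\,d\rho_x(y)\,d\mu(x)<\infty$, so $\Delta g \in L^1_{\mathrm{loc}}(\mu)$ and the right-hand side is a genuine $\mu$-integral.

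Finally, the hypothesis $\langle \chi_A, g\rangle_{\h_E}=0$ for all $A\in\Bfin$ yields $\int_A \Delta g\,d\mu = 0$ for every such $A$. Partitioning $V$ into a countable disjoint union of finite-measure sets and testing against arbitrary finite-measure subsets of each piece forces $\Delta g = 0$ $\mu$-a.e. on $V$. Thus $g$ is harmonic, which is the assertion of the lemma.
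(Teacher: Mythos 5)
Your proposal is correct and follows essentially the same route as the paper: both establish the identity $\langle \chi_A, g\rangle_{\h_E} = \int_A \Delta g\, d\mu$ via the symmetry of $\rho$ and then conclude $\Delta g = 0$ a.e.\ by testing against all $A \in \Bfin$. Your block decomposition of the support of $\chi_A(x)-\chi_A(y)$ and the explicit integrability estimates are a slightly more careful packaging of the same computation the paper carries out directly.
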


\begin{proof}  By condition, the function $g(x)$ is orthogonal to every 
characteristic function $\chi_A$, $A \in \Bfin$. Then 
$$
\ba
0 = & \iint_{V \times V} (\chi_A(x) - \chi_A(y))(g(x) - g(y))\; d\rho(x, y)\\
= & \iint_{V \times V} \chi_A(x) (g(x) - g(y))\; d\rho(x, y) -
\iint_{V \times V} \chi_A(y)(g(x) - g(y))\; d\rho(x, y)\\
= & \iint_{V \times V} \chi_A(x) (g(x) - g(y))\; d\rho(x, y) -
\iint_{V \times V} \chi_A(x)(g(y) - g(x))\; d\rho(x, y)\\
= & 2\int_V\!\int_V \chi_A(x) (g(x) - g(y))\; d\rho_x(y)d \mu(x) \\
= & 2\int_V \chi_A(x) (c(x)g(x) - R(g)(x))\; d \mu(x) \\
=& 2\int_A  c(x)(g(x) - P(g)(x))\; d \mu(x). \\
\ea
$$
Hence, $P(g)(x) = g(x)$ for $\mu$-a.e. $x\in V$, and Lemma is proved.
\end{proof}

It follows from Lemma \ref{lem orth to char f-ns} that any harmonic function 
is orthogonal to the closure $\ol{\mathcal D}_{\mathrm{fin}}$ of the space 
spanned by characteristic functions. Thus, we have the following 
decomposition. 

We denote by $\h arm_E$ the subspace in $\h$ such that $P(f) = f$, i.e.,
$$
\h arm_E := \{ f \in \mathcal F(V, \B) : P(f) = f\} \cap \h.
$$

\begin{corollary} \label{cor Royden} The finite energy space $\h$ admits the decomposition 
into the orthogonal sum 
\be\label{eq Royden}
\h = \ol{\mathcal D}_{\mathrm{fin}} \oplus \h arm_E.
\ee
A function $h\in \h_E$ is harmonic if and only if 
$$
\langle h, f - P(f)\rangle_{\h_E} = 0 \qquad \forall f \in \Dfin.
$$

\end{corollary}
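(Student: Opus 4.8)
The plan is to identify $\h arm_E$ as the orthogonal complement of $\ol{\mathcal D}_{\mathrm{fin}}$ inside the Hilbert space $(\h,\|\cdot\|_{\h})$, whose completeness is Theorem \ref{thm H is H space}. Since $\ol{\mathcal D}_{\mathrm{fin}}$ is by construction a closed subspace, the projection theorem gives $\h=\ol{\mathcal D}_{\mathrm{fin}}\oplus(\ol{\mathcal D}_{\mathrm{fin}})^{\perp}$, so the decomposition (\ref{eq Royden}) reduces to proving $(\ol{\mathcal D}_{\mathrm{fin}})^{\perp}=\h arm_E$. First I would note that, by continuity of the inner product and density of $\Dfin$ in $\ol{\mathcal D}_{\mathrm{fin}}$, a function $g\in\h$ is orthogonal to $\ol{\mathcal D}_{\mathrm{fin}}$ exactly when $\langle \chi_A,g\rangle_{\h_E}=0$ for every $A\in\Bfin$. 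The computation in the proof of Lemma \ref{lem orth to char f-ns} yields $\langle \chi_A,g\rangle_{\h_E}=\int_A c(x)\,(g(x)-P(g)(x))\,d\mu(x)$, and this vanishes for all $A\in\Bfin$ if and only if $c\,(g-P(g))=0$ $\mu$-a.e.; since $c(x)>0$ a.e. (Assumption D), this is equivalent to $P(g)=g$, i.e. $g\in\h arm_E$. Lemma \ref{lem orth to char f-ns} gives one inclusion and the same identity gives the reverse, establishing (\ref{eq Royden}).

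For the second assertion the key step is the Green-type identity
\[
\langle h,\,f-P(f)\rangle_{\h_E}=\langle (I-P)h,\,(I-P)f\rangle_{L^2(\nu)},\qquad f\in\Dfin,\ h\in\h_E,
\]
which I would derive from the same reproducing computation: symmetrizing in $\rho$ gives $\langle h,g\rangle_{\h_E}=\int_V g\,\Delta h\,d\mu$ for $g=f-P(f)$, and then $\Delta h=c\,(I-P)h$ together with $d\nu=c\,d\mu$ converts the right-hand side into the stated $L^2(\nu)$ pairing; here I use Corollary \ref{cor harmonic}(1), which guarantees $(I-P)f,(I-P)h\in L^2(\nu)$. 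Granting this identity, the forward implication is immediate: if $h$ is harmonic then $(I-P)h=0$, so $\langle h,f-P(f)\rangle_{\h_E}=0$ for every $f\in\Dfin$.

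For the converse I would argue by density of coboundaries. Assume $\langle h,f-P(f)\rangle_{\h_E}=0$ for all $f\in\Dfin$; by the identity this says $(I-P)h\perp(I-P)f$ in $L^2(\nu)$ for all such $f$. Since $c\in L^1_{\mathrm{loc}}(\mu)$ forces $\Dfin\subset L^2(\nu)$ densely, and $I-P$ is bounded and self-adjoint on $L^2(\nu)$ (Theorem \ref{thm P is s-a}, with $\nu P=\nu$ from Lemma \ref{lem nu P = nu}), the set $\{(I-P)f:f\in\Dfin\}$ has the same closure as the coboundary space $Cb(P)$, namely $\ol{Cb(P)}$. Hence $(I-P)h\perp\ol{Cb(P)}$, and by the mean-ergodic decomposition $L^2(\nu)=\h arm_2(P)\oplus\ol{Cb(P)}$ of Proposition \ref{prop coboundaries}(1) we get $(I-P)h\in\h arm_2(P)$. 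Because $\rho$ is irreducible and $\nu$ is infinite, Theorem \ref{thm harmonic} gives $\h arm_2(P)=0$, so $(I-P)h=0$ and $h$ is harmonic; in the finite-measure case $\h arm_2(P)=\mathbb R\mathbbm 1$, and integrating $(I-P)h=\mathrm{const}\in L^1(\nu)$ against the $P$-invariant finite measure $\nu$ forces the constant to vanish, again giving $P(h)=h$.

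The step I expect to be the main obstacle is justifying the Green identity rigorously rather than its two clean applications. One must verify that $f-P(f)$ genuinely lies in $\h_E$ for $f\in\Dfin$ (equivalently that $\iint (h(x)-h(y))(f-P(f))(x)\,d\rho$ converges absolutely), so that the pairing $\langle h,f-P(f)\rangle_{\h_E}$ is well defined and the interchange of integration producing $\int_V (f-P(f))\,\Delta h\,d\mu$ is legitimate. This is precisely where local integrability of $c$, the boundedness of $I-P:\h_E\to L^2(\nu)$ from Corollary \ref{cor harmonic}(1), and a Cauchy–Schwarz estimate in $L^2(\rho)$ must be combined; once the identity is secured, the remainder is routine Hilbert-space and density bookkeeping.
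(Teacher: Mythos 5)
Your first assertion (the decomposition) is proved exactly the way the paper does it: the paper deduces \eqref{eq Royden} from Lemma \ref{lem orth to char f-ns} together with the projection theorem, and your identification of $(\ol{\mathcal D}_{\mathrm{fin}})^{\perp}$ with $\h arm_E$ via the identity $\langle \chi_A, g\rangle_{\h_E}=\int_A c\,(g-P(g))\,d\mu$ and the positivity of $c$ is precisely that argument. Where you genuinely add something is the second assertion: the paper states it without proof, while you supply a route through the Green-type identity $\langle h, f-P(f)\rangle_{\h_E}=\langle (I-P)h,(I-P)f\rangle_{L^2(\nu)}$ and the density of $(I-P)(\Dfin)$ in $\ol{Cb(P)}$, closed off by the mean-ergodic decomposition of Proposition \ref{prop coboundaries} and Theorem \ref{thm harmonic}. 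This buys a clean reduction of an $\h_E$-orthogonality statement to an $L^2(\nu)$ statement where the paper's spectral results apply, and you correctly flag the real technical burden (that $f-P(f)\in\h_E$ for $f\in\Dfin$ and that the term-by-term expansion of the $\rho$-integral is legitimate), which is the same issue the paper itself acknowledges around Theorem \ref{thm inner product is energy norm}.

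One point you gloss over: in the finite-measure branch of the converse you conclude from $(I-P)h=\mathrm{const}$ that the constant vanishes by ``integrating against $\nu$,'' but that step requires $\int_V h\,d\nu$ and $\int_V P(h)\,d\nu$ to exist separately, i.e.\ $h\in L^1(\nu)$, which is not guaranteed for a general $h\in\h_E$ (membership in the energy space controls differences $h(x)-h(y)$, not $h$ itself). Note also that your hypothesis $\langle (I-P)h,(I-P)f\rangle_{L^2(\nu)}=0$ is automatically satisfied by any constant $(I-P)h$ when $\nu$ is finite and $P$-invariant, so this case needs a separate argument rather than the orthogonality. Since the paper restricts to infinite $\sigma$-finite measures, this is a corner case, but as written that branch of your proof is incomplete.
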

  
Relation (\ref{eq Royden}) is an extension of the \textit{Royden
decomposition.} 

\begin{proposition}
Let $A\in \Bfin$. Then $|| \chi_A ||_{\h_E} = 0$ if and only if $\mu(A) = 0$, 
i.e., $\chi_A = 0$ a.e. 
\end{proposition}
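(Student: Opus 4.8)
The plan is to reduce the statement to two facts already established in this section: the explicit identity $\|\chi_A\|^2_{\h_E} = \rho(A\times A^c)$ from Lemma~\ref{lem D is in H}, and the rigidity result of Theorem~\ref{thm norm f is 0}, which says that a finite-energy function whose energy norm vanishes is $\mu$-a.e.\ constant. Since $\chi_A$ is a two-valued function, ``constant a.e.'' collapses to a clean dichotomy, and the work reduces to eliminating one of the two branches using the standing infinite-measure hypothesis.

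The forward implication is immediate. If $\mu(A)=0$, then by Lemma~\ref{lem D is in H} we have $\|\chi_A\|^2_{\h_E} = \rho(A\times A^c) = \int_A \rho_x(A^c)\,d\mu(x)$, and this integral vanishes because it is taken over a $\mu$-null set (indeed it is bounded by $\int_A c(x)\,d\mu(x)=0$). Hence $\|\chi_A\|_{\h_E}=0$.

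For the converse, suppose $\|\chi_A\|_{\h_E}=0$. By Lemma~\ref{lem D is in H} we know $\chi_A\in\Dfin\subset\h_E$, and by Assumption~A the symmetric measure $\rho$ is irreducible, so Theorem~\ref{thm norm f is 0} applies to $f=\chi_A$ and yields that $\chi_A$ is constant $\mu$-a.e. As $\chi_A$ takes only the values $0$ and $1$, this forces either $\mu(A)=0$ or $\mu(A^c)=0$, where $A^c=V\setminus A$.

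The only remaining point, and the one place where the hypotheses genuinely enter, is to rule out the second alternative; I expect this to be the main (if modest) obstacle, since it is where one must invoke the global convention that $\mu$ is an \emph{infinite} $\sigma$-finite measure. Combined with $A\in\Bfin$, i.e.\ $\mu(A)<\infty$, this gives $\mu(A^c)=\mu(V)-\mu(A)=\infty\neq 0$, so the branch $\mu(A^c)=0$ cannot occur and we conclude $\mu(A)=0$. It is worth flagging that the statement is genuinely false for a finite measure space (a set of full measure would be a counterexample, having zero energy norm but positive measure), which is precisely why the restriction $\mu(V)=\infty$ is essential here.
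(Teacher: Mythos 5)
Your proof is correct, and it follows exactly the route the paper intends: the paper states this proposition without proof, but the remark immediately following it sketches precisely your argument (zero energy norm forces $\chi_A$ to be a.e.\ constant, so either $A$ or $A^c$ is $\mu$-null). Your completion of the final step — ruling out $\mu(A^c)=0$ via $\mu(V)=\infty$ and $\mu(A)<\infty$ — is the right way to finish, and your observation that the claim fails for finite measure spaces correctly identifies why the paper's standing infinite-measure convention is essential here.
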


%\begin{proof}

%\end{proof}

In the following statement, we collect several facts about the properties
of characteristic functions considered as elements of the energy space.

\begin{corollary} Let $\rho$ be a symmetric measure on $(V \times V,
\B \times \B)$, and $A, B \in \Bfin$. Under the assumption that 
$c \in L^1_{\mathrm{loc}}(\mu)$, the following statement hold:

(1) $\rho(A \times B) > 0 \  \Longleftrightarrow \ \mu(\{x \in A : \rho_x(B) 
> 0\}) > 0$;

(2) $\chi_A= 0$ in $\h \ \Longleftrightarrow \ ||\chi_A||_{\h_E} = 0 \ 
 \Longleftrightarrow \ \rho_x(A) = c(x),\ 
\mu\mbox{-a.e.}\ x\in A \  \Longleftrightarrow \ \rho(A\times A) =\int_A 
c(x) \; d\mu(x) =\rho(A \times V)$; 

(2a) in general, not assuming connectedness, 
$\| \chi_A\|_{\h_E} = 0 \ \Longrightarrow \ P(\chi_A g) = \chi_A
P(g), \ \forall g \in \h_E$;

(3) 
$$
\chi_A \ \bot \  \chi_B \ \Longleftrightarrow \ \int_{A\cap B} c(x)\;
 d\mu(x)  = \rho(A \times B) = \int_A \rho_x(B)\; d\mu(x);
$$
 
(4) if $A \subset B$ and $\mu(A) > 0$, then   
$$
\chi_A \bot \chi_B \  \Longleftrightarrow \ \rho(A \times B^c) =0 
\  \Longleftrightarrow \ \rho_x(B^c) = 0 \ \mbox{for\ a.e.} \ x \in A.
$$

(5) if $\chi_A \bot \chi_B $ and $A \cap B = \emptyset$, then 
$\rho(A \times B) = 0$; in general, if $A \cap B = \emptyset$, then 
$\langle \chi_A, \chi_B\rangle_{\h_E} \leq 0$.

\end{corollary}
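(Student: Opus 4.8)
The plan is to derive every item from the two identities recorded in Lemma \ref{lem D is in H}: the norm formula $\| \chi_A\|^2_{\h_E} = \rho(A\times A^c)$ together with its expanded form $\| \chi_A\|^2_{\h_E} = \int_A (c(x)-\rho_x(A))\, d\mu(x)$, and the polarized identity $\langle \chi_A,\chi_B\rangle_{\h_E} = \rho((A\cap B)\times V) - \rho(A\times B)$. The recurring analytic tool is the elementary fact that $\int_S \varphi \, d\mu = 0$ for a non-negative measurable $\varphi$ forces $\varphi = 0$ $\mu$-a.e.\ on $S$. Combined with the disintegration $\rho(A\times B) = \int_A \rho_x(B)\, d\mu(x)$ from (\ref{eq disint formula}) and $\rho(A\times V) = \int_A c(x)\, d\mu(x)$ (using $\rho_x(V)=c(x)$), this converts each identity about $\rho$ into a fiberwise a.e.\ statement about the measures $\rho_x$, and vice versa.

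For the direct items I would argue as follows. In (1), $\rho(A\times B) = \int_A \rho_x(B)\, d\mu(x)$ has non-negative integrand, so positivity of the integral is exactly $\mu(\{x\in A:\rho_x(B)>0\})>0$. In (3), substituting $\rho((A\cap B)\times V)=\int_{A\cap B} c\, d\mu$ and $\rho(A\times B)=\int_A \rho_x(B)\, d\mu$ into the inner-product formula shows that $\langle\chi_A,\chi_B\rangle_{\h_E}=0$ is precisely the displayed chain of equalities. In (4), since $A\subset B$ we have $A\cap B=A$ and $\rho(A\times V)=\rho(A\times B)+\rho(A\times B^c)$, so orthogonality forces $\rho(A\times B^c)=0$, and the vanishing criterion gives $\rho_x(B^c)=0$ for a.e.\ $x\in A$. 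In (5), disjointness makes $\rho((A\cap B)\times V)=0$, whence $\langle\chi_A,\chi_B\rangle_{\h_E}=-\rho(A\times B)\le 0$ because $\rho$ is a positive measure, and orthogonality then forces $\rho(A\times B)=0$.

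Item (2) is handled by the expanded norm formula: since $A\subset V$ gives $\rho_x(A)\le\rho_x(V)=c(x)$, the integrand $c(x)-\rho_x(A)$ is non-negative, so $\|\chi_A\|_{\h_E}=0$ is equivalent to $\rho_x(A)=c(x)$ $\mu$-a.e.\ on $A$. Rewriting $\int_A \rho_x(A)\, d\mu = \rho(A\times A)$ and $\int_A c\, d\mu = \rho(A\times V)$ turns this into the third formulation $\rho(A\times A)=\int_A c\, d\mu=\rho(A\times V)$; the equivalence $\chi_A=0$ in $\h \Leftrightarrow \|\chi_A\|_{\h_E}=0$ is just the Hilbert-space statement that an element is zero iff its norm is, established in Theorem \ref{thm H is H space}.

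The one step requiring a genuine idea is (2a). The hypothesis $\|\chi_A\|_{\h_E}=0$ gives, via (2), $\rho_x(A^c)=0$ for $\mu$-a.e.\ $x\in A$, i.e.\ $\rho(A\times A^c)=0$; the symmetry of $\rho$ in (\ref{eq formula fo symm meas}) then yields $\rho(A^c\times A)=0$, hence $\rho_x(A)=0$ for $\mu$-a.e.\ $x\in A^c$. Thus for $\mu$-a.e.\ $x$ the fiber measure $\rho_x$ is carried by $A$ when $x\in A$ and by $A^c$ when $x\notin A$, so that $y\mapsto\chi_A(y)$ is $\rho_x$-a.e.\ equal to the constant $\chi_A(x)$. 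Pulling this constant out of the integral defining $P$ yields $P(\chi_A g)(x)=\int_V \chi_A(y)g(y)\, d\ol\rho_x(y)=\chi_A(x)\int_V g(y)\, d\ol\rho_x(y)=\chi_A(x)P(g)(x)$ for $\mu$-a.e.\ $x$. The only point to watch is $\rho_x$-integrability of $g$ for a.e.\ $x$, so that $P(g)(x)$ is finite; this is available since $P(g)$ already occurs as a well-defined object for $g\in\h_E$. I expect this integrability bookkeeping to need the most care, and it is exactly here that the absence of a connectedness assumption is essential, because $\|\chi_A\|_{\h_E}=0$ describes precisely the degenerate situation in which $A$ splits off as a $\rho$-invariant piece on which multiplication by $\chi_A$ commutes with $P$.
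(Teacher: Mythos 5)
Your argument for items (1)--(5) is correct and follows essentially the same route as the paper: everything is read off from the norm and inner-product identities of Lemma \ref{lem D is in H} together with the disintegration $\rho(A\times B)=\int_A\rho_x(B)\,d\mu(x)$ and the observation that a vanishing integral of a non-negative integrand forces the integrand to vanish a.e. The one place you go beyond the paper is (2a): the paper's own proof is silent on that item, whereas you supply a complete argument --- from $\|\chi_A\|_{\h_E}^2=\rho(A\times A^c)=0$ and the symmetry of $\rho$ you get $\rho_x(A^c)=0$ for a.e.\ $x\in A$ and $\rho_x(A)=0$ for a.e.\ $x\in A^c$, so $\chi_A(y)=\chi_A(x)$ for $\rho_x$-a.e.\ $y$, and the constant pulls out of the integral defining $P$. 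That argument is sound, and your caveat about $\rho_x$-integrability of $g$ (so that $P(g)(x)$ is finite a.e., which is available via Corollary \ref{cor harmonic}) is exactly the right point to flag. So your proposal actually proves slightly more than the paper's written proof does.
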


%\tcr{[Quiestion: Suppose that  $\chi_A \bot \chi_B $ and $C = A\cap B 
%\neq \emptyset$. Let $A' = A \setminus C, B'= B \setminus C$. Is it true 
%that $\chi_{A'} \bot \chi_{B'}$? More general, how can one orthogonalize 
%two characteristic functions   $\chi_A, \chi_B $? Can one describe
% an orthonormal basis?]}

\begin{proof}  We begin with the obvious formula for the measure of
a rectangle in $V \times V$:
$$
\rho(A \times B) = \int_A \rho_x(B)\; d\mu(x), \qquad \quad A, B \in \Bfin,
$$
where $\rho_x(B) = R(\chi_B).$
This proves (1). 

It was proved in Lemma \ref{lem D is in H} that 
$$
 || \chi_A ||^2_{\h_E} = \int_A (c(x) -\rho_x(A))\; d\mu(x) = \int_A 
 \rho_x(V\setminus A)\; d\mu(x). 
$$
Because of (1), we see that 
$$
 || \chi_A ||_{\h_E} = 0 \ \Longleftrightarrow \ \rho(A\times A) = 
 \rho(A \times V). 
$$
This means that (2) holds.

Since  $\chi_A$ and $\chi_B$ are in $\h$, we can compute their inner 
product as in Lemma \ref{lem D is in H}: 
$$
\langle \chi_A, \chi_B \rangle_{\h_E}  = \iint_{V \times V} (\chi_A(x)
\chi_B(x) \; d\rho(x, y) - \chi_A(x) \chi_B(y) )\; d\rho(x, y)
$$
Therefore, $\chi_A \bot \chi_B $ if and only if 
$$
\iint_{V \times V} (\chi_A(x)\chi_B(x) \; d\rho(x, y) = \iint_{V \times V}
\chi_A(x) \chi_B(y) \; d\rho(x, y). 
$$
The latter is equivalent to
$$
\int_V(\chi_{A \cap B}(x) \left(\int_V \; d\rho_x(y)\right)  d\mu(x)
= \iint_{V \times V} \chi_A(x) \otimes \chi_B(y)  \; d\rho(x, y)
$$
or 
$$
\int_{A\cap B} c(x)\; d\mu(x) = \int_A \rho_x(B)\; d\mu(x),
$$
that proves (3).

To show that (4) holds, we assume that $A \subset B$. Then it follows from 
(3) that  
$$
\ba
\chi_A \bot \chi_B & \  \Longleftrightarrow \ \int_{A} c(x)\; d\mu(x) 
= \int_A \rho_x(B)\; d\mu(x);\\
& \  \Longleftrightarrow \ \rho(A \times V) = \rho(A \times B)\\
& \  \Longleftrightarrow \ \rho(A \times B^c) =0.
\ea
$$

Statement (5) follows from (3) and Lemma \ref{lem D is in H}.
\end{proof}

\begin{remark} As follows from the definition of the energy space, the zero
 element of $\h$ corresponds to any  constant function. The proved 
 properties of $|| \chi_A ||$ means that either the set $A$ or $A^c$ must
 have  zero measure $\mu$. 
\end{remark}

Theorem \ref{thm H is H space} and Lemma \ref{lem D is in H} are used to 
describe the orthogonal complement of $\partial d(\h)$ in $L^2(\rho)$.

Given a function $F(x, y) \in L^2(\rho)$, let $F^{\#}(x, y)$ 
denote the function $F(y, x)$. 

\begin{proposition}\label{prop orth compl}
 The orthogonal compliment $L^2(\rho) \ominus 
\partial(\h)$ consists  of all functions $F(x, y) \in L^2(\rho)$ such that
$\wt R(F)(x) = \wt R(F^{\#})(x)$ for $\mu$-a.e. $x\in V$ where the 
operator $\wt R$ is defined in (\ref{eq R on L}).
\end{proposition}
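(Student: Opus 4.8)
The plan is to test orthogonality against the isometric image $\partial f = \frac{1}{\sqrt 2}(f(x)-f(y))$ and to collapse everything to a single scalar identity coming from disintegration together with the symmetry of $\rho$. Since $\partial$ is an isometry (Lemma \ref{lem energy sp embedding}) and $\h$ is complete (Theorem \ref{thm H is H space}), the image $\partial(\h)$ is a closed subspace of $L^2(\rho)$, so $F \in L^2(\rho)\ominus\partial(\h)$ if and only if $\langle F,\partial f\rangle_{L^2(\rho)}=0$ for every $f\in\h$. First I would record the basic computation for functions $f$ with $f\circ\pi\in L^2(\rho)$, i.e.\ $f\in L^2(\nu)$ by Theorem \ref{thm R properties}(5): then $F(x,y)f(x)$ and $F(x,y)f(y)$ are $\rho$-integrable, and applying disintegration (\ref{eq disint formula}) to the first product and the symmetry identity (\ref{eq formula fo symm meas}) to the second, one gets
\[
\sqrt 2\,\langle F,\partial f\rangle_{L^2(\rho)} = \int_V f(x)\,\wt R(F)(x)\, d\mu(x) - \int_V f(x)\,\wt R(F^{\#})(x)\, d\mu(x).
\]

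The forward implication is then clean and needs no harmonic functions: the characteristic functions $\chi_A$ with $A\in\Bfin$ lie in $\h$ (Lemma \ref{lem D is in H}) and, because $c\in L^1_{\mathrm{loc}}(\mu)$, also in $L^2(\nu)$, so the identity applies and gives $\int_A(\wt R(F)-\wt R(F^{\#}))\,d\mu=0$ for all $A\in\Bfin$. A Cauchy--Schwarz estimate $\int_A|\wt R(F)|\,d\mu\le \|\chi_A\circ\pi\|_{L^2(\rho)}\|F\|_{L^2(\rho)}=\sqrt{\nu(A)}\,\|F\|_{L^2(\rho)}<\infty$ shows $\wt R(F),\wt R(F^{\#})\in L^1_{\mathrm{loc}}(\mu)$, so vanishing of the integral over every set of finite measure forces $\wt R(F)=\wt R(F^{\#})$ $\mu$-a.e.

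For the converse, assuming $\wt R(F)=\wt R(F^{\#})$ a.e., the same identity instantly yields $\langle F,\partial f\rangle=0$ for $f\in L^2(\nu)\cap\h$, in particular for $f\in\Dfin$; by continuity of $f\mapsto\langle F,\partial f\rangle$ this passes to $\ol{\Dfin}$, which by the Royden-type decomposition (Corollary \ref{cor Royden}) equals $\h\ominus\h arm_E$. The delicate remaining point is orthogonality to the harmonic summand $\h arm_E$, since a harmonic $h\in\h$ need not lie in $L^2(\nu)$ and the splitting above is then unjustified. To treat it I would put $G:=F-F^{\#}$, observe that $G$ is antisymmetric with $\wt R(G)=\wt R(G^{\#})=0$ and that $\langle F,\partial h\rangle=\frac12\langle G,\partial h\rangle$, and work fiberwise: for a.e.\ $x$ one has $G(x,\cdot)\in L^2(\rho_x)$ and $h(x)-h(\cdot)\in L^2(\rho_x)$ while $\rho_x$ is finite, so $\int_V G(x,y)(h(x)-h(y))\,d\rho_x(y)=h(x)\wt R(G)(x)-\int_V G(x,y)h(y)\,d\rho_x(y)=-\int_V G(x,y)h(y)\,d\rho_x(y)$. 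The horizontal disintegration of $\rho$ together with $\int_V G(x,y)\,d\rho_y(x)=\wt R(G^{\#})(y)=0$ should then collapse $\int_V\int_V G(x,y)h(y)\,d\rho_x(y)\,d\mu(x)$ to $0$.

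The main obstacle is exactly this last interchange: because $G\cdot(h\circ\pi)$ need not be $\rho$-integrable when $h\notin L^2(\nu)$, I cannot apply the symmetry identity (\ref{eq formula fo symm meas}) or Fubini to the separated pieces directly. I expect to resolve it by an exhaustion argument over sets $A_k\in\Bfin$ increasing to $V$, controlling the localized integrals through the a.e.\ identity $\int_V G(x,y)h(y)\,d\rho_x(y)=-\int_V G(x,y)(h(x)-h(y))\,d\rho_x(y)$, whose right-hand side is $\mu$-integrable by Cauchy--Schwarz in $L^2(\rho)$, and passing to the limit; this confines every manipulation to finite-measure fibers where the integrability required for the symmetry swap is available.
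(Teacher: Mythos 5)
Your forward direction is correct and is essentially the paper's own computation carried out more carefully: you are right that splitting $\iint F(x,y)(f(x)-f(y))\,d\rho$ into two absolutely convergent pieces requires $f\circ\pi\in L^2(\rho)$, i.e.\ $f\in L^2(\nu)$, and that testing against $\chi_A$, $A\in\Bfin$, together with local integrability of $\wt R(F)$ and $\wt R(F^{\#})$, yields the a.e.\ identity. You have also put your finger on exactly the point the paper elides when it writes ``clearly, the converse implication is also true'': for a nonconstant $h\in\h arm_E$ the swap is not justified, since by Theorem \ref{thm harmonic} such an $h$ never lies in $L^2(\nu)$.

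However, the exhaustion argument you propose for the harmonic summand cannot be completed, because the implication it is meant to establish fails whenever $\h arm_E\neq 0$. Take a nonconstant $h\in\h arm_E$ and set $F=\partial h=\tfrac{1}{\sqrt 2}(h(x)-h(y))$. Then $F^{\#}=-F$, and for $\mu$-a.e.\ $x$ the fiber integral converges absolutely (Cauchy--Schwarz against $\rho_x(V)=c(x)<\infty$), giving $\wt R(F)(x)=\tfrac{1}{\sqrt 2}\,c(x)\bigl(h(x)-P(h)(x)\bigr)=0=\wt R(F^{\#})(x)$; yet $F\in\partial(\h)$ with $\langle F,\partial h\rangle_{L^2(\rho)}=\|h\|_{\h}^{2}>0$, so $F$ satisfies the stated condition without being orthogonal to $\partial(\h)$. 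Abstractly, your own computation shows that $\{F:\wt R(F)=\wt R(F^{\#})\ \text{a.e.}\}=(\partial\Dfin)^{\perp}=\bigl(\ol{\partial(\Dfin)}\bigr)^{\perp}$, whereas $\partial(\h)=\ol{\partial(\Dfin)}\oplus\partial(\h arm_E)$ by Corollary \ref{cor Royden} and the isometry of $\partial$ (Lemma \ref{lem energy sp embedding}); the two orthogonal complements coincide exactly when $\h arm_E=0$. So the correct conclusion of your argument is that the condition $\wt R(F)=\wt R(F^{\#})$ characterizes $L^2(\rho)\ominus\ol{\partial(\Dfin)}$, and the proposition as stated needs the extra hypothesis that $\h_E$ contains no nonconstant harmonic functions. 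The non-integrability of $G\cdot(h\circ\pi_2)$ that you flagged is not a technical nuisance to be worked around by localization; it is precisely the point where the two iterated integrals genuinely disagree, and your planned limit passage would have to fail there.
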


\begin{proof} Suppose that a function $F(x, y)$ belongs to $L^2(\rho)
 \ominus \partial(\h)$. Then, for any $g(x) \in \h$, we have 
 $$
 \ba
\langle F(x, y), g(x) - g(y)\rangle_{L^2(\rho)} & = 
\int_{V\times V} F(x, y)(g(x) - g(y))\; d\rho(x, y)\\
& = \int_{V\times V} F(x, y)g(x)\; d\rho(x, y) \\
& \ \ \  -
\int_{V\times V} F(x, y)g(y)\; d\rho(x, y)\\
& = \int_{V\times V} F(x, y)g(x)\; d\rho(x, y)\\
& \ \ \  -
\int_{V\times V} F^{\#}(x, y)g(x)\; d\rho(x, y)\\
& =\int_{V\times V} g(x) [F(x, y) - F^{\#}(x, y)]\; d\rho(x, y)\\
& =\int_V g(x) [\wt R(F)(x) - \wt R(F^{\#})(x)]\; d\mu(x)\\
& = 0
\ea
$$
The above relation, in particular, holds for any characteristic function
$g = \chi_A$, $A \in \Bfin$. Hence, $\wt R(F)(x) - \wt R(F^{\#})(x) =0$ 
a.e.

Clearly, the converse implication is also true. 
\end{proof}

\begin{remark} 
In another applications of the kind of energy Hilbert space $\h_E$
 we study is the last term in the Beurling-Dini formula, see 
 \cite[Theorem 3.6.5]{Applebaum2009} and 
 Example  \ref{ex 1} (5); focus on the jump
  term. In our general setting, our $\h_E$ is paired with an $L^2$-space, and
 in the case of the standard Beurling-Dini formula, this $L^2$-space is
  $L^2(\R^d)$. For the literature, see e.g., \cite{Katznelson1968, 
 AlbeverioMaRockner2015, MaRockner1995}.
\end{remark}

\section{Spectral theory for graph Laplacians in $L^2(\mu)$}
\label{sect Delta in L2}

%\subsection{Graph Laplace operator in $L^2(\mu)$} 

We will use here the notation introduced in the previous sections.
In the next statement, we consider the graph Laplace operator $\Delta$
acting in the Hilbert space $L^2(\mu)$. To emphasize this fact, we will
use also the notation $\Delta_2$. As usual, our basic objects are
a measure space $(V, \B, \mu)$ and a symmetric measure $\rho$ such
that $\rho_x(V) = c(x) \in (0, \infty)$ for $\mu$-a.e. $x\in V$.
\medskip

\textbf{\textit{Assumption E}}: We assume in this section that, for every
set $A \in \Bfin$, the function 
$$
x \mapsto \rho_x(A) = \int_V \chi_A(y)\; d\rho_x(y)
$$  
belongs to $L^1(\mu) \cap L^2(\mu)$.
\medskip

Recall that the subspace $\mathcal D_{\mathrm{fin}}$ is spanned by
 characteristic
functions $\chi_A $ with $\mu(A) < \infty$. Clearly,  
$\mathcal D_{\mathrm{fin}}$ is dense in $L^2(\mu)$. 

We use Assumption to justify the definition of the graph Laplace operator 
$\Delta$ as an unbounded linear operator acting in $L^2(\mu)$. 

\begin{lemma} Let 
$$
\Delta(f)(x) = \int_V (f(x) - f(y))\; d\rho_x(y).
$$ 
Then 
\be\label{eq D in domain}
\mathcal D_{\mathrm{fin}} \subset Dom (\Delta)\cap L^2(\mu)
\ee
and $\Delta$ is a densely defined operator. 
\end{lemma}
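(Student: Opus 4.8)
The plan is to verify the containment on the spanning set $\{\chi_A : A \in \Bfin\}$ and then invoke linearity and density. Since $\mathcal D_{\mathrm{fin}} = \mathrm{Span}\{\chi_A : A \in \Bfin\}$ and $\Delta$ is linear, it suffices to check that each $\chi_A$ lies in $Dom(\Delta)\cap L^2(\mu)$. That $\chi_A \in L^2(\mu)$ is immediate, since $\|\chi_A\|_{L^2(\mu)}^2 = \mu(A) < \infty$ for $A \in \Bfin$. The real content is that $\Delta\chi_A \in L^2(\mu)$, and here I would use the decomposition $\Delta = cI - R$ from \eqref{eq Delta = c - R}, which gives
\[
\Delta\chi_A(x) = c(x)\chi_A(x) - R(\chi_A)(x) = c(x)\chi_A(x) - \rho_x(A),
\]
so that it is enough to show that both $c\chi_A$ and $x \mapsto \rho_x(A)$ belong to $L^2(\mu)$. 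Note in passing that $\Delta\chi_A(x)$ is finite $\mu$-a.e., since $c(x) < \infty$ a.e. by Assumption D.

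The second term is controlled precisely by the standing hypothesis: Assumption E states that $x \mapsto \rho_x(A) = R(\chi_A)(x)$ lies in $L^2(\mu)$ for every $A \in \Bfin$, which is exactly why that assumption is imposed in this section. For the first term I would rewrite $\Delta\chi_A$ fiberwise as $\rho_x(A^c) = c(x) - \rho_x(A)$ for $x \in A$ and as $-\rho_x(A)$ for $x \in A^c$, so that
\[
\|\Delta\chi_A\|_{L^2(\mu)}^2 = \int_A \rho_x(A^c)^2 \, d\mu(x) + \int_{A^c} \rho_x(A)^2 \, d\mu(x),
\]
the second integral being finite by Assumption E. The main obstacle is the first integral, equivalently the square-integrability of $c\chi_A$: local integrability of $c$ (Assumption D) only yields $\int_A c\, d\mu < \infty$ and $\rho(A \times A^c) < \infty$ (Lemma \ref{lem A x A^c finite}), i.e.\ an $L^1$ bound on $\rho_\cdot(A^c)$ over $A$, not an $L^2$ one. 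I would close this gap by the elementary estimate $\int_A \rho_x(A^c)^2\, d\mu \le \int_A c(x)^2\, d\mu \le \|c\|_{L^\infty(A)}\int_A c\, d\mu$, using $\rho_x(A^c) \le \rho_x(V) = c(x)$; this is finite whenever $c$ is bounded on $A$.

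Finally, for ``densely defined'' I would note that the characteristic functions $\chi_A$ with $A \in \Bfin$ and $c$ bounded on $A$ already span a dense subspace of $L^2(\mu)$: since $c < \infty$ a.e.\ one has $V = \bigcup_M \{c \le M\}$ mod $0$, and simple functions supported on finite-measure sets are dense in $L^2(\mu)$ because $\mu$ is $\sigma$-finite. Hence $\Delta$ is defined on a dense domain contained in $\mathcal D_{\mathrm{fin}}$, which gives the density conclusion. I expect the delicate point to be exactly the passage from the $L^1$-type control furnished by local integrability and Lemma \ref{lem A x A^c finite} to the $L^2$-control needed for $\Delta\chi_A \in L^2(\mu)$: Assumption E supplies it for the $R(\chi_A)$ term, while the multiplication term $c\chi_A$ must be handled separately as above.
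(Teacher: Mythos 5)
Your proof runs along the same line as the paper's: the paper's entire argument is the identity $\Delta(\chi_A)(x)=c(x)\chi_A(x)-\rho_x(A)$ followed by the single assertion that Assumption E places this in $L^2(\mu)$. The difference is in the first term, and here you have put your finger on a real issue rather than created one. Assumption E controls $x\mapsto\rho_x(A)$ only for $A\in\Bfin$, while $c(x)\chi_A(x)=\bigl(\rho_x(A)+\rho_x(A^c)\bigr)\chi_A(x)$ involves $\rho_x(A^c)$ with $A^c\notin\Bfin$ in general; Assumption D gives only $\int_A c\,d\mu<\infty$, not $\int_A c^2\,d\mu<\infty$. One can in fact construct symmetric measures satisfying Assumptions A--E for which $\int_A c^2\,d\mu=\infty$ for some $A\in\Bfin$ (take $V$ a disjoint union of pairs of intervals $I_n, J_n$ with $\mu(I_n)=2^{-n}$, $\mu(J_n)=1$, and $c_{xy}=2^{n/2}$ on $I_n\times J_n\cup J_n\times I_n$; then $c=2^{n/2}$ on $I_n$, so $c\in L^1(\bigcup_n I_n)\setminus L^2(\bigcup_n I_n)$, while $\rho_\cdot(A)\in L^1\cap L^2$ for every $A\in\Bfin$). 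So the containment \eqref{eq D in domain} does not follow from the stated hypotheses alone; the paper's one-line justification is incomplete on exactly the point you isolate, and the clean repair is to strengthen Assumption E to require $c\chi_A\in L^2(\mu)$ (equivalently $\rho_\cdot(A^c)\chi_A\in L^2(\mu)$) for every $A\in\Bfin$.

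The cost of your workaround is that it proves strictly less than the lemma asserts: by restricting to sets $A$ on which $c$ is bounded you obtain that $Dom(\Delta_2)$ contains a dense subspace of $L^2(\mu)$, but not the inclusion $\Dfin\subset Dom(\Delta)$ itself, and the latter is what the paper uses afterwards (the symmetry of $\Delta$ and the inequality \eqref{eq Delta positive} are both stated ``on $\Dfin$''). Your density argument is correct ($V=\bigcup_M\{c\le M\}$ mod $0$, so $\chi_{A\cap\{c\le M\}}\to\chi_A$ in $L^2(\mu)$), and every individual step of your proposal is valid; just be explicit that you are either (a) proving only the ``densely defined'' half of the lemma, or (b) adding the hypothesis $c\in L^2_{\mathrm{loc}}(\mu)$ under which your computation gives the full statement.
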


\begin{proof}
It is obvious that $\mathcal D_{\mathrm{fin}}$ is a dense subset in 
$L^2(\mu)$. We need to check only that $\Delta(\chi_A) $ is in 
$L^2(\mu)$ if $\mu(A) < \infty$. Since
$$
\Delta (\chi_A)(x) = c(x)\chi_A(x) - \rho_x(A),
$$
we conclude that $\Delta (\chi_A)$ is in $L^2(\mu)$ because of 
Assumption E. 
\end{proof}

Having the densely defined $\Delta$, we can associate the Hilbert adjoint 
operator $\Delta^*$, The domain of $\Delta^*$ is defined by the set of all 
elements $g$  of $\h$ for which the linear functional $f \mapsto 
\langle \Delta f, g\rangle_{L^2(\mu)}$ is continuous. Then there exists 
$g^*\in \h$ such that $\langle \Delta f, g\rangle_{L^2(\mu)} = \langle f, 
g^* \rangle_{L^2(\mu)}$. Set $\Delta^*(g) = g^*$. The operator 
$\Delta^*$ is uniquely defined. 

In fact, we can determine $\Delta^*$ explicitly using the Identity 
$$
\langle \Delta (f), g\rangle_{L^2(\mu)} = \langle f, 
\Delta^*(g) \rangle_{L^2(\mu)}
$$
and formula for $\Delta$:
$$
\ba 
\langle \Delta (f), g\rangle_{L^2(\mu)}  =  &\int_V g(x) \left(\int_V (f(x) - f(y))\; d\rho_x(y) \right) \; d\mu(x)  &\\
= & \iint_{V \times V} (g(x)f(x) - f(y)g(x))\; d\rho(x, y)\\
  = & \iint_{V\times V} g(x)f(x)\; d\rho(x, y)  
 - \iint_{V\times V} f(x)g(y)\; d\rho(x, y)\\
= & \int_V f(x) \left(\int_V (g(x) - g(y))\; d\rho_x(y) \right) \; d\mu(x) \\
 = & \langle f,  \Delta^*(g) \rangle_{L^2(\mu)}.
\ea
$$
Hence $\Delta^*(g) = \int_V (g(x) - g(y))\; d\rho_x(y)$.

Therefore we have proved the following result:

\begin{proposition} The graph Laplace operator $\Delta$ considered in 
$L^2(\mu)$ is symmetric with dense domain $\Dfin$, i.e., 
$$
\langle g, \Delta(f) \rangle_{L^2(\mu)} =  \langle  \Delta(g), f
 \rangle_{L^2(\mu)} \ \ \mbox{on}\ \Dfin.
$$
\end{proposition}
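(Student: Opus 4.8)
The plan is to reduce the claimed symmetry to the symmetry of the measure $\rho$, carried out on the dense domain $\Dfin$. First I would record what is already available: under Assumption E the preceding lemma gives $\Dfin \subset \mathrm{Dom}(\Delta)\cap L^2(\mu)$, and $\Dfin$ is dense in $L^2(\mu)$, so it suffices to verify
\[
\langle g, \Delta f\rangle_{L^2(\mu)} = \langle \Delta g, f\rangle_{L^2(\mu)}
\]
for $f,g \in \Dfin$. By bilinearity it is then enough to treat $f=\chi_A$, $g=\chi_B$ with $A,B \in \Bfin$, which also makes the integrability questions concrete.

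The computational core is to expand the pairing using \eqref{eq def of Delta} and the disintegration formula \eqref{eq disint formula},
\[
\langle g, \Delta f\rangle_{L^2(\mu)} = \int_V g(x)\Big(\int_V (f(x)-f(y))\,d\rho_x(y)\Big)d\mu(x) = \iint_{V\times V}\big(g(x)f(x)-g(x)f(y)\big)\,d\rho(x,y),
\]
and then to split this into two double integrals and apply the symmetry of $\rho$, equation \eqref{eq formula fo symm meas}, to the cross term: taking $h(x,y)=g(x)f(y)$ gives $\iint g(x)f(y)\,d\rho = \iint g(y)f(x)\,d\rho$. Recollecting yields $\iint f(x)(g(x)-g(y))\,d\rho = \langle \Delta g, f\rangle_{L^2(\mu)}$, which is the desired identity. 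This is precisely the computation displayed just above the proposition, now read as the assertion that $\Delta^{*}g = \Delta g$ on $\Dfin$.

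A cleaner route, which I would prefer to present, is to invoke the decomposition $\Delta = cI - R$ from \eqref{eq Delta = c - R}. Multiplication by the real function $c$ is manifestly symmetric on $L^2(\mu)$, and Theorem \ref{thm R properties}(9) — whose hypothesis $x\mapsto \rho_x(A)\in L^2(\mu)$ is exactly guaranteed by Assumption E — states that $R$ is symmetric on $\Dfin$. Symmetry of $\Delta$ then follows at once, since a difference of symmetric operators sharing the common dense domain $\Dfin$ is again symmetric.

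The step I expect to require the most care is not the symmetrization itself, which is immediate from \eqref{eq formula fo symm meas}, but the integrability bookkeeping that licenses both the interchange of the order of integration and the splitting of the double integral into two separately convergent pieces. The key observation is that for $A,B\in\Bfin$ one has $\iint \chi_B(x)\chi_A(x)\,d\rho = \int_B \chi_A\,c\,d\mu$ and $\iint \chi_B(x)\chi_A(y)\,d\rho = \rho(B\times A) \le \int_B c\,d\mu < \infty$, both finite because $c \in L^1_{\mathrm{loc}}(\mu)$. Once these finiteness bounds are in place, Fubini and the term-by-term manipulation are justified and the remainder of the argument is routine.
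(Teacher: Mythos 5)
Your proposal is correct and follows essentially the same route as the paper: the authors' own argument is exactly the displayed computation preceding the proposition, expanding $\langle \Delta(f),g\rangle_{L^2(\mu)}$ via the disintegration of $\rho$ and using the symmetry identity (\ref{eq formula fo symm meas}) on the cross term to conclude $\Delta^* g=\Delta g$ on $\Dfin$. Your alternative packaging via $\Delta=cI-R$ together with Theorem \ref{thm R properties}(9) is also valid but is not genuinely different, since that theorem's proof rests on the very same symmetry of $\rho$; your added attention to the integrability of the two pieces for $A,B\in\Bfin$ is a welcome refinement that the paper leaves implicit.
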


We show below that, in fact, the graph Laplace operator $\Delta$ is
self-adjoint.

\begin{theorem}\label{thm Delta is p.d.}
Let $\rho$ be a symmetric measure on $(V \times V, \B\times \B)$ where
$(V, \B, \mu)$ is
a  measure space. The graph Laplace operator  $\Delta :
L^2(\mu) \to L^2(\mu)$ is positive definite, i.e., it satisfies the following
inequality:
\be\label{eq Delta positive}
2 \int_V f^2 c \; d\mu \geq 
\langle f, \Delta f \rangle_{L^2(\mu)} \geq 0, \quad \forall f \in 
\mathcal D_{\mathrm{fin}}
%L^2(\mu) \cap L^2(c\mu). 
\ee
\end{theorem}

\begin{proof} We first reformulate (\ref{eq Delta positive}) in more 
convenient terms:
$$
\begin{aligned}
\langle f, \Delta f \rangle_{L^2(\mu)} & = \int_V f \Delta(f)\; d\mu\\
&= \int_V f(x) \left( \int_V (f(x) - f(y))\; d\rho_x(y)\right)\; d\mu(x)\\
& = \int_V f^2(x) c(x) \; d\mu(x) 
- \int_V f(x) \left( \int_V  f(y)\; d\rho_x(y)\right)\; d\mu(x)\\
\end{aligned}
$$
Hence $\langle f, \Delta f \rangle_{L^2(\mu)} \geq 0$ if and only if
\be\label{eq inequality for pos}
\int_V f^2(x) c(x) \; d\mu(x) 
 \geq \int_V f(x) \left( \int_V  f(y)\; d\rho_x(y)\right)\; d\mu(x)
\ee
In order to prove (\ref{eq inequality for pos}), we apply 
the Schwarz inequality:
$$
\begin{aligned}
& \left| \int_V f(x) \left( \int_V  f(y)\; d\rho_x(y)\right)\; d\mu(x) \right|
\quad\qquad \qquad\qquad\ \ \ \ \ 
\mbox{(Schwarz inequality)}\\
& \leq \int_V |f(x)| \left( \int_V  f^2(y)\; d\rho_x(y)\right)^{1/2}
\left( \int_V  \; d\rho_x(y)\right)^{1/2}\; d\mu(x) \\
&  = \int_V |f(x)| \sqrt{c(x)}\left( \int_V  f^2(y)\; d\rho_x(y)\right)^{1/2}
\; d\mu(x) \qquad\ \ \ \mbox{(Schwarz inequality)}\\
& \leq \left(\int_V f^2 c\ d\mu\right)^{1/2} \left(
\int_V \rho_x(f^2) \; d\mu\right)^{1/2} \\
& = \left(\int_V f^2 c\ d\mu\right)^{1/2} \left(\int_V f^2 c\ d\mu
\right)^{1/2} \\
& = \int_V f^2 c\ d\mu.
\end{aligned}
$$
We used here the fact that 
$$
\int_V gc \; d\mu = \int_V R(g)\; d\mu  = \int_V\rho_x(g) \; d\mu.
$$
This proves (\ref{eq inequality for pos}).  Therefore, $\Delta$ is positive
 definite. 

To see that the other inequality in (\ref{eq Delta positive}) holds, we consider
 (\ref{eq inequality for pos}) and write it as 
 $$
 \langle f, \Delta f \rangle_{L^2(\mu)}  \leq 
\int_V f^2(x) c(x) \; d\mu(x)  + \left|
\int_V f(x) \left( \int_V  f(y)\; d\rho_x(y)\right)\; d\mu(x)\right|
 $$
The result then follows from (\ref{eq inequality for pos}).
\end{proof}

\begin{corollary}\label{cor Delta is bdd}
 The operator $\Delta$ acting in $L^2(\mu)$ is bounded
 if and only if   $c \in L^\infty(\mu)$.
\end{corollary}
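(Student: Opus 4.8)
The plan is to handle the two implications separately, using the decomposition $\Delta = cI - R$ from \eqref{eq Delta = c - R}, where $cI$ denotes multiplication by $c$ and $R$ is the positive operator of \eqref{eq def of R}. For the sufficiency ($c\in L^\infty(\mu)\Rightarrow\Delta$ bounded) I would simply observe that multiplication by $c$ is bounded on $L^2(\mu)$ with norm $\|c\|_\infty$, while Theorem \ref{thm R properties}(8) gives that $R$ is bounded with $\|R\|_{L^2(\mu)\to L^2(\mu)}\le\|c\|_\infty$. Hence $\Delta=cI-R$ is bounded, with $\|\Delta\|\le 2\|c\|_\infty$. This direction is immediate.

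The content lies in the necessity, which I would prove in contrapositive form: if $c\notin L^\infty(\mu)$ then $\Delta$ is unbounded. The basic tool is the value of the quadratic form on a characteristic function. Since $\Delta(\chi_A)(x)=c(x)\chi_A(x)-\rho_x(A)$, for $A\in\Bfin$ one gets
\begin{equation*}
\langle \chi_A,\Delta\chi_A\rangle_{L^2(\mu)} = \int_A (c(x)-\rho_x(A))\,d\mu(x) = \int_A \rho_x(A^c)\,d\mu(x) = \rho(A\times A^c),
\end{equation*}
which is exactly $\|\chi_A\|^2_{\h_E}$ by Lemma \ref{lem D is in H}. If $\Delta$ were bounded, then by Cauchy--Schwarz $\langle\chi_A,\Delta\chi_A\rangle_{L^2(\mu)}\le\|\Delta\|\,\mu(A)$, so boundedness would force $\rho(A\times A^c)\le\|\Delta\|\,\mu(A)$ for every $A\in\Bfin$. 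The aim is to contradict this when $c$ is unbounded.

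To produce the contradiction I would localize. Fix $K>0$; since $c\notin L^\infty(\mu)$, choose $A_0\subseteq\{c>K\}$ with $0<\mu(A_0)<\infty$, so $\int_{A_0}c\,d\mu>K\mu(A_0)$. Take a refining sequence of finite partitions of $A_0$ by sets of $\Bfin$ that separates points (such partitions exist on a standard measure space; compare the nested partitions $\xi_n$ in the Remark following Corollary \ref{cor prob from A to B}). For any partition $\{C_j\}$ of $A_0$ one has
\begin{equation*}
\sum_j \rho(C_j\times C_j^c) = \int_{A_0} c\,d\mu - \sum_j \rho(C_j\times C_j).
\end{equation*}
As the partition refines, $\bigcup_j(C_j\times C_j)$ decreases to the diagonal of $A_0\times A_0$, so $\sum_j\rho(C_j\times C_j)$ decreases to its $\rho$-measure, which is $0$. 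Thus $\sum_j\rho(C_j\times C_j^c)\uparrow\int_{A_0}c\,d\mu>K\mu(A_0)$, and for a sufficiently fine partition the left side exceeds $K\mu(A_0)=K\sum_j\mu(C_j)$. Averaging then forces some cell with $\rho(C_j\times C_j^c)>K\mu(C_j)$, i.e. $\langle\chi_{C_j},\Delta\chi_{C_j}\rangle_{L^2(\mu)}>K\|\chi_{C_j}\|^2_{L^2(\mu)}$, so $\|\Delta\|>K$. Since $K$ was arbitrary, $\Delta$ is unbounded.

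The step I expect to be the main obstacle is the vanishing $\sum_j\rho(C_j\times C_j)\to 0$ under refinement. This is where one must use that $\rho$ charges the diagonal $\{(x,x)\}$ with zero mass (equivalently $\rho_x(\{x\})=0$ a.e.), which is the measurable counterpart of the ``no loops'' convention on the symmetric set $E$; together with continuity from above of the finite measure $\rho$ on $A_0\times A_0$ (finiteness following from $c\in L^1_{\mathrm{loc}}(\mu)$ and $A_0\in\Bfin$). If the diagonal carried mass the same computation would instead characterize boundedness through the off-diagonal conductance $x\mapsto\rho_x(V\setminus\{x\})$, which coincides with $c$ precisely because that diagonal is null.
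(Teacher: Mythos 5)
Your proposal is correct, but it takes a genuinely different and substantially more detailed route than the paper. The paper's own proof is a one-line appeal to the two-sided estimate $0\le\langle f,\Delta f\rangle_{L^2(\mu)}\le 2\int_V f^2c\,d\mu$ of Theorem \ref{thm Delta is p.d.}: the upper bound gives boundedness when $c\in L^\infty(\mu)$ (since $\Delta$ is symmetric and positive, its norm is controlled by its quadratic form), and the ``only if'' direction is left as immediate. Your sufficiency argument via $\Delta=cI-R$ and Theorem \ref{thm R properties}(8) is an equally valid alternative yielding the same bound $\|\Delta\|\le 2\|c\|_\infty$. Where you genuinely add content is the necessity: the paper's displayed inequality only bounds $\Delta$ \emph{from above} by $c$, and since $\langle\chi_A,\Delta\chi_A\rangle=\rho(A\times A^c)$ can be much smaller than $\int_A c\,d\mu$ (when $\rho_x$ concentrates on $A$), some localization is really needed to convert unboundedness of $c$ into unboundedness of $\Delta$. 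Your partition-refinement argument supplies exactly this: the identity $\sum_j\rho(C_j\times C_j^c)=\int_{A_0}c\,d\mu-\sum_j\rho(C_j\times C_j)$ together with continuity from above of the finite measure $\rho$ restricted to $A_0\times A_0$ (finite by $c\in L^1_{\mathrm{loc}}(\mu)$) forces some cell with $\rho(C_j\times C_j^c)>K\mu(C_j)$, hence $\|\Delta\|>K$. The one hypothesis you correctly flag --- that the diagonal is $\rho$-null --- is in fact guaranteed by the paper's standing assumptions: Assumption A requires $(E_x,\rho_x)$ to be a standard measure space for a.e.\ $x$, and the paper's convention is that standard measure spaces carry continuous (non-atomic) measures, so $\rho_x(\{x\})=0$ a.e.\ and your limit $\sum_j\rho(C_j\times C_j)\to 0$ is legitimate. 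In short: your proof buys a complete argument for the ``only if'' half, which the paper's proof asserts rather than demonstrates; the paper's proof buys brevity for the ``if'' half, which both approaches handle in one step.
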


\begin{proof}
The result follows immediately from the inequality 
$$
2 \int_V f^2 c \; d\mu \geq 
\langle f, \Delta f \rangle_{L^2(\mu)}. 
$$
\end{proof}

\begin{theorem}\label{thm inner product is energy norm} Let $f$ be an 
element of the energy space such that $f $ and $\Delta(f)$ are elements 
of $L^2(\mu)$. Then
\be\label{eq inner product is energy norm}
|| f ||^2_{\mathcal H} = \int_V f \Delta(f) \; d\mu.  
\ee
\end{theorem}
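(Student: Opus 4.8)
I need to prove that for $f$ in the energy space with both $f$ and $\Delta(f)$ in $L^2(\mu)$, the energy norm equals $\int_V f\,\Delta(f)\,d\mu$. The energy norm is $\|f\|_{\h}^2 = \frac12 \iint (f(x)-f(y))^2\,d\rho(x,y)$, and $\Delta(f)(x) = \int_V(f(x)-f(y))\,d\rho_x(y)$. So I want to show $\frac12\iint(f(x)-f(y))^2\,d\rho = \int_V f(x)\left(\int_V(f(x)-f(y))\,d\rho_x(y)\right)d\mu(x)$.

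Let me think about how to prove this. The right-hand side is $\iint f(x)(f(x)-f(y))\,d\rho(x,y)$. The left side is $\frac12\iint(f(x)-f(y))^2\,d\rho$.

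**The key symmetrization trick.** The standard approach: expand $(f(x)-f(y))^2 = (f(x)-f(y))(f(x)-f(y))$. I'd use symmetry of $\rho$. Specifically,
$$\iint (f(x)-f(y))^2\,d\rho = \iint f(x)(f(x)-f(y))\,d\rho - \iint f(y)(f(x)-f(y))\,d\rho.$$
Now in the second integral, swap $x\leftrightarrow y$ using the symmetry of $\rho$ (formula \eqref{eq formula fo symm meas}): $\iint f(y)(f(x)-f(y))\,d\rho = \iint f(x)(f(y)-f(x))\,d\rho = -\iint f(x)(f(x)-f(y))\,d\rho$. So the two terms combine to give $2\iint f(x)(f(x)-f(y))\,d\rho$, hence $\frac12\iint(f(x)-f(y))^2\,d\rho = \iint f(x)(f(x)-f(y))\,d\rho$, which is exactly $\int_V f\,\Delta(f)\,d\mu$ by the disintegration \eqref{eq disint formula}.

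**The integrability obstacle.** The symmetrization is formal algebra; the real work is justifying that every integral I split and recombine is finite, so that the manipulations (splitting a difference of integrals, swapping variables) are legitimate rather than producing $\infty - \infty$. The hypothesis $f\in\h_E$ gives $(f(x)-f(y))\in L^2(\rho)$ by Lemma \ref{lem energy sp embedding}, so $\iint(f(x)-f(y))^2\,d\rho < \infty$. The hypothesis $\Delta f\in L^2(\mu)$ together with $f\in L^2(\mu)$ ensures $\int f\,\Delta f\,d\mu$ is finite by Cauchy–Schwarz. The genuinely delicate point is the cross term $\iint f(x)f(y)\,d\rho$: I must verify this is absolutely convergent before splitting. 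I expect to control it by writing $f(x)(f(x)-f(y)) = f(x)^2 - f(x)f(y)$ and noting $\iint f(x)^2\,d\rho = \int f^2 c\,d\mu$, combined with the observation that $f(x)f(y) = f(x)^2 - f(x)(f(x)-f(y))$, so that finiteness propagates from the energy integral and the $L^2(\nu)$-type term; the estimate in Theorem \ref{thm main norm f} or Corollary \ref{cor harmonic}(1), which shows $f - P(f)\in L^2(\nu)$, provides exactly the bound needed here.

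**Plan of execution.** First I would record the two integrability facts: $(f(x)-f(y))\in L^2(\rho)$ from membership in $\h_E$, and $\int_V |f\,\Delta f|\,d\mu<\infty$ from the two $L^2(\mu)$ hypotheses. Second, I would justify absolute convergence of $\iint f(x)(f(x)-f(y))\,d\rho$, which is the single step that must be handled carefully to avoid $\infty-\infty$; once this is finite, I may split it and recombine terms freely. Third, I would carry out the symmetrization exactly as above, invoking \eqref{eq formula fo symm meas}. Finally, I would translate $\iint f(x)(f(x)-f(y))\,d\rho(x,y)$ into $\int_V f(x)\,\Delta(f)(x)\,d\mu(x)$ via the disintegration formula \eqref{eq disint formula}, completing the identity. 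The main obstacle, as emphasized, is the integrability justification for the cross term; the algebra itself is a one-line consequence of symmetry.
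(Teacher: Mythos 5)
Your proof follows essentially the same route as the paper's: the symmetrization of the energy form via \eqref{eq formula fo symm meas} (you factor $(f(x)-f(y))^2$ and swap variables in one factor, the paper expands the square and swaps in the $f(y)^2$ term --- the same trick), followed by disintegration via \eqref{eq disint formula} to identify the result as $\int_V f\,\Delta(f)\,d\mu$. Your explicit attention to the integrability of the cross term is a point the paper's computation passes over silently, but it does not change the argument.
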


\begin{proof} We first observe that the condition of the theorem holds for 
any function $f \in \mathcal D_{\mathrm{fin}}$. We compute the norm of
 $f$ in $\mathcal H$ by using the 
symmetric property of the measure $\rho$. In other words, the equality 
$$
\int_V f(y)\; d\rho(x, y) =   \int_V f(x)\; d\rho(x, y)
$$
holds for any function $f$. Therefore, we have
$$
\ba
|| f ||^2_{\mathcal H}  & = \frac{1}{2} \iint_{V \times V}
(f(x) - f(y))^2 \; d\rho(x, y) \\
& = \iint_{V\times V} [f^2(x) -  f(x) f(y)] \; d\rho(x,y)\\
& = \int_{V}  \left( \int_V [ f(x)^2   - f(x) f(y)] \; d\rho_x(y)\right) \;
d\mu(x)\\ 
& = \int_V [ f(x)^2 c(x) -  f(x)R(f)(x)] \;d\mu(x)\\
& = \int_V f(x) [c(x) f(x) -  R(f)(x)]  \;d\mu(x)\\
%& = \int_V c(x) f(x) [f(x) - P(f)(x)]  \; d\mu(x)\\
&= \int_V f(x) \Delta(f)(x)  \;d\mu(x).
% & = \langle f, \Delta(f) \rangle_{L^2(\mu)}.
\ea
$$
Hence, If $f$ and $\Delta(f)$ are $ \in L^2(\mu)$, then we obtain that
$$
|| f ||^2_{\mathcal H}  = \langle f, \Delta f\rangle_{L^2(\mu)}.
$$
\end{proof}

\begin{remark} In (\ref{eq inner product is energy norm}), we can use the
equality 
$$
\int_V f \Delta(f) \; d\mu = \langle f, \Delta(f) \rangle_{L^2(\mu)}
$$
only for those functions $f$ from $\mathcal H$ which are also in 
$L^2(\mu)$. If $f$ is not in $L^2(\mu)$, the integral in 
(\ref{eq inner product is energy norm}) is still well defined. It is worth noting
that, in general, Theorem \ref{thm inner product is energy norm} does not 
hold for arbitrary functions $f$ from $\h$. In the case of discrete networks,
it was shown in \cite{Jorgensen_Pearse2011, JorgensenPearse2013} that 
a certain  discrete Gauss-Green formula contains an
additional term, the so called \textit{boundary term}. 
\end{remark}

We are ready to prove our main result of this section. 

\begin{theorem}\label{thm Delta is s. a.}
The graph Laplace operator $\Delta$ is self-adjoint in the Hilbert space 
$L^2(\mu)$.
\end{theorem}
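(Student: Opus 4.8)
The plan is to establish that $\Delta$ is essentially self-adjoint on $\Dfin$, so that its closure — equivalently $\Delta$ realized on its maximal domain — is the asserted self-adjoint operator. The bounded case is free: if $c\in L^\infty(\mu)$, then Corollary \ref{cor Delta is bdd} shows $\Delta$ is bounded, and a bounded symmetric operator is automatically self-adjoint. For the general, unbounded case I would invoke the standard criterion for a densely defined \emph{nonnegative} symmetric operator $A$: such an operator is essentially self-adjoint if and only if $\ker(A^*+I)=\{0\}$. Since $\Delta$ is symmetric on the dense domain $\Dfin$ and satisfies $\langle f,\Delta f\rangle\ge 0$ by Theorem \ref{thm Delta is p.d.}, the whole problem reduces to showing that the only $g\in\mathrm{Dom}(\Delta^*)$ with $\Delta^* g=-g$ is $g=0$.

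First I would pin down $\Delta^*$ explicitly on its domain. Testing the defining identity $\langle \Delta f,g\rangle_{L^2(\mu)}=\langle f,\Delta^* g\rangle_{L^2(\mu)}$ against $f=\chi_A$, $A\in\Bfin$, and using $\Delta(\chi_A)(x)=c(x)\chi_A(x)-\rho_x(A)$ together with the symmetry of $\rho$ (which gives $\int_V \rho_x(A)\,g(x)\,d\mu(x)=\int_A R(g)\,d\mu$), one obtains $\int_A(cg-Rg)\,d\mu=\int_A(\Delta^* g)\,d\mu$ for every $A\in\Bfin$. Hence $\Delta^* g=cg-Rg=\int_V(g(x)-g(y))\,d\rho_x(y)$, i.e. the adjoint is given by the same integral formula already derived formally just before the Proposition.

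The heart of the argument is a two-sided evaluation of $\langle g,\Delta^* g\rangle_{L^2(\mu)}$ for a putative deficiency vector $g$ satisfying $\Delta^* g=-g$. On the one hand this inner product equals $-\|g\|_{L^2(\mu)}^2\le 0$. On the other hand, inserting the integral formula for $\Delta^* g$ and symmetrizing in $\rho$ exactly as in the proof of Theorem \ref{thm inner product is energy norm},
\[
\langle g,\Delta^* g\rangle_{L^2(\mu)}=\iint_{V\times V} g(x)\bigl(g(x)-g(y)\bigr)\,d\rho(x,y)=\frac{1}{2}\iint_{V\times V}\bigl(g(x)-g(y)\bigr)^2\,d\rho(x,y)=\|g\|_{\h_E}^2\ge 0.
\]
Comparing the two evaluations forces $\|g\|_{L^2(\mu)}^2=-\|g\|_{\h_E}^2$, so both vanish and $g=0$. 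This annihilates the deficiency space and yields essential self-adjointness; combined with the computation of $\Delta^*$ above, the closure $\overline{\Delta}=\Delta^*$ is self-adjoint.

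The main obstacle is the rigorous justification of splitting $\iint g(x)(g(x)-g(y))\,d\rho$ into $\iint g(x)^2\,d\rho$ and $\iint g(x)g(y)\,d\rho$, which is legitimate only when $g\in L^2(\nu)$, i.e. $\int_V c\,g^2\,d\mu<\infty$ — a condition not built into membership in $\mathrm{Dom}(\Delta^*)$. I would remove this gap by an exhaustion/truncation argument: pick $A_n\uparrow V$ with $A_n\in\Bfin$ and set $g_n:=\bigl((g\wedge n)\vee(-n)\bigr)\chi_{A_n}$, so each $g_n$ is bounded with support of finite measure and therefore lies in $L^2(\nu)$ by the local integrability $c\in L^1_{\mathrm{loc}}(\mu)$ of Assumption D. Pairing the relation $Rg=(c+1)g$ (equivalent to $\Delta^* g=-g$) with $g_n$ and passing to the limit via monotone and dominated convergence upgrades $g$ to $L^2(\nu)$, after which the displayed symmetrization is valid and the proof closes.
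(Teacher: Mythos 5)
Your proof follows essentially the same route as the paper's: both reduce self-adjointness to showing that the deficiency equation $\Delta^* u = -u$ forces $u=0$, and both derive this from positivity --- your two-sided evaluation $-\|g\|_{L^2(\mu)}^2=\langle g,\Delta^* g\rangle=\|g\|_{\h_E}^2\ge 0$ is the same computation as the paper's chain $\int_V cu^2\,d\mu\ge\int_V cu\,P(u)\,d\mu=\int_V cu^2\,d\mu+\int_V u^2\,d\mu$ obtained from $P(u)=(1+c^{-1})u$. The only substantive difference is that you explicitly flag the integrability gap (the splitting of the double integral, or equivalently the application of the positivity inequality to a deficiency vector known only to lie in $L^2(\mu)$ rather than in $\Dfin$ or $L^2(\nu)$) and sketch a truncation repair; the paper passes over this point in silence, so while your patch is not fully worked out, your version is if anything more careful than the published one.
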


\begin{proof}
We showed that $\Delta$ is a symmetric operator. In order to proof that it
is self-adjoint, it suffices to show that the deficiency index of $\Delta$
is zero. 

\begin{lemma} $\Delta^* u = -u \ \Longrightarrow \ u =0$.
\end{lemma}

\textit{Proof of the lemma.} Since $\Delta$ is symmetric and 
$\Delta^*u = \Delta u = c(u - P u)$, we show that the equality
\be\label{eq about u}
c(u - P u) = -u
\ee
holds only when $ = 0$. Relation (\ref{eq about u}) is equivalent to
$$
Pu = \left(1 +\frac{1}{c}\right) u.
$$
We use the fact proved in Theorem \ref{thm Delta is p.d.} that $\Delta$
is positive definite:
$$
\ba
\langle u,  \Delta u \rangle_{L^2(\mu)} & = \langle u, c(u - Pu) 
\rangle_{L^2(\mu)} \geq 0\\
\Updownarrow\\
\int_V c u^2\; d\mu & \geq \int_V cu P(u)\; d\mu\\
\Updownarrow\\
\int_V c u^2\; d\mu & \geq \int_V cu (1 + c^{-1})u\; d\mu\\
\Updownarrow\\
\int_V c u^2\; d\mu & \geq \int_V c u^2\; d\mu  + \int_V u^2\; d\mu\\
\ea
$$
Hence $u= 0$ in $L^2(\mu)$, and this completes the proof of the theorem.
\end{proof}

\begin{corollary}\label{cor inner prod}
Let $f, g\in \h_E$ be two functions such that $f, g$ and $\Delta f, \Delta g$
belong to $L^2(\mu)$. Then
\be\label{eq inner prod H and L2}
\langle f, g\rangle_{\h_E} = \langle f, \Delta g \rangle_{L^2(\mu)}.
\ee
\end{corollary}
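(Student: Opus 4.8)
The plan is to deduce this bilinear identity from the quadratic identity already established in Theorem \ref{thm inner product is energy norm} by polarization, exploiting that $\h_E$ is a real vector space and that $\Delta$ acts linearly. First I would observe that the hypotheses are stable under linear combinations: if $f,g\in\h_E$ with $f,g,\Delta f,\Delta g\in L^2(\mu)$, then $f\pm g\in\h_E$ (since $\h_E$ is a vector space, as noted in the remark following Definition \ref{def f.e. space}), while $\Delta(f\pm g)=\Delta f\pm\Delta g\in L^2(\mu)$ by linearity of $\Delta$ and $f\pm g\in L^2(\mu)$ trivially. Hence Theorem \ref{thm inner product is energy norm} applies to each of $f+g$ and $f-g$, giving $\|f\pm g\|_{\h_E}^2=\langle f\pm g,\Delta(f\pm g)\rangle_{L^2(\mu)}$.

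Next, since $\langle\cdot,\cdot\rangle_{\h_E}$ is a real symmetric bilinear form, the polarization identity $\langle f,g\rangle_{\h_E}=\tfrac14\big(\|f+g\|_{\h_E}^2-\|f-g\|_{\h_E}^2\big)$, combined with the two displayed identities and the bilinearity of $(f,g)\mapsto\langle f,\Delta g\rangle_{L^2(\mu)}$, yields
$$
\langle f,g\rangle_{\h_E}=\tfrac12\big(\langle f,\Delta g\rangle_{L^2(\mu)}+\langle g,\Delta f\rangle_{L^2(\mu)}\big).
$$
The final step is to symmetrize the right-hand side: because $\Delta$ is self-adjoint in $L^2(\mu)$ by Theorem \ref{thm Delta is s. a.}, and $f,g$ belong to its domain (which is exactly the content of $f,g,\Delta f,\Delta g\in L^2(\mu)$), one has $\langle g,\Delta f\rangle_{L^2(\mu)}=\langle\Delta g,f\rangle_{L^2(\mu)}=\langle f,\Delta g\rangle_{L^2(\mu)}$ in the real setting, so the two terms coincide and collapse to the asserted equality.

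I expect the only delicate point to be this last symmetry, namely that $f$ and $g$ genuinely lie in the domain of the self-adjoint $\Delta$; this is precisely what Theorem \ref{thm Delta is s. a.} supplies, so no deficiency-subspace issue remains. Alternatively, one may bypass polarization and compute $\langle f,g\rangle_{\h_E}$ directly, mirroring the proof of Theorem \ref{thm inner product is energy norm}: expanding $(f(x)-f(y))(g(x)-g(y))$, pairing the two mixed terms via the symmetry of $\rho$ (formula (\ref{eq formula fo symm meas})), and then using $\int_V d\rho_x=c(x)$ together with $\Delta=cI-R$ from (\ref{eq Delta = c - R}) reproduces $\int_V f\,\Delta(g)\,d\mu$ verbatim. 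In that route the only thing to verify is that each rearranged integral is finite, which follows from $f,g\in L^2(\mu)$ and $\Delta g\in L^2(\mu)$.
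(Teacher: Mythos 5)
Your proposal is correct and follows essentially the same route as the paper: the authors likewise obtain the identity by applying the norm formula of Theorem \ref{thm inner product is energy norm} to $\|f+g\|_{\h_E}$ (a polarization step) and then invoking the self-adjointness of $\Delta$ in $L^2(\mu)$ from Theorem \ref{thm Delta is s. a.} to symmetrize the resulting bilinear expression. Your version merely spells out the polarization and the domain considerations in more detail, and the alternative direct computation you sketch is a valid, if unnecessary, second route.
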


\begin{proof} Relation (\ref{eq inner prod H and L2}) immediately follows
from two facts: formula (\ref{eq inner product}) of Theorem 
\ref{thm inner product is energy norm}, applied to $|| f + g ||_{\h_E}$, and 
the self-adjointness of the operator $\Delta$ in $L^2(\mu)$,   
Theorem \ref{thm Delta is s. a.}.
\end{proof}

 For more details regarding the potential theory and finite energy space 
 (Dirichlet space), the reader may consult the following items
  \cite{Kaneko2006, Kaneko2014, KawabiRockner2007}
  and the papers cited there.

\section{Spectral theory of the graph Laplacian in the energy space}
\label{sect Laplace in H}

In this section, we consider the graph Laplace operator $\Delta$ acting in 
the energy  space $\h = \h_E$. We will also discuss the properties of
this  operator $\Delta$. 

Our approach is based on the notion of symmetric pairs of operators. We
briefly describe this approach. For more details regarding the theory of
 unbounded operators, readers may
 consult the following items \cite{DunfordSchwartz1988,
 JorgensenTian2017} and the papers cited there.

Let $\h_1$ and $\h_2$ be Hilbert spaces, and let $\mc D_1 \subset \h_1$ 
and $\mc D_2 \subset \h_2$ be dense subspaces. Suppose that two linear
operators
\be\label{eq J and K}
J : \mc D_1 \to \h_2, \ \ \ K  : \mc D_2 \to \h_1
\ee
are defined on these dense subspaces. The pair $(J, K)$ is called a 
\textit{symmetric pair} if
\be\label{eq symm pair}
\langle J \varphi, \psi \rangle_{\h_2}  = 
\langle  \varphi, K\psi \rangle_{\h_1}, \ \ \varphi \in \mc D_1, \psi \in 
\mc D_2.
\ee

The following statement is a well known result in the theory of unbounded
operators. 

\begin{lemma}
(1) Suppose $(J, K)$ be a symmetric pair satisfying (\ref{eq J and K}) and 
(\ref{eq symm pair}). Then the operators $J$ and $K$ are closable and 
$J \subset K^*$, $K \subset J^*$.  Without loss of generality, one can 
assume that $J = \ol J, K = \ol K$. 

(2) $J^*J$ is a self-adjoint densely defined operator 
in $\h_1$, and $K^*K$ is a self-adjoint densely defined operator in $\h_2$. 
\end{lemma}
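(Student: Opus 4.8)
The plan is to deduce (1) directly from the definition of the adjoint of a densely defined operator, and to obtain (2) from von Neumann's theorem on operators of the form $T^*T$.

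First I would prove (1). Because $\mc D_2$ is dense in $\h_2$, the operator $K$ is densely defined, so its adjoint $K^*$ exists and is automatically closed. The defining identity (\ref{eq symm pair}), rewritten using symmetry of the inner product as
\[
\langle K\psi, \varphi \rangle_{\h_1} = \langle \psi, J\varphi \rangle_{\h_2}, \qquad \varphi \in \mc D_1,\ \psi \in \mc D_2,
\]
says precisely that every $\varphi \in \mc D_1$ lies in $\mathrm{Dom}(K^*)$ with $K^*\varphi = J\varphi$; that is, $J \subset K^*$. Since $K^*$ is closed, $J$ has a closed extension and is therefore closable. Exchanging the roles of $J$ and $K$ gives $K \subset J^*$ and the closability of $K$. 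As the adjoint of an operator coincides with the adjoint of its closure, I may replace $J,K$ by $\ol J, \ol K$ and assume from now on that $J=\ol J$ and $K=\ol K$ are closed densely defined operators.

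For (2) I would invoke von Neumann's theorem: for any densely defined closed operator $T$ between Hilbert spaces, the operator $T^*T$, with domain $\{\varphi \in \mathrm{Dom}(T) : T\varphi \in \mathrm{Dom}(T^*)\}$, is densely defined, non-negative, and self-adjoint, and its domain is a core for $T$. Applying this with $T = J$ shows that $J^*J$ is self-adjoint and densely defined in $\h_1$, and applying it with $T = K$ gives the same for $K^*K$ in $\h_2$. This finishes the argument modulo von Neumann's theorem.

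If a self-contained argument is wanted, the main obstacle is not the symmetry and non-negativity of $J^*J$, which are immediate since $\langle J^*J\varphi, \varphi\rangle_{\h_1} = \|J\varphi\|_{\h_2}^2 \geq 0$ on its natural domain, but rather the density of $\mathrm{Dom}(J^*J)$ together with genuine self-adjointness. The standard route is the orthogonal graph decomposition
\[
\h_1 \oplus \h_2 = \Gamma(J) \oplus V\,\Gamma(J^*),
\]
valid for closed densely defined $J$, where $\Gamma$ denotes the graph and $V(\xi,\eta) = (-\eta,\xi)$. Decomposing $(h,0)$ for arbitrary $h \in \h_1$ produces $\varphi \in \mathrm{Dom}(J^*J)$ with $(I + J^*J)\varphi = h$, so $I + J^*J$ is surjective; combined with the bound $\langle (I+J^*J)\varphi,\varphi\rangle_{\h_1} \geq \|\varphi\|_{\h_1}^2$, this makes $(I+J^*J)^{-1}$ a bounded, everywhere-defined, self-adjoint operator, from which the self-adjointness of $J^*J$ follows. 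The same argument applied to $K$ handles $K^*K$.
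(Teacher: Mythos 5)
Your proof is correct. Note that the paper does not actually prove this lemma: it introduces it with the sentence ``The following statement is a well known result in the theory of unbounded operators'' and leaves the reader to the cited references, so there is no argument in the text to compare against; what you have written is precisely the standard proof that those references supply. Part (1) is exactly the right observation: the identity $\langle J\varphi,\psi\rangle_{\h_2}=\langle\varphi,K\psi\rangle_{\h_1}$ is, by definition of the adjoint of the densely defined $K$, the statement that $\mc D_1\subset\mathrm{Dom}(K^*)$ with $K^*|_{\mc D_1}=J$, and closability follows since adjoints are closed; the only point needing a word of care is the ``without loss of generality'' step, namely that $\ol J\subset K^*$ and $\ol K\subset J^*$ still hold and that the pairing identity survives passage to the closures (a routine limit argument with graph-convergent sequences), which your remark that the adjoint of an operator equals the adjoint of its closure covers. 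Part (2) is von Neumann's theorem applied to the closed, densely defined $J$ and $K$, and your appendix sketch via the graph decomposition $\h_1\oplus\h_2=\Gamma(J)\oplus V\,\Gamma(J^*)$, yielding surjectivity of $I+J^*J$ and hence self-adjointness and density of the domain, is the standard self-contained route. Nothing is missing.
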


Now we apply the above statement to the case of Hilbert spaces $L^2(\mu)$
and $\h_E$. To distinguish  the graph Laplace operators acting in 
$L^2(\mu)$ and $\h_E$, we will use the notation $\Delta_2$ and 
$\Delta_{\h}$, respectively. 

As was proved in Theorems \ref{thm Delta is p.d.} and 
\ref{thm Delta is s. a.}, the operator $\Delta_2$ is positive definite and
 essentially self-adjoint; therefore, by the spectral theorem, there exists 
a projection-valued measure $Q(dt)$ such that
$$
\Delta_2 = \int_0^\infty t \; dQ(t)
$$
or, for any $\va \in L^2(\mu)$,
\be\label{eq spectral thm for Delta}
\langle \varphi, \Delta_2\va \rangle_{L^2(\mu)} = \int_0^\infty t \;
|| Q(dt) \va||^2_{L^2(\mu)} 
\ee
(we used here the fact that $Q(dt)$ is a projection). 

\begin{lemma}\label{lem dense D_Q}
In the above notation, let
$$
H_{n,m} = Q([n^{-1}, m]) L^2(\mu), \ \ \ n, m \in \N.
$$ 
Then 
$$
\mc D_Q := \bigcup_{n,m} H_{n, m}
$$
is a dense subspace in $L^2(\mu)$ which is also invariant with respect to 
$\Delta_2$ and  $\Delta_2^{-1}$.  

Moreover,  $\mc D_Q $ can be viewed also as a subspace of $\h_E$.
\end{lemma}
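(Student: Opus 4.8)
The plan is to treat each spectral subspace $H_{n,m}$ through the bounded Borel functional calculus for the self-adjoint operator $\Delta_2$, on which $\Delta_2$ acts as multiplication by the spectral variable $t$ restricted to $[n^{-1}, m]$. First I would record the three elementary consequences of the spectral theorem that drive the whole argument: the projection $Q([n^{-1},m])$ commutes with $\Delta_2$; on its range $H_{n,m}$ the operator $\Delta_2$ is bounded with $\| \Delta_2 f \|_{L^2(\mu)} \leq m \| f \|_{L^2(\mu)}$; and it is bounded below there by $n^{-1}$, hence invertible on $H_{n,m}$ with $\| \Delta_2^{-1} f \|_{L^2(\mu)} \leq n \| f \|_{L^2(\mu)}$.

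Invariance is then immediate. For $f \in H_{n,m}$ one has $\Delta_2 f = Q([n^{-1},m]) \Delta_2 f \in H_{n,m}$, since $Q([n^{-1},m])$ is a spectral projection commuting with $\Delta_2$; applying the same remark to the bounded operator $\int_{[n^{-1},m]} t^{-1}\, dQ(t)$ shows $\Delta_2^{-1} H_{n,m} \subseteq H_{n,m}$. Consequently $\mc D_Q = \bigcup_{n,m} H_{n,m}$ is invariant under both $\Delta_2$ and $\Delta_2^{-1}$.

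For density, the decisive point, and the step I expect to be the main obstacle, is that $Q(\{0\}) = 0$, i.e. $\Delta_2$ has trivial kernel. Since $H_{n,m}$ increases to $Q((0,\infty)) L^2(\mu)$ as $n, m \to \infty$, the closure of $\mc D_Q$ equals the range of $Q((0,\infty))$, which is all of $L^2(\mu)$ precisely when $0$ is not an eigenvalue. To rule out a kernel, suppose $\Delta_2 f = 0$ with $f \in L^2(\mu)$. The integral identity established in the proof of Theorem \ref{thm inner product is energy norm} gives $\| f \|^2_{\h_E} = \int_V f\, \Delta_2 f\; d\mu = 0$, so $f$ is an energy-zero element; by Theorem \ref{thm norm f is 0}, $f$ is $\mu$-a.e. constant. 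Because $\mu$ is an infinite $\sigma$-finite measure, the only constant lying in $L^2(\mu)$ is $0$, whence $f = 0$. Here I would take care to justify that the integral formula of Theorem \ref{thm inner product is energy norm} applies even before $f \in \h_E$ is known, reading it off directly from the computation $\| f \|^2_{\h_E} = \int_V f\, \Delta(f)\; d\mu$ specialized to $\Delta f = 0$.

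Finally, to realize $\mc D_Q$ inside $\h_E$, I would combine the spectral representation (\ref{eq spectral thm for Delta}) with Theorem \ref{thm inner product is energy norm}: for $f \in H_{n,m}$ both $f$ and $\Delta_2 f$ lie in $L^2(\mu)$, so
\[
\| f \|^2_{\h_E} = \langle f, \Delta_2 f\rangle_{L^2(\mu)} = \int_{n^{-1}}^{m} t\; \| Q(dt) f \|^2_{L^2(\mu)} \leq m\, \| f \|^2_{L^2(\mu)} < \infty.
\]
Thus every $f \in H_{n,m}$ has finite energy and determines an element of $\h_E$, and therefore the linear span $\mc D_Q$ embeds into $\h_E$, as claimed.
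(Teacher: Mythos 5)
Your proof follows the same route as the paper's: invariance of $H_{n,m}$ via the spectral-calculus bounds $n^{-1}\|\varphi\|\le\|\Delta_2\varphi\|\le m\|\varphi\|$, and the embedding into $\h_E$ via $\|f\|^2_{\h_E}=\langle f,\Delta_2 f\rangle_{L^2(\mu)}\le m\|f\|^2_{L^2(\mu)}$ from Theorem \ref{thm inner product is energy norm}. The one place you genuinely go beyond the paper is the density step: the paper simply asserts that $Q([n^{-1},m])$ converges strongly to $I$, which is true only when $Q(\{0\})=0$, i.e.\ when $\Delta_2$ has trivial kernel (a fact the paper also needs, silently, to speak of $\Delta_2^{-1}$ at all). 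You correctly isolate this as the decisive point and close it by showing that $\Delta_2 f=0$ forces $\|f\|_{\h_E}=0$, hence $f$ constant by Theorem \ref{thm norm f is 0}, hence $f=0$ in $L^2$ of the infinite measure $\mu$. That is a real gap in the paper's argument which your proof fills. The only caveat — which you yourself flag — is that invoking the energy identity for a kernel element $f$ requires justifying the splitting $\iint[f^2(x)-f(x)f(y)]\,d\rho=\int f^2c\,d\mu-\int fR(f)\,d\mu$, which needs $\int_V f^2c\,d\mu<\infty$ (i.e.\ $f\in L^2(\nu)$) and is not automatic for an arbitrary $f\in L^2(\mu)$ unless $c$ is bounded; as written this remains a loose end, though the overall strategy is sound and strictly more careful than the paper's.
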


\begin{proof}
The density of $\mc D_Q $  follows directly from the spectral theorem since
the double-indexed sequence of projections 
$\{Q([n^{-1}, m])\}$ strongly converges to the identity operator $I$ in 
$L^2(\mu)$ as $n,m \to \infty$. The invariance of $\mc D_Q$ 
with respect to $\Delta_2$ and $\Delta_2^{-1}$ is deduced from 
the boundness of $\Delta_2$ and $\Delta_2^{-1}$ on every set $H_{n,m}$.
We see that
$$
n^{-1} ||\va ||_{L^2(\mu)} \leq || \Delta_2 \va ||_{L^2(\mu)} \leq m 
||\va ||_{L^2(\mu)}, \ \  \va \in H_{n,m}.
$$
Similarly, for $\va \in H_{n,m}$, we have
$$
m^{-1} ||\va ||_{L^2(\mu)} \leq || \Delta_2^{-1} \va ||_{L^2(\mu)} \leq n 
||\va ||_{L^2(\mu)},
$$
because 
$$
\Delta_2^{-1} = \int_0^\infty t^{-1} \; dQ(t).
$$
Hence, if $\va$ is in $\mc D_Q$, then $\Delta_2 \va \in \mc D_Q$ and
$\Delta_2^{-1} \va \in \mc D_Q$. 

The proof of the second assertion of the lemma follows from relation 
(\ref{eq inner product is energy norm}). We have 
\be\label{eq norm in H for DQ}
|| f ||_{\h_E}^2 = \langle f, \Delta_2 f\rangle_{L^2(\mu)},
\ee
and this holds for any function $f \in \mc D_Q \subset L^2(\mu)$ 
(note that then $\Delta_2 f$ is in $\mc D_Q$). It follows from
(\ref{eq norm in H for DQ}) that, for $f \in H_{n,m}$,
$$
|| f ||_{\h_E}^2 \leq m || f ||_{L^2(\mu)}.
$$
\end{proof}

\begin{lemma}\label{lem Dfin is in D_Q} Let $Harm$ be the set of harmonic 
functions in $\h_E$. Then the space
$$
 \mc C:= \mc D_Q +  Harm
$$
is dense in $\h_E$.
\end{lemma}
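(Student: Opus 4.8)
The plan is to combine the Royden-type decomposition of Corollary~\ref{cor Royden} with a spectral truncation argument for $\Delta_2$. By Corollary~\ref{cor Royden} we have the orthogonal splitting $\h_E = \ol{\mathcal D}_{\mathrm{fin}} \oplus \h arm_E$, and by definition $Harm = \h arm_E$. Since $0 \in Harm$ we have $\mc D_Q \subseteq \mc C$, and since $Harm \subseteq \mc C$ the closed subspace $\ol{\mc C}$ already contains the whole harmonic summand $\h arm_E$. Hence it suffices to prove $\ol{\mathcal D}_{\mathrm{fin}} \subseteq \ol{\mc C}$, and for this I would establish the stronger inclusion $\ol{\mathcal D}_{\mathrm{fin}} \subseteq \ol{\mc D_Q}^{\h_E}$; once both orthogonal summands lie in the closed space $\ol{\mc C}$, the decomposition forces $\ol{\mc C} = \h_E$.

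Because $\ol{\mathcal D}_{\mathrm{fin}}$ is by definition the $\h_E$-closure of the span of $\{\chi_A : A \in \Bfin\}$, I would reduce the problem to approximating a single $\chi_A$, $A \in \Bfin$, in the energy norm by elements of $\mc D_Q$. The natural candidates are the spectral truncations $g_{n,m} := Q([n^{-1},m])\chi_A$, which lie in $H_{n,m} \subseteq \mc D_Q$. Here one first checks $\chi_A \in \mathrm{Dom}(\Delta_2) \cap \h_E$: membership in $\h_E$ is Lemma~\ref{lem D is in H}, while $\Delta_2 \chi_A \in L^2(\mu)$ follows from Assumption~E, so that Theorem~\ref{thm inner product is energy norm} yields the identity $\| h\|_{\h_E}^2 = \langle h, \Delta_2 h\rangle_{L^2(\mu)}$ for every $h \in \mathrm{Dom}(\Delta_2)\cap\h_E$, in particular for the differences $h = \chi_A - g_{n,m}$.

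The heart of the argument is then the spectral computation. Writing $\chi_A - g_{n,m} = (I - Q([n^{-1},m]))\chi_A$ and applying the spectral theorem (see~(\ref{eq spectral thm for Delta})) gives
\[
\| \chi_A - g_{n,m}\|_{\h_E}^2 = \int_{[0,n^{-1}) \cup (m, \infty)} t \; \|Q(dt)\chi_A\|^2_{L^2(\mu)}.
\]
Since the total mass $\int_0^\infty t \,\|Q(dt)\chi_A\|^2_{L^2(\mu)} = \|\chi_A\|_{\h_E}^2$ is finite, the tail $\int_{(m,\infty)}$ tends to $0$ as $m \to \infty$, while the factor $t$ in the integrand forces the bottom piece $\int_{[0,n^{-1})}$ to tend to $\int_{\{0\}} t \,\|Q(dt)\chi_A\|^2_{L^2(\mu)} = 0$ as $n \to \infty$; crucially, any contribution of the kernel of $\Delta_2$ (the $L^2$-harmonic part, carried by $t=0$) is annihilated by this factor, which is precisely why truncations in $\mc D_Q$ suffice to reach all of $\ol{\mathcal D}_{\mathrm{fin}}$. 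Thus $g_{n,m} \to \chi_A$ in $\h_E$, giving $\chi_A \in \ol{\mc D_Q}^{\h_E}$, hence $\ol{\mathcal D}_{\mathrm{fin}} \subseteq \ol{\mc C}$, and therefore $\ol{\mc C} = \h_E$. The main technical point to watch is the legitimacy of the energy identity for the differences $\chi_A - g_{n,m}$, that is, verifying that these remain in $\mathrm{Dom}(\Delta_2)\cap \h_E$; this rests on the invariance of $\mc D_Q$ under $\Delta_2$ from Lemma~\ref{lem dense D_Q} together with the self-adjointness of $\Delta_2$ established in Theorem~\ref{thm Delta is s. a.}.
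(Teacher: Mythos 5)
Your argument is correct, and on the decisive step it diverges from the paper. The paper's proof is a one-line assertion: it claims the inclusion $\Dfin \subset \mc D_Q$ and then invokes the Royden decomposition (\ref{eq Royden}). You keep the same skeleton --- split $\h_E = \ol{\mathcal D}_{\mathrm{fin}} \oplus \h arm_E$, note that the harmonic summand sits inside $\mc C$ for free, and reduce to the $\ol{\mathcal D}_{\mathrm{fin}}$ part --- but you replace the asserted inclusion with an approximation: $\chi_A$ is reached in the energy norm by its spectral truncations $Q([n^{-1},m])\chi_A \in H_{n,m}$, using Theorem \ref{thm inner product is energy norm} to convert the energy norm of the difference into $\int_{[0,n^{-1})\cup(m,\infty)} t\,\|Q(dt)\chi_A\|^2$ and then letting $n,m\to\infty$. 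This is a genuine improvement in rigor: a set inclusion $\Dfin \subset \mc D_Q$ is hard to justify literally, since $\mc D_Q$ is a union of spectral bands $Q([n^{-1},m])L^2(\mu)$ and there is no reason a characteristic function should have its spectral measure supported in a single such band; what is actually needed (and all that Lemma \ref{lem J K symm pair} and Theorem \ref{thm Delta_h} use) is density, which is exactly what you prove. Your observation that the factor $t$ kills any mass of $\|Q(dt)\chi_A\|^2$ at $t=0$ is the right way to see why convergence holds in $\h_E$ even though the truncations converge in $L^2(\mu)$ only to $Q((0,\infty))\chi_A$. The one hypothesis you should state explicitly is that $g_{n,m}=Q([n^{-1},m])\chi_A$ lies in $\h_E$ with $\Delta_2 g_{n,m}\in L^2(\mu)$ so that Theorem \ref{thm inner product is energy norm} applies to the difference; this is supplied by Lemma \ref{lem dense D_Q} and Assumption E, as you note at the end.
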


\begin{proof} This result follows from the inclusion $\Dfin \subset 
\mc D_Q$ and the decomposition (\ref{eq Royden}).

\end{proof}

We now define  two operators, $J$ and $K$, that constitute a symmetric 
pair. Based on Lemmas \ref{eq spectral thm for Delta} and \ref{lem dense 
D_Q}, we can define the densely defined operator $J$:
\be\label{eq def J}
L^2(\mu) \supset \mc D_Q \ni \va \ \stackrel{J}\longrightarrow \va 
\in \h_E.
\ee
To define its counterpart, the operator $K$, we use Lemma \ref{lem Dfin is in 
D_Q} and put
\be\label{eq def K}
K h = 0, \ h \in Harm,  \quad  K \psi = \Delta_2 \psi, \ \psi \in D_Q,
\ee
where $Harm$ is the set of harmonic functions in $\h_E$. 
Then $K$ is a densely defined operator on the subspace $\mc C$ of $\h_E$
such that $K(\mc C) \subset L^2(\mu)$. 

\begin{lemma}\label{lem J K symm pair} The operators $J$ and $K$,
defined by (\ref{eq def J}) and (\ref{eq def K}), form a symmetric pair, i.e.,
\be\label{eq symm pair J and K}
\langle J \varphi, \psi \rangle_{\h_E}  = 
\langle  \varphi, K\psi \rangle_{L^2(\mu)}, \ \ \varphi \in \mc D_Q, \psi \in 
\mc C.
\ee
\end{lemma}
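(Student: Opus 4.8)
The plan is to reduce the identity (\ref{eq symm pair J and K}) to two separate facts by splitting the test element $\psi \in \mc C$ according to the decomposition $\mc C = \mc D_Q + Harm$ of Lemma \ref{lem Dfin is in D_Q}. Writing $\psi = \psi_0 + h$ with $\psi_0 \in \mc D_Q$ and $h \in Harm$, the definition (\ref{eq def K}) gives $K\psi = \Delta_2 \psi_0$, so the right-hand side of (\ref{eq symm pair J and K}) is $\langle \varphi, \Delta_2 \psi_0\rangle_{L^2(\mu)}$; since $J$ is the inclusion, the left-hand side expands as $\langle \varphi, \psi_0\rangle_{\h_E} + \langle \varphi, h\rangle_{\h_E}$. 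Thus it suffices to prove, for every $\varphi \in \mc D_Q$ and every harmonic $h$, the two statements (a) $\langle \varphi, \psi_0\rangle_{\h_E} = \langle \varphi, \Delta_2 \psi_0\rangle_{L^2(\mu)}$ and (b) $\langle \varphi, h\rangle_{\h_E} = 0$. I would record at the outset that (b) also makes $K$ well defined: a difference of two representations $\psi_0+h=\psi_0'+h'$ gives $\psi_0-\psi_0' \in \mc D_Q \cap Harm$, and (b) forces this intersection to be trivial.

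Step (a) is immediate from the machinery already in place. Both $\varphi$ and $\psi_0$ lie in $\mc D_Q$, hence in $\h_E$ (Lemma \ref{lem dense D_Q}) and in $L^2(\mu)$, and by the $\Delta_2$-invariance of $\mc D_Q$ we also have $\Delta_2\varphi, \Delta_2\psi_0 \in L^2(\mu)$. Therefore Corollary \ref{cor inner prod} applies verbatim and yields $\langle \varphi, \psi_0\rangle_{\h_E} = \langle \varphi, \Delta_2\psi_0\rangle_{L^2(\mu)}$, which is exactly what is required; note $\langle \varphi, Kh\rangle_{L^2(\mu)} = \langle\varphi,0\rangle = 0$ on the $Harm$ component, so nothing further is contributed on the right.

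Step (b) is the crux. The clean route is to show $\mc D_Q \subset \ol{\Dfin}$, closure taken in the energy norm, and then to invoke the Royden decomposition $\h_E = \ol{\Dfin} \oplus \h arm_E$ of Corollary \ref{cor Royden}, under which every harmonic function is energy-orthogonal to $\ol{\Dfin}$. To establish $\mc D_Q \subset \ol{\Dfin}$ I would use that $\Dfin$ is a core for the (essentially) self-adjoint operator $\Delta_2$ (Theorem \ref{thm Delta is s. a.}): given $\varphi \in \mc D_Q \subset \mathrm{Dom}(\Delta_2)$, choose $f_k \in \Dfin$ with $f_k \to \varphi$ in the graph norm, that is $f_k \to \varphi$ and $\Delta_2 f_k \to \Delta_2\varphi$ in $L^2(\mu)$. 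Applying Theorem \ref{thm inner product is energy norm} to the difference $f_k-\varphi$ (legitimate, since $f_k-\varphi$ and $\Delta_2(f_k-\varphi)$ both lie in $L^2(\mu)\cap\h_E$) gives
\be
\| f_k - \varphi\|_{\h_E}^2 = \langle f_k - \varphi,\, \Delta_2(f_k-\varphi)\rangle_{L^2(\mu)} \le \|f_k-\varphi\|_{L^2(\mu)}\,\|\Delta_2 f_k - \Delta_2\varphi\|_{L^2(\mu)} \longrightarrow 0 .
\ee
Hence $f_k \to \varphi$ in $\h_E$, so $\varphi \in \ol{\Dfin}$. Since each $f_k \in \Dfin$ is energy-orthogonal to every harmonic function — this is precisely the computation in Lemma \ref{lem orth to char f-ns}, where $\langle \chi_A, h\rangle_{\h_E} = 2\int_A c\,(h - Ph)\, d\mu = 0$ when $Ph=h$ — passing to the limit yields $\langle \varphi, h\rangle_{\h_E} = \lim_k \langle f_k, h\rangle_{\h_E} = 0$, proving (b).

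The main obstacle is exactly step (b), and specifically the inclusion $\mc D_Q \subset \ol{\Dfin}$. One is tempted to compute $\langle \varphi, h\rangle_{\h_E}$ by symmetrizing $d\rho$ and splitting it as $\int_V \varphi\,\Delta h\, d\mu = 0$; but a nontrivial harmonic $h$ need not belong to $L^2(\mu)$ (indeed Theorem \ref{thm harmonic} rules this out when $\nu(V)=\infty$), so the individual integrals in such a splitting need not converge absolutely and the manipulation is not justified. Routing the argument through the core property of $\Dfin$ together with the energy identity $\|g\|_{\h_E}^2 = \langle g, \Delta_2 g\rangle_{L^2(\mu)}$ sidesteps this entirely, since it only ever pairs $L^2(\mu)$ functions with $L^2(\mu)$ functions. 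Everything else is bookkeeping with the decomposition of $\psi$.
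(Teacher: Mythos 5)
Your proof is correct and follows the same route as the paper: split $\psi$ along $\mc C = \mc D_Q + Harm$, apply Corollary \ref{cor inner prod} on the $\mc D_Q$ component, and observe that both sides vanish on the harmonic component. The one place you go beyond the paper is your step (b): the paper simply asserts that the left-hand side is zero when $\psi$ is harmonic, whereas you actually prove $\langle \varphi, h\rangle_{\h_E}=0$ for $\varphi\in\mc D_Q$ (via the core property of $\Dfin$ for $\Delta_2$, the energy identity of Theorem \ref{thm inner product is energy norm}, and the Royden decomposition of Corollary \ref{cor Royden}), correctly noting that the naive pairing $\langle\varphi,\Delta h\rangle_{L^2(\mu)}$ is unavailable since $h$ need not lie in $L^2(\mu)$; this also settles the well-definedness of $K$ on the non-direct sum $\mc D_Q + Harm$, a point the paper leaves implicit.
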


\begin{proof} We first observe that, by Lemma \ref{lem Dfin is in D_Q},
 every function $\psi \in \mc D_Q$ can be 
represented as $\psi = \Delta^{-1}_2 \xi$. Then  we use Corollary 
\ref{cor inner prod} in the proof. By definition of $K$, we obtain that,
for $\va, \psi \in \mc D_Q$, 
$$
\ba
\langle J\va, \psi\rangle_{\h_E} = & \langle J\va, \Delta^{-1}_2 \xi
\rangle_{\h_E}\\
= & \langle \va, \Delta_2(\Delta^{-1}_2 \xi) \rangle_{L^2(\mu)}\\\
= &  \langle \va, K\psi \rangle_{L^2(\mu)}
\ea
$$
If $\psi$ is in $Harm$, then the left- and right-hand sides in  
(\ref{eq symm pair J and K}) are simultaneously equal to zero. 
\end{proof}

It follows from Lemma \ref{lem J K symm pair} that: 

(1) $J^* = K$ and $K^* = J$, 

(2) the operators $J^*J$ and $K^*K$ are self-adjoint in $L^2(\mu)$ and 
 $\h_E$, respectively.

We combine the results of the lemmas proved in this section 
in the following theorem.

Let $\Delta$ be a linear operator acting on Borel functions $f \in \mc F(X,
\B)$ by 
$$
\Delta(f)(x) = \int_V (f(x) - f(y)) \; d\rho_x(y)
$$
where $\rho = \int_V \rho_x \; d\mu(x)$ is a symmetric measure.

\begin{theorem}\label{thm Delta_h} The Laplace operator $\Delta$ admits
 its realizations in the Hilbert spaces $L^2(\mu)$ and $\h_E$ such that:

(i) $\Delta_2 = J^*J$ is a positive definite essentially self-adjoint operator;

(ii)  $\Delta_{\h}$ is a positive definite and symmetric operator which is
not self-adjoint, in general;  a self-adjoint  extension of   $\Delta_{\h}$ 
is given by  the opearor $JJ^* = K^*K$.

\end{theorem}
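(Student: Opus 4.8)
The plan is to obtain both realizations directly from the symmetric pair $(J,K)$ of Lemma \ref{lem J K symm pair}, combined with the spectral information about $\Delta_2$ already in hand. Recall from the abstract lemma on symmetric pairs that $J$ and $K$ are closable, that $J^*=K$ and $K^*=J$ hold on the closures, and that both $J^*J$ and $K^*K$ are self-adjoint; note also $JJ^* = JK = K^*K$, since $J^*=K$ and $K^*=J$.

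For part (i), positive definiteness and essential self-adjointness of $\Delta_2$ are exactly Theorems \ref{thm Delta is p.d.} and \ref{thm Delta is s. a.}, so the only new point is the factorization $\Delta_2 = J^*J$. First I would compute, for $\va \in \mc D_Q$, that $J^*J\va = K(J\va) = K\va = \Delta_2\va$, using that $J$ is the inclusion $\mc D_Q \hookrightarrow \h_E$ and that $K = \Delta_2$ on $\mc D_Q$. Hence $J^*J \supseteq \Delta_2|_{\mc D_Q}$, and taking closures $J^*J \supseteq \overline{\Delta_2|_{\mc D_Q}} = \Delta_2$ by essential self-adjointness (with $\mc D_Q$ a core, Lemma \ref{lem dense D_Q}). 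Since a self-adjoint operator admits no proper self-adjoint extension and $J^*J$ is self-adjoint, $J^*J = \Delta_2$.

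For part (ii), I would first verify that $\Delta_{\h}$ is symmetric and positive on the dense domain $\mc D_Q \subset \h_E$. Applying the identity of Corollary \ref{cor inner prod} with $\Delta f$ placed in the first argument gives, for $f,g \in \mc D_Q$, the relation $\langle \Delta_{\h} f, g\rangle_{\h_E} = \langle \Delta_2 f, \Delta_2 g\rangle_{L^2(\mu)}$; this is legitimate because $\mc D_Q$ is $\Delta_2$-invariant, so that $f,g,\Delta_2 f,\Delta_2 g,\Delta_2^2 f$ all lie in $L^2(\mu)$. The right-hand side is symmetric in $f$ and $g$, giving symmetry of $\Delta_{\h}$, while setting $g=f$ yields $\langle \Delta_{\h} f, f\rangle_{\h_E} = \|\Delta_2 f\|^2_{L^2(\mu)} \geq 0$, i.e. positivity. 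The failure of self-adjointness in general is the energy-space phenomenon already recorded for the discrete model (Theorem \ref{thm summary discr}(2)): nontrivial harmonic functions in $\h_E$ produce a nonzero deficiency, so $\Delta_{\h}$ genuinely admits proper self-adjoint extensions.

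Finally, to exhibit such an extension, I would show that the self-adjoint operator $K^*K = JJ^*$ extends $\Delta_{\h}$. For $\psi \in \mc D_Q$ one has $K\psi = \Delta_2\psi \in \mc D_Q$ by invariance, so $\psi$ lies in $\mathrm{Dom}(K^*K)$ and $K^*K\psi = J(\Delta_2\psi) = \Delta_2\psi = \Delta_{\h}\psi$ in $\h_E$; for $h \in Harm$ one has $Kh = 0$, hence $K^*K h = 0 = \Delta_{\h} h$. Thus $K^*K$ is self-adjoint and agrees with $\Delta_{\h}$ on the dense subspace $\mc C = \mc D_Q + Harm$ (Lemma \ref{lem Dfin is in D_Q}), so it is a self-adjoint extension of $\Delta_{\h}$. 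The main obstacle I anticipate is the domain bookkeeping: one must confirm that the abstract identities $J^*=K$ and $K^*=J$ hold for the genuine Hilbert-space adjoints of the closures (not merely as a formal consequence of the symmetric-pair relation), and that the equality $K^*K = JJ^*$ together with the computation above really pins down $\Delta_{\h}$ on all of $\mc C$ rather than only on the smoother core $\mc D_Q$.
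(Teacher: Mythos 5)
Your proposal is correct and follows essentially the same route as the paper: the paper proves Theorem \ref{thm Delta_h} simply by combining the symmetric-pair lemma with Lemmas \ref{lem dense D_Q}--\ref{lem J K symm pair} and the earlier results on $\Delta_2$, which is exactly the assembly you carry out. Your write-up merely makes explicit the domain computations (e.g.\ $J^*J\varphi = K\varphi = \Delta_2\varphi$ on $\mc D_Q$ and the agreement of $K^*K$ with $\Delta_{\h}$ on $\mc C = \mc D_Q + Harm$) that the paper leaves implicit.
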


\begin{remark} (1) The operator $\Delta_{\h}$ has a self-adjoint extension
 $JJ^*$ but its
deficiency indices might be non-zero. Corresponding examples can 
be found in the discrete theory of Laplace operators (see 
\cite{JorgensenPearse2016}).

(2)  We recall that $\Dfin$ is a natural dense subset in the Hilbert space
$L^2(\mu)$. Moreover, functions from $\Dfin$ belong to the energy
space $\h_E$. Then we could define the operator $J$ by putting
\be\label{eqJ}
\chi_A \stackrel J \longrightarrow \chi_A : L_2(\mu) \to \h_E.
\ee
But for the definition of the operator $K$ we do need the dense subset
$\mc C$ in the energy space $\h_E$. Then the conclusion of Theorem 
\ref{thm Delta_h} can then be obtained from the pair $J, K$.

(3) The definition of the operator $\Delta_E$ is based on the 
construction of a symmetric pair of operators.  A similar technique can be
 used to define an analogue of the Markov operator $P$ acting in the space 
 $\h_E$. We briefly outline this approach. 
 
 Let the operators $\partial : \h_E \to Diss$ and $S : Diss \to Diss$ be
 defined by (\ref{eq operator d}) and (\ref{eqdef S}), respectively.
 
\textbf{ Claim}. The operator $P : \h_E \to \h_E$ is defined by the
formula
$$
P = \partial^* S \partial.
$$
In other words,  the following diagram is commutative.
$$
\begin{array}[c]{ccc}
\h_E &\stackrel{P}{\longrightarrow}& \h_E \\
\downarrow\scriptstyle{\partial}&&\uparrow\scriptstyle{\partial^*}\\
Diss &\stackrel{S}{\longrightarrow}& Diss
\end{array}
$$

We take now $J$ as in (\ref{eqJ}) and define $K : \h_E \to L^2(\nu)$ 
by the formula (see Corollary \ref{cor Riesz})
\be\label{eqK}
K : f = G(\va) + h \mapsto \va. 
\ee
Then one can check that $J$ and $K$ form a symmetric pair, i.e., $K = J^*$.
Using this pair we can define an operator $P : \h_E \to \h_E$, an 
analogue of the Markov operator on $L^2(\nu)$.
\end{remark}

\textbf{Acknowledgments.} The authors are pleased to thank colleagues and
 collaborators, especially members of the seminars in Mathematical Physics 
 and Operator Theory at the University of Iowa, where versions of this work 
 have been presented. We acknowledge very helpful conversations with 
 among others Professors Paul Muhly, Wayne Polyzou; and conversations at 
 distance with Professors Daniel Alpay, and his colleagues at both Ben Gurion 
 University, and Chapman University. The second named author presented an 
 early version of our paper at a 2017-Chapman University conference, Signal 
 Processing and Linear Systems: New Problems and Directions.

\bibliographystyle{alpha}
\bibliography{references1}

\newcommand{\etalchar}[1]{$^{#1}$}
\def\ocirc#1{\ifmmode\setbox0=\hbox{$#1$}\dimen0=\ht0 \advance\dimen0
  by1pt\rlap{\hbox to\wd0{\hss\raise\dimen0
  \hbox{\hskip.2em$\scriptscriptstyle\circ$}\hss}}#1\else {\accent"17 #1}\fi}
\begin{thebibliography}{CGG{\etalchar{+}}16}

\bibitem[Aar97]{Aaronson1997}
Jon Aaronson.
\newblock {\em An introduction to infinite ergodic theory}, volume~50 of {\em
  Mathematical Surveys and Monographs}.
\newblock American Mathematical Society, Providence, RI, 1997.

\bibitem[AFMP94]{Adams_et_al1994}
Gregory~T. Adams, John Froelich, Paul~J. McGuire, and Vern~I. Paulsen.
\newblock Analytic reproducing kernels and factorization.
\newblock {\em Indiana Univ. Math. J.}, 43(3):839--856, 1994.

\bibitem[AJ12]{AlpayJorgensen2012}
Daniel Alpay and Palle E.~T. Jorgensen.
\newblock Stochastic processes induced by singular operators.
\newblock {\em Numer. Funct. Anal. Optim.}, 33(7-9):708--735, 2012.

\bibitem[AJ14]{AplayJorgensen2014}
Daniel Alpay and Palle Jorgensen.
\newblock Reproducing kernel {H}ilbert spaces generated by the binomial
  coefficients.
\newblock {\em Illinois J. Math.}, 58(2):471--495, 2014.

\bibitem[AJ15]{AlpayJorgensen2015}
Daniel Alpay and Palle Jorgensen.
\newblock Spectral theory for {G}aussian processes: reproducing kernels,
  boundaries, and {$L^2$}-wavelet generators with fractional scales.
\newblock {\em Numer. Funct. Anal. Optim.}, 36(10):1239--1285, 2015.

\bibitem[AJK15]{AlpayJorgensenKimsey2015}
Daniel Alpay, Palle E.~T. Jorgensen, and David~P. Kimsey.
\newblock Moment problems in an infinite number of variables.
\newblock {\em Infin. Dimens. Anal. Quantum Probab. Relat. Top.},
  18(4):1550024, 14, 2015.

\bibitem[AJL11]{AlpayJorgensenLevanony2011}
Daniel Alpay, Palle Jorgensen, and David Levanony.
\newblock A class of {G}aussian processes with fractional spectral measures.
\newblock {\em J. Funct. Anal.}, 261(2):507--541, 2011.

\bibitem[AJL17]{AlpayJorgensenLevanony2017}
Daniel Alpay, Palle Jorgensen, and David Levanony.
\newblock On the equivalence of probability spaces.
\newblock {\em J. Theoret. Probab.}, 30(3):813--841, 2017.

\bibitem[AJLM15]{AlpayJorgensenLewkowicz2015}
Daniel Alpay, Palle Jorgensen, Izchak Lewkowicz, and Itzik Martziano.
\newblock Infinite product representations for kernels and iterations of
  functions.
\newblock In {\em Recent advances in inverse scattering, {S}chur analysis and
  stochastic processes}, volume 244 of {\em Oper. Theory Adv. Appl.}, pages
  67--87. Birkh\"auser/Springer, Cham, 2015.

\bibitem[AJSV13]{AlpayJorgensenSeagerVolok2013}
Daniel Alpay, Palle Jorgensen, Ron Seager, and Dan Volok.
\newblock On discrete analytic functions: products, rational functions and
  reproducing kernels.
\newblock {\em J. Appl. Math. Comput.}, 41(1-2):393--426, 2013.

\bibitem[AJV14]{AlpayJorgensenVolok2014}
Daniel Alpay, Palle Jorgensen, and Dan Volok.
\newblock Relative reproducing kernel {H}ilbert spaces.
\newblock {\em Proc. Amer. Math. Soc.}, 142(11):3889--3895, 2014.

\bibitem[AMR15]{AlbeverioMaRockner2015}
Sergio Albeverio, Zhi-Ming Ma, and Michael R\"ockner.
\newblock Quasi regular {D}irichlet forms and the stochastic quantization
  problem.
\newblock In {\em Festschrift {M}asatoshi {F}ukushima}, volume~17 of {\em
  Interdiscip. Math. Sci.}, pages 27--58. World Sci. Publ., Hackensack, NJ,
  2015.

\bibitem[App09]{Applebaum2009}
David Applebaum.
\newblock {\em L\'evy processes and stochastic calculus}, volume 116 of {\em
  Cambridge Studies in Advanced Mathematics}.
\newblock Cambridge University Press, Cambridge, second edition, 2009.

\bibitem[APSS17]{Avella-Medina_2017}
Marco Avella{-}Medina, Francesca Parise, Michael~T. Schaub, and Santiago
  Segarra.
\newblock Centrality measures for graphons.
\newblock {\em CoRR}, abs/1707.09350, 2017.

\bibitem[Aro50]{Aronszajn1950}
N.~Aronszajn.
\newblock Theory of reproducing kernels.
\newblock {\em Trans. Amer. Math. Soc.}, 68:337--404, 1950.

\bibitem[Atk75]{Atkinson1975}
Kendall Atkinson.
\newblock Convergence rates for approximate eigenvalues of compact integral
  operators.
\newblock {\em SIAM J. Numer. Anal.}, 12:213--222, 1975.

\bibitem[BCL{\etalchar{+}}08]{Borgs_2008}
C.~Borgs, J.~T. Chayes, L.~Lov\'asz, V.~T. S\'os, and K.~Vesztergombi.
\newblock Convergent sequences of dense graphs. {I}. {S}ubgraph frequencies,
  metric properties and testing.
\newblock {\em Adv. Math.}, 219(6):1801--1851, 2008.

\bibitem[BCL{\etalchar{+}}12]{Borgs_2012}
C.~Borgs, J.~T. Chayes, L.~Lov\'asz, V.~T. S\'os, and K.~Vesztergombi.
\newblock Convergent sequences of dense graphs {II}. {M}ultiway cuts and
  statistical physics.
\newblock {\em Ann. of Math. (2)}, 176(1):151--219, 2012.

\bibitem[BJ15a]{BezuglyiJorgensen_2015}
S.~{Bezuglyi} and P.~E.~T. {Jorgensen}.
\newblock {Monopoles, dipoles, and harmonic functions on Bratteli diagrams}.
\newblock {\em ArXiv e-prints}, August 2015.

\bibitem[BJ15b]{BezuglyiJorgensen2015}
S.~Bezuglyi and Palle E.~T. Jorgensen.
\newblock Representations of {C}untz-{K}rieger relations, dynamics on
  {B}ratteli diagrams, and path-space measures.
\newblock In {\em Trends in harmonic analysis and its applications}, volume 650
  of {\em Contemp. Math.}, pages 57--88. Amer. Math. Soc., Providence, RI,
  2015.

\bibitem[BJ17]{BezuglyiJorgensen2017}
S.~{Bezuglyi} and P.~E.~T. {Jorgensen}.
\newblock {Infinite-dimensional transfer operators, endomorphisms, and
  measurable partitions}.
\newblock {\em ArXiv e-prints}, February 2017.

\bibitem[BLP{\etalchar{+}}10]{Bagett_et_al2010}
Lawrence~W. Baggett, Nadia~S. Larsen, Judith~A. Packer, Iain Raeburn, and Arlan
  Ramsay.
\newblock Direct limits, multiresolution analyses, and wavelets.
\newblock {\em J. Funct. Anal.}, 258(8):2714--2738, 2010.

\bibitem[BO17]{BorodinOlshanski2017}
Alexei Borodin and Grigori Olshanski.
\newblock The {ASEP} and determinantal point processes.
\newblock {\em Comm. Math. Phys.}, 353(2):853--903, 2017.

\bibitem[BQ15]{BufetovQiu2015}
Alexander~I. Bufetov and Yanqi Qiu.
\newblock Equivalence of {P}alm measures for determinantal point processes
  associated with {H}ilbert spaces of holomorphic functions.
\newblock {\em C. R. Math. Acad. Sci. Paris}, 353(6):551--555, 2015.

\bibitem[CGG{\etalchar{+}}16]{Chen_et_al2016}
Chao-Yang Chen, Weihua Gui, Zhi-Hong Guan, Shaowu Zhou, and Cailun Huang.
\newblock Optimal tracking performance of {MIMO} discrete-time systems with
  network parameters.
\newblock {\em Discrete Dyn. Nat. Soc.}, pages Art. ID 6826130, 7, 2016.

\bibitem[Cho14]{Cho2014}
Ilwoo Cho.
\newblock {\em Algebras, graphs and their applications}.
\newblock CRC Press, Boca Raton, FL, 2014.
\newblock Edited by Palle E. T. Jorgensen.

\bibitem[CWK17]{ChenWheelerKochenderfer2017}
Yi-Chun Chen, Tim~A. Wheeler, and Mykel~J. Kochenderfer.
\newblock Learning discrete {B}ayesian networks from continuous data.
\newblock {\em J. Artificial Intelligence Res.}, 59:103--132, 2017.

\bibitem[CZ07]{CuckerZhou2007}
Felipe Cucker and Ding-Xuan Zhou.
\newblock {\em Learning theory: an approximation theory viewpoint}, volume~24
  of {\em Cambridge Monographs on Applied and Computational Mathematics}.
\newblock Cambridge University Press, Cambridge, 2007.
\newblock With a foreword by Stephen Smale.

\bibitem[DJ06]{DutkayJorgensen2006}
Dorin~Ervin Dutkay and Palle E.~T. Jorgensen.
\newblock Iterated function systems, {R}uelle operators, and invariant
  projective measures.
\newblock {\em Math. Comp.}, 75(256):1931--1970, 2006.

\bibitem[DJ11]{Dutkay_Jorgensen2011}
Dorin~Ervin Dutkay and Palle E.~T. Jorgensen.
\newblock Spectral duality for unbounded operators.
\newblock {\em J. Operator Theory}, 65(2):325--353, 2011.

\bibitem[DS88]{DunfordSchwartz1988}
Nelson Dunford and Jacob~T. Schwartz.
\newblock {\em Linear operators. {P}art {II}}.
\newblock Wiley Classics Library. John Wiley \& Sons, Inc., New York, 1988.
\newblock Spectral theory. Selfadjoint operators in Hilbert space, With the
  assistance of William G. Bade and Robert G. Bartle, Reprint of the 1963
  original, A Wiley-Interscience Publication.

\bibitem[EFHN15]{Eisner2015}
Tanja Eisner, B\'alint Farkas, Markus Haase, and Rainer Nagel.
\newblock {\em Operator theoretic aspects of ergodic theory}, volume 272 of
  {\em Graduate Texts in Mathematics}.
\newblock Springer, Cham, 2015.

\bibitem[Gei17]{Geiger2017}
Bernhard~C. Geiger.
\newblock A sufficient condition for a unique invariant distribution of a
  higher-order {M}arkov chain.
\newblock {\em Statist. Probab. Lett.}, 130:49--56, 2017.

\bibitem[Gho15]{Ghosh2015}
Subhroshekhar Ghosh.
\newblock Determinantal processes and completeness of random exponentials: the
  critical case.
\newblock {\em Probab. Theory Related Fields}, 163(3-4):643--665, 2015.

\bibitem[GQ15]{GaubertQu2015}
St\'ephane Gaubert and Zheng Qu.
\newblock Dobrushin's ergodicity coefficient for {M}arkov operators on cones.
\newblock {\em Integral Equations Operator Theory}, 81(1):127--150, 2015.

\bibitem[HKPV09]{Hough_et_al_2009}
J.~Ben Hough, Manjunath Krishnapur, Yuval Peres, and B\'alint Vir\'ag.
\newblock {\em Zeros of {G}aussian analytic functions and determinantal point
  processes}, volume~51 of {\em University Lecture Series}.
\newblock American Mathematical Society, Providence, RI, 2009.

\bibitem[Ho17]{Ho2017}
Kwok-Pun Ho.
\newblock Modular estimates of fractional integral operators and {$k$}-plane
  transforms.
\newblock {\em Integral Transforms Spec. Funct.}, 28(11):801--812, 2017.

\bibitem[Jan13]{Janson2013}
Svante Janson.
\newblock {\em Graphons, cut norm and distance, couplings and rearrangements},
  volume~4 of {\em New York Journal of Mathematics. NYJM Monographs}.
\newblock State University of New York, University at Albany, Albany, NY, 2013.

\bibitem[JKL02]{JacksonKechrisLouveau2002}
S.~Jackson, A.~S. Kechris, and A.~Louveau.
\newblock Countable {B}orel equivalence relations.
\newblock {\em J. Math. Log.}, 2(1):1--80, 2002.

\bibitem[Jor01]{Jorgensen2001}
Palle E.~T. Jorgensen.
\newblock Ruelle operators: functions which are harmonic with respect to a
  transfer operator.
\newblock {\em Mem. Amer. Math. Soc.}, 152(720):viii+60, 2001.

\bibitem[Jor12]{Jorgensen2012}
Palle E.~T. Jorgensen.
\newblock Unbounded graph-{L}aplacians in energy space, and their extensions.
\newblock {\em J. Appl. Math. Comput.}, 39(1-2):155--187, 2012.

\bibitem[JP11]{Jorgensen_Pearse2011}
Palle E.~T. Jorgensen and Erin P.~J. Pearse.
\newblock Resistance boundaries of infinite networks.
\newblock In {\em Random walks, boundaries and spectra}, volume~64 of {\em
  Progr. Probab.}, pages 111--142. Birkh\"auser/Springer Basel AG, Basel, 2011.

\bibitem[JP12]{JorgensenPaolucci2012}
P.~E.~T. Jorgensen and A.~M. Paolucci.
\newblock Markov measures and extended zeta functions.
\newblock {\em J. Appl. Math. Comput.}, 38(1-2):305--323, 2012.

\bibitem[JP13]{JorgensenPearse2013}
Palle E.~T. Jorgensen and Erin P.~J. Pearse.
\newblock A discrete {G}auss-{G}reen identity for unbounded {L}aplace
  operators, and the transience of random walks.
\newblock {\em Israel J. Math.}, 196(1):113--160, 2013.

\bibitem[JP14]{JorgensenPearse2014}
Palle E.~T. Jorgensen and Erin P.~J. Pearse.
\newblock Spectral comparisons between networks with different conductance
  functions.
\newblock {\em J. Operator Theory}, 72(1):71--86, 2014.

\bibitem[JP16]{JorgensenPearse2016}
Palle E.~T. Jorgensen and Erin P.~J. Pearse.
\newblock Symmetric pairs and self-adjoint extensions of operators, with
  applications to energy networks.
\newblock {\em Complex Anal. Oper. Theory}, 10(7):1535--1550, 2016.

\bibitem[JP17]{JorgensenPearse_2017}
Palle E.~T. Jorgensen and Erin P.~J. Pearse.
\newblock Symmetric pairs of unbounded operators in {H}ilbert space, and their
  applications in mathematical physics.
\newblock {\em Math. Phys. Anal. Geom.}, 20(2):Art. 14, 24, 2017.

\bibitem[JPT16]{JorgensenPedersenTian2016}
Palle Jorgensen, Steen Pedersen, and Feng Tian.
\newblock {\em Extensions of positive definite functions}, volume 2160 of {\em
  Lecture Notes in Mathematics}.
\newblock Springer, [Cham], 2016.
\newblock Applications and their harmonic analysis.

\bibitem[JS13]{JorgensenSong2013}
Palle E.~T. Jorgensen and Myung-Sin Song.
\newblock Compactification of infinite graphs and sampling.
\newblock {\em Sampl. Theory Signal Image Process.}, 12(2-3):139--158, 2013.

\bibitem[JT15a]{JorgensenTian2015}
Palle Jorgensen and Feng Tian.
\newblock Discrete reproducing kernel {H}ilbert spaces: sampling and
  distribution of {D}irac-masses.
\newblock {\em J. Mach. Learn. Res.}, 16:3079--3114, 2015.

\bibitem[JT15b]{JorgensenTian2015b}
Palle Jorgensen and Feng Tian.
\newblock Frames and factorization of graph {L}aplacians.
\newblock {\em Opuscula Math.}, 35(3):293--332, 2015.

\bibitem[JT15c]{Jorgensen_Tian2015}
Palle Jorgensen and Feng Tian.
\newblock Infinite networks and variation of conductance functions in discrete
  {L}aplacians.
\newblock {\em J. Math. Phys.}, 56(4):043506, 27, 2015.

\bibitem[JT17a]{JorgensenTian_2017}
P.~Jorgensen and F.~Tian.
\newblock Transfer operators, induced probability spaces, and random walk
  models.
\newblock {\em Markov Process. Related Fields}, 23(2):187--210, 2017.

\bibitem[JT17b]{JorgensenTian2017}
Palle Jorgensen and Feng Tian.
\newblock {\em Non-commutative analysis}.
\newblock World Scientific Publishing Co. Pte. Ltd., Hackensack, NJ, 2017.
\newblock With a foreword by Wayne Polyzou.

\bibitem[Kal83]{Kallenberg1983}
Olav Kallenberg.
\newblock {\em Random measures}.
\newblock Akademie-Verlag, Berlin; Academic Press, Inc. [Harcourt Brace
  Jovanovich, Publishers], London, third edition, 1983.

\bibitem[Kan06]{Kaneko2006}
Hiroshi Kaneko.
\newblock Sobolev space and {D}irichlet space associated with symmetric
  {M}arkov process on a local field.
\newblock {\em Potential Anal.}, 24(1):87--103, 2006.

\bibitem[Kan08]{Kanovei2008}
Vladimir Kanovei.
\newblock {\em Borel equivalence relations}, volume~44 of {\em University
  Lecture Series}.
\newblock American Mathematical Society, Providence, RI, 2008.
\newblock Structure and classification.

\bibitem[Kan14]{Kaneko2014}
Hiroshi Kaneko.
\newblock A {D}irichlet space on ends of tree and {D}irichlet forms with a
  nodewise orthogonal property.
\newblock {\em Potential Anal.}, 41(1):245--268, 2014.

\bibitem[Kat68]{Katznelson1968}
Yitzhak Katznelson.
\newblock {\em An introduction to harmonic analysis}.
\newblock John Wiley \& Sons, Inc., New York-London-Sydney, 1968.

\bibitem[Kec10]{Kechris2010}
Alexander~S. Kechris.
\newblock {\em Global aspects of ergodic group actions}, volume 160 of {\em
  Mathematical Surveys and Monographs}.
\newblock American Mathematical Society, Providence, RI, 2010.

\bibitem[KM15]{KapicaMorawiec2015}
Rafa\l Kapica and Janusz Morawiec.
\newblock Inhomogeneous poly-scale refinement type equations and {M}arkov
  operators with perturbations.
\newblock {\em J. Fixed Point Theory Appl.}, 17(3):507--520, 2015.

\bibitem[Kol50]{Kolmogorov1950}
A.~N. Kolmogorov.
\newblock {\em Foundations of the {T}heory of {P}robability}.
\newblock Chelsea Publishing Company, New York, N. Y., 1950.

\bibitem[KR07]{KawabiRockner2007}
Hiroshi Kawabi and Michael R\"ockner.
\newblock Essential self-adjointness of {D}irichlet operators on a path space
  with {G}ibbs measures via an {SPDE} approach.
\newblock {\em J. Funct. Anal.}, 242(2):486--518, 2007.

\bibitem[Lov12]{Lovasz2012}
L\'aszl\'o Lov\'asz.
\newblock {\em Large networks and graph limits}, volume~60 of {\em American
  Mathematical Society Colloquium Publications}.
\newblock American Mathematical Society, Providence, RI, 2012.

\bibitem[LP16]{LyonsPeres2016}
Russell Lyons and Yuval Peres.
\newblock {\em Probability on trees and networks}, volume~42 of {\em Cambridge
  Series in Statistical and Probabilistic Mathematics}.
\newblock Cambridge University Press, New York, 2016.

\bibitem[Luk16]{Lukashiv2016}
Taras Lukashiv.
\newblock One form of {L}yapunov operator for stochastic dynamic system with
  {M}arkov parameters.
\newblock {\em J. Math.}, pages Art. ID 1694935, 5, 2016.

\bibitem[Lyo03]{Lyons2003}
Russell Lyons.
\newblock Determinantal probability measures.
\newblock {\em Publ. Math. Inst. Hautes \'Etudes Sci.}, (98):167--212, 2003.

\bibitem[MR92]{MaRockner1992}
Zhi~Ming Ma and Michael R\"ockner.
\newblock {\em Introduction to the theory of (nonsymmetric) {D}irichlet forms}.
\newblock Universitext. Springer-Verlag, Berlin, 1992.

\bibitem[MR95]{MaRockner1995}
Zhi~Ming Ma and Michael R\"ockner.
\newblock Markov processes associated with positivity preserving coercive
  forms.
\newblock {\em Canad. J. Math.}, 47(4):817--840, 1995.

\bibitem[PR16]{PaulsenRaghupathi2016}
Vern~I. Paulsen and Mrinal Raghupathi.
\newblock {\em An introduction to the theory of reproducing kernel {H}ilbert
  spaces}, volume 152 of {\em Cambridge Studies in Advanced Mathematics}.
\newblock Cambridge University Press, Cambridge, 2016.

\bibitem[Rev84]{Revuz1984}
D.~Revuz.
\newblock {\em Markov chains}, volume~11 of {\em North-Holland Mathematical
  Library}.
\newblock North-Holland Publishing Co., Amsterdam, second edition, 1984.

\bibitem[Roh49]{Rohlin1949}
V.~A. Rohlin.
\newblock On the fundamental ideas of measure theory.
\newblock {\em Mat. Sbornik N.S.}, 25(67):107--150, 1949.

\bibitem[Rud87]{Rudin1987}
Walter Rudin.
\newblock {\em Real and complex analysis}.
\newblock McGraw-Hill Book Co., New York, third edition, 1987.

\bibitem[Sim12]{Simmons2012}
David Simmons.
\newblock Conditional measures and conditional expectation; {R}ohlin's
  disintegration theorem.
\newblock {\em Discrete Contin. Dyn. Syst.}, 32(7):2565--2582, 2012.

\bibitem[SS17]{SasakiSuzuki2017}
Itaru Sasaki and Akito Suzuki.
\newblock Essential spectrum of the discrete {L}aplacian on a perturbed
  periodic graph.
\newblock {\em J. Math. Anal. Appl.}, 446(2):1863--1881, 2017.

\bibitem[Ter17]{Terhesiu2017}
Dalia Terhesiu.
\newblock Non trivial limit distributions for transient renewal chains.
\newblock {\em Statist. Probab. Lett.}, 129:189--195, 2017.

\bibitem[Ver00]{Vershik2000}
A.~M. Vershik.
\newblock Dynamic theory of growth in groups: entropy, boundaries, examples.
\newblock {\em Uspekhi Mat. Nauk}, 55(4(334)):59--128, 2000.

\bibitem[Ver05]{Vershik2005}
A.~M. Vershik.
\newblock Polymorphisms, {M}arkov processes, and quasi-similarity.
\newblock {\em Discrete Contin. Dyn. Syst.}, 13(5):1305--1324, 2005.

\bibitem[Vil09]{Villani2009}
C\'edric Villani.
\newblock {\em Optimal transport}, volume 338 of {\em Grundlehren der
  Mathematischen Wissenschaften [Fundamental Principles of Mathematical
  Sciences]}.
\newblock Springer-Verlag, Berlin, 2009.
\newblock Old and new.

\bibitem[Yos95]{Yosida1995}
K\=osaku Yosida.
\newblock {\em Functional analysis}.
\newblock Classics in Mathematics. Springer-Verlag, Berlin, 1995.
\newblock Reprint of the sixth (1980) edition.

\end{thebibliography}

\end{document}